\documentclass[a4paper,11pt,twoside]{article}
\usepackage{mathrsfs}
\usepackage{amssymb}
\usepackage{amsmath}
\usepackage{mathrsfs}
\usepackage{amsthm}
\usepackage{amsfonts}
\usepackage{color}
\usepackage{latexsym}
\usepackage{indentfirst}
\usepackage{extarrows}%²Å±àÏÂÃæ¶þÐÐ
\usepackage{amscd}
\usepackage{bbm}
\usepackage{graphicx}
\usepackage[all]{xy}
\usepackage[mathscr]{eucal}
\usepackage{subfigure}
\newcommand{\K}{{\mathbb K}}

\newcommand{\R}{{\mathbb R}}

\allowdisplaybreaks
\newtheorem{theorem}{Theorem}[section]

\newtheorem{corollary}[theorem]{Corollary}

\newtheorem{definition}[theorem]{Definition}

\newtheorem{remark}[theorem]{Remark}

\newtheorem{lemma}[theorem]{Lemma}

\newtheorem{proposition}[theorem]{Proposition}
\newtheorem{claim}[theorem]{Claim}

\newtheorem{assumption}[theorem]{Assumption}

\numberwithin{equation}{section}

\begin{document}

\title{\bf\Large
Bifurcation of periodic and antiperiodic solutions\\
in non-autonomous potential-type delay systems
	\footnotetext{\hspace{-0.35cm} 2020
{\it Mathematics Subject Classification}.
Primary 34K18, 34K20.
\endgraf
{\it Key words and phrases.}
Delay or advanced differential equations,
bifurcation, periodic or antiperiodic solutions, Maslov index.
}}
\date{}
\author{Guangcun Lu\footnote{
E-mail: \texttt{gclu@bnu.edu.cn}/{July 15, 2026}.}}

\maketitle

\vspace{-0.7cm}

\begin{center}
\begin{minipage}{13cm}
{\small {\bf Abstract}\quad
As a continuation of our previous work, where bifurcations of periodic and antiperiodic orbits near an equilibrium were studied for autonomous differential delay systems with one or two delays, this paper investigates bifurcations of periodic and antiperiodic solutions for five classes of non-autonomous potential delay differential systems with multiple delays. By reformulating these systems as generalized Hamiltonian systems on Euclidean spaces, we apply the Hamiltonian bifurcation theory recently developed by the author to establish alternative bifurcation results of both Fadell--Rabinowitz and Rabinowitz type.}
\end{minipage}
\end{center}

\vspace{0.2cm}
	
\tableofcontents

\vspace{0.2cm}

\section{Introduction}\label{sec:Intro0}
\setcounter{equation}{0}

%Since the work of Nussbaum \cite{Nu75} and Walther \cite{Wal83}, many authors have investigated bifurcations from periodic solutions in delay differential equations (DDEs) using various methods. See, for example,
%\cite{ArHb90, Ben24, BiHanWu03, ChowM78,ChowWa88, DiVLW, Do89, ElSM05, Er09, Fa06, Guo26, HanM05, HanBi04, HaV93, HaW04, HbQe06, Lop, LSBK23, ThWal05, Wal06, Wal}
%and the references therein.

Since the work of Nussbaum \cite{Nu75} and
Kaplan-Yorke \cite{KaYo74}, bifurcations of periodic solutions in differential delay equations (DDEs) have been extensively studied by various methods; see, e.g.,
\cite{ArHb90, Ben24, BiHanWu03,
ChowM78,ChowWa88, DiVLW, Do89, ElSM05, Er09, Fa06, Guo26, Hale, HaV93, HaW04, HanM05, HanBi04, HbQe06, Lop, LSBK23, ThWal05, Wal78, Wal83, Wal06, Wal}
 and the references therein.

%However, the bifurcation problem for systems with [????????(1.1)???1.1???????????????? (antiperiodic boundary conditions)/??????????????] remains less explored, especially in connection with [????????????????Maslov??]. The objective of this paper is to fill this gap by investigating...

The primary objective of this paper is to investigate bifurcation phenomena in five classes of differential-delay systems.
The first three classes are described by the following problems:
 \begin{equation}\label{e:Delay1}
 \left\{\begin{array}{ll}
 \dot{x}( t ) = \sum^{m-1}_{k=1}\nabla_3 V (\lambda, t , x ( t - k\tau ) ),\\
 %+ \nabla_3 V (\lambda, t , x ( t - 2 \tau ) ) +
%  \cdots + \nabla_3 V (\lambda, t , x ( t - ( m - 1 ) \tau ) ),\\
  x( t + m \tau ) = - x( t )\;\forall t,
   \end{array}\right.
  \end{equation}
  \begin{equation}\label{e:Delay2}
  \left\{\begin{array}{ll}
 \dot{x}( t ) = \sum^{m-1}_{k=1} J_n\nabla_3 G (\lambda, t , x ( t -k \tau ) ),\\
  %+ \nabla_3 G (\lambda, t , x ( t - 2 \tau ) ) +
%  \cdots + \nabla_3 G (\lambda, t , x ( t - ( m - 1 ) \tau ) )],\\
    x( t + m \tau ) =  x( t )\;\forall t,
   \end{array}\right.
  \end{equation}
  and
 \begin{equation}\label{e:Delay3}
 \left\{\begin{array}{ll}
 \ddot{x}( t ) = - \sum^{m-1}_{k=1}\nabla_3 U (\lambda, t , x ( t - k\tau ) ),\\
  %+ \nabla_3 U (\lambda, t , x ( t - 2 \tau ) ) +
%  \cdots + \nabla_3 U (\lambda, t , x ( t - ( m - 1 ) \tau ) )],  \\
    x( t + m \tau ) =  x( t )\;\forall t.
   \end{array}\right.
 \end{equation}
 where $\tau>0$ is a real, $m\ge 2$ an integer,  $\Lambda$ a topological space, and $U,V:\Lambda\times \mathbb{R}\times{\R}^{n}\to\R$
 and $G:\Lambda\times \mathbb{R}\times{\R}^{2n}\to\R$ satisfy the following:

 \begin{assumption}\label{ass:BasiAss1Delay1}
{\rm
The functions $U, V: \Lambda \times \mathbb{R} \times \mathbb{R}^n \to \mathbb{R}$ and
$G: \Lambda \times \mathbb{R} \times \mathbb{R}^{2n} \to \mathbb{R}$
are continuous, and $\tau$-periodic in $t$. Additionally, $V$ is even in $x \in \mathbb{R}^n$. For each fixed $(\lambda,t) \in \Lambda \times \mathbb{R}$, the mappings
$U(\lambda,t,\cdot), V(\lambda,t,\cdot): \mathbb{R}^n \to \mathbb{R}$ and $G(\lambda,t,\cdot): \mathbb{R}^{2n} \to \mathbb{R}$
are of class $C^2$, with all partial derivatives depending continuously on $(\lambda,t,x)$.}
\end{assumption}

The above $\nabla_3V(\lambda,t,\cdot)$ and $\nabla_3U(\lambda,t,\cdot)$ denote the Euclidean gradients of $V(\lambda,t,\cdot)$ and $U(\lambda,t,\cdot)$ in $\mathbb{R}^{n}$, respectively, while $\nabla_3G(\lambda,t,\cdot)$ denotes that of $G(\lambda,t,\cdot)$ in $\mathbb{R}^{2n}$.

%%%%%%%%%%%%%%%%%%%%%%%%%%%%%%%%%%%%%
%In the autonomous case, where $V, G, U$ in Assumption~\ref{ass:BasiAss1Delay1} are %time-independent, the local bifurcation near the trivial solution for \eqref{e:Delay1} %($m\in\{2,3\}$) and for \eqref{e:Delay2}--\eqref{e:Delay3} ($m=2$) was analyzed in \cite{Lu14}.

In the autonomous case, where $V, G, U$ in Assumption~\ref{ass:BasiAss1Delay1} are time-independent, the local bifurcation near the trivial solution for
 (\ref{e:Delay1}) ($m\in\{2,3\}$)  and for (\ref{e:Delay2})--(\ref{e:Delay3})  ($m=2$)
 was analyzed in \cite{Lu14}.
The present paper studies the bifurcation of systems \eqref{e:Delay1}--\eqref{e:Delay3}
near a family of solutions that are not necessarily trivial.
To this end, we impose the following assumptions.

\begin{assumption}\label{ass:BasiAss1Delay2}
{\rm Let $V$ satisfy  Assumption~\ref{ass:BasiAss1Delay1}, and for each $\lambda\in\Lambda$ let $x^\lambda:\R\to\mathbb{R}^{n}$
be a solution  of (\ref{e:Delay1}); moreover, $\Lambda\times \R\ni (\lambda,t)\mapsto x^\lambda(t)\in\mathbb{R}^{n}$ is also continuous.
 ({\it Note}: the map $\mathbb{R}\ni t\mapsto 0\in\mathbb{R}^n$,
  denoted by ${\bf 0}^\lambda$ without occurrence of confusions,
   satisfies (\ref{e:Delay1}) because $V(\lambda,x)$ is also even in variable $x\in\mathbb{R}^n$.)}
\end{assumption}

\begin{assumption}\label{ass:BasiAss1Delay3}
{\rm Let $G$ satisfy Assumption~\ref{ass:BasiAss1Delay1}, and for each $\lambda\in\Lambda$ let $x^\lambda:\R\to\mathbb{R}^{2n}$
be a  solution of (\ref{e:Delay2}), such that the map $\Lambda\times \R\ni (\lambda,t)\mapsto x^\lambda(t)\in\mathbb{R}^{2n}$
 is also continuous.
}
\end{assumption}

 \begin{assumption}\label{ass:BasiAss1Delay4}
{\rm Let $U$ satisfy Assumption~\ref{ass:BasiAss1Delay1}. For each $\lambda\in\Lambda$, let $x^\lambda:\R\to\mathbb{R}^{n}$
be a  solution of (\ref{e:Delay3}),  and assume that the map
$$
\Lambda\times \R\ni (\lambda,t)\mapsto x^\lambda(t)\in\mathbb{R}^{n}\quad\hbox{and}\quad
\Lambda\times \R\ni (\lambda,t)\mapsto \dot{x}^\lambda(t)\in\mathbb{R}^{n}
$$
are also continuous.
}
\end{assumption}

\begin{definition}\label{def:Delay}
{\rm
\begin{itemize}
\item[\rm (I)] Under Assumption~\ref{ass:BasiAss1Delay2} (resp. Assumption~\ref{ass:BasiAss1Delay3}),
 $(\mu, x^\mu)$ is said to be a \textsf{bifurcation point along sequences }of (\ref{e:Delay1}) [resp. (\ref{e:Delay2})]
 with respect to the branch $\{(\lambda, x^\lambda)\,|\,\lambda\in\Lambda\}$ if
 there exists a sequence $(\lambda_k)\subset\Lambda$ converging to $\mu\in\Lambda$
 and solutions $\bar{x}^k\ne x^{\lambda_k}$  of (\ref{e:Delay1})  [resp. (\ref{e:Delay2})]
 with $\lambda=\lambda_k\in\Lambda$,  $k=1,2,\dots$, such that
 ${\bar{x}^k}|_{[0,\tau]}\to {x^{\mu}}|_{[0,\tau]}$ in $C^{0}([0,\tau];\R^{n})$
or $\Lambda\times C^1([0,\tau];\R^{n})$.
\item[\rm (II)] Under Assumption~\ref{ass:BasiAss1Delay4} we say
 $(\mu, x^\mu)$ to be a \textsf{bifurcation point along sequences} of (\ref{e:Delay3})
 with respect to the branch $\{(\lambda, x^\lambda)\,|\,\lambda\in\Lambda\}$ if
 there exists a sequence $(\lambda_k)\subset\Lambda$ converging to $\mu\in\Lambda$
 and solutions $\bar{x}^k\ne x^{\lambda_k}$  of (\ref{e:Delay3})
 with $\lambda=\lambda_k\in\Lambda$,  $k=1,2,\dots$, such that
 ${\bar{x}^k}|_{[0,\tau]}\to {x^{\mu}}|_{[0,\tau]}$ in $C^{1}([0,\tau];\R^{n})$
or $\Lambda\times C^2([0,\tau];\R^{n})$.
\end{itemize}
 Similarly, we may define the\textsf{ notions of  bifurcation points} of (\ref{e:Delay1}), (\ref{e:Delay2}) and (\ref{e:Delay3})
  with respect to the branch $\{(\lambda, x^\lambda)\,|\,\lambda\in\Lambda\}$, respectively.}
\end{definition}

%Liu \cite{Liu12} developed the ideas of \cite{KaYo74}
% and recast the study of periodic solutions in delay differential equations (\ref{e:Delay1})-(\ref{e:Delay3})
% into a generalized  Hamiltonian boundary problem as in (\ref{e:M-invariantDelay4}),

%\textbf{Section~\ref{sec:Remark}}. For suitable Hamiltonian functions $H(\lambda,t,x,y)$
%on $\Lambda \times \mathbb{R} \times \mathbb{R}^{n} \times \mathbb{R}^{n}$,

%We further establish bifurcation results for delay and advanced differential systems subject to an anti-periodic boundary condition. Specifically, we consider
 The fourth class of the delay system is
\begin{equation}\label{e:crm1}
\dot{x}(t) = -\nabla_4 H(\lambda, t, x(t), x(t-\tau)), \qquad x(t + 2\tau) = -x(t),
\end{equation}
as well as the advanced system
\begin{equation}\label{e:crm1Ad}
\dot{x}(t) = -\nabla_4 H(\lambda, t, x(t), x(t+\tau)), \qquad x(t + 2\tau) = -x(t).
\end{equation}
Here, $\tau > 0$ denotes a constant delay (for \eqref{e:crm1}) or advance (for \eqref{e:crm1Ad}), $\Lambda$ is a topological parameter space, and the Hamiltonian $H: \Lambda \times \mathbb{R} \times \mathbb{R}^n \times \mathbb{R}^n \to \mathbb{R}$ is continuous. We impose the following regularity and symmetry assumptions on $H$.

\begin{assumption}\label{ass:Crm1}
		\begin{enumerate}
		\item[\rm (i)] {\bf Regularity.} For every $(\lambda, t) \in \Lambda \times \mathbb{R}$, the mapping $z\equiv (x, y) \mapsto H(\lambda, t, x, y)$ is  $C^2$. Moreover,  the Euclidean gradient and the Hessian of  $H(\lambda,t, x)$ with respect to
$z\in{\R}^{2n}$, $\nabla_zH(\lambda,t, z)$ and $\nabla^2_zF(\lambda,t, z)$,
are continuous in  $(\lambda, t, z)\in\Lambda\times \R\times\mathbb{R}^{2n}$.

		\item[\rm (ii)] {\bf Symmetries.} For all $(\lambda, t, x, y) \in \Lambda \times \mathbb{R} \times \mathbb{R}^n \times \mathbb{R}^n$,
		\[
		H(\lambda, t + \tau, x, y) = H(\lambda, t, x, y) = H(\lambda, t, y, -x).
		\]
	\end{enumerate}
\end{assumption}

Equation \eqref{e:crm1Ad} was first investigated in \cite{DoNa53}. As noted by Walther in his recent survey \cite{Wal}, a comprehensive theory for such equations remains in its infancy.\\

%
% \begin{assumption}\label{ass:Distri1Delay1}
%{\rm For a topological space $\Lambda$, let continuous  functions $F: \Lambda \times \mathbb{R} \times \mathbb{R}^n \to \mathbb{R}$ and
%$G: \Lambda \times  \mathbb{R}^{n} \to \mathbb{R}$ satisfy:
%\begin{itemize}
%\item[\rm (i)] For each fixed $(\lambda,t) \in \Lambda \times \mathbb{R}$, the mapping
%$F(\lambda,t,\cdot): \mathbb{R}^n \to \mathbb{R}$ is of class $C^2$, with all partial derivatives depending continuously on $(\lambda,t,x)$.
%\item[\rm (ii)] For each fixed $\lambda\in \Lambda$, the mapping
%$G(\lambda,\cdot): \mathbb{R}^n \to \mathbb{R}$ is even and of class $C^2$, with all partial derivatives depending continuously on $(\lambda,x)$.
%\end{itemize}}
%\end{assumption}

Finally, we study the following bifurcations
of distributed-delay differential systems:
\begin{equation}\label{e:Bifu-distributedDelay1}
\left\{\begin{array}{ll}
\dot{x}(t) = -\nabla_3 F\left(\lambda, t, \displaystyle\int_0^\tau
\nabla_2 G(\lambda, x(t-s)) \,\mathrm{d}s\right), & x(t) \in \mathbb{R}^n,
\\
x(t+\tau) = -x(t) \; \forall t,
\end{array}\right.
\end{equation}
%\begin{equation}\label{e:Bifu-distributedDelay2}
%\left\{\begin{array}{ll}
%\dot{x}(t) = -\nabla_x F\left(\lambda, t, \displaystyle\int_0^\tau
%\nabla_x G(x(t-s)) \,\mathrm{d}s\right), & x(t) \in \mathbb{R}^n,
%\\
%x(t+\tau) = x(t), \; \forall t,
%\end{array}\right.
%\end{equation}
%and
%\begin{equation}\label{e:Bifu-distributedDelay3}
%\left\{\begin{array}{ll}
%\dot{x}(t) = -\nabla_x F\left(\lambda, t, \displaystyle\int_0^\tau
%\nabla_x G(x(t-s)) \,\mathrm{d}s\right), & x(t) \in \mathbb{R}^n,
%\\
%x(t+\tau) =x(t) = -x(-t), & \forall t.
%\end{array}\right.
%\end{equation}
%Under this assumption and some additional conditions on $F$,
where $\Lambda$ is a topological space, and
$F: \Lambda \times \mathbb{R} \times \mathbb{R}^n \to \mathbb{R}$ and
$G: \Lambda \times \mathbb{R}^{n} \to \mathbb{R}$ are continuous functions satisfying the following:

\begin{assumption}\label{ass:Distri1Delay1}
\begin{itemize}
\item[\rm (i)] For each fixed $(\lambda,t) \in \Lambda \times \mathbb{R}$, the function
$F(\lambda,t,\cdot): \mathbb{R}^n \to \mathbb{R}$ is of class $C^2$,
and the Euclidean gradient and the Hessian of  $F(\lambda,t, x)$ with respect to
$x\in{\R}^{n}$, $\nabla_3F(\lambda,t, x)$ and $\nabla^2_3F(\lambda,t, z)$,
are continuous in  $(\lambda, t, x)\in\Lambda\times \R\times\mathbb{R}^{n}$.

\item[\rm (ii)] For each fixed $\lambda\in \Lambda$, the function
$G(\lambda,\cdot): \mathbb{R}^n \to \mathbb{R}$ is even and of class $C^2$,
and the Euclidean gradient and the Hessian of  $G(\lambda, x)$ with respect to
$x\in{\R}^{n}$, $\nabla_2G(\lambda, x)$ and $\nabla^2_2G(\lambda, x)$,
are continuous in  $(\lambda, x)\in\Lambda\times\mathbb{R}^{n}$.
\end{itemize}
\end{assumption}

%
%Let  $F \in C^1(\mathbb{R} \times \mathbb{R}^n, \mathbb{R})$ and $G \in C^1(\mathbb{R}^n, \mathbb{R})$ satisfy
%$F(t+\tau, -x) = F(t,x) = F(-t,x)$ and
%$G(x) = G(-x)$ for all $t \in \mathbb{R}$ and $x \in \mathbb{R}^n$.
%Let $z=(x^\top,y^\top)^\top$ and $H(t,z) = 2G(x) + F(t,y)$.
% Zhong, Wang and Liu (\cite{ZhongWL25}) showed:

%
%
%We will transform the problem of solving the system (1.6) together with $(S_1)$ and $(S_2)$ to that of solving a Hamiltonian system with some symmetric periodic conditions. Let $z = \begin{pmatrix} x \\ y \end{pmatrix} \in \mathbb{R}^{2n}$, the Hamiltonian function $H(t,z)$ is defined by
%
%\emph{The existence and multiplicity of symmetrical periodic solutions
%for asymptotically linear distributed delay differential systems},

%\noindent{\textbf{Methods and Organisation of the paper}}.

\noindent\textbf{Structure of the paper.}
Kaplan and Yorke~\cite{KaYo74} showed that periodic solutions of the delay differential equation~\eqref{e:Delay1} can be recast as periodic solutions with a symmetric structure for an associated Hamiltonian system; see~\cite{Fe06a,LiHe1998,LiHe1999a} and references therein.
Liu~\cite{Liu12} further reformulated such problems as equivalent Hamiltonian boundary-value problems.
%Following Liu~\cite{Liu12}, we work throughout with the
%Hamiltonian boundary-value reformulation of the
%Kaplan--Yorke reduction; this is what we mean by ``the method''
%used below.

In Section~\ref{sec:delay0} we use this method to derive equivalent Hamiltonian formulations for the bifurcation problems~\eqref{e:Delay1}--\eqref{e:Delay3}.
Section~\ref{sec:delay1} then applies the theorems of~\cite{Lu11} to analyse bifurcations in~\eqref{e:Delay1} under Assumption~\ref{ass:BasiAss1Delay2}.
Theorems~\ref{th:bif-per1Delay1},~\ref{th:bif-per2Delay},~\ref{th:bif-per2Delay+} establish a framework for characterising bifurcation phenomena along the branch $\{(\lambda,x^\lambda)\mid\lambda\in\Lambda\}$ via the linearised problem
\begin{equation}\label{e:LinearDelay1.1Lambda}
\left\{\begin{array}{ll}
\dot x(t)=\displaystyle\sum_{k=1}^{m-1}\nabla_3^2 V\bigl(\lambda,t,x^\lambda(t-\tau)\bigr)x(t-\tau),\\[4pt]
x(t+m\tau)=-x(t)\quad\forall t,
\end{array}\right.
\end{equation}
where $\nabla_3^2 V$ is the Hessian of $V$ w.r.t.\ $x\in\mathbb R^n$.
If~\eqref{e:LinearDelay1.1Lambda} admits only the trivial solution, Theorem~\ref{th:bif-per1Delay1}(I) implies that $(\mu,x^\mu)$ is not a bifurcation point on that branch.
The remaining parts of the three theorems describe bifurcation behaviour in terms of Maslov-type indices of fundamental-matrix solutions of the associated linear Hamiltonian systems derived from~\eqref{e:LinearDelay1.1Lambda}; these indices are always theoretically computable.
When $V(\lambda,t,z)$ is affine in $\lambda$, the three theorems simplify to a concise form without explicit indices (Theorem~\ref{th:bif-per2Delay++}).

Parallel results for~\eqref{e:Delay2} under Assumption~\ref{ass:BasiAss1Delay3} appear in Section~\ref{sec:delay2}; the main results are Theorems~\ref{th:bif-per1Delay2},~\ref{th:bif-per2Delay2},~\ref{th:bif-per2Delay+2},~\ref{th:bif-per2Delay++G}.

Section~\ref{sec:delay3} presents only three general bifurcation results for system~\eqref{e:Delay3}: Theorems~\ref{th:bif-per1Delay3},~\ref{th:bif-per2Delay3},~\ref{th:bif-per2Delay+3}.
These do not correspond to Theorem~\ref{th:bif-per2Delay++} or~\ref{th:bif-per2Delay++G}, because the matrix $\mathcal A_m$ in~\eqref{e:Delay21} is indefinite, making $\nabla_3^2\mathcal H(\lambda,t,\mathbf0)$ in~\eqref{e:Delay32} indefinite even when all $U(\lambda,t,\cdot)$ are even and $\nabla_3^2U(\lambda,t,\mathbf0)$ has a fixed sign.

 Section~\ref{sec:delay4} discusses bifurcations of solutions for problem~\eqref{e:crm1} and~\eqref{e:crm1Ad}.

 Section~\ref{sec:delay5} investigates bifurcations of solutions for problem~\eqref{e:Bifu-distributedDelay1}.\\

%\noindent\textbf{Further researches.}
%When $V$ in system (\ref{e:Delay1}) is independent of time $t$, we can generalize our studies in \cite{Lu12, Lu13} to the system using the direct variational method
%of Guo and Yu \cite{GuoMYu05, GuoMYu11}.

\noindent{\textbf{Notation and conventions}}. Let  $[a]$  denote the largest integer no more than $a\in\R$.
All vectors in $\R^m$ will be understood as column vectors.
Let $\mathbb{K}=\mathbb{R}$ or $\mathbb{C}$, and let $\mathbb{K}^{m\times m}$ denote the set of
all $m\times m$ matrices with entries in the field $\mathbb{K}$.
For $M\in\K^{m\times m}$ let $\sigma(M)$ be the set of all eigenvalues of $M$,
let $M^\top$ be the transpose of $M$, ${\rm Ker}(M)=\{x\in\K^m\,|\,Mx=0\}$ and $\exp M=e^M=\sum^\infty_{l=0}\frac{1}{l!}M^l$.
We denote by $(\cdot,\cdot)_{\mathbb{R}^m}$ the standard Euclidean inner product on
$\mathbb{R}^m$, and by $|\cdot|$ the corresponding norm.
Let $\mathcal{L}_s(\mathbb{R}^{m})$ be the set of all real symmetric
matrixes of order $m$. The standard complex structure on $\mathbb{R}^{2n}$ is given by %(\ref{e:standcompl}).
$J_n=\left(\begin{array}{cc}
             0 & -I_n \\
             I_n & 0 \\
           \end{array}
         \right)$.
%
%({\bf Note}: In this paper,  all vectors in the Euclidean spaces are understood to be column vectors,

%When $\lambda<0$, the equation (\ref{e:Delay6Auto13C-}) is termed a \textsf{differential equation with advanced arguments} (cf. \cite{ElN73,Hale}).
%or
%For $\lambda<0$, (\ref{e:Delay6Auto13C-}) is referred to as an advanced differential equation? (cf. \cite{ElN73,Hale}).
%
%For a $C^2$ even function $W:\mathbb{R}\to\mathbb{R}$,
%if $\tau>0$ satisfies $\tau W''(0)\in(4\mathbb{Z}+3)\pi$,
% by Corollary~\ref{cor:bif-per4Delay}  we conclude that the system
%\begin{eqnarray*}
%  \dot{x}( t ) = \lambda\nabla W (x ( t - \lambda) )\quad\hbox{and}\quad
%  x( t -2\lambda) = - x( t )\;\forall t\in\mathbb{R}
%    \end{eqnarray*}
%exhibits alternative bifurcations of Fadell--Rabinowitz type with respect to the delay $\lambda$ at the trivial solution $0\in\mathbb{R}$.
%For more general and precise statements, see Theorem~\ref{th:bif-per3DelayC}, which can be derived from Theorem~\ref{th:bif-per3Delay}.
%
%Appendix~\ref{app:Matrix} provides an explicit algorithmic construction of congruence matrices between:
%(i) the matrix $A_{m,n}^{-1}$ in (\ref{e:Delay7}) and $J_{mn/2}^{-1}$,
%(ii) the matrix $J_{ n , m }$ in (\ref{e:Delay17}) and  $J_{mn}$.

\section{Reduction to Hamiltonian bifurcation problems}\label{sec:delay0}

Recall that all vectors in the Euclidean spaces will be regarded as column vectors.

%?All vectors in $\R^m$ are understood to be column vectors.\\
%We assume that all vectors in $\R^m$ are column vectors.\\
%It is understood that all vectors in $\R^m$ are column vectors.

Every skew-symmetric non-degenerate $2 N \times 2 N$ matrix $\mathcal{J}=(a_{i j})$
determines a constant symplectic form on $\mathbb{R}^{2 N}$, $\omega=\frac{1}{2} \sum_{i, j} a^{i j} d x_{i} \wedge d x_{j}$ with $(a^{i j})=\mathcal{J}^{-1}$, which can be written as
\begin{equation}\label{e:nonstandSympl}
\omega(v, w)=v^{\top} \cdot \mathcal{J}^{-1}w=(v, \mathcal{J}^{-1}w)_{\mathbb{R}^{2N}}=(\mathcal{J}^{-1}w, v)_{\mathbb{R}^{2N}}
\end{equation}
after $\sum^{2N}_{i=1}v_i\frac{\partial}{\partial x_i}$ and $\sum^{2N}_{i=1}w_i\frac{\partial}{\partial x_i}$
are identified with $v=(v_1,\dots,v_{2N})^\top\in\mathbb{R}^{2N}$ and $u=(u_1,\dots,u_{2N})^\top\in\mathbb{R}^{2N}$, respectively.
Matrixes in $\operatorname{Sp}_{\mathcal{J}}(2 N):=\{\mathcal{M}\in\mathbb{R}^{2N\times 2N}\,|\,
\mathcal{M}^{\top} \cdot \mathcal{J}^{-1} \cdot \mathcal{M}=\mathcal{J}^{-1}\}$
are called \textsf{$\mathcal{J}$-symplectic}.

\begin{assumption}\label{ass:EquivDelay}
{\rm Given $\mathcal{M}\in \operatorname{Sp}_{\mathcal{J}}(2 N)$ with $\mathcal{M}^k=I_{2N}$,  a real $\tau>0$  and
a topological space $\Lambda$, let
$K:\Lambda\times\R\times{\R}^{2N}\to\R$ be a continuous function such that
each $K(\lambda,t,\cdot):{\R}^{2N}\to\R$, $(\lambda,t)\in\Lambda\times \R$, is $C^2$,
 and all possible partial derivatives of $K$ depend continuously on
 $(\lambda, t, z)\in\Lambda\times \R\times\mathbb{R}^{2N}$, and that
 for all $(\lambda,t,z)\in\Lambda\times\R\times\mathbb{R}^{2N}$,
\begin{equation}\label{e:M-invariantDelay1}
K(\lambda, t+\tau, \mathcal{M}z)=K(\lambda, t, z).
\end{equation}
}
\end{assumption}

The second-to-last assumption is equivalent to the statement that the Euclidean gradient
$\nabla_3 K(\lambda, t, z)$ and the Euclidean Hessian matrix $\nabla^2_3 K(\lambda, t, z)$
 of $K(\lambda, t, z)$ with respect to the variable $z \in \mathbb{R}^{2N}$ are continuous in
 $(\lambda, t, z) \in \Lambda \times \mathbb{R} \times \mathbb{R}^{2N}$.
Here, $\nabla_3$ and $\nabla^2_3$ denote the partial gradient and Hessian with respect to the third argument $z$, as defined earlier.

  Since all constant symplectic forms on $\mathbb{R}^{2N}$ are linear symplectic isomorphic to
  the canonical symplectic structure $\omega_0$ on $\mathbb{R}^{2N}$,
  there exists a nonsingular $2 N \times 2 N$ matrix $A$ such that
$\omega_0(v, w)=\omega(Av, Aw)$ for all $v,w\in\mathbb{R}^{2N}$,
which is equivalent to $A^\top\mathcal{J}^{-1}A=J_N^{-1}$,
where
\begin{equation}\label{e:standcompl}
J_N=\left(\begin{array}{cc}
             0 & -I_N \\
             I_N & 0 \\
           \end{array}
         \right)
\end{equation}
with the identity matrix $I_N$ of order $N$.
Let $M=A^{-1}\mathcal{M}A$. Then it is easily checked that $M^\top J_NM=J_N$
and $M^k=I_{2N}$. Define
\begin{equation}\label{e:M-invariantDelay2}
\check{K}^A:\Lambda\times\R\times{\R}^{2N}\to\R,\;(\lambda,t, z)\mapsto K(\lambda,t, Az).
\end{equation}
Then  (\ref{e:M-invariantDelay1}) implies that for all $(\lambda, t, z)$,
$\check{K}^A(\lambda, t+\tau, Mz)=\check{K}^A(\lambda, t, z)$ and
\begin{equation}\label{e:M-invariantDelay3}
\left\{\begin{array}{l}
\nabla_3 {K}(\lambda, t, Az)=(A^{-1})^\top\nabla_3\check{K}^A(\lambda, t, z),\\
\nabla^2_3 {K}(\lambda, t, Az)=(A^{-1})^\top\nabla^2_2 \check{K}^A(\lambda, t, z)A^{-1}.
\end{array}\right.
\end{equation}
It is straightforward to verify the following proposition.

\begin{proposition}\label{prop:Equiv}
Under the above assumptions, $z:\mathbb{R}\to\mathbb{R}^{2N}$ satisfies
\begin{equation}\label{e:M-invariantDelay4}
\left\{\begin{array}{l}
\dot{z}(t)=\mathcal{J}\nabla_3K(\lambda, t, z(t)),\\
 z(t+\tau)=\mathcal{M}z(t)\;\;\forall t\in \R
\end{array}\right.
\end{equation}
if and only if
$u(t):=A^{-1}z(t)$ solves the following problem
\begin{equation}\label{e:M-invariantDelay5}
\left\{\begin{array}{l}
\dot{u}(t)={J}_N \nabla_3 \check{K}^A(\lambda, t, u(t)), \\
 u(t+\tau)={M}u(t)\;\;\forall t\in \R.
\end{array}\right.
\end{equation}
Moreover, in this situation,  (since $\mathcal{J}=AJ_N(A^{-1})^\top$) it holds that
\begin{align}\label{e:M-invariantDelay6-}
&\int^{\tau}_0\left[\frac{1}{2}(J_N\dot{u}(t), u(t))_{\mathbb{R}^{2N}}+ \check{K}(\lambda, t, u(t))\right]dt\nonumber\\
&= \int^{\tau}_0\left[\frac{1}{2}(\dot{z}(t), \mathcal{J}^{-1}z(t))_{\mathbb{R}^{2N}}+ K(\lambda, t,
z(t))\right]dt,
\end{align}
and if $\Gamma_{z}(t)$ is the fundamental matrix solution of
\begin{equation}\label{e:M-invariantDelay6}
\dot{y}(t)=\mathcal{J} \nabla^2_3{K}(\lambda, t, z(t)) y(t)
\end{equation}
 with $\Gamma_{z}(0)=I_{2 N}$, then $A^{-1}\Gamma_z(t)A$
 is the fundamental matrix solution of
\begin{equation}\label{e:M-invariantDelay7}
\dot{y}(t)={J}_N \nabla^2_3\check{K}^A(\lambda, t, u(t)) y(t)
\end{equation}
 with $A^{-1}\Gamma_z(0)A=I_{2N}$.
\end{proposition}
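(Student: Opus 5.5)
The plan is a direct substitution argument resting on two algebraic identities that follow from $A^\top\mathcal{J}^{-1}A=J_N^{-1}$. Inverting this relation gives $A^{-1}\mathcal{J}(A^{-1})^\top=J_N$, that is, $\mathcal{J}=AJ_NA^\top$. Note that $J_N^{-1}=-J_N=J_N^\top$, and the relation is equivalent to $A^\top\mathcal{J}^{-1}A=J_N^{-1}$, so $\mathcal{J}=AJ_NA^\top$ is the identity we actually use. I will first record the consequence
\[
A^{-1}\mathcal{J}(A^{-1})^\top=J_N\cdot(A^\top(A^{-1})^\top)=J_N,
\]
since $A^{-1}\mathcal{J}=J_NA^\top$. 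The second identity I need is the chain rule (\ref{e:M-invariantDelay3}).

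For the equivalence of (\ref{e:M-invariantDelay4}) and (\ref{e:M-invariantDelay5}), set $z=Au$. The boundary condition $z(t+\tau)=\mathcal{M}z(t)$ becomes $Au(t+\tau)=\mathcal{M}Au(t)$, which is $u(t+\tau)=A^{-1}\mathcal{M}Au(t)=Mu(t)$. For the differential equation, $\dot z=\mathcal{J}\nabla_3K(\lambda,t,z)$ becomes $A\dot u=\mathcal{J}(A^{-1})^\top\nabla_3\check K^A(\lambda,t,u)$ by (\ref{e:M-invariantDelay3}). Multiplying by $A^{-1}$ and using $A^{-1}\mathcal{J}(A^{-1})^\top=J_N$ gives $\dot u=J_N\nabla_3\check K^A(\lambda,t,u)$. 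Every step is reversible, which gives the "if and only if".

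For the action identity (\ref{e:M-invariantDelay6-}), the potential terms agree pointwise because $\check K^A(\lambda,t,u(t))=K(\lambda,t,z(t))$. For the kinetic terms,
\[
(\dot z,\mathcal{J}^{-1}z)=(A\dot u,\mathcal{J}^{-1}Au)=(\dot u,A^\top\mathcal{J}^{-1}Au)=(\dot u,J_N^{-1}u)=-(\dot u,J_Nu)=(J_N\dot u,u),
\]
where the last two equalities use $J_N^{-1}=-J_N$ and $J_N^\top=-J_N$. Hence the integrands coincide, and the identity holds pointwise before integration, so no boundary terms arise.

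For the fundamental matrices, set $\Psi(t)=A^{-1}\Gamma_z(t)A$. Then $\Psi(0)=I_{2N}$, and
\[
\dot\Psi=A^{-1}\mathcal{J}\nabla_3^2K(\lambda,t,Au)\Gamma_zA=A^{-1}\mathcal{J}(A^{-1})^\top\nabla^2_3\check K^A(\lambda,t,u)A^{-1}\Gamma_zA=J_N\nabla_3^2\check K^A(\lambda,t,u)\Psi,
\]
again by (\ref{e:M-invariantDelay3}) and the identity above. Uniqueness of solutions to linear ODEs with continuous coefficients then shows that $\Psi$ is the fundamental matrix solution of (\ref{e:M-invariantDelay7}).

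The only delicate point is keeping track of transposes and inverses in $\mathcal{J}=AJ_NA^\top$. Note that the statement's parenthetical writes $\mathcal{J}=AJ_N(A^{-1})^\top$, which does not match the identity derived here; I would verify the correct form directly from $A^\top\mathcal{J}^{-1}A=J_N^{-1}$ and use $\mathcal{J}=AJ_NA^\top$ throughout. Everything else is routine substitution.
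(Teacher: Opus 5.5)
Your proof is correct and is exactly the direct verification the paper leaves to the reader (``straightforward to verify''). The substitution $z=Au$, the chain rule (\ref{e:M-invariantDelay3}) and the identity $A^{-1}\mathcal{J}(A^{-1})^\top=J_N$ give the equivalence, the pointwise equality of the action integrands, and the conjugation of fundamental matrices. You are also right that inverting $A^\top\mathcal{J}^{-1}A=J_N^{-1}$ gives $\mathcal{J}=AJ_NA^\top$, so the parenthetical $\mathcal{J}=AJ_N(A^{-1})^\top$ in the statement is a typo (both forms hold only if $A^2=I_{2N}$); your version is the one consistent with the paper's later use of $\Gamma(m,2n)=(A^{-1})^\top$ with $\Gamma(m,2n)^\top J_{n,m}\Gamma(m,2n)=J_{mn}$.
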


%The equation (\ref{e:M-invariantDelay6-}) is derived from $\mathcal{J}=AJ_N(A^{-1})^T$.
%Others are clear.

%Motivated by \cite{KaYo74},
%Liu \cite{Liu12} developed the ideas of \cite{KaYo74}
% and recast the study of periodic solutions in delay differential equations (\ref{e:Delay1})-(\ref{e:Delay3})
% into a generalized  Hamiltonian boundary problem as in (\ref{e:M-invariantDelay4}),
%that is,  he
According to Liu \cite{Liu12} we have:

\begin{proposition}\label{prop:Liu12}
Under Assumption~\ref{ass:BasiAss1Delay1}, the following conclusions hold:
  \begin{enumerate}
\item[\rm (i)] For $m\in 2\mathbb{N}$,  let $H:\Lambda\times \mathbb{R}\times({\mathbb{R}}^{n})^m\to\mathbb{R}$ be defined by
\begin{equation}\label{e:Delay4}
H\left(\lambda, t , x _ { 1 } , \ldots , x _ { m } \right) = V \left(\lambda, t , x _ { 1 } \right)
+ \cdots + V \left(\lambda, t , x _ { m } \right).
\end{equation}
Denote by $\nabla_3 H(\lambda, t, v)$  the gradient of $H\left(\lambda, t , v \right)$ with respect to the third variable $v\in({\mathbb{R}}^{n})^m$. Define the
$m n \times m n$  matrices  $A _ { m,n }$ and $T_{m,n}$  by
\begin{equation}\label{e:Delay7}
A _ { m,n} = \left( \begin{array} { c c c c } 0 & I _ { n } & \cdots & I _ { n } \\ - I _ { n } & 0 & \cdots & I _ { n } \\ \vdots & \vdots & \cdots & \vdots \\ - I _ { n } & - I _ { n } & \cdots & 0 \end{array} \right)
\quad\text{and}\quad
T_{m,n}=\left( \begin{array} { c c c } 0 & I _ { n ( m - 1 ) } \\ - I _ { n } & 0 \end{array} \right),
\end{equation}
 respectively. These matrices satisfy
\begin{align}\label{e:Delay8}
&T^m_{m,n}=-I_{mn},\quad T^{2m}_{m,n}=I_{mn}, \quad T_{m,n}A_{m,n}(T_{m,n})^\top=A_{m,n},\nonumber\\
&H(\lambda, t, T_{m,n}z)=H(\lambda, t+\tau, T_{m,n}z)=H(\lambda,t, z)\quad\forall (\lambda,t,z).
\end{align}
If $x$ is a solution of (\ref{e:Delay1}), then it is necessarily $2 m \tau$-periodic, and
\begin{equation}\label{e:Delay5}
\mathbb{R}\ni t\mapsto v(t):=(x_1(t)^\top,\dots, x_m(t)^\top)^\top\in ({\mathbb{R}}^{n})^m
\end{equation}
where $x _ { i } ( t ) = x ( t-(i-1)\tau)$ for $i=1,\dots,m$, satisfies
\begin{equation}\label{e:Delay6}
\dot{v}(t)=A_{m,n}\nabla_3 H(\lambda, t, v(t))\quad\hbox{and}\quad v(t+\tau)=T_{m,n}^{-1}v(t)\;\;\forall t\in \mathbb{R}.
\end{equation}
Conversely, if $v(t)=(x_1(t)^\top,\dots, x_m(t)^\top)^\top$, where $x _ { i } ( t )\in\mathbb{R}^n$ for $i=1,\dots,m$, satisfies (\ref{e:Delay6}),
then $x(t):=x_1(t)$ satisfies (\ref{e:Delay1}). % with $x ( t - m \tau ) = - x ( t ) $ (and thus  $2 m \tau$-periodic).

\item[\rm (ii)] For $m\in 2\mathbb{N}+1$,  let $\tilde{H}:\Lambda\times \mathbb{R}\times({\mathbb{R}}^{n})^{m-1}\to\mathbb{R}$ be defined by
\begin{align}\label{e:Delay9}
\tilde{H}\left(\lambda, t , x _ { 1 } , \ldots , x _ { m-1 } \right) &= V \left(\lambda, t , x _ { 1 } \right) + \cdots
+ V \left(\lambda, t , x _ { m -1} \right)\nonumber\\
&\quad+V \left(\lambda, t , -x_1+x_2-\cdots+ x _ { m-1} \right).
\end{align}
Denote by $\nabla_3 \tilde{H}(\lambda, t, v)$  the gradient of $\tilde{H}\left(\lambda, t , v \right)$
with respect to the third variable $v\in({\mathbb{R}}^{n})^{m-1}$.
Define the $(m-1) n \times(m-1)n$  matrix  $\tilde{B}_{m-1,n}$ by
\begin{equation}\label{e:Delay12}
\tilde { B } _ {2,n} = \left( \begin{array} { c c c c c } 0 & I _ { n }\\
- I _ { n } & I _ { n }\end{array} \right)\quad\text{and}\quad\tilde { B } _ { m - 1,n} = \left( \begin{array} { c c c c c } 0 & I _ { n } & 0 & \cdots & 0 \\ 0 & 0 & I _ { n } & \cdots & 0 \\ \vdots & \vdots & \vdots & \cdots & \vdots \\ 0 & 0 & 0 & \cdots & I _ { n } \\ - I _ { n } & I _ { n } & - I _ { n } & \cdots & I _ { n } \end{array} \right)\quad\text{for $m>3$}.
\end{equation}
These matrices satisfy
\begin{align}\label{e:Delay13}
&\left( \tilde { B }_{ m - 1,n} \right)^{ 2 m } = I_{ n ( m - 1 ) },\quad
 \tilde { B }_{ m - 1,n} A_{ m - 1,n} \left( \tilde { B }_{ m - 1,n} \right)^{ \top } = A_{ m - 1,n},
 \nonumber\\
&\tilde{H}(\lambda, t, \tilde{B}_{m-1,n}z) = \tilde{H}(\lambda, t+\tau, \tilde{B}_{m-1,n}z) = \tilde{H}(\lambda, t, z) \quad \forall (\lambda, t, z).
\end{align}
Suppose that $x$ is a solution of problem (\ref{e:Delay1}).
 Then $x$ is necessarily $2m\tau$-periodic.
Furthermore, the function $v:\mathbb{R}\to (\mathbb{\R}^{n})^{m-1}$ defined by
\begin{equation}\label{e:Delay10}
v(t)=(x_1(t)^\top,\dots, x_{m-1}(t)^\top)^\top,
\end{equation}
where $x_{i}(t)= x ( t-(i-1)\tau)$ for $i=1,\dots,m-1$,  satisfies
\begin{equation}\label{e:Delay11}
\dot{v}(t)=A_{m-1,n}\nabla_3\tilde{H}(\lambda, t, v(t))\quad\text{and}\quad v(t+\tau)=\tilde{B}_{m-1,n}^{-1}v(t)\;\;\forall t\in \mathbb{R}.
\end{equation}
Conversely, if $v(t)=(x_1(t)^\top,\dots, x_{m-1}(t)^\top)^\top$, where $x_{ i } ( t )\in\mathbb{R}^n$ for $i=1,\dots,m-1$, is a solution of (\ref{e:Delay11}),
then $x(t):=x_1(t)$ satisfies (\ref{e:Delay1}).

\item[\rm (iii)] For an integer $m\ge 2$, let $\hat{H}:\Lambda\times \mathbb{R}\times({\mathbb{R}}^{2n})^m\to\mathbb{R}$ be defined by
\begin{equation}\label{e:Delay14}
\hat{H} \left(\lambda, t , x_{ 1 } , \ldots , x_{ m } \right) = G\left(\lambda, t , x_{ 1 } \right) + \cdots + G\left(\lambda, t , x_{ m } \right).
\end{equation}
Denote by $\nabla_3 \hat{H}(\lambda, t, v)$  the gradient of $\hat{H}\left(\lambda, t , v \right)$ with respect to the third variable $v\in({\mathbb{R}}^{2n})^m$.
Define $2mn \times 2mn$ matrices  $J_ {n, m }$ and $P_{n,m}$  by
\begin{equation}\label{e:Delay17}
J _ { n , m } = \left( \begin{array} { c c c c c } 0 & J _ { n } & \cdots & J _ { n } & J _ { n } \\
J _ { n } & 0 & \cdots & J _ { n } & J _ { n } \\
\vdots & \vdots & \cdots & \vdots & \vdots \\ J _ { n } & J _ { n } & \cdots & J _ { n } & 0 \end{array} \right)
\quad\text{and}\quad
P_{n, m } =  \left( \begin{array} { c c c } 0 & I _ { 2n ( m - 1 ) } \\ I _ { 2n } & 0 \end{array} \right),
\end{equation}
 respectively. These matrices satisfy the following properties:
\begin{align}\label{e:Delay18}
&P_{n , m }^{ - 1 } = P_{n , m }^{ \top },
\quad P_{n, m }^{ m } = I_{n m },\quad
P_{n, m } J_{ n , m } = J_{n, m}P_{n , m }, \nonumber\\
&\hat{H}(\lambda,t, z)=\hat{H}(\lambda, t, P_{n,m}z)=\hat{H}(\lambda, t+\tau, P_{n,m}z)\quad\forall (\lambda,t,z).
\end{align}
If  $x$ is a  solution of (\ref{e:Delay2}), and
 $x_{ i } ( t ) = x ( t-(i-1)\tau)$ for $i=1,\dots,m$,
then the function
\begin{equation}\label{e:Delay15}
\mathbb{R}\ni t\mapsto v(t):=(x_1(t)^\top,\dots, x_m(t)^\top)^\top\in ({\mathbb{R}}^{2n})^m
\end{equation}
satisfies
\begin{equation}\label{e:Delay16}
\dot{v}(t)=J_{n,m}\nabla_3 \hat{H}(\lambda, t, v(t))\quad\text{and}\quad v(t+\tau)=P_{n,m}^{-1}v(t)\;\;\forall t\in \mathbb{R}.
\end{equation}
Conversely, if $v(t)=(x_1(t)^\top,\dots, x_m(t)^\top)^\top$, where $x_{ i } ( t )\in\mathbb{R}^{2n}$ for $i=1,\dots,m$, satisfies (\ref{e:Delay16}),
then $x(t):=x_1(t)$ is a  solution of (\ref{e:Delay2}).

\item[\rm (iv)] For an integer $m\ge 2$, let ${\bf H}:\Lambda\times \mathbb{R}\times({\mathbb{R}}^{n})^m\to\mathbb{R}$ and  $\mathcal{H}:\Lambda\times \mathbb{R}\times({\mathbb{R}}^{n})^m
\times({\mathbb{R}}^{n})^m\to\mathbb{R}$ be defined by
\begin{eqnarray}\label{e:Delay19}
&&{\bf H}\left(\lambda, t , x_{ 1 } , \ldots , x_{ m } \right) = U\left(\lambda, t , x_{ 1 } \right) + \cdots + U\left(\lambda, t, x_{ m } \right),\\
&&\mathcal{H}(\lambda, t, w)=\mathcal{H}(\lambda, t, (z^\top, y^\top))=-\frac{1}{2}(\mathcal{A}_{m,n}y,y)-{\bf H}(\lambda,t,z),\label{e:Delay20}
\end{eqnarray}
respectively, where
\begin{equation}\label{e:Delay21}
\mathcal { A }_{ m,n} = \left( \begin{array} { c c c c } 0 & I _ { n } & \cdots & I _ { n } \\ I _ { n } & 0 & \cdots & I _ { n } \\ \vdots & \vdots & \cdots & \vdots \\ I _ { n } & I _ { n } & \cdots & 0 \end{array} \right)\in\mathbb{R}^{mn\times mn}.
\end{equation}
Denote by  $\nabla_3 \mathcal{H}(\lambda, t, w)$  the gradient of $\mathcal{H}\left(\lambda, t , w \right)$
 with respect to the third variable $w\in({\mathbb{R}}^{n})^m\times ({\mathbb{R}}^{n})^m$. Let  $P_{ n , m }$ be as in (\ref{e:Delay17}), and
\begin{equation}\label{e:Delay23}
\mathcal { P }_{ n , m } : = \left( \begin{array} { c c } P_{ n , m } & 0 \\ 0 & P_{ n , m } \end{array} \right) \in \operatorname { S p } ( 2 m n ).
%\quad\text{with}\quad
%P_{n , m }: =  \left( \begin{array} { c c c } 0 & I_ { 2n ( m - 1 ) } \\
%I_{ 2n } & 0 \end{array} \right).
\end{equation}
Then the matrices $P_{n , m }$ and $\mathcal { P }_{ n , m }$ satisfy the following properties:
\begin{align*}
&\mathcal { A }_{ m,n}P_{n , m }=P_{n , m }\mathcal { A }_{ m,n}, \quad
\mathcal { P }_{ n , m } J_{ m n } \mathcal { P }_{ n , m } ^ { \top } = J_{ m n },\quad
\mathcal { P }_{ n , m } ^ { m } = I_{ 2 mn},\\
&\mathcal{H}(\lambda,t+\tau, \mathcal { P }_{ n , m }^{-1}w)=\mathcal{H}(\lambda,t, w)
\quad\text{for all $(\lambda, t, w)$.}
\end{align*}
Suppose that  $x$ is a  solution of (\ref{e:Delay3}). Define
\begin{align*}
 x_{ i }( t )&= x ( t-(i-1)\tau),\quad i=1,\dots,m,\\
 v(t)&=(x_1(t)^\top,\dots, x_m(t)^\top)^\top,\quad u(t)=(\mathcal { A }_{m,n})^{-1}\dot{v}(t).
  % w(t)&=(v(t)^\top, u(t)^\top)^\top.
  \end{align*}
Then $w(t):=(v(t)^\top, u(t)^\top)^\top$ satisfies
\begin{equation}\label{e:Delay22}
\dot{w}(t)=J_{mn}\nabla_3\mathcal{H}(\lambda, t, w(t))\quad\hbox{and}\quad w(t+\tau)=\mathcal{P}_{n,m}^{-1}w(t)\;\;\forall t\in \mathbb{R}.
\end{equation}
Conversely, if $w(t)=(v(t)^\top, u(t)^\top)^\top$ with $v(t)=(x_1(t)^\top,\dots, x_m(t)^\top)^\top$,
 where $x_{ i } ( t )\in\mathbb{R}^{n}$
for $i=1,\cdots,m$, satisfies (\ref{e:Delay22}), then $x(t):=x_1(t)$ is a solution of
 (\ref{e:Delay3}).
  \end{enumerate}
\end{proposition}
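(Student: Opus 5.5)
Each of the four parts is a direct verification: first the algebraic identities for the matrices, then the two directions of the equivalence by differentiating the stacked vector componentwise. I will write $\nabla_3 H(\lambda,t,v)=(\nabla_3V(\lambda,t,x_1)^\top,\dots,\nabla_3V(\lambda,t,x_m)^\top)^\top$, and similarly for $\tilde H$, $\hat H$ and ${\bf H}$. The matrices $A_{m,n}$, $J_{n,m}$, $\mathcal A_{m,n}$ have a zero block on the diagonal and $\pm$ the same block off it. Hence the $i$-th block row of $A_{m,n}\nabla_3 H$ equals $\sum_{j>i}\nabla V(x_j)-\sum_{j<i}\nabla V(x_j)$.

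\textbf{Part (i).} Periodicity is immediate: $x(t+2m\tau)=-x(t+m\tau)=x(t)$. Set $x_i(t)=x(t-(i-1)\tau)$ and use $x(t-k\tau)=-x(t+(m-k)\tau)$ together with the evenness of $V$, so that $\nabla V$ is odd. Applying the equation at time $t-(i-1)\tau$, the delays $t-(i-1+k)\tau$ with $i-1+k\ge m$ wrap around to $-x_{i+k-m}$. This produces exactly $\dot x_i=\sum_{j>i}\nabla V(x_j)-\sum_{j<i}\nabla V(x_j)$, which is (\ref{e:Delay6}). The relation $v(t+\tau)=T^{-1}_{m,n}v(t)$ amounts to $x_{i}(t+\tau)=x_{i-1}(t)$ and $x_1(t+\tau)=x(t+\tau)=-x(t-(m-1)\tau)=-x_m(t)$, and I will check that this is what $T^{-1}_{m,n}$ encodes. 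The identities (\ref{e:Delay8}) follow from $T_{m,n}$ being a signed cyclic block permutation together with the evenness of $V$. The claim $T_{m,n}A_{m,n}T_{m,n}^\top=A_{m,n}$ is a direct block computation: conjugating by a cyclic shift with one sign flip preserves the ``$+$ above, $-$ below'' pattern.

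\textbf{Converse of (i).} This is where the real work lies. From $v(t+\tau)=T^{-1}v(t)$ I get $x_i(t)=x_1(t-(i-1)\tau)$ and $x_1(t+m\tau)=-x_1(t)$. Substituting these into the first block row $\dot x_1=\sum_{j\ge2}\nabla V(x_j)$ recovers (\ref{e:Delay1}) for $x=x_1$.

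\textbf{Part (ii), $m$ odd.} The same computation applies, but the extra term $V(-x_1+x_2-\cdots+x_{m-1})$ has to be identified with $V(x_m)$. From the antiperiodicity one gets $\sum_{i=1}^{m}(-1)^{i}x(t-(i-1)\tau)$-type relations. Concretely, for odd $m$ the $2m\tau$-antiperiodic delayed coordinates should satisfy $x_m=-x_1+x_2-\cdots+x_{m-1}$ only in the sense that the Hamiltonian reduction uses this combination. I expect this step, matching the last coordinate via the last row of $\tilde B_{m-1,n}$, to be the main obstacle: the correct identity must be extracted from the equation rather than from the boundary condition alone. I would first verify it for $m=3$ with $\tilde B_{2,n}$, following Liu \cite{Liu12}.

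\textbf{Parts (iii) and (iv).} Part (iii) is identical to (i), with periodicity in place of antiperiodicity: $P_{n,m}$ is a pure cyclic shift, $J_n\nabla G$ replaces $\nabla V$, and no evenness is needed. For (iv), I set $u=\mathcal A^{-1}_{m,n}\dot v$. Then $J_{mn}\nabla\mathcal H$ gives $\dot v=\mathcal A u$ and $\dot u=\nabla{\bf H}$ up to the sign conventions of $J_{mn}$. Hence $\ddot v=\mathcal A\nabla{\bf H}(v)$ with a minus sign, and its first row is (\ref{e:Delay3}). Commutation of $\mathcal A_{m,n}$ with $P_{n,m}$ makes $u$ inherit the shift relation. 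The invertibility of $\mathcal A_{m,n}=(\mathbf 1\mathbf 1^\top-I_m)\otimes I_n$, with eigenvalues $m-1$ and $-1$, is needed here.
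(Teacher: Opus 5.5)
\textbf{Part (ii) has a genuine gap.} Parts (i), (iii) and (iv) are essentially right. In (i) you should say that it is the $\tau$-periodicity of $V$ in $t$ that turns $\nabla_3V(\lambda,t-(i-1)\tau,\cdot)$ into $\nabla_3V(\lambda,t,\cdot)$. In (ii) you correctly sense that something must be extracted from the equation, but you neither prove it nor state it correctly. Everything there rests on the identity $x_m=\sigma$, where $x_m(t)=x(t-(m-1)\tau)$ and $\sigma=-x_1+x_2-\cdots+x_{m-1}$. This does not follow from antiperiodicity. Nor is it true merely ``in the sense that the Hamiltonian reduction uses this combination'': it holds literally for every solution when $m$ is odd, and it is used literally twice. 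First, it lets you replace $\nabla_3V(\lambda,t,x_m)$ by $\nabla_3V(\lambda,t,\sigma)$ in each block row. Second, the last block row of the equivalent relation $\tilde B_{m-1,n}v(t+\tau)=v(t)$ reads $\sigma(t+\tau)=x_{m-1}(t)=x_m(t+\tau)$. Also, $x$ is $m\tau$-antiperiodic, not $2m\tau$-antiperiodic.

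\textbf{The missing idea is the Kaplan--Yorke first integral} $S(t):=\sum_{i=1}^{m}(-1)^{i-1}x(t-(i-1)\tau)$. Your computation in (i) never uses the parity of $m$, and it gives $(\dot x_1,\dots,\dot x_m)^\top=(A_m\otimes I_n)F$. Here $A_m$ is the scalar pattern of $A_{m,n}$ and $F=(\nabla_3V(\lambda,t,x_j))_{j=1}^m$. For odd $m$ the vector $\mathbf{s}=(1,-1,\dots,1)^\top$ lies in $\ker A_m$. Since $A_m$ is skew, $\dot S=-((A_m\mathbf{s})^\top\otimes I_n)F=0$. On the other hand, $x(t+\tau)=-x(t-(m-1)\tau)$ together with $m$ odd gives $S(t+\tau)=-S(t)$. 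Hence $S\equiv0$, which is exactly $x_m=\sigma$. With this in hand, the coefficient of $\nabla_3V(\lambda,t,\sigma)$ in the $i$-th block row of $A_{m-1,n}\nabla_3\tilde H$ is $\sum_{j=i+1}^{m-1}(-1)^j-\sum_{j=1}^{i-1}(-1)^j=1$ for every $i$, and (\ref{e:Delay11}) follows.

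\textbf{The converse also needs its own argument}, which ``the same computation applies'' does not supply. The first $m-2$ rows of $\tilde B_{m-1,n}v(t+\tau)=v(t)$ give $x_i(t)=x_1(t-(i-1)\tau)$. The last row says precisely that $S\equiv0$ with $x_1$ in place of $x$. Then $0=S(t+\tau)+S(t)=x_1(t+\tau)+x_1(t-(m-1)\tau)$ yields the antiperiodicity. The first block row reduces to (\ref{e:Delay1}) because $\sum_{j=2}^{m-1}(-1)^j=1$.

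\textbf{The identities in (\ref{e:Delay13}) also need a word.} For instance, $\tilde H(\lambda,t,\tilde B_{m-1,n}z)=\tilde H(\lambda,t,z)$ holds because the alternating sum of the blocks of $\tilde B_{m-1,n}z$ equals $-x_1$ and $V$ is even.
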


\begin{remark}\label{rm:Liu12}
{\rm Under Assumption~\ref{ass:BasiAss1Delay1}, we have the following:
  \begin{enumerate}
\item[\rm (i)] For $m\in 2\mathbb{N}$, let $\bar{x}:\mathbb{R}\to\mathbb{R}^n$ satisfy (\ref{e:Delay1}), and
let
\begin{align*}
\bar{x}_ { i } ( t ) &= \bar{x}( t-(i-1)\tau),\quad i=1,\dots,m,\\
\bar{v}(t)&=(\bar{x}_1(t)^\top,\dots, \bar{x}_{m}(t)^\top)^\top\in ({\mathbb{R}}^{n})^{m}.
\end{align*}
Consider the linearized problem of (\ref{e:Delay1}) along the solution $\bar{x}$:
 \begin{equation}\label{e:LinearDelay1.1}
 \left\{\begin{array}{ll}
& \dot{x}( t ) =\sum^{m-1}_{i=1} \nabla_3^2 V (\lambda, t , \bar{x}( t - i\tau ) )x(t-i\tau),\\
& x( t + m \tau ) = - x( t )\;\forall t.
   \end{array}\right.
  \end{equation}
Here $\nabla^2_3V(\lambda,t, x)$ denotes the Hessian of $V(\lambda,t, x)$ with respect to
the third variable $x\in\mathbb{R}^n$.
Suppose that $x=x(t)$ is a solution of (\ref{e:LinearDelay1.1}).
Then $x(t)$ is necessarily $2m\tau$-periodic. Furthermore,
with $x_{ i } ( t ) = x ( t-(i-1)\tau)$ for $i=1,\dots,m$, the function
\begin{equation*}
\mathbb{R}\ni t\mapsto v(t):=(x_1(t)^\top,\dots, x_m(t)^\top)^\top\in ({\mathbb{R}}^{n})^m
\end{equation*}
 satisfies the linearization of problem (\ref{e:Delay6}) along $\bar{v}$:
\begin{equation}\label{e:LinearDelay1.2}
\dot{v}(t)=A_{m,n}\nabla^2_3 H(\lambda, t, \bar{v}(t))v(t)\quad\text{and}\quad v(t+\tau)=T_{m,n}^{-1}v(t)\;\;\forall t\in \mathbb{R},
\end{equation}
where $H$, $A _ { m,n }$ and $T_{m,n}$ are as in Proposition~\ref{rm:Liu12}(i),
and $\nabla^2_3 H(\lambda, t, v)$ is the Hessian of
$H\left(\lambda, t , v \right)$ with respect to the third variable $v\in({\mathbb{R}}^{n})^m$.
 Conversely, if
 $$
 v(t)=(x_1(t)^\top,\dots, x_m(t)^\top)^\top,\quad \text{with $x_{ i } ( t )\in\mathbb{R}^n$ for $i=1,\dots,m$},
 $$
 satisfies (\ref{e:LinearDelay1.2}),
then $x(t):=x_1(t)$ is a solution of (\ref{e:LinearDelay1.1}).

\item[\rm (ii)] For $m\in 2\mathbb{N}+1$,  let $\bar{x}:\mathbb{R}\to\mathbb{R}^n$ satisfy (\ref{e:Delay1}),
and let
\begin{align*}
\bar{x}_ { i } ( t ) &= \bar{x}( t-(i-1)\tau),\quad i=1,\dots,m-1,\\
\bar{v}(t)&=(\bar{x}_1(t)^\top,\dots, \bar{x}_{m-1}(t)^\top)^\top\in ({\mathbb{R}}^{n})^{m-1}.
\end{align*}
If $x$ is a solution of (\ref{e:LinearDelay1.1}), then it must be $2m\tau$-periodic, and
the function
\begin{equation*}
\mathbb{R}\ni t\mapsto v(t):=(x_1(t)^\top,\cdots, x_{m-1}(t)^\top)^\top\in ({\mathbb{R}}^{n})^{m-1},
\end{equation*}
where $x_{ i } ( t ) = x( t-(i-1)\tau)$ for $i=1,\dots,m-1$,
satisfies the linearization  of  problem (\ref{e:Delay11}) along $\bar{v}$:
\begin{equation}\label{e:LinearDelay1.3}
\dot{v}(t)=A_{m-1,n}\nabla^2_3\tilde{H}(\lambda, t, \bar{v}(t))v(t)\quad\text{and}\quad v(t+\tau)=\tilde{B}_{m-1,n}^{-1}v(t)\;\;\forall t\in \mathbb{R},
\end{equation}
where $\tilde{H}$,  $A _ {m-1,n }$ and $\tilde{B}_{m-1,n}$ are as in Proposition~\ref{rm:Liu12}(ii),
and $\nabla^2_3 \tilde{H}(\lambda, t, v)$ is the Hessian of
$\tilde{H}\left(\lambda, t , v \right)$
with respect to the third variable $v\in({\mathbb{R}}^{n})^{m-1}$.
 Conversely, if $v(t)=(x_1(t)^\top,\dots, x_{m-1}(t)^\top)^\top$ with $x_{ i } ( t )\in\mathbb{R}^n$
 ($i=1,\dots,m-1$) is a solution of (\ref{e:LinearDelay1.3}),
then $x(t):=x_1(t)$ satisfies (\ref{e:LinearDelay1.1}).

\item[\rm (iii)] For an integer $m\ge 2$ and a  solution $\bar{x}$ of (\ref{e:Delay2}),
let  $x$ be a  solution of the  linearization  of problem (\ref{e:Delay2}) along $\bar{x}$:
\begin{equation}\label{e:LinearDelay1.4}
  \left\{\begin{array}{ll}
 &\dot{x}( t ) = J_n\sum^{m-1}_{i=1}\nabla^2_3 G (\lambda, t , \bar{x}(t - i\tau ) )x ( t - i\tau ),\\
 &  x( t + m \tau ) =  x( t )\;\forall t,
   \end{array}\right.
  \end{equation}
 where $\nabla^2_3 G(\lambda, t, x)$ is the Hessian of $G\left(\lambda, t , x \right)$ with respect to
  $x\in {\mathbb{R}}^{2n}$.
 Denote by
 \begin{align*}
 x_{ i }( t )&= x ( t-(i-1)\tau),\quad i=1,\dots,m,\\
 \bar{x}_{ i } ( t ) &= \bar{x}( t-(i-1)\tau), \quad i=1,\dots,m,\\
 v(t)&=(x_1(t)^\top,\dots, x_m(t)^\top)^\top\in ({\mathbb{R}}^{2n})^m,\\
 \bar{v}(t)&=(\bar{x}_1(t)^\top,\dots, \bar{x}_m(t)^\top)^\top\in ({\mathbb{R}}^{2n})^m.
  \end{align*}
  Then $v(t)$  satisfies the linearization  of problem (\ref{e:Delay16}) along $\bar{v}$:
\begin{equation}\label{e:LinearDelay1.5}
\dot{v}(t)=J_{n,m}\nabla^2_3 \hat{H}(\lambda, t, \bar{v}(t))v(t)\quad\hbox{and}\quad v(t+\tau)=P_{n,m}^{-1}v(t)\;\;\forall t\in \mathbb{R},
\end{equation}
where $\hat{H}$, $J_ {n, m }$ and $P_{n,m}$ are as in Proposition~\ref{rm:Liu12}(iii),
and $\nabla^2_3 \hat{H}(\lambda, t, v)$ is the Hessian of $\hat{H}\left(\lambda, t , v \right)$ with respect to the third variable $v\in({\R}^{2n})^m$.
Conversely, if $v(t)=(x_1(t)^\top,\dots, x_m(t)^\top)^\top$ with $x_{ i } ( t )\in\mathbb{R}^{2n}$
($i=1,\dots,m$)  is a solution of (\ref{e:LinearDelay1.5}),
then $x(t):=x_1(t)$ satisfies (\ref{e:LinearDelay1.4}).

\item[\rm (iv)] For an integer $m\geq 2$ and  a  solution $\bar{x}$ of (\ref{e:Delay3}),
suppose that $x$ satisfies the  linearization  of problem (\ref{e:Delay3}) along $\bar{x}$:
\begin{equation}\label{e:LinearDelay1.6}
 \left\{\begin{array}{ll}
 &\ddot{x}( t ) = -\sum^{m-1}_{i=1}\nabla^2_3 U (\lambda, t , \bar{x}( t - i\tau ) ) x ( t - i\tau ),  \\
 &  x( t + m \tau ) =  x( t )\;\forall t,
   \end{array}\right.
 \end{equation}
 where $\nabla^2_3U(\lambda,t, x)$ denotes the Hessian of $U(\lambda,t, x)$ with respect to  $x\in\mathbb{R}^n$.  Let
 \begin{align*}
 x_{ i }( t )&= x ( t-(i-1)\tau),\quad i=1,\dots,m,\\
 \bar{x}_{ i } ( t ) &= \bar{x}( t-(i-1)\tau), \quad i=1,\dots,m,\\
 v(t)&=(x_1(t)^\top,\dots, x_m(t)^\top)^\top,\quad u(t)=(\mathcal { A }_{m,n})^{-1}\dot{v}(t),\\
 \bar{v}(t)&=(\bar{x}_1(t)^\top,\dots, \bar{x}_m(t)^\top)^\top,\quad
 \bar{u}(t)=(\mathcal { A }_{m,n})^{-1}\dot{\bar{v}}(t),\\
  w(t)&=(v(t)^\top, u(t)^\top)^\top,\quad \bar{w}(t)=(\bar{v}(t)^\top, \bar{u}(t)^\top)^\top.
  \end{align*}
   Then $w(t)$  satisfies the  linearization  of problem (\ref{e:Delay22}) along $\bar{w}$:
\begin{equation}\label{e:LinearDelay1.7}
\dot{w}(t)=J_{mn}\nabla^2_3\mathcal{H}(\lambda, t, \bar{w}(t))w(t)\quad\hbox{and}\quad w(t+\tau)=\mathcal{P}_{n,m}^{-1}w(t)\;\;\forall t\in \mathbb{R},
\end{equation}
where $\mathcal{H}$, $J_ {mn}$ and $\mathcal{P}_{n,m}$ are as in Proposition~\ref{rm:Liu12}(iv),
and $\nabla^2_3 \mathcal{H}(\lambda, t, w)$ is the Hessian of $\mathcal{H}\left(\lambda, t , w \right)$
 with respect to the third variable $w\in({\R}^{n})^m\times ({\R}^{n})^m$.
Conversely, if $w(t)=(v(t)^\top, u(t)^\top)^\top$, where $v(t)=(x_1(t)^\top,\dots, x_m(t)^\top)^\top$
with $x_{ i } ( t )\in\mathbb{R}^{n}$ ($i=1,\dots,m$), satisfy (\ref{e:LinearDelay1.7}),
then $x(t):=x_1(t)$ is a  solution of (\ref{e:LinearDelay1.6}).
  \end{enumerate}
  }
\end{remark}

Let $\mathcal{P}_\tau(2n) =\{\gamma\in C([0,\tau],{\rm Sp}(2n))\,|\, \gamma(0)=I_{2n}\}$
and ${\cal P}^\ast_\tau(2n)=\{\gamma\in\mathcal{P}_\tau(2n)\,|\, \gamma(\tau)\in{\rm Sp}(2n)^\ast\}$.
As an extension of the Conley-Zehnder index $i_{\rm CZ}$ to paths in $\mathcal{P}_\tau(2n)\setminus\mathcal{P}^\ast_\tau(2n)$,
  Long \cite{Long97, Long02}  defined the \textsf{Maslov-type index} of $\gamma\in\mathcal{P}_\tau(2n)$ to be a pair of integers $(i_\tau(\gamma),\nu_\tau(\gamma))$, where $\nu_\tau(\gamma):=\dim{\rm Ker}(\gamma(\tau)-I_{2n})$,
  ${i}_\tau(\gamma)=i_{\rm CZ}(\gamma)$ if $\nu_\tau(\gamma)=0$, and
\begin{eqnarray}\label{e:long}
 {i}_\tau(\gamma)=\inf\{i_{\rm CZ}(\beta)\,|\, \beta\in{\cal
P}^\ast_\tau(2n)\;\hbox{is sufficiently $C^0$ close to $\gamma$
in}\;{\cal P}_\tau(2n)\}
\end{eqnarray}
if $\nu_\tau(\gamma)>0$.

Dong \cite{Do06} and Liu \cite{Liu06} each extended
the Maslov-type index $(i_\tau(\gamma),\nu_\tau(\gamma))$ of $\gamma\in\mathcal{P}_\tau(2n)$
 to the case relative to a given symplectic matrix $M\in{\rm Sp}(2n,\mathbb{R})$,
but via  different methods. We denoted their respective extensions by
\begin{equation}\label{e:DongLiuIndex}
(i_{\tau, M}(\gamma),\nu_{\tau,M}(\gamma))\quad\text{and}\quad (i_{\tau}^M(\gamma),\nu_{\tau}^M(\gamma)).
\end{equation}
(The former was written as  $(i_M(\gamma),\nu_M(\gamma))$ in \cite{Do06}).
 They are more suitable and convenient for dual variational methods and saddle point reduction ones, respectively.
If $M=I_{2n}$, by \cite{Do06} and \cite[Definition~2.5 and Definition~2.6]{Liu06} there holds
\begin{equation}\label{e:dongLiuIndex+}
(i_{\tau,M}(\gamma),\nu_{\tau,M}(\gamma))=(i_{\tau}^M(\gamma),\nu_{\tau}^M(\gamma))=(i_\tau(\gamma),\nu_\tau(\gamma)).
\end{equation}
Let $\xi$ be any element in $\mathcal{P}_\tau(2n)$ satisfying $\xi(\tau)=M^{-1}$,
and for a real $\lambda$ let $[\lambda]$ denote the largest integer less than or equal to
it as agreed at the end of Section~\ref{sec:Intro0}.
 Dong  defined
 \begin{equation}\label{e:dongIndex}
  \begin{aligned}
  &\nu_{\tau,M}(\gamma)=\dim{\rm Ker}(\gamma(\tau)-M)\quad\hbox{and}\\
&i_{\tau,M}(\gamma)=[i_\tau((\gamma M^{-1})\ast\xi)-\Delta({\xi})/\pi]
   \end{aligned}
 \end{equation}
(\cite[Definitions~2.1, 2.2]{Do06}), and  Liu (\cite[Definition~2.7 and Remark~2.8]{LiuTang15}) defined
\begin{equation}\label{e:LiuTang-index}
\begin{aligned}
&\nu_{\tau}^M(\gamma)=\dim{\rm Ker}(\gamma(\tau)-M)\quad\hbox{and}\\
&i^{M}_\tau(\gamma)=i_\tau((M^{-1}\gamma)\ast\xi)-i_\tau({\xi})
\end{aligned}
 \end{equation}
 for $M\ne I_{2n}$,  and $(i_{\tau}^M(\gamma),\nu_{\tau}^M(\gamma))=(i_\tau(\gamma),\nu_\tau(\gamma))$
  for $M= I_{2n}$.
 (Note that $i_\tau((M^{-1}\gamma)\ast\xi)$ can be replaced by $i_\tau((\gamma M^{-1})\ast\xi))$,
 see the proof of \cite[Proposition~A.1]{Lu11}.
It was shown in \cite[Remark~2.8]{LiuTang15} that when $M=I_{2n}$
the right side of the second equality in (\ref{e:LiuTang-index}) is $i_\tau(\gamma)+n$.
Therefore, $i_{\tau,M}(\gamma)$ and $i^M_{\tau}(\gamma)$ are generally not equal.)
We derived the following exact relation between
$i_{\tau, M}$ and $i^{M}_\tau$ in \cite{Lu11}.

\begin{proposition}[\hbox{\cite[Proposition~A.1]{Lu11}}]\label{prop:twoDef}
 For any $(M,\gamma)\in ({\rm Sp}(2n,\R)\setminus\{I_{2n}\})\times\mathcal{P}_\tau(2n)$
 and  any $\xi\in\mathcal{P}_\tau(2n)$ satisfying $\xi(\tau)=M^{-1}$
 it holds that
  \begin{equation}\label{e:indexrelation}
 i_{\tau,M}(\gamma)=[i^{M}_\tau(\gamma)+i_\tau(\xi)-\Delta({\xi})/\pi]=i^{M}_\tau(\gamma)+ [i_\tau(\xi)-\Delta({\xi})/\pi].
\end{equation}
 {\rm (}The term $[i_\tau(\xi)-\Delta({\xi})/\pi]$
only depends on $M$.{\rm )}
   \end{proposition}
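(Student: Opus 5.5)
The identity follows by comparing the two definitions (\ref{e:dongIndex}) and (\ref{e:LiuTang-index}) through a common intermediate quantity. Fix $M\ne I_{2n}$, $\gamma\in\mathcal{P}_\tau(2n)$ and $\xi\in\mathcal{P}_\tau(2n)$ with $\xi(\tau)=M^{-1}$. By the note after (\ref{e:LiuTang-index}), Liu's index may be written as
\[
i^{M}_\tau(\gamma)=i_\tau((\gamma M^{-1})\ast\xi)-i_\tau(\xi).
\]
Rearranging gives
\[
i_\tau((\gamma M^{-1})\ast\xi)=i^{M}_\tau(\gamma)+i_\tau(\xi).
\]
Substituting this into Dong's definition yields
\[
i_{\tau,M}(\gamma)=\bigl[i^{M}_\tau(\gamma)+i_\tau(\xi)-\Delta(\xi)/\pi\bigr],
\]
which is the first equality. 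Since $i^M_\tau(\gamma)$ is an integer, and $[k+a]=k+[a]$ for $k\in\mathbb{Z}$, $a\in\R$, the second equality follows.

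The main obstacle is justifying the note, namely that
\[
i_\tau((M^{-1}\gamma)\ast\xi)=i_\tau((\gamma M^{-1})\ast\xi).
\]
For this I would use conjugation invariance. The path $(M^{-1}\gamma)\ast\xi$ is, up to homotopy with fixed endpoints, related to $(\gamma M^{-1})\ast\xi$ by conjugating with $M$: indeed $M^{-1}\gamma(t)=M^{-1}(\gamma(t)M^{-1})M$. The following concatenated $\xi$-part must be handled too, since conjugating $\xi$ by $M$ gives a path ending at $M^{-1}MM^{-1}\cdot M=M^{-1}$, so its endpoint is unchanged.

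Writing $M=e^{S}$ locally, or more generally joining $M$ to $I_{2n}$ by a path $M_s$ in ${\rm Sp}(2n)$ (which is connected), the family $M_s^{-1}\beta M_s$ deforms one path into the other. Along this deformation the starting point stays at $I$. The endpoint conjugacy class, and hence $\dim{\rm Ker}(\beta(\tau)-I)$, stays fixed. The Maslov-type index $i_\tau$ is invariant under homotopies with these properties (Long's homotopy invariance for paths whose endpoint nullity is constant). It remains to check that the conjugated concatenation is homotopic rel endpoints to $(M^{-1}\gamma)\ast\xi$, which I would do by an explicit reparametrization.

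Finally, I would record that $[i_\tau(\xi)-\Delta(\xi)/\pi]$ depends only on $M$. This follows because the left side $i_{\tau,M}(\gamma)-i^M_\tau(\gamma)$ is independent of $\xi$, as both indices are well defined independently of the choice of $\xi$ by \cite{Do06,LiuTang15}, and independent of $\gamma$.
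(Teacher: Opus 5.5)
The overall structure of your proof is right. It matches what the paper's remark after Liu's definition attributes to the cited proof: once $i_\tau((M^{-1}\gamma)\ast\xi)=i_\tau((\gamma M^{-1})\ast\xi)$ is known, substituting Liu's formula into Dong's definition and using $[k+a]=k+[a]$ gives the identity. Your argument that $[i_\tau(\xi)-\Delta(\xi)/\pi]$ depends only on $M$ is also fine.

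The gap is in the step you flag as the main obstacle. Your homotopy $s\mapsto M_s^{-1}\beta M_s$ with $\beta=(\gamma M^{-1})\ast\xi$ is legitimate and shows $i_\tau(\beta)=i_\tau(M^{-1}\beta M)$. But conjugation acts pointwise, so
\[
M^{-1}\bigl((\gamma M^{-1})\ast\xi\bigr)M=(M^{-1}\gamma)\ast(M^{-1}\xi M),
\]
whose first leg is $M^{-1}\xi M$, not $\xi$. These are two different paths from $I_{2n}$ to $M^{-1}$, and no reparametrization turns $(M^{-1}\gamma)\ast(M^{-1}\xi M)$ into $(M^{-1}\gamma)\ast\xi$. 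What you actually need is that $\xi$ and $M^{-1}\xi M$ are homotopic with endpoints fixed. Looking at the square $(s,t)\mapsto M_s^{-1}\xi(t)M_s$, this is equivalent to contractibility of the loop $s\mapsto M_s^{-1}M^{-1}M_s$ based at $M^{-1}$. Since $\pi_1({\rm Sp}(2n))\cong\mathbb{Z}$, this is a genuine topological statement. It is true (for instance, the Salamon--Zehnder rotation function is invariant under symplectic conjugation and induces an isomorphism on $\pi_1$), but your argument does not supply it.

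You can close the gap using only facts you already invoke. Set $\xi':=M^{-1}\xi M\in\mathcal{P}_\tau(2n)$, which also satisfies $\xi'(\tau)=M^{-1}$.
First, independence of Liu's index from the auxiliary path gives $i_\tau((M^{-1}\gamma)\ast\xi')-i_\tau(\xi')=i_\tau((M^{-1}\gamma)\ast\xi)-i_\tau(\xi)$.
Second, symplectic conjugation invariance of $i_\tau$ (your homotopy) gives $i_\tau(\xi')=i_\tau(\xi)$.
Third, the same invariance gives $i_\tau((M^{-1}\gamma)\ast\xi')=i_\tau\bigl(M^{-1}((\gamma M^{-1})\ast\xi)M\bigr)=i_\tau((\gamma M^{-1})\ast\xi)$.
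Combining these three equalities yields $i_\tau((M^{-1}\gamma)\ast\xi)=i_\tau((\gamma M^{-1})\ast\xi)$, and the rest of your proof then goes through unchanged.
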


Let $\mathcal { J }$, $\mathcal { M }$ and $A$ be as given above (\ref{e:M-invariantDelay2}).
For a symmetric matrix-valued function $B\in C\left( [ 0 , \tau ], \mathcal{L}_{s}(\mathbb{R}^{2N})\right)$,
  let $\gamma_B$ be the fundamental matrix solution of
  $\dot { y } = \mathcal { J } B ( t ) y$ with $\gamma_B(0)=I_{2N}$.
  Each $\gamma_B(t)$ is $\mathcal { J }$-symplectic (\cite{Liu12}).
It is clear that  $A^{-1}\gamma_BA$
 is the fundamental  matrix solution of $\dot { y } = J_N (A^\top B ( t )A) y$ with
 $A^{-1}\gamma_B(0)A=I_{2N}$.
 If $\hat{A}$ is another nonsingular $2 N \times 2 N$ matrix such that
$\omega_0(v, w)=\omega(\hat{A}v, \hat{A}w)$ for all $v,w\in\mathbb{R}^{2N}$,
and $\hat{M}:=\hat{A}^{-1}\mathcal{M}\hat{A}$, then
$\omega_0(A^{-1}\hat{A}v, A^{-1}\hat{A}w)=\omega(\hat{A}v, \hat{A}w)=\omega_0(v, w)$ implies that
$Q:=A^{-1}\hat{A}$ is a standard symplectic matrix, and $\hat{M}=\hat{A}^{-1}{A}M(\hat{A}^{-1}A)^{-1}=Q^{-1}MQ$.
From the latter and \cite[Lemma~A.11]{Lu14} it follows that
\begin{eqnarray*}
&&i_{\tau}^M(A^{-1}\gamma_BA)=  i_{\tau}^{\hat{M}}(\hat{A}^{-1}\gamma_B\hat{A})\quad\hbox{and}\\
&&\dim{\rm Ker}(A^{-1}\gamma_B(\tau)A-M)=\dim{\rm Ker}(\hat{A}^{-1}\gamma_B(\tau)\hat{A}-\hat{M}).
\end{eqnarray*}
These show that
\begin{eqnarray}\label{e:NM-index}
i^{\mathcal{J}}_{\mathcal{M}}(\gamma_B):=i^M_\tau(A^{-1}\gamma_BA)\quad\hbox{and}\quad
\nu^{\mathcal{J}}_{\mathcal{M}}(\gamma_B):=\nu^M_\tau(A^{-1}\gamma_BA)
\end{eqnarray}
are well-defined;  they are collectively referred to as the
 the \textsf{$(\mathcal{J}, \mathcal{M})$-index} of $\gamma_B$ in \cite{Liu12} and  \cite{ZhouLZZ22}.

%Actually, the second equality is clear because $\nu^{\mathcal{J}}_{\mathcal{M}}(\gamma_B)$ is equal to
%the dimension of the space of solutions of  $\dot { y } = \mathcal { J } B ( t ) y$ with $y(\tau)=\mathcal{M}y(0)$.
%
%Liu \cite{Liu12}
%introduced the $(\mathcal{J}, \mathcal{M})$-index of $\gamma_B$,
%and Liu et al \cite{ZhouLZZ22}
%equivalently defined  the index as
%\begin{eqnarray}\label{e:NM-index}
%i^{\mathcal{J}}_{\mathcal{M}}(\gamma_B)=i^M_\tau(A^{-1}\gamma_BA)\quad\hbox{and}\quad
%\nu^{\mathcal{J}}_{\mathcal{M}}(\gamma_B)=\nu^M_\tau(A^{-1}\gamma_BA)
%\end{eqnarray}
%via the Maslov-type index $(i_{\tau}^M,\nu_{\tau}^M)$ in (\ref{e:DongLiuIndex}), which was defined by
%Liu \cite{Liu06}.

%
%However, if we replace $i_{\tau}^M$ with the Maslov-type index $i_{\tau, M}$
%in (\ref{e:dongIndex}),
%by the second part of Lemma~\ref{lem:Dong06+} we can only guarantee that
%\begin{eqnarray*}
%i_{\tau,M}(A^{-1}\gamma_BA) = i_{\tau,\hat{M}}(\hat{A}^{-1}\gamma_B\hat{A})
%\end{eqnarray*}
%holds in the case $Q:=A^{-1}\hat{A}$ is an orthogonal symplectic matrix
%(although we always have the equality $\nu_{\tau,M}(A^{-1}\gamma_BA)=  \nu_{\tau,\hat{M}}(\hat{A}^{-1}\gamma_B\hat{A})$).
%Consequently, defining $i_{\mathcal{M}, \mathcal{J}}(\gamma_B)=i_{\tau, M}(A^{-1}\gamma_BA)$
% as in (\ref{e:NM-index}) is ill-posed.
% %For this reason, it becomes important to provide an explicit/algorithmic construction of $A$,
%%  which is also done in Appendix~\ref{app:Matrix}.
%

\section{ Bifurcations for the delay system (\ref{e:Delay1})}\label{sec:delay1}

 Under Assumption~\ref{ass:BasiAss1Delay2}, let $x^\lambda_ { i } ( t ) = x^\lambda( t-(i-1)\tau)$, $i=1,\dots,m$.
Then we have
\begin{equation}\label{e:Delay24}
{\small
\nabla^2_3 H\bigl(\lambda, t, x^\lambda_1(t), \ldots, x^\lambda_m(t)\bigr) =
\begin{pmatrix}
\nabla^2_3 V\bigl(\lambda, t, x^\lambda_1(t)\bigr) & 0 & \cdots & 0 \\
0 & \nabla^2_3 V\bigl(\lambda, t, x^\lambda_2(t)\bigr) & \cdots & 0 \\
\vdots & \vdots & \ddots & \vdots \\
0 & 0 & \cdots & \nabla^2_3 V\bigl(\lambda, t, x^\lambda_m(t)\bigr)
\end{pmatrix}
}
\end{equation}
for \( H \) (as in Proposition~\ref{prop:Liu12}(i)) when \( m \in 2\mathbb{N} \),
where \( \nabla^2_3 H(\lambda, t, v) \) is the Hessian of \( H(\lambda, t, v) \)
with respect to the third variable \( v \in (\mathbb{R}^n)^m \).
Furthermore,
\begin{align*}
&\nabla^2_3\tilde{H}\bigl(\lambda, t, x^\lambda_1(t), \ldots, x^\lambda_{m-1}(t)\bigr)\\
&=
\begin{pmatrix}
\nabla^2_3 V\bigl(\lambda, t, x^\lambda_1(t)\bigr) & 0 & \cdots & 0 \\
0 & \nabla^2_3 V\bigl(\lambda, t, x^\lambda_2(t)\bigr) & \cdots & 0 \\
\vdots & \vdots & \ddots & \vdots \\
0 & 0 & \cdots & \nabla^2_3 V\bigl(\lambda, t, x^\lambda_{m-1}(t)\bigr)
\end{pmatrix}
\\
&\qquad\qquad+
\Bigl(-I_{n},\;I_{n},\;-I_{n},\;\cdots,\;I_{n}\Bigr)^\top
\cdot\nabla^2_3V\biggl(\lambda,t,\sum^{m-1}_{i=1}(-1)^i x^\lambda_i(t)\biggr)
\cdot\Bigl(-I_{n},\;I_{n},\;-I_{n},\;\cdots,\;I_{n}\Bigr),
\end{align*}
\begin{equation}\label{e:Delay25}
\end{equation}
for \( \tilde{H} \) in Proposition~\ref{prop:Liu12}(ii) by \cite[pages 28--29]{ZhouLZZ22} when \( m\in 2\mathbb{N}+1 \),
where \( \nabla^2_3 \tilde{H}(\lambda, t, v) \) is the Hessian of
\( \tilde{H}(\lambda, t, v) \) with respect to the third variable \( v \in (\mathbb{R}^{n})^{m-1} \).
Let $\gamma_{n, m,x}^\lambda$ be the fundamental matrix solution of the linear system
\begin{equation}\label{e:Delay26}
\left.\begin{array}{ll}
&\dot{Z}(t)=A_{m,n}\nabla^2_3H\left(\lambda, t , x^\lambda_ { 1 }(t) , \ldots , x^\lambda_ { m }(t)\right)Z(t)\\
&\hbox{[resp. $\dot{Z}(t)=A_{m-1,n}\nabla^2_3\tilde{H}\left(\lambda, t , x^\lambda_ { 1 }(t) , \ldots , x^\lambda_ { m-1 }(t)\right)                                    Z(t)$]}
\end{array}\right\}
\end{equation}
with $\gamma^\lambda_{n,m,x}(0)=I_{nm}$ (resp. $\gamma^\lambda_{n,m,x}(0)=I_{n(m-1)}$) for $m\in 2\mathbb{N}$ (resp.
$m\in 2\mathbb{N}+1$).

When $m\in 2\mathbb{N}$, applying the reasoning above
(\ref{e:M-invariantDelay2}) to $\mathcal{J}=A_{m,n}$ and $\mathcal{M}=T_{m,n}^{-1}$
yields a nonsingular real $nm\times nm$ matrix $\Upsilon(m,n)$ such that
$A_{m,n}^{-1}={\Upsilon}(m, n)^\top J_{mn/2}^{-1}{\Upsilon}(m, n)$.
%%%%%%%%%%%%%%%%%%%%%%%%%%%%%%%%%%%%%%%%%%%%%%%%%%%%%%%%%%%%%%%%%%%%%
%%%The matrix $\Upsilon(m,n)$  given by Theorem~\ref{th:A_{m,n}} satisfies
%%%$A_{m,n}^{-1}={\Upsilon}(m, n)^\top J_{mn/2}^{-1}{\Upsilon}(m, n)$.
%%%%%%%%%%%%%%%%%%%%%%%%%%%%%%%%%%%%%%%%%%%%%%%%%%%%%%%%%%%%%%%%%%%
Define
\begin{align}\label{e:Delay26.1}
M_{m,n}&={\Upsilon}(m, n)T_{m,n}^{-1}{\Upsilon}(m, n)^{-1},\\
\check{H}(\lambda,t, z)&=H(\lambda,t, {\Upsilon}(m, n)^{-1}z)\quad\forall
(\lambda,t, z)\in \Lambda\times\R\times({\R}^{n})^m.\label{e:Delay26.2}
\end{align}
Then $M_{m,n}$ is a symplectic matrix and satisfies $M_{m,n}^{2m}=I_{mn}$.

For $m\in 2\mathbb{N}+1$ let $\Upsilon(m-1, n)$ be as above, and define
\begin{align}
M_{m,n}&=\Upsilon(m-1, n)\,\tilde{B}_{m-1,n}^{-1}\,\Upsilon(m-1, n)^{-1},
\label{e:Delay26.3}\\
\check{H}(\lambda,t, z)&=
\tilde{H}\!\bigl(\lambda,t,\,
\Upsilon(m-1, n)^{-1}z\bigr)
\quad\forall(\lambda,t, z)\in \Lambda\times\mathbb{R}\times(\mathbb{R}^{n})^{m-1}.
\label{e:Delay26.4}
\end{align}
Then the $M_{m,n}$ defined in \eqref{e:Delay26.3} is again symplectic and satisfies
$M_{m,n}^{2m}=I_{(m-1)n}$.
%(or $I_{(m-1)\,n}$ to be completely unambiguous).

In both cases above, the identity $\check{H}(\lambda,t+\tau, Mz)=\check{H}(\lambda,t, z)$
holds for all $(\lambda,t,z)$.  By Proposition~\ref{prop:Equiv}, we then obtain the following:

\begin{proposition}\label{prop:Equiv1}
For $m\in 2\mathbb{N}$ {\rm (}resp. $m\in 2\mathbb{N}+1${\rm )},
$v:\mathbb{R}\to ({\R}^{n})^m$ {\rm [}resp. $v:\mathbb{R}\to({\R}^{n})^{m-1}${\rm ]}
 solves (\ref{e:Delay6}) {\rm [}resp. (\ref{e:Delay11}){\rm ]} if and only if
$u(t):={\Upsilon}(m, n)v(t)$ {\rm [}resp.  $u(t):={\Upsilon}(m-1, n)v(t)${\rm ]} satisfies the following problem
\begin{equation}\label{e:M-invariantDelay5++}
\left\{\begin{array}{ll}
&\dot{u}(t)={J}_{mn/2} \nabla_3 \check{H}(\lambda, t, u(t))\\
& \hbox{{\rm [}resp. $\dot{u}(t)={J}_{(m-1)n/2} \nabla_3 \check{H}(\lambda, t, u(t))${\rm ]}},\\
& u(t+\tau)={M}_{m,n}u(t)\;\;\forall t\in \R
\end{array}\right.
\end{equation}
with $M_{m,n}={\Upsilon}(m, n)T_{m,n}^{-1}{\Upsilon}(m, n)^{-1}$ {\rm [}resp. $M_{m,n}={\Upsilon}(m-1, n)\tilde{B}_{m-1,n}^{-1}{\Upsilon}(m-1, n)^{-1}${\rm ]},
and in this situation there holds
\begin{align*}
&\int^{\tau}_0\left[\frac{1}{2}(J_{mn/2}\dot{u}(t), u(t))_{\mathbb{R}^{mn}}+ \check{H}(\lambda, t, u(t))\right]dt\nonumber\\
&= \int^{\tau}_0\left[\frac{1}{2}(\dot{v}(t), A_{m,n}^{-1}v(t))_{\mathbb{R}^{mn}}+ H(\lambda, t,
v(t))\right]dt.\\
{\rm [}\hbox{resp.}
&\int^{\tau}_0\left[\frac{1}{2}(J_{(m-1)n/2}\dot{u}(t), u(t))_{\mathbb{R}^{(m-1)n}}+ \check{H}(\lambda, t, u(t))\right]dt\nonumber\\
&= \int^{\tau}_0\left[\frac{1}{2}(\dot{v}(t), A_{m-1,n}^{-1}v(t))_{\mathbb{R}^{(m-1)n}}+ H(\lambda, t,
v(t))\right]dt. {\rm ]}
\end{align*}
\end{proposition}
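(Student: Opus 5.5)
This is a direct specialization of Proposition~\ref{prop:Equiv}. I treat the case $m\in 2\mathbb{N}$; the odd case is identical after replacing $(A_{m,n},T_{m,n},H,\Upsilon(m,n))$ by $(A_{m-1,n},\tilde B_{m-1,n},\tilde H,\Upsilon(m-1,n))$ and using (\ref{e:Delay13}) instead of (\ref{e:Delay8}). The first step is to check Assumption~\ref{ass:EquivDelay} with $N=mn/2$, $\mathcal{J}=A_{m,n}$, $\mathcal{M}=T_{m,n}^{-1}$, $K=H$ and $k=2m$:

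\begin{itemize}
\item $A_{m,n}$ is skew-symmetric, and it is nonsingular, since the paper already presupposes $A_{m,n}^{-1}$.
\item $\mathcal{M}$ is $\mathcal{J}$-symplectic, i.e. $T_{m,n}^{-\top}A_{m,n}^{-1}T_{m,n}^{-1}=A_{m,n}^{-1}$. This follows by inverting the relation $T_{m,n}A_{m,n}T_{m,n}^\top=A_{m,n}$ in (\ref{e:Delay8}).
\item $\mathcal{M}^{2m}=I$ follows from $T_{m,n}^{2m}=I_{mn}$.
\item The invariance (\ref{e:M-invariantDelay1}) reads $H(\lambda,t+\tau,T_{m,n}^{-1}z)=H(\lambda,t,z)$. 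I get it by substituting $z\mapsto T_{m,n}^{-1}z$ in the identity $H(\lambda,t+\tau,T_{m,n}z)=H(\lambda,t,z)=H(\lambda,t,T_{m,n}z)$ of (\ref{e:Delay8}). This gives $H(\lambda,t+\tau,z)=H(\lambda,t,T_{m,n}^{-1}z)$ and $H(\lambda,t,z)=H(\lambda,t,T_{m,n}^{-1}z)$, hence $t$-periodicity and the required identity.
\item The regularity of $H$ comes directly from Assumption~\ref{ass:BasiAss1Delay1}, because $H$ is a sum of copies of $V$.
\end{itemize}

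Next I fix the matrix $A$ of Proposition~\ref{prop:Equiv}. The relation $A_{m,n}^{-1}=\Upsilon(m,n)^\top J_{mn/2}^{-1}\Upsilon(m,n)$ says exactly that $A:=\Upsilon(m,n)^{-1}$ satisfies $A^\top\mathcal{J}^{-1}A=J_N^{-1}$. With this choice, $M=A^{-1}\mathcal{M}A=\Upsilon(m,n)T_{m,n}^{-1}\Upsilon(m,n)^{-1}=M_{m,n}$, and $\check K^A(\lambda,t,z)=H(\lambda,t,\Upsilon(m,n)^{-1}z)=\check H(\lambda,t,z)$, as in (\ref{e:Delay26.1})--(\ref{e:Delay26.2}). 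The boundary problem (\ref{e:M-invariantDelay4}) then coincides with (\ref{e:Delay6}), and $u=A^{-1}v=\Upsilon(m,n)v$. The equivalence statement and the action identity therefore follow verbatim from Proposition~\ref{prop:Equiv} and (\ref{e:M-invariantDelay6-}), since $(\dot v,\mathcal{J}^{-1}v)=(\dot v,A_{m,n}^{-1}v)$. As a side remark, symplecticity of $M_{m,n}$ and $M_{m,n}^{2m}=I$ also follow at once.

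The only step needing care is the sign and inverse bookkeeping in the invariance check, namely converting the $T_{m,n}$-invariance stated in (\ref{e:Delay8}) into the $T_{m,n}^{-1}$-invariance demanded by (\ref{e:M-invariantDelay1}). A second point is the odd case. In the bracketed identity of the statement, $H$ on the right should be read as $\tilde H$, and the matrix $\mathcal{J}=A_{m-1,n}$ requires $(m-1)n$ even, which holds since $m-1$ is even.
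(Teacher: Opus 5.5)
Your proposal is correct and follows the paper's own route: the paper gets this proposition directly from Proposition~\ref{prop:Equiv}, taking $\mathcal{J}=A_{m,n}$, $\mathcal{M}=T_{m,n}^{-1}$ (resp.\ $A_{m-1,n}$, $\tilde{B}_{m-1,n}^{-1}$) and $A=\Upsilon(m,n)^{-1}$ (resp.\ $\Upsilon(m-1,n)^{-1}$); you just spell out the check of Assumption~\ref{ass:EquivDelay} that the paper leaves implicit, including the conversion from $T_{m,n}$-invariance to $T_{m,n}^{-1}$-invariance. Your remark that $H$ on the right-hand side of the odd-case action identity should read $\tilde{H}$ is also correct.
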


For each $\lambda\in\Lambda$ and $m\in 2\mathbb{N}$ {\rm (}resp. $m\in 2\mathbb{N}+1${\rm )},
let
%\begin{align}\label{e:M-invariantDelay6++}
%&v^\lambda(t)=(x^\lambda_1(t)^\top,\dots, x^\lambda_m(t)^\top)^\top\in ({\R}^{n})^m\quad\text{and}\quad
%u^\lambda(t):=\Upsilon(m,n)v^\lambda(t)\\
%&{\rm [resp.}\;
%v^\lambda(t)=(x^\lambda_1(t)^\top,\dots, x^\lambda_{m-1}(t)^\top)^\top\in ({\R}^{n})^{m-1}\quad\text{and}\quad
%u^\lambda(t):=\Upsilon(m-1,n)v^\lambda(t)
%{\rm ]},\nonumber
%\end{align}
\begin{equation}\label{e:M-invariantDelay6++}
\left.\begin{array}{ll}
&v^\lambda(t)=(x^\lambda_1(t)^\top,\dots, x^\lambda_m(t)^\top)^\top\in ({\R}^{n})^m\quad\text{and}\quad
u^\lambda(t):=\Upsilon(m,n)v^\lambda(t)\\
&{\rm [resp.}\;
v^\lambda(t)=(x^\lambda_1(t)^\top,\dots, x^\lambda_{m-1}(t)^\top)^\top\in ({\R}^{n})^{m-1}\quad\text{and}\\
&\hspace{70mm} u^\lambda(t):=\Upsilon(m-1,n)v^\lambda(t)
{\rm ]},
\end{array}\right\}.
\end{equation}
where $x^\lambda_ { i } ( t ) = x^\lambda( t-(i-1)\tau)$ for $i=1,\dots,m$.
Then, by Proposition~\ref{prop:Liu12}\,(i)-(ii),  we obtain that
  $v^\lambda$ solves (\ref{e:Delay6}) [resp. (\ref{e:Delay11})], and therefore $u^\lambda$
satisfies (\ref{e:M-invariantDelay5++}) by Proposition~\ref{prop:Equiv1}.

Linearizing the first two lines of (\ref{e:M-invariantDelay5++}) at $u^\lambda$,
we have the linear system
\begin{equation}\label{e:M-invariantDelay7++}
\left.\begin{array}{ll}
&\dot{y}(t)={J}_{mn/2} \nabla^2_3\check{H}(\lambda, t, u^\lambda(t)) y(t)\\
&\hbox{[resp. $\dot{y}(t)={J}_{(m-1)n/2} \nabla^2_3\check{H}(\lambda, t, u^\lambda(t)) y(t)$].}
\end{array}\right\}
\end{equation}
Let $\Gamma_{n, m,x}^\lambda(t)$ be the fundamental matrix solution of it
with $\Gamma_{n, m,x}^\lambda(0)=I_{mn}$. Then
 $$
 \Gamma_{n, m,x}^\lambda(t)=\Upsilon(m,n)\gamma_{n, m,x}^\lambda(t)\Upsilon(m,n)^{-1}
 \;{\rm [resp.\;}\Gamma_{n, m,x}^\lambda(t)=\Upsilon(m-1,n)\gamma_{n, m,x}^\lambda(t)\Upsilon(m-1,n)^{-1}\;{\rm ]}
 $$
for $m\in 2\mathbb{N}$ {\rm (}resp. $m\in 2\mathbb{N}+1${\rm )}, where $\gamma_{n, m,x}^\lambda$ is
the fundamental matrix solution  of (\ref{e:Delay26}) as above.

\begin{remark}\label{rm:Equiv1}
{\rm In practice, we lack a general method for choosing the nonsingular  matrix $\Upsilon(m,n)$
to ensure that the symplectic matrices matrixes  $M_{m,n}$ defined in (\ref{e:Delay26.1}) and (\ref{e:Delay26.3}) are also orthogonal.
When $m=2$, by (\ref{e:Delay7})  we have
$A_{2,n} = \begin{pmatrix} 0 & I_n \\
-I_n & 0 \end{pmatrix}=T_{2,n}=-J_n$.
%A direct computation yields the desired congruence relation
%\[
%\begin{pmatrix} 0 & I_n \\ I_n & 0 \end{pmatrix}^\top A_{2,n}^{-1} \begin{pmatrix} 0 & I_n \\ I_n & 0 \end{pmatrix}
%= \begin{pmatrix} 0 & I_n \\ I_n & 0 \end{pmatrix} J_n \begin{pmatrix} 0 & I_n \\ I_n & 0 \end{pmatrix}
%= -J_n = J_n^{-1}.
%\]
Hence, we can take $\Upsilon(2,n) = \begin{pmatrix} 0 & I_n \\ I_n & 0 \end{pmatrix}^{-1} = \begin{pmatrix} 0 & I_n \\ I_n & 0 \end{pmatrix}$. In this case,
\[
M_{2,n} = \Upsilon(2, n)T_{2,n}^{-1}\Upsilon(2, n)^{-1} = -\Upsilon(2, n)J_n^{-1}\Upsilon(2, n)^\top = -J_n
\]
is a symplectic orthogonal matrix and satisfies $(M_{2,n})^4=I_{2n}$.
However, by (\ref{e:Delay12}), $(\tilde{B}_{2,n})^{-1} = \begin{pmatrix} I_n & -I_n \\ I_n & 0 \end{pmatrix}$, and
\[
M_{3,n} = \Upsilon(2, n)\tilde{B}_{2,n}^{-1}\Upsilon(2, n)^{-1} = \begin{pmatrix} 0 & I_n \\ -I_n & I_n \end{pmatrix}
\]
is not a symplectic orthogonal matrix.
%Since the results from \cite{Lu11} apply only to symplectic orthogonal matrices, we therefore restrict our analysis from now on to the case $m=2$.
}
\end{remark}

%According to  \cite[Definition~2.4]{Liu12} and \cite[Definition~2.2]{ZhouLZZ22},
% the Maslov-type index $(i_{\tau}^M, \nu_{\tau}^M)$ in (\ref{e:LiuTang-index}) %yields well-defined indexes

%By (\ref{e:NM-index}), for $m\in 2\mathbb{N}$ {\rm (}resp. $m\in 2\mathbb{N}+1${\rm )}, we have $(A_{m,n}, T_{m,n}^{-1})$-index (resp.
%$(A_{m-1,n}, \tilde{B}_{m-1,n}^{-1})$-index
%) of $\gamma_{n, m,x}^\lambda$

 By (\ref{e:NM-index}),  we have the $(A_{m,n}, T_{m,n}^{-1})$-index
 of $\gamma_{n, m,x}^\lambda$  defined by
\begin{equation}\label{e:Delay27}
\left\{\begin{array}{ll}
&i_{T_{m,n}^{-1}}^{A_{m,n}}(\gamma_{n, m,x}^\lambda):=i_{\tau}^{M_{m,n}}(\Gamma_{n, m,x}^\lambda)=i_{\tau}^{M_{m,n}}(\Upsilon(m,n)\gamma_{n, m,x}^\lambda\Upsilon(m,n)^{-1}),\\
&\nu_{T_{m,n}^{-1}}^{A_{m,n}}(\gamma_{n, m,x}^\lambda):=\nu_{\tau}^{M_{m,n}}(\Gamma_{n, m,x}^\lambda)=\nu_{\tau}^{M_{m,n}}(\Upsilon(m,n)\gamma_{n, m,x}^\lambda\Upsilon(m,n)^{-1})
\end{array}\right.
\end{equation}
with $M_{m,n}={\Upsilon}(m, n)T_{m,n}^{-1}{\Upsilon}(m, n)^{-1}$ for $m\in 2\mathbb{N}$,
 and the $(A_{m-1,n}, \tilde{B}_{m-1,n}^{-1})$-index
 of $\gamma_{n, m,x}^\lambda$ defined by
\begin{equation}\label{e:Delay28}
\left\{\begin{array}{ll}
&i_{\tilde{B}_{m-1,n}^{-1}}^{A_{m-1,n}}(\gamma_{n, m,x}^\lambda):=i_{\tau}^{M_{m,n}}(\Gamma_{n, m,x}^\lambda)=i_{\tau}^{M_{m,n}}(\Upsilon(m-1,n)\gamma_{n, m,x}^\lambda\Upsilon(m-1,n)^{-1}),\\
&\nu_{\tilde{B}_{m-1,n}^{-1}}^{A_{m-1,n}}(\gamma_{n, m,x}^\lambda):=\nu_{\tau}^{M_{m,n}}(\Gamma_{n, m,x}^\lambda)=\nu_{\tau}^{M_{m,n}}(\Upsilon(m-1,n)\gamma_{n, m,x}^\lambda\Upsilon(m-1,n)^{-1})
\end{array}\right.
\end{equation}
with $M_{m,n}={\Upsilon}(m-1, n)\tilde{B}_{m-1,n}^{-1}{\Upsilon}(m-1, n)^{-1}$
for $m\in 2\mathbb{N}+1$.

For convenience, in what follows we will always write
\begin{equation}\label{e:Delay277*}
i_{T_{m,n}^{-1}}^{A_{m,n}}(x^\lambda):=
i_{T_{m,n}^{-1}}^{A_{m,n}}(\gamma_{n, m,x}^\lambda)
\quad\text{and}\quad
\nu_{T_{m,n}^{-1}}^{A_{m,n}}(x^\lambda):=\nu_{T_{m,n}^{-1}}^{A_{m,n}}(\gamma_{n, m,x}^\lambda)
\end{equation}
for $m\in 2\mathbb{N}$, and
\begin{equation}\label{e:Delay288*}
i_{\tilde{B}_{m-1,n}^{-1}}^{A_{m-1,n}}(x^\lambda):=
i_{\tilde{B}_{m-1,n}^{-1}}^{A_{m-1,n}}(\gamma_{n, m,x}^\lambda)
\quad\text{and}\quad
\nu_{\tilde{B}_{m-1,n}^{-1}}^{A_{m-1,n}}(x^\lambda):=
\nu_{\tilde{B}_{m-1,n}^{-1}}^{A_{m-1,n}}(\gamma_{n, m,x}^\lambda)
\end{equation}
for $m\in 2\mathbb{N}+1$.

Note that for $m\in 2\mathbb{N}$ (resp. $m\in 2\mathbb{N}+1$), $\nu_{T_{m,n}^{-1}}^{A_{m,n}}(x^\lambda)$
(resp. $\nu_{\tilde{B}_{m-1,n}^{-1}}^{A_{m-1,n}}(x^\lambda)$
is equal to the dimension of the solution space of  the linear problem
\begin{equation}\label{e:Delay28+}
\left.\begin{array}{ll}
&\dot{Z}(t)=A_{m,n}\nabla^2_3H\left(\lambda, t , x^\lambda_ { 1 }(t) , \ldots , x^\lambda_ { m }(t)\right)Z(t)\\
&Z(t+\tau)=T_{m,n}^{-1}Z(t)\;\;\forall t\in \R
\end{array}\right\}
\end{equation}
\begin{equation}\label{e:Delay28++}
\left({\rm resp}. \left.\begin{array}{ll}
&\dot{Z}(t)=A_{m-1,n}\nabla^2_3\tilde{H}\left(\lambda, t , x^\lambda_ { 1 }(t) , \ldots , x^\lambda_ { m-1 }(t)\right)                                    Z(t)\\
&Z(t+\tau)=\tilde{B}_{m-1,n}^{-1}Z(t)\;\;\forall t\in \R,
\end{array}\right\}\right).
\end{equation}
Equivalently,
$\nu_{T_{m,n}^{-1}}^{A_{m,n}}(x^\lambda)$
(resp. $\nu_{\tilde{B}_{m-1,n}^{-1}}^{A_{m-1,n}}(x^\lambda)$
  equals the dimension of the solution space of the linear delay problem (\ref{e:LinearDelay1.1Lambda})
  for $m\in 2\mathbb{N}$ (resp. $m\in 2\mathbb{N}+1$) by Remark~\ref{rm:Liu12}(i)-(ii).

Applying Theorem~1.5 of \cite{Lu11} to the problem (\ref{e:M-invariantDelay5++}) with the choices $(H, u_\lambda, M)=(\check{H}, u^\lambda, M_{m,n})$,  and with
$u^\lambda(t)$ given by $\Upsilon(m,n)v^\lambda(t)$
as in (\ref{e:M-invariantDelay6++}), we immediately obtain:

\begin{proposition}\label{prop:bif-per1Delay1}
 \begin{enumerate}
\item[\rm (I)]{\rm (\textsf{Necessary condition}):}
If $(\mu, u^\mu)$ is a bifurcation point along sequences of (\ref{e:M-invariantDelay5++})
 with respect to the branch $\{(\lambda, u^\lambda)\,|\,\lambda\in\Lambda\}$,
 then $\nu_{\tau, M_{m,n}}(\Gamma_{n,m,x}^{\lambda})\ne 0$.

\item[\rm (II)]{\rm (\textsf{Sufficient condition}):}
Let $\Lambda$ be first countable.
Suppose  that for some $\mu\in\Lambda$,
there exist two sequences  $(\lambda_k^-)$ and
$(\lambda_k^+)$ in $\Lambda$  such that $\lambda_k^-\to\mu$ and $\lambda_k^+\to\mu$.
 For each \(k\in\mathbb{N}\), let
\begin{align*}
&[i_{\tau, M_{m,n}}(\Gamma_{n,m,x}^{\lambda^-_k}), i_{\tau, M_{m,n}}(\Gamma_{n,m,x}^{\lambda^-_k})+\nu_{\tau, M_{m,n}}(\Gamma_{n,m,x}^{\lambda^-_k})]\\
&\cap[i_{\tau, M_{m,n}}(\Gamma_{n,m,x}^{\lambda^+_k}), i_{\tau, M_{m,n}}(\Gamma_{n,m,x}^{\lambda^+_k})+\nu_{\tau, M_{m,n}}(\Gamma_{n,m,x}^{\lambda^+_k})]
=\emptyset,
\end{align*}
with either \(\nu_{\tau, M_{m,n}}(\Gamma_{n,m,x}^{\lambda^+_k})=0\) or \(\nu_{\tau, M_{m,n}}(\Gamma_{n,m,x}^{\lambda^-_k})=0\).
Define $\hat{\Lambda}:=\{\mu,\lambda^+_k, \lambda^-_k\,|\,k\in\mathbb{N}\}$.
  Then  $(\mu, u^\mu)$ is a bifurcation point  of problem (\ref{e:M-invariantDelay5++})
  with respect to the branch $\{(\lambda, u^\lambda)\,|\,\lambda\in\hat\Lambda\}$
 (and hence also with respect to the larger branch $\{(\lambda, u^\lambda) \mid \lambda \in \Lambda\}$).

\item[\rm (III)]{\rm (\textsf{Existence for bifurcations}):}
 Let $\Lambda$ be path-connected. Suppose that there exist two  points $\lambda^+, \lambda^-\in\Lambda$ such that
 \begin{align*}
&[i_{\tau, M_{m,n}}(\Gamma_{n,m,x}^{\lambda^-}), i_{\tau, M_{m,n}}(\Gamma_{n,m,x}^{\lambda^-})+\nu_{\tau, M_{m,n}}(\Gamma_{n,m,x}^{\lambda^-})]\\
&\cap[i_{\tau, M_{m,n}}(\Gamma_{n,m,x}^{\lambda^+}), i_{\tau, M_{m,n}}(\Gamma_{n,m,x}^{\lambda^+})+\nu_{\tau, M_{m,n}}(\Gamma_{n,m,x}^{\lambda^+})]
=\emptyset,
\end{align*}
 with $\nu_{\tau, M_{m,n}}(\Gamma_{n,m,x}^{\lambda^+})=0$ or $\nu_{\tau, M_{m,n}}(\Gamma_{n,m,x}^{\lambda^-})=0$.
 Then for any path $\alpha: [0,1] \to \Lambda$ connecting $\lambda^+$ to $\lambda^-$,
there exist $t_k \to \bar{t}$ in $[0,1]$ and solutions $u^k \neq u^{\alpha(t_k)}$ of \eqref{e:M-invariantDelay5++} with $\lambda = \alpha(t_k)$ such that $u^k \to u^{\alpha(\bar{t})}$ in $C^1_{\mathrm{loc}}(\mathbb{R}, \mathbb{R}^{2n})$ as $k\to\infty$.
 Moreover, $\alpha(\bar{t}) \neq \lambda^+$ if $\nu_{\tau, M_{m,n}}(\Gamma_{n,m,x}^{\lambda^+}) = 0$,
and $\alpha(\bar{t}) \neq \lambda^-$ if $\nu_{\tau, M_{m,n}}(\Gamma_{n,m,x}^{\lambda^-}) = 0$.
  \end{enumerate}
\end{proposition}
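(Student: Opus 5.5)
This proposition is a direct specialization of Theorem~1.5 of \cite{Lu11} to the Hamiltonian system (\ref{e:M-invariantDelay5++}). The proof therefore reduces to checking that the triple $(H,u_\lambda,M)=(\check{H},u^\lambda,M_{m,n})$ meets the standing hypotheses of that theorem, and then translating its conclusions.

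\textbf{Step 1: regularity and symmetry of $\check H$.} By Assumption~\ref{ass:BasiAss1Delay1}, $V$ is continuous, $\tau$-periodic in $t$, and $C^2$ in $x$, with gradient and Hessian continuous in $(\lambda,t,x)$. Hence $H$ (resp. $\tilde H$) from Proposition~\ref{prop:Liu12}(i) (resp.~(ii)) has the same regularity in $v$. Since $\check H$ is obtained from $H$ by the fixed linear change of variables $\Upsilon(m,n)^{-1}$ (resp. $\Upsilon(m-1,n)^{-1}$), the formulas (\ref{e:M-invariantDelay3}) give the required continuity of $\nabla_3\check H$ and $\nabla_3^2\check H$.

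The invariance $\check H(\lambda,t+\tau,M_{m,n}z)=\check H(\lambda,t,z)$ follows from (\ref{e:Delay8}) (resp. (\ref{e:Delay13})) together with the definition of $M_{m,n}$. Here one uses that (\ref{e:Delay8}) gives invariance under both $T_{m,n}$ and $T_{m,n}^{-1}$, because $T_{m,n}$ has finite order. Finally, $M_{m,n}$ is a standard symplectic matrix with $M_{m,n}^{2m}=I$, as noted after (\ref{e:Delay26.1}) and (\ref{e:Delay26.3}).

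\textbf{Step 2: the branch.} By Assumption~\ref{ass:BasiAss1Delay2}, $(\lambda,t)\mapsto x^\lambda(t)$ is continuous. Hence $(\lambda,t)\mapsto v^\lambda(t)$ and $(\lambda,t)\mapsto u^\lambda(t)=\Upsilon v^\lambda(t)$ are continuous. By Proposition~\ref{prop:Liu12} and Proposition~\ref{prop:Equiv1}, each $u^\lambda$ solves (\ref{e:M-invariantDelay5++}). The equation then gives continuity of $\dot u^\lambda$ in $(\lambda,t)$ as well, which is the form of branch hypothesis used in \cite{Lu11}.

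The linearization of (\ref{e:M-invariantDelay5++}) along $u^\lambda$ is (\ref{e:M-invariantDelay7++}), whose fundamental matrix is $\Gamma^\lambda_{n,m,x}$. This is exactly the path whose Dong-type index $(i_{\tau,M},\nu_{\tau,M})$ appears in Theorem~1.5 of \cite{Lu11}.

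\textbf{Step 3: transcription.} With Steps 1 and 2 in place, parts (I)--(III) of Theorem~1.5 of \cite{Lu11} become verbatim parts (I)--(III) of the present proposition, with the indices $i_{\tau,M_{m,n}}(\Gamma^\lambda_{n,m,x})$ and $\nu_{\tau,M_{m,n}}(\Gamma^\lambda_{n,m,x})$.

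In (I), the nullity should be read at $\lambda=\mu$. It equals $\dim\mathrm{Ker}(\Gamma^\mu_{n,m,x}(\tau)-M_{m,n})$, which is the dimension of the space of solutions of the linearized problem with the twisted boundary condition.

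\textbf{Main obstacle.} The one point needing care is whether Theorem~1.5 of \cite{Lu11} requires $M$ to be orthogonal; Remark~\ref{rm:Equiv1} shows that $M_{m,n}$ need not be. I would first check that the statement of \cite{Lu11} uses Dong's index $i_{\tau,M}$ for an arbitrary $M\in{\rm Sp}(2N)$ with $M^l=I$ for some $l$. If the theorem is stated under an orthogonality hypothesis, I would replace it by the general-$M$ version of \cite{Lu11}, which is available via Proposition~\ref{prop:twoDef}. That proposition converts between $i_{\tau,M}$ and $i^M_\tau$ up to an additive constant depending only on $M$, so the interval-disjointness hypotheses in (II) and (III) are unaffected by which index is used.
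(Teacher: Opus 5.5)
Your proposal follows the paper's argument. The paper gets this proposition in one line by applying Theorem~1.5 of \cite{Lu11} to (\ref{e:M-invariantDelay5++}) with $(H,u_\lambda,M)=(\check H,u^\lambda,M_{m,n})$; your Steps 1--3, including reading the nullity in (I) at $\lambda=\mu$, just make that hypothesis check explicit.

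The one weak spot is the fallback in your last paragraph. Proposition~\ref{prop:twoDef} only compares $i_{\tau,M}$ with $i^M_\tau$; it cannot produce a bifurcation theorem for non-orthogonal $M$. If orthogonality were actually required, the correct repair would be the following. Since $M_{m,n}$ has finite order, it generates a compact subgroup, so it can be conjugated by some $Q\in{\rm Sp}$ into ${\rm Sp}\cap O$; equivalently, replace $\Upsilon$ by $Q^{-1}\Upsilon$. You would then transfer the hypotheses using the symplectic-conjugation invariance of $i^M_\tau$ behind (\ref{e:NM-index}), together with Proposition~\ref{prop:twoDef}. Within one application every index is shifted by the same constant, so disjointness of the intervals is preserved.
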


Let $(i_{\tau, M}, \nu_{\tau, M})$ be  the Maslov-type index  in (\ref{e:dongIndex}).
Together with (\ref{e:LiuTang-index}) and Proposition~\ref{prop:twoDef}, this yields the following relations.

For $m\in 2\mathbb{N}$, we have
\begin{equation}\label{e:Delay27+Dong}
\left\{\begin{array}{ll}
&i_{T_{m,n}^{-1}}^{A_{m,n}}(x^\lambda)=i_{\tau}^{M_{m,n}}(\Gamma_{n, m,x}^\lambda)=i_{\tau, M_{m,n}}(\Gamma_{n, m,x}^\lambda)+ N_{m,n},\\
&\nu_{T_{m,n}^{-1}}^{A_{m,n}}(x^\lambda)=\nu_{\tau}^{M_{m,n}}(\Gamma_{n, m,x}^\lambda)=\nu_{\tau, M_{m,n}}(\Gamma_{n, m,x}^\lambda)
\end{array}\right.
\end{equation}
where $N_{m,n}$ is an integer depending only on $M_{m,n}={\Upsilon}(m, n)T_{m,n}^{-1}{\Upsilon}(m, n)^{-1}$.
For $m\in 2\mathbb{N}+1$, we have
\begin{equation}\label{e:Delay28+Dong}
\left\{\begin{array}{ll}
&i_{\tilde{B}_{m-1,n}^{-1}}^{A_{m-1,n}}(x^\lambda)=i_{\tau}^{M_{m,n}}(\Gamma_{n, m,x}^\lambda)=i_{\tau, M_{m,n}}(\Gamma_{n, m,x}^\lambda)+ N_{m,n},\\
&\nu_{\tilde{B}_{m-1,n}^{-1}}^{A_{m-1,n}}(x^\lambda)=\nu_{\tau}^{M_{m,n}}(\Gamma_{n, m,x}^\lambda)=\nu_{\tau, M_{m,n}}(\Gamma_{n, m,x}^\lambda)
\end{array}\right.
\end{equation}
where $N_{m,n}$ is an integer depending only on $M_{m,n}={\Upsilon}(m-1, n)\tilde{B}_{m-1,n}^{-1}{\Upsilon}(m-1, n)^{-1}$.

%By (\ref{e:Delay27})-(\ref{e:Delay28}),
By Proposition~\ref{prop:Liu12}(i)-(ii), and Proposition~\ref{prop:Equiv1},
we may derive (I), (II), and (III) of the following theorem from (I), (II), and (III) of Proposition~\ref{prop:bif-per1Delay1}, respectively.

\begin{theorem}\label{th:bif-per1Delay1}
Under Assumption~\ref{ass:BasiAss1Delay2}, for $m\in 2\mathbb{N}$,
%Let $i_{\tau}(x^\lambda)$ and $\nu_{\tau}(x^\lambda)$
%be defined by (\ref{e:Delay27}).
 the following conclusions hold.
 \begin{enumerate}
\item[\rm (I)]{\rm (\textsf{Necessary condition}):}
 If the pair $(\mu, x^\mu)$ is a bifurcation point along sequences of (\ref{e:Delay1})
 with respect to the branch $\{(\lambda, x^\lambda)\,|\,\lambda\in\Lambda\}$,
 then $\nu_{T_{m,n}^{-1}}^{A_{m,n}}(x^\lambda)\ne 0$.

\item[\rm (II)]{\rm (\textsf{Sufficient condition}):}
Let  $\Lambda$ be first countable.
Suppose that for some $\mu\in\Lambda$,
there exist two sequences $(\lambda_k^-)$ and
$(\lambda_k^+)$ in  $\Lambda$ such that $\lambda_k^\pm\to\mu$,  and such that
for each $k\in\mathbb{N}$,
$$
[i_{T_{m,n}^{-1}}^{A_{m,n}}(x^{\lambda_k^-}), i_{T_{m,n}^{-1}}^{A_{m,n}}(x^{\lambda_k^-})+\nu_{T_{m,n}^{-1}}^{A_{m,n}}(x^{\lambda_k^-})]
\cap[i_{T_{m,n}^{-1}}^{A_{m,n}}(x^{\lambda_k^+}), i_{T_{m,n}^{-1}}^{A_{m,n}}(x^{\lambda_k^+})+
\nu_{T_{m,n}^{-1}}^{A_{m,n}}(x^{\lambda_k^+})]=\emptyset,
$$
 and moreover, either $\nu_{T_{m,n}^{-1}}^{A_{m,n}}(x^{\lambda_k^+})=0$ or $\nu_{T_{m,n}^{-1}}^{A_{m,n}}(x^{\lambda_k^-})=0$.
Let $\hat{\Lambda}:=\{\mu,\lambda^+_k, \lambda^-_k\,|\,k\in\mathbb{N}\}$.
Then $(\mu, x^\mu)$ is a bifurcation point of  problem (\ref{e:Delay1}) with respect to the branch
$\{(\lambda, x^\lambda) \mid \lambda \in \hat\Lambda\}$; hence, it is also a bifurcation point with respect to the larger branch
$\{(\lambda, x^\lambda) \mid \lambda \in \Lambda\}$.

\item[\rm (III)]{\rm (\textsf{Existence for bifurcations}):}
 Let  $\Lambda$ be path-connected. Suppose that there exist two  points $\lambda^+, \lambda^-\in\Lambda$ such that
  $$
[i_{T_{m,n}^{-1}}^{A_{m,n}}(x^{\lambda^-}), i_{T_{m,n}^{-1}}^{A_{m,n}}(x^{\lambda^-})+\nu_{T_{m,n}^{-1}}^{A_{m,n}}(x^{\lambda^-})]
\cap[i_{T_{m,n}^{-1}}^{A_{m,n}}(x^{\lambda^+}), i_{T_{m,n}^{-1}}^{A_{m,n}}(x^{\lambda^+})+
\nu_{T_{m,n}^{-1}}^{A_{m,n}}(x^{\lambda^+})]=\emptyset,
$$
 and either $\nu_{T_{m,n}^{-1}}^{A_{m,n}}(x^{\lambda^+})=0$ or $\nu_{T_{m,n}^{-1}}^{A_{m,n}}(x^{\lambda^-})=0$.
  Then for any path $\alpha: [0,1] \to \Lambda$ connecting $\lambda^+$ to $\lambda^-$,
there exist $t_k \to \bar{t}$ in $[0,1]$ and solutions $x^k \neq x^{\alpha(t_k)}$ of
(\ref{e:Delay1}) with $\lambda = \alpha(t_k)$ such that $x^k \to x^{\alpha(\bar{t})}$ in $C^1_{\mathrm{loc}}(\mathbb{R}, \mathbb{R}^{n})$ as $k\to\infty$.
 Moreover, $\alpha(\bar{t}) \neq \lambda^+$ if $\nu_{T_{m,n}^{-1}}^{A_{m,n}}(x^{\lambda^+})=0$,
 and $\alpha(\bar{t}) \neq \lambda^-$ if  $\nu_{T_{m,n}^{-1}}^{A_{m,n}}(x^{\lambda^-})=0$.
\end{enumerate}
Correspondingly, for  $m\in 2\mathbb{N}+1$,
 the statements (I)-(III) above continue to hold if we replace $i_{T_{m,n}^{-1}}^{A_{m,n}}$ and $\nu_{T_{m,n}^{-1}}^{A_{m,n}}$  by
$i_{\tilde{B}_{m-1,n}^{-1}}^{A_{m-1,n}}$ and $\nu_{\tilde{B}_{m-1,n}^{-1}}^{A_{m-1,n}}$, respectively.
\end{theorem}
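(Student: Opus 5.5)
The plan is to transport each of (I)–(III) of Proposition~\ref{prop:bif-per1Delay1} back to the delay problem (\ref{e:Delay1}). The dictionary is the chain of bijections
\[
x\;\longmapsto\; v=(x_1^\top,\dots,x_m^\top)^\top \;\longmapsto\; u=\Upsilon(m,n)v .
\]
Here Proposition~\ref{prop:Liu12}(i) identifies solutions of (\ref{e:Delay1}) with solutions of (\ref{e:Delay6}). Proposition~\ref{prop:Equiv1} identifies those with solutions of (\ref{e:M-invariantDelay5++}). Under this correspondence the branch $x^\lambda$ goes to $u^\lambda$ as in (\ref{e:M-invariantDelay6++}). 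For $m$ odd, the same chain is used with $m-1$ components, $\tilde B_{m-1,n}$ and $\Upsilon(m-1,n)$.

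\textbf{Step 1: translate the indices.} By (\ref{e:Delay27+Dong}) (resp.\ (\ref{e:Delay28+Dong})) we have
\[
i^{A_{m,n}}_{T^{-1}_{m,n}}(x^\lambda)=i_{\tau,M_{m,n}}(\Gamma^\lambda_{n,m,x})+N_{m,n},\qquad
\nu^{A_{m,n}}_{T^{-1}_{m,n}}(x^\lambda)=\nu_{\tau,M_{m,n}}(\Gamma^\lambda_{n,m,x}),
\]
where $N_{m,n}$ does not depend on $\lambda$. A common shift of both intervals preserves whether they intersect. So the hypotheses of (II) and (III) in the theorem are exactly those of Proposition~\ref{prop:bif-per1Delay1}(II),(III), and the vanishing of $\nu$ in (I) is the same condition in both formulations.

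\textbf{Step 2: compare the topologies.} I must check that convergence notions match in both directions.

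Suppose $\bar x^k|_{[0,\tau]}\to x^\mu|_{[0,\tau]}$ in $C^0$. Every solution satisfies $x(t+m\tau)=-x(t)$, so it is $2m\tau$-periodic and determined by its values on $[0,2m\tau]$. From the equation, the values on $[j\tau,(j+1)\tau]$ are obtained by integrating the right-hand side built from earlier shifts. Equivalently, $\bar v^k$ solves the Hamiltonian ODE (\ref{e:Delay6}) with $\bar v^k(\tau)=T^{-1}_{m,n}\bar v^k(0)$.

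Here is the subtle point. $C^0$ convergence of $\bar x^k$ alone on $[0,\tau]$ only controls the first component $x_1$ of $v$. I will instead use the structure $v(t+\tau)=T_{m,n}^{-1}v(t)$. Through the block-shift form of $T_{m,n}$, this relation expresses $x_i$ on $[0,\tau]$ via $x_1$ on translated intervals. Combined with uniqueness and continuous dependence for the ODE (\ref{e:Delay6}), $C^0$ closeness of $x_1$ on a full period window propagates to $C^1_{\rm loc}$ closeness of $v$, hence of $u$.

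I expect this to be the main obstacle. The cleanest route is to use the definition in the direction actually needed:

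\emph{For (I),} argue by contradiction. A bifurcating sequence $\bar x^k$ yields $\bar u^k\ne u^{\lambda_k}$. After integrating the ODE, $\bar u^k\to u^\mu$ as well, using $\bar x^k(t)=\bar x^k(t-m\tau)\cdot(-1)$ and the equation over one period to bound all shifted components. Then $(\mu,u^\mu)$ is a bifurcation point along sequences of (\ref{e:M-invariantDelay5++}), and Proposition~\ref{prop:bif-per1Delay1}(I) gives $\nu\neq0$.

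\emph{For (II) and (III),} the direction is easy. Solutions $u^k\ne u^{\lambda}$ converging in $C^1_{\rm loc}$ give $x^k=(\Upsilon^{-1}u^k)_1$, which differ from $x^\lambda$ by the converse parts of Propositions~\ref{prop:Liu12} and~\ref{prop:Equiv1}. They converge in $C^1_{\rm loc}$, because projection and the linear map $\Upsilon^{-1}$ are continuous.

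\textbf{Step 3: repeat for odd $m$.} For $m\in2\mathbb N+1$ the argument is verbatim with the $(m-1)$-component reduction, Proposition~\ref{prop:Liu12}(ii), and (\ref{e:Delay28+Dong}).
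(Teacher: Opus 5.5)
Your route is the paper's. The paper derives (I)--(III) from Proposition~\ref{prop:bif-per1Delay1} via the bijections of Proposition~\ref{prop:Liu12}(i)--(ii) and Proposition~\ref{prop:Equiv1}, and the $\lambda$-independent shift $N_{m,n}$ in (\ref{e:Delay27+Dong})--(\ref{e:Delay28+Dong}) makes the interval conditions equivalent. Your Step~1 and your transfer for (II) and (III) are fine. The reason is that $v$ is recovered from $x=v_1$ through $x_{i+1}(t)=x_i(t-\tau)$, so $u^k\ne u^{\lambda_k}$ forces $x^k\ne x^{\lambda_k}$, and $C^1_{\rm loc}$ convergence passes to the first component.

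The gap is Step~2 for (I), and it fails for $m\ge 3$.

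\begin{itemize}
\item Definition~\ref{def:Delay} only gives $\bar x^k\to x^\mu$ uniformly on $[0,\tau]$. But $\bar v^k|_{[0,\tau]}$ also involves $\bar x^k$ on $[-(m-1)\tau,0]$ (resp.\ $[-(m-2)\tau,0]$ for odd $m$). Modulo the antiperiod $m\tau$, this is a union of windows $[j\tau,(j+1)\tau]$ with $1\le j\le m-1$.
\item Antiperiodicity only carries $[0,\tau]$ to $[jm\tau,jm\tau+\tau]$, never onto those windows.
\item The method of steps needs data on an interval of length $(m-1)\tau$. On $[\tau,2\tau]$, $\dot x$ depends on $x$ on $[(2-m)\tau,\tau]$, which for $m\ge3$ contains the uncontrolled interval $[(2-m)\tau,0]$. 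So ``integrating the equation over one period'' is circular.
\item Even a uniform bound on the shifted components would not suffice. It only yields a subsequential limit $\tilde v$ whose first component agrees with $x^\mu$ on $[0,\tau]$, and nothing forces $\tilde v=v^\mu$.
\end{itemize}

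Your propagation does work for $m=2$. There you integrate $\dot x(t)=\nabla_3V(\lambda_k,t,x(t-\tau))$ on $[\tau,2\tau]$ from $\bar x^k(\tau)$, then use $x(t-\tau)=-x(t+\tau)$.

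For $m\ge3$, you must read the convergence in Definition~\ref{def:Delay} on a full window. That means $\bar v^k|_{[0,\tau]}\to v^\mu|_{[0,\tau]}$, or convergence in $C^0_{\rm loc}(\mathbb{R})$. This is what the paper's one-line derivation tacitly assumes. Under that reading, $\bar u^k=\Upsilon(m,n)\bar v^k$ (resp.\ $\Upsilon(m-1,n)\bar v^k$) converges to $u^\mu$ in $C^0([0,\tau])$. The Hamiltonian equation upgrades this to $C^1$, and Proposition~\ref{prop:bif-per1Delay1}(I) applies.
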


\begin{theorem}[\textsf{Alternative bifurcations of Rabinowitz type}]\label{th:bif-per2Delay}
 Let  Assumption~\ref{ass:BasiAss1Delay2} hold with $\Lambda$ being a real interval.
 For $m\in 2\mathbb{N}$, let $i_{T_{m,n}^{-1}}^{A_{m,n}}(x^\lambda)$ and
 $\nu_{T_{m,n}^{-1}}^{A_{m,n}}(x^\lambda)$ be defined by (\ref{e:Delay277*}).
Suppose that $\mu$ is an interior point of $\Lambda$ such that
 $\nu_{T_{m,n}^{-1}}^{A_{m,n}}(x^\mu)\ne 0$,  and that $\nu_{T_{m,n}^{-1}}^{A_{m,n}}(x^\lambda)=0$
 for all $\lambda\in\Lambda\setminus\{\mu\}$ sufficiently close to $\mu$. Moreover, suppose that
  $i_{T_{m,n}^{-1}}^{A_{m,n}}(x^\lambda)$ takes, respectively, values $i_{T_{m,n}^{-1}}^{A_{m,n}}(x^\mu)$ and
  $i_{T_{m,n}^{-1}}^{A_{m,n}}(x^\mu)+ \nu_{T_{m,n}^{-1}}^{A_{m,n}}(x^\mu)$
 as $\lambda\in\Lambda$ varies in
 two deleted half neighborhoods  of $\mu$.
 Then one of the following alternatives occurs:% one of the following assertions holds:
\begin{enumerate}
\item[\rm (i)] For $\lambda\ne\mu$,
 problem (\ref{e:Delay1}) possesses  a sequence of solutions $\{x^{\mu,j}\}^\infty_{j=1}$,
  with $x^{\mu,j}\ne x^\mu$, such that  $x^{\mu,j}|_{[0,\tau]}\to x^\mu|_{[0,\tau]}$ in $C^1([0,\tau];\R^{n})$ as $j\to\infty$.

\item[\rm (ii)]  For every $\lambda\in\Lambda\setminus\{\mu\}$ sufficiently close to $\mu$,
problem (\ref{e:Delay1}) admits a  solution $\bar{x}^\lambda\ne x^\lambda$ satisfying
  $\bar{x}^\lambda|_{[0,\tau]}\to x^\lambda|_{[0,\tau]}$ in $C^1([0,\tau];\R^{n})$  as $\lambda\to \mu$.

\item[\rm (iii)] For a given neighborhood $\mathcal{W}$ of $x^\mu|_{[0,\tau]}$ in $C^1([0,\tau];\R^{n})$,
there is a one-sided  neighborhood $\Lambda^0$ of $\mu$ such that
for any $\lambda\in\Lambda^0\setminus\{\mu\}$, problem (\ref{e:Delay1}) (with parameter $\lambda$)
admits at least two distinct solutions, $\bar{x}^\lambda\ne x^\lambda$ and $\hat{x}^\lambda\ne x^\lambda$, satisfying $\bar{x}^\lambda|_{[0,\tau]}, \hat{x}^\lambda|_{[0,\tau]}\in \mathcal{W}$.
Moreover, if $\nu_{T_{m,n}^{-1}}^{A_{m,n}}(x^\mu)>1$
and if, for parameter $\lambda\in\Lambda^0\setminus\{\mu\}$, problem (\ref{e:Delay1}) (with parameter  $\lambda$) has only finitely many  solutions whose restrictions to $[0,\tau]$ belong to $\mathcal{W}$, then the above solutions $\bar{x}^\lambda$ and $\hat{x}^\lambda$  can, in addition,
be chosen to satisfy the following condition for $H$ in (\ref{e:Delay4}):
\begin{align}\label{e:Delay26++}
&\int^{\tau}_0\left[\frac{1}{2}(\dot{\bar{v}}^\lambda(t), A_{m,n}^{-1}\bar{v}^\lambda(t))_{\mathbb{R}^{mn}}+ H(\lambda, t, \bar{v}^\lambda(t))\right]dt\nonumber\\
&\ne \int^{\tau}_0\left[\frac{1}{2}(\dot{\hat{v}}^\lambda(t), A_{m,n}^{-1}\hat{v}^\lambda(t))_{\mathbb{R}^{mn}}+ H(\lambda, t,
\hat{v}^\lambda(t))\right]dt
\end{align}
 where $\bar{v}^\lambda(t):=(\bar{x}^\lambda_1(t)^\top, \cdots, \bar{x}^\lambda_m(t)^\top)^\top\in ({\R}^{n})^m$
with $\bar{x}^\lambda_ { i } ( t ) = \bar{x}^\lambda( t-(i-1)\tau)$ ($i=1,
\cdots,m$),
and $\hat{v}^\lambda(t):=(\hat{x}^\lambda_1(t)^\top, \cdots, \hat{x}^\lambda_m(t)^\top)^\top\in ({\R}^{n})^m$
with $\hat{x}^\lambda_ { i } ( t ) =
\hat{x}^\lambda( t-(i-1)\tau)$ ($i=1,\cdots,m$). % {\bf and}
\end{enumerate}
Correspondingly, for $m\in 2\mathbb{N}+1$, the following replacements should be made to the statements above:
\begin{enumerate}
\item[$\bullet$]  $i_{T_{m,n}^{-1}}^{A_{m,n}}$ and $\nu_{T_{m,n}^{-1}}^{A_{m,n}}$ are replaced by
$i_{\tilde{B}_{m-1,n}^{-1}}^{A_{m-1,n}}$ and $\nu_{\tilde{B}_{m-1,n}^{-1}}^{A_{m-1,n}}$ in (\ref{e:Delay288*}), respectively;
\item[$\bullet$] the part ``$H$ in (\ref{e:Delay4}):....($i=1,\cdots,m$).'' in (iii)
is replaced by ``$\tilde{H}$ in (\ref{e:Delay9}),
\begin{eqnarray}\label{e:Delay27++}
&&\int^{\tau}_0\left[\frac{1}{2}(\dot{\bar{v}}^\lambda(t), A_{m-1,n}^{-1}\bar{v}^\lambda(t))_{\mathbb{R}^{(m-1)n}}+\tilde{H}(\lambda, t,
\bar{v}^\lambda(t))\right]dt\nonumber\\
&&\ne \int^{\tau}_0\left[\frac{1}{2}(\dot{\hat{v}}^\lambda(t), A_{m-1,n}^{-1}\hat{v}^\lambda(t))_{\mathbb{R}^{(m-1)n}}+ \tilde{H}(\lambda, t,
\hat{v}^\lambda(t))\right]dt
\end{eqnarray}
where $\bar{v}^\lambda(t):=(\bar{x}^\lambda_1(t)^\top,\cdots, \bar{x}^\lambda_{m-1}(t)^\top)^\top\in ({\R}^{n})^{m-1}$
with $\bar{x}^\lambda_ { i } ( t ) = \bar{x}^\lambda( t-(i-1)\tau)$ for $i=1,\dots,m-1$,
and $\hat{v}^\lambda(t):=(\hat{x}^\lambda_1(t)^\top,\cdots, \hat{x}^\lambda_{m-1}(t)^\top)^\top\in ({\R}^{n})^{m-1}$
with $\hat{x}^\lambda_ { i } ( t ) = \hat{x}^\lambda( t-(i-1)\tau)$ for $i=1,\dots,m-1$.''
\end{enumerate}
 \end{theorem}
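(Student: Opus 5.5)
The plan is to reduce Theorem~\ref{th:bif-per2Delay} to the Rabinowitz-type alternative bifurcation theorem for generalized Hamiltonian systems with symmetric boundary condition $u(t+\tau)=Mu(t)$ from \cite{Lu11}. In the same way Theorem~\ref{th:bif-per1Delay1} was obtained from Theorem~1.5 there, we apply the corresponding Rabinowitz-type theorem to problem (\ref{e:M-invariantDelay5++}) with data $(H,u_\lambda,M)=(\check H,u^\lambda,M_{m,n})$. Here $u^\lambda=\Upsilon(m,n)v^\lambda$ (resp.\ $\Upsilon(m-1,n)v^\lambda$) as in (\ref{e:M-invariantDelay6++}).

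\textbf{Step 1: the hypotheses transfer.} By Assumption~\ref{ass:BasiAss1Delay2}, $\check H$ satisfies the regularity required in Assumption~\ref{ass:EquivDelay} and the invariance $\check H(\lambda,t+\tau,M_{m,n}z)=\check H(\lambda,t,z)$. The branch $\lambda\mapsto u^\lambda$ is continuous into $C^1$ on compact intervals; this uses continuity of $(\lambda,t)\mapsto x^\lambda(t)$ together with the equation, which expresses $\dot x^\lambda$ continuously.

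By (\ref{e:Delay27})/(\ref{e:Delay28}) and (\ref{e:Delay27+Dong})/(\ref{e:Delay28+Dong}), the index pairs of $\Gamma^\lambda_{n,m,x}$ are related by
\[
i_{\tau,M_{m,n}}(\Gamma^\lambda_{n,m,x})=i^{A_{m,n}}_{T^{-1}_{m,n}}(x^\lambda)-N_{m,n},\qquad \nu_{\tau,M_{m,n}}(\Gamma^\lambda_{n,m,x})=\nu^{A_{m,n}}_{T^{-1}_{m,n}}(x^\lambda),
\]
with $N_{m,n}$ independent of $\lambda$. The hypotheses are invariant under a constant shift of the index. Hence $\nu\ne0$ at $\mu$, $\nu=0$ nearby, and the index jumping from $i(\mu)$ on one side to $i(\mu)+\nu(\mu)$ on the other hold verbatim for the $(i_{\tau,M_{m,n}},\nu_{\tau,M_{m,n}})$ pair. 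If the cited theorem is phrased with $i^{M}_\tau$, no conversion is needed at all.

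\textbf{Step 2: invoke the theorem and pull back.} The Hamiltonian theorem gives one of three alternatives for (\ref{e:M-invariantDelay5++}), with convergence in $C^1([0,\tau])$. We translate back as follows.
\begin{itemize}
\item A solution $u$ gives $v=\Upsilon^{-1}u$, solving (\ref{e:Delay6}) (resp.\ (\ref{e:Delay11})) by Proposition~\ref{prop:Equiv1}.
\item Its first component $x=x_1$ then solves (\ref{e:Delay1}) by Proposition~\ref{prop:Liu12}(i)--(ii).
\item Conversely, distinct solutions $x\ne x^\lambda$ correspond to $u\ne u^\lambda$, since $v$ is built from $x$ and $x$ is the first component of $v$. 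This injectivity also ensures that two distinct $u$'s give two distinct $x$'s.
\end{itemize}
The action inequality in (iii) transfers through the action identity in Proposition~\ref{prop:Equiv1}. Finiteness of solutions in $\mathcal W$ corresponds to finiteness in the induced neighborhood $\Upsilon(\text{neighborhood})$, since the correspondence is bijective.

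\textbf{Step 3: convergence of restrictions.} This is the main technical point. Convergence $u\to u^\lambda$ in $C^1([0,\tau])$ gives $x_1\to x_1^\lambda$ in $C^1([0,\tau])$, which is exactly what the statement needs. For (iii), we need a neighborhood $\mathcal W$ of $x^\mu|_{[0,\tau]}$ to correspond to a neighborhood of $u^\mu|_{[0,\tau]}$.
\begin{itemize}
\item The components $x_i(t)=x(t-(i-1)\tau)$ on $[0,\tau]$ involve $x$ on $[-(m-1)\tau,\tau]$.
\item For solutions, values of $x$ on the whole line are determined from $x|_{[0,\tau]}$ via the solution $u$ of the Hamiltonian system with the twisted condition $u(t+\tau)=Mu(t)$.
\item In particular $u|_{[0,\tau]}$ determines everything, and $x|_{[0,\tau]}$ is the first block of $\Upsilon^{-1}u|_{[0,\tau]}$.
\end{itemize}
So we take the open set $\{u:\ (\Upsilon^{-1}u)_1|_{[0,\tau]}\in\mathcal W\}$, which is an open neighborhood of $u^\mu|_{[0,\tau]}$ in $C^1$, and apply the theorem with it. Solutions of the Hamiltonian problem in this set map to solutions of (\ref{e:Delay1}) with restrictions in $\mathcal W$. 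The $m$ odd case is identical, with $\Upsilon(m-1,n)$, $\tilde B_{m-1,n}$ and $\tilde H$.
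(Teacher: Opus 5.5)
Your proposal is correct and is essentially the paper's proof. The paper likewise uses (\ref{e:Delay27}), (\ref{e:Delay277*}) and the $\lambda$-independent shift $N_{m,n}$ in (\ref{e:Delay27+Dong}) to transfer the nullity and index-jump hypotheses to $(i_{\tau,M_{m,n}},\nu_{\tau,M_{m,n}})(\Gamma^\lambda_{n,m,x})$, applies the first part of Theorem~1.8 of \cite{Lu11} to (\ref{e:M-invariantDelay5++}) with $(H,u_\lambda,M)=(\check H,u^\lambda,M_{m,n})$, and translates (i)--(iii) back via Propositions~\ref{prop:Equiv1} and~\ref{prop:Liu12}; your Steps~2--3 (solution bijection, pulled-back neighborhood $\{u:\,(\Upsilon^{-1}u)_1|_{[0,\tau]}\in\mathcal W\}$, action identity) just make explicit what the paper compresses into ``as shown in the proof of Theorem~\ref{th:bif-per1Delay1}'', and your aside that $x|_{[0,\tau]}$ alone determines $x$ is unnecessary (and unclear for $m>2$), since the bijection between global solutions suffices.
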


Since  $H$ in (\ref{e:Delay26++}) is even,
the inequality in (\ref{e:Delay26++})  show that $\bar{v}^\lambda\ne -\hat{v}^\lambda$ and
hence $\bar{x}^\lambda\ne -\hat{x}^\lambda$.

\begin{remark}\label{rm:bif-per2Delay}
{\rm When $m=2$, Equation (\ref{e:Delay26++}) can be stated more precisely as follows:
\begin{align}\label{e:Delay26+++}
&(\bar{x}^\lambda(0), \bar{x}^\lambda(\tau))_{\mathbb{R}^n}-
 \int^{\tau}_0(\dot{\bar{x}}^\lambda(t), \bar{x}^\lambda(t-\tau))_{\mathbb{R}^n}dt+\int^{2\tau}_0V(\lambda,t,
 \bar{x}^\lambda(t))dt\nonumber\\
&\ne (\hat{x}^\lambda(0), \hat{x}^\lambda(\tau))_{\mathbb{R}^n}-
 \int^{\tau}_0(\dot{\hat{x}}^\lambda(t), \hat{x}^\lambda(t-\tau))_{\mathbb{R}^n}dt+\int^{2\tau}_0V(\lambda,t,
 \hat{x}^\lambda(t))dt.
\end{align}
(Clearly, this  also implies $\bar{x}^\lambda\ne -\hat{x}^\lambda$.) In fact, since $A_{2,n}^{-1}=J_n$, then
$(\dot{\bar{v}}^\lambda(t), A_{2,n}^{-1}\bar{v}^\lambda(t))_{\mathbb{R}^{2n}}=
-(\dot{\bar{x}}_1^\lambda(t), \bar{x}_2^\lambda(t))_{\mathbb{R}^{n}}+
(\dot{\bar{x}}_2^\lambda(t), \bar{x}_1^\lambda(t))_{\mathbb{R}^{n}}$.
Note that $\bar{x}^{\lambda}(t+2\tau)=- \bar{x}^\lambda( t)=-\bar{x}^\lambda_ {1} ( t )$ and $\bar{x}^\lambda_ { 2 } ( t ) = \bar{x}^\lambda( t-\tau)$
 for all $t\in\mathbb{R}$.
Using integration by parts, we obtain
\begin{align*}
\int^\tau_0(\dot{\bar{x}}_2^\lambda(t), \bar{x}_1^\lambda(t))_{\mathbb{R}^{n}}dt
&=({\bar{x}}_2^\lambda(t), \bar{x}_1^\lambda(t))_{\mathbb{R}^{n}}\big|^\tau_0-
\int^\tau_0(\bar{x}_2^\lambda(t), \dot{\bar{x}}_1^\lambda(t))_{\mathbb{R}^{n}}dt\\
&=2({\bar{x}}^\lambda(0), \bar{x}^\lambda(\tau))_{\mathbb{R}^{n}}
-\int^\tau_0(\dot{\bar{x}}_1^\lambda(t), \bar{x}_2^\lambda(t))_{\mathbb{R}^{n}}dt.
\end{align*}
It follows that
\begin{align*}
\int^\tau_0(\dot{\bar{v}}^\lambda(t), A_{2,n}^{-1}\bar{v}^\lambda(t))_{\mathbb{R}^{2n}}dt&=
-\int^\tau_0(\dot{\bar{x}}_1^\lambda(t), \bar{x}_2^\lambda(t))_{\mathbb{R}^{n}}dt+
\int^\tau_0(\dot{\bar{x}}_2^\lambda(t), \bar{x}_1^\lambda(t))_{\mathbb{R}^{n}}dt\\
&=2({\bar{x}}^\lambda(0), \bar{x}^\lambda(\tau))_{\mathbb{R}^{n}}
-2\int^\tau_0(\dot{\bar{x}}^\lambda(t), \bar{x}^\lambda(t-\tau))_{\mathbb{R}^{n}}dt.
\end{align*}
Moreover, since  $V(\lambda,t, x)$ is even  in $x$ and $\tau$-periodic in $t$, there holds
\begin{align*}
\int^\tau_0V(\lambda, t, \bar{x}^\lambda(t-\tau))dt&=
\int^\tau_0V(\lambda, t, -\bar{x}^\lambda(t+\tau))dt\\
&=\int^\tau_0V(\lambda, t, \bar{x}^\lambda(t+\tau))dt\\
&=\int^\tau_0V(\lambda, t+\tau, \bar{x}^\lambda(t+\tau))dt\\
&=\int^{2\tau}_\tau V(\lambda, t, \bar{x}^\lambda(t))dt.
\end{align*}
Hence, it follows from
$H(\lambda, t, \bar{v}^\lambda(t))=V(\lambda, t, \bar{x}^\lambda(t))+
V(\lambda, t, \bar{x}^\lambda(t-\tau))$ that
$$
\int^\tau_0 H(\lambda, t, \bar{v}^\lambda(t))dt=\int^{2\tau}_0V(\lambda,t,
 \bar{x}^\lambda(t))dt.
$$
These show that (\ref{e:Delay26++}) and  (\ref{e:Delay26+++}) are equivalent.}
\end{remark}

\begin{proof}[\bf Proof of Theorem~\ref{th:bif-per2Delay}]
%From By the assumptions and (\ref{e:Delay27})-(\ref{e:Delay28}), we have:
% $\nu_{\tau, M_{2,n}}(\Gamma_{n,2,x}^\mu)\ne 0$,  $\nu_{\tau, M_{2,n}}(\Gamma_{n,2,x}^\lambda)=0$
% for all $\lambda\in\Lambda\setminus\{\mu\}$ sufficiently close to $\mu$, and
%  $i_{\tau, M_{2,n}}(\Gamma_{n,2,x}^\lambda)$ takes, respectively, values
%  $i_{\tau, M_{2,n}}(\Gamma_{n,2,x}^\mu)$ and
%  $i_{\tau, M_{2,n}}(\Gamma_{n,2,x}^\mu)+ \nu_{\tau, M_{2,n}}(\Gamma_{n,2,x}^\mu)$
% as $\lambda\in\Lambda$ varies in  two deleted half neighborhoods  of $\mu$.
We only prove the case $m\in 2\mathbb{N}$ because a similar proof can be completed for $m\in 2\mathbb{N}+1$.
From the assumptions, and (\ref{e:Delay27}), (\ref{e:Delay277*}) and (\ref{e:Delay27+Dong}) it follows that:
$$
\nu_{\tau, M_{m,n}}(\Gamma_{n,m,x}^\mu)\ne 0,\quad\text{whereas\quad
  $\nu_{\tau, M_{m,n}}(\Gamma_{n,m,x}^\lambda)=0$\;
 for all $\lambda\in\Lambda\setminus\{\mu\}$ sufficiently close to $\mu$, }
$$
and that the index
  $i_{\tau, M_{m,n}}(\Gamma_{n,m,x}^\lambda)$ takes values
  $i_{\tau, M_{m,n}}(\Gamma_{n,m,x}^\mu)$ and
  $i_{\tau, M_{m,n}}(\Gamma_{n,m,x}^\mu)+ \nu_{\tau, M_{m,n}}(\Gamma_{n,m,x}^\mu)$
 on the  two deleted half-neighborhoods  of $\mu$,  respectively.

In the first part of Theorem~1.8 in \cite{Lu11}, taking $(H, u_\lambda, M)$ as
 $(\check{H}, u^\lambda, M_{m,n})$ in problem (\ref{e:M-invariantDelay5++}) (where $u^\lambda$ is as above (\ref{e:M-invariantDelay7++})),
 we may obtain three assertions (i)-(iii).
  Then, as shown in the proof of Theorem~\ref{th:bif-per1Delay1}, we deduce that they
  correspond respectively to (i), (ii), and (iii) in Theorem~\ref{th:bif-per2Delay}.
   \end{proof}

\begin{theorem}[\textsf{Alternative bifurcations of  Fadell-Rabinowitz type}]\label{th:bif-per2Delay+}
Suppose that Assumption~\ref{ass:BasiAss1Delay1} holds with $\Lambda$ being a real interval and
with $x^\lambda\equiv {\bf 0}^\lambda$ (the zero solution of (\ref{e:Delay1}) with parameter  $\lambda$).
 For $m\in 2\mathbb{N}$,
let $i_{T_{m,n}^{-1}}^{A_{m,n}}({\bf 0}^\lambda)$ and
 $\nu_{T_{m,n}^{-1}}^{A_{m,n}}({\bf 0}^\lambda)$ be defined by (\ref{e:Delay277*}).
Suppose that $\mu$ is an interior point of $\Lambda$ such that
 $\nu_{T_{m,n}^{-1}}^{A_{m,n}}({\bf 0}^\mu)\ne 0$,  and that $\nu_{T_{m,n}^{-1}}^{A_{m,n}}({\bf 0}^\lambda)=0$
 for all $\lambda\in\Lambda\setminus\{\mu\}$ sufficiently close to $\mu$. Moreover, suppose that
  $i_{T_{m,n}^{-1}}^{A_{m,n}}({\bf 0}^\lambda)$ takes, respectively, values $i_{T_{m,n}^{-1}}^{A_{m,n}}({\bf 0}^\mu)$ and
  $i_{T_{m,n}^{-1}}^{A_{m,n}}({\bf 0}^\mu)+ \nu_{T_{m,n}^{-1}}^{A_{m,n}}({\bf 0}^\mu)$
 as $\lambda\in\Lambda$ varies in
 two deleted half neighborhoods  of $\mu$.
 %%%%%%%%%%%%%%%%%%%%%%%%%%%%%%%%%%%%%%%%%%%%%%%%%%%%%%%%%%%%%%%%%%%%%%%%%%%%%%%%%%%%%%%%%%%%%%%%%%%%
%For $m=2$, let  Assumption~\ref{ass:BasiAss1Delay1}  with $\Lambda$ being a real interval be  satisfied,
%and let ${\bf 0}^\lambda$ be the zero solution of (\ref{e:Delay1}) with parameter  $\lambda$.
%Suppose  that for some interior point $\mu$ of $\Lambda$,
% $\nu_{\tau}({\bf 0}^\mu)\ne 0$ and  $\nu_{\tau}({\bf 0}^\lambda)=0$
% for each $\lambda\in\Lambda\setminus\{\mu\}$ sufficiently close to $\mu$, and that
%  $i_{\tau}({\bf 0}^\lambda)$ takes values $i_{\tau}({\bf 0}^\mu)$ and
%  $i_{\tau}({\bf 0}^\mu)+ \nu_{\tau}({\bf 0}^\mu)$
%  on the  two deleted half-neighborhoods  of $\mu$,  respectively.
 Then at least one of the following assertions holds:
\begin{enumerate}
\item[\rm (i)]
The problem (\ref{e:Delay1}) with $\lambda=\mu$ admits a sequence of solutions, $x^{\mu,j}\ne {\bf 0}^\mu$ ($j=1,2,\cdots$)
such that  $x^{\mu,j}|_{[0,\tau]}$ converges to the zero  in $C^1([0,\tau];\R^{n})$.
\item[\rm (iv)] There exist left and right  neighborhoods $\Lambda^-$ and $\Lambda^+$ of $\mu$ within $\Lambda$
and nonnegative integers $n^+$ and $n^-$ satisfying $n^++n^-\ge \nu_{T_{m,n}^{-1}}^{A_{m,n}}({\bf 0}^\mu)$,
such that for $\lambda\in\Lambda^-\setminus\{\mu\}$ (resp. $\lambda\in\Lambda^+\setminus\{\mu\}$),
 problem (\ref{e:Delay1}) (with parameter $\lambda$) possesses at least $n^-$ (resp. $n^+$) distinct pairs of nontrivial solutions,
$\{v^{\lambda,i}, -v^{\lambda,i}\}$ ($i = 1, \dots, n^-$ resp. $n^+$),
whose restrictions to $[0,\tau]$  converge to zero in $C^1([0,\tau];\R^{n})$  as $\lambda\to\mu$.
\end{enumerate}
Correspondingly,  for $m\in 2\mathbb{N}+1$, the following replacements should be made to the statements above:
 $i_{T_{m,n}^{-1}}^{A_{m,n}}$ and $\nu_{T_{m,n}^{-1}}^{A_{m,n}}$ are replaced by
$i_{\tilde{B}_{m-1,n}^{-1}}^{A_{m-1,n}}$ and $\nu_{\tilde{B}_{m-1,n}^{-1}}^{A_{m-1,n}}$ in (\ref{e:Delay288*}), respectively;
\end{theorem}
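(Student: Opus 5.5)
The plan mirrors the proof of Theorem~\ref{th:bif-per2Delay}, with Theorem~1.8 of \cite{Lu11} replaced by its Fadell--Rabinowitz-type counterpart (the second part of Theorem~1.8 in \cite{Lu11}). That counterpart covers an even Hamiltonian with the trivial branch. We treat only $m\in 2\mathbb{N}$; the odd case is identical once $(A_{m,n},T_{m,n}^{-1},H)$ is replaced by $(A_{m-1,n},\tilde B_{m-1,n}^{-1},\tilde H)$ and $\Upsilon(m,n)$ by $\Upsilon(m-1,n)$.

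\emph{Step 1: the branch and the symmetry.} With $x^\lambda\equiv{\bf 0}^\lambda$ we get $v^\lambda\equiv 0$ and $u^\lambda\equiv 0$ in (\ref{e:M-invariantDelay6++}). Since $V(\lambda,t,\cdot)$ is even, $H$ in (\ref{e:Delay4}) is even in $v$, and so $\check H(\lambda,t,z)=H(\lambda,t,\Upsilon(m,n)^{-1}z)$ is even in $z$. Hence $u\equiv0$ solves (\ref{e:M-invariantDelay5++}) for every $\lambda$, and the solution set is invariant under $u\mapsto -u$. Moreover $\check H(\lambda,t+\tau,M_{m,n}z)=\check H(\lambda,t,z)$, and $M_{m,n}$ is symplectic with $M_{m,n}^{2m}=I$. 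These are exactly the structural hypotheses of the even case in \cite{Lu11}. The regularity hypotheses follow from Assumption~\ref{ass:BasiAss1Delay1}.

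\emph{Step 2: translating the index hypotheses.} By (\ref{e:Delay27}), (\ref{e:Delay277*}) and (\ref{e:Delay27+Dong}), the nullity $\nu^{A_{m,n}}_{T^{-1}_{m,n}}({\bf 0}^\lambda)$ equals $\nu_{\tau,M_{m,n}}(\Gamma^\lambda_{n,m,0})$. The index $i^{A_{m,n}}_{T^{-1}_{m,n}}({\bf 0}^\lambda)$ differs from $i_{\tau,M_{m,n}}(\Gamma^\lambda_{n,m,0})$ by the constant $N_{m,n}$, which is independent of $\lambda$. So the assumptions become the following: $\nu_{\tau,M_{m,n}}(\Gamma^\mu)\neq0$; $\nu_{\tau,M_{m,n}}(\Gamma^\lambda)=0$ for all $\lambda\neq\mu$ near $\mu$; and $i_{\tau,M_{m,n}}(\Gamma^\lambda)$ takes the values $i_{\tau,M_{m,n}}(\Gamma^\mu)$ and $i_{\tau,M_{m,n}}(\Gamma^\mu)+\nu_{\tau,M_{m,n}}(\Gamma^\mu)$ on the two deleted half-neighborhoods of $\mu$. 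This is precisely the hypothesis of the cited Fadell--Rabinowitz alternative for (\ref{e:M-invariantDelay5++}). That theorem yields one of two outcomes. In case (i), there is a sequence of nontrivial solutions at $\lambda=\mu$ converging to $0$ in $C^1([0,\tau])$. In case (iv), there are one-sided neighborhoods $\Lambda^\pm$ and integers $n^\pm$ with $n^++n^-\ge\nu_{\tau,M_{m,n}}(\Gamma^\mu)$, together with $n^\pm$ distinct pairs $\{u^{\lambda,i},-u^{\lambda,i}\}$ of nontrivial solutions tending to $0$ as $\lambda\to\mu$.

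\emph{Step 3: pulling back to (\ref{e:Delay1}).} By Proposition~\ref{prop:Equiv1}, $v=\Upsilon(m,n)^{-1}u$ solves (\ref{e:Delay6}). By Proposition~\ref{prop:Liu12}(i), $x:=v_1$, the first $n$ components, then solves (\ref{e:Delay1}). Conversely, a solution $x$ of (\ref{e:Delay1}) determines $v$ through the delay stacking (\ref{e:Delay5}). The correspondence is linear, so it preserves nontriviality and maps pairs $\pm u$ to pairs $\pm x$.

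The main point to check is that the correspondence is bijective, so that distinct pairs stay distinct and $C^1$ convergence transfers. Injectivity holds because $v$ is recovered from $x$ via $x_i(t)=x(t-(i-1)\tau)$. For the topology we use the boundary relation $v(t+\tau)=T_{m,n}^{-1}v(t)$. It shows that $u|_{[0,\tau]}\to0$ in $C^1$ if and only if $v|_{[0,\tau]}\to 0$ in $C^1$, which is equivalent to $x|_{[-(m-1)\tau,\tau]}\to0$ in $C^1$. By antiperiodicity and $2m\tau$-periodicity this is equivalent to $x|_{[0,\tau]}\to0$ in $C^1$. Alternatively, the delay equation with the previously controlled pieces gives the same conclusion. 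This step is the same routine argument used for Theorem~\ref{th:bif-per1Delay1}, so we expect no genuine obstacle beyond this bookkeeping.
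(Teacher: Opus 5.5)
Your proposal is correct and is essentially the paper's proof. You translate the hypotheses via (\ref{e:Delay27+Dong}), where the shift $N_{m,n}$ is independent of $\lambda$, then apply the second part of Theorem~1.8 of \cite{Lu11} to (\ref{e:M-invariantDelay5++}) with $(\check H,u^\lambda,M_{m,n})$, and pull back through Proposition~\ref{prop:Equiv1} and Proposition~\ref{prop:Liu12}(i). One small correction to Step~3: only the forward implication $u|_{[0,\tau]}\to0\Rightarrow x|_{[0,\tau]}=v_1|_{[0,\tau]}\to0$ in $C^1$ is needed, and it is immediate; the reverse ``equivalence'' you assert is not justified by (anti)periodicity for $m\ge3$, since shifts by multiples of $m\tau$ never carry $[0,\tau]$ onto $[-(m-1)\tau,0]$, so you should simply drop that claim.
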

\begin{proof}[\bf Proof]
As in the proof of Theorem~\ref{th:bif-per2Delay},
we only prove the case $m\in 2\mathbb{N}$.
From the assumptions and from (\ref{e:Delay27}), (\ref{e:Delay277*}), and (\ref{e:Delay27+Dong}), we derive that:
$\nu_{\tau, M_{2,n}}(\Gamma_{n,2,{\bf 0}}^\mu)\ne 0$,  $\nu_{\tau, M_{2,n}}(\Gamma_{n,2,{\bf 0}}^\lambda)=0$
 for all $\lambda\in\Lambda\setminus\{\mu\}$ sufficiently close to $\mu$, and
  $i_{\tau, M_{2,n}}(\Gamma_{n,2,{\bf 0}}^\lambda)$ takes, respectively, values
  $i_{\tau, M_{2,n}}(\Gamma_{n,2,{\bf 0}}^\mu)$ and
  $i_{\tau, M_{2,n}}(\Gamma_{n,2,{\bf 0}}^\mu)+ \nu_{\tau, M_{2,n}}(\Gamma_{n,2,{\bf 0}}^\mu)$
 as $\lambda\in\Lambda$ varies in  two deleted half neighborhoods  of $\mu$.
Then applying the second part of Theorem~1.8 in \cite{Lu11}
to problem (\ref{e:M-invariantDelay5++}) with $(H, u_\lambda, M)=(\check{H}, u^\lambda, M_{2,n})$
 (where $u^\lambda=\Upsilon(m,n)v^\lambda$
is given in (\ref{e:M-invariantDelay6++})), we obtain the desired conclusions as in the previous proof.
\end{proof}

When $V(\lambda, t, x)$ is affine in $\lambda \in \mathbb{R}$,
the previous three theorems take a concise form without indexes.

\begin{theorem}\label{th:bif-per2Delay++}
 Let the potential $V(\lambda, t, x)$ in the problem (\ref{e:Delay1}) be of the form:
$$
V(\lambda, t, x) = V_0(t, x) + \lambda V_1(t, x), \quad \lambda \in \mathbb{R},
$$
where $V_0, V_1: \mathbb{R} \times \mathbb{R}^n \to \mathbb{R}$ are $C^1$-smooth functions satisfying additional conditions:
\begin{enumerate}
\item[\rm (i)] $V_0(t+\tau, x)=V_0(t,x)$ and ${V}_1(t+\tau, x)={V}_1(t,x)$ for all $(t,x)\in\Lambda\times\R\times\mathbb{R}^{n}$.
\item[\rm (ii)] All functions $V_0(t, \cdot), V_1(t, \cdot): \mathbb{R}^n \to \mathbb{R}$ are even and of class $C^2$,
so that $\nabla_2 V_0(t, \mathbf{0}) = \nabla_2 V_1(t, \mathbf{0}) = 0$ for all $t$ (with $\nabla_2$ denoting the gradient with respect to $x\in \mathbb{R}^n$).
Furthermore, for $i=0,1$, the gradient $\nabla_2 V_i(t, x)$ and the Hessian $\nabla^2_2 V_i(t, x)$
 with respect to $x$ depend continuously on $(t, x) \in \mathbb{R} \times \mathbb{R}^n$.
%{\rm (Therefore $\nabla_2V_0(t,{\bf 0})=\nabla_2{V}_1(t,{\bf 0})=0\;\forall t$. Hereafter,
% denote, respectively, )}
%\end{enumerate}
%Let $\bar{v}:\R\to\R^{2n}$ satisfy
%\begin{enumerate}
%\item[\rm (c)] $\dot{v}(t)=J\nabla_zH_0(t, v(t))$ and $v(t+\tau)=Mv(t)\;\forall t$.
\item[\rm (iii)] The Hessian $\nabla^2_2{V}_1(t, {\bf 0})$ is either positive definite for all $t$
     or negative definite for all $t$.
%$\nabla^2_2{V}_1(t, {\bf 0})$ is definite for all $t$, being either positive definite or negative definite.
\end{enumerate}
Denoted the zero solution of the problem
\begin{eqnarray}\label{e:LinearDelay2+}
 \left\{\begin{array}{ll}
 &\dot{x}( t ) =\sum^{m-1}_{i=1} \nabla_2 V_0 (t , x(t-i\tau))+
 \lambda\sum^{m-1}_{i=1}\nabla_2 V_1 (t , x ( t - i\tau )),\\
 & x( t - m \tau ) = - x( t )\;\forall t
   \end{array}\right.
  \end{eqnarray}
% with parameter value $\lambda$
  by ${\bf 0}^\lambda$, and
 the dimension of the solution space of the linear delay problem
 \begin{eqnarray}\label{e:LinearDelay2}
 \left\{\begin{array}{ll}
 &\dot{x}( t ) =\sum^{m-1}_{i=1} \nabla_2^2 V_0 (t , {\bf 0})x(t-i\tau) +
 \lambda\sum^{m-1}_{i=1}\nabla^2_2 V_1 (t , {\bf 0})x ( t - i\tau ),\\
 & x( t - m \tau ) = - x( t )\;\forall t
   \end{array}\right.
  \end{eqnarray}
by $\nu_{\tau}({\bf 0}^\lambda)$. %the claim below (\ref{e:Delay28++}).
Then
\begin{enumerate}
\item[\rm (A)] $\Sigma:=\{\lambda\in\R\,|\, \nu_{\tau}({\bf 0}^\lambda)>0\}$
is a discrete set in $\R$.
\item[\rm (B)] $(\mu, {\bf 0}^\mu)$ with $\mu\in\R$ is a bifurcation point  for the problem (\ref{e:LinearDelay2+})
if and only if $\nu_{\tau}({\bf 0}^\mu)>0$.
\item[\rm (C)] For each $\mu\in\Sigma$,   one of the following assertions holds:
\begin{enumerate}
\item[\rm (C.1)]
The problem (\ref{e:LinearDelay2+})
 with $\lambda=\mu$ has a sequence of solutions, $x^{\mu,j}\ne {\bf 0}^\mu$ ($j=1,2,\cdots$)
such that  $x^{\mu,j}|_{[0,\tau]}$ converges to  zero  in $C^1([0,\tau];\R^{n})$.
\item[\rm (C.2)] There exist left and right  neighborhoods $\Lambda^-$ and $\Lambda^+$ of $\mu$ in $\mathbb{R}$
and nonnegative integers $n^+$ and $n^-$ satisfying $n^++n^-\ge \nu_{\tau}({\bf 0}^\mu)$,
such that for $\lambda\in\Lambda^-\setminus\{\mu\}$ (resp. $\lambda\in\Lambda^+\setminus\{\mu\}$),
the problem (\ref{e:LinearDelay2+}) with parameter value $\lambda$  has at least $n^-$ (resp. $n^+$) distinct pairs of nontrivial solutions,
$\{v^{\lambda,i}, -v^{\lambda,i}\}$ ($i = 1, \dots, n^-$ resp. $n^+$),
whose restrictions to $[0,\tau]$  converge to zero in $C^1([0,\tau];\R^{n})$  as $\lambda\to\mu$.
\end{enumerate}
%the conclusions of Theorem~\ref{th:bif-per2Delay+}
%with $V(\lambda,t, z)=V_0(t,z)+\lambda{V}_1(t,z)$ and $\Lambda=\mathbb{R}$ hold true.
%
% and a small enough $\rho>0$, (\ref{e:case1}) and (\ref{e:case2})
%hold, and therefore the conclusions in the first part of Theorem~\ref{th:bif-per2}
%holds, and the second one of Theorem~\ref{th:bif-per2} is also true provided that
%$\bar{v}=0$ and all $H_0(t,\cdot), \hat{H}(t,\cdot)$ are even.
\end{enumerate}
Moreover, under the above assumptions (i)-(ii), suppose that $m\in 2\mathbb{N}$ and that the function $\bar{x}:\mathbb{R}\to\mathbb{R}^n$ satisfies:
$$
  \dot{\bar{x}}( t ) =\sum^{m-1}_{i=1} \nabla_2 V_0 (t , \bar{x}(t-i\tau)),\quad
 \bar{x}( t - m \tau ) = -\bar{x}( t ),\quad
 \sum^{m-1}_{i=1} \nabla_2 V_1 (t , \bar{x}(t-i\tau))=0
$$
for all $t\in\mathbb{R}$. Furthermore,  the matrix $\sum^{m-1}_{i=1} \nabla^2_2 V_1 (t , \bar{x}(t-i\tau))$
 is either positive definite for all $t$  or negative definite for all $t$.
Denoted the dimension of the solution space of the linear delay problem
 \begin{eqnarray}\label{e:LinearDelay2+C}
 \left\{\begin{array}{ll}
 &\dot{x}( t ) =\sum^{m-1}_{i=1} \nabla_2^2 V_0 (t , \bar{x}(t-i\tau))x(t-i\tau) +
 \lambda\sum^{m-1}_{i=1}\nabla^2_2 V_1 (t ,\bar{x}(t-i\tau))x ( t - i\tau ),\\
 &  x( t - m \tau ) = - x( t )\;\forall t
   \end{array}\right.
  \end{eqnarray}
by $\nu_{\lambda, \tau}(\bar{x})$. Then
\begin{enumerate}
\item[\rm (I)] $\Sigma(\bar{x}):=\{\lambda\in\R\,|\, \nu_{\lambda,\tau}(\bar{x})>0\}$
is a discrete set in $\R$.
\item[\rm (II)] $(\mu, \bar{x})$ with $\mu\in\R$ is a bifurcation point  for the problem (\ref{e:LinearDelay2+})
%\begin{eqnarray}\label{e:LinearDelay2++C}
% \dot{x}( t ) =\sum^{m-1}_{i=1} \big(\nabla_2 V_0 (t , x(t-i\tau))+ \lambda\nabla_2 V_1 (t , x(t-i\tau))\big)\quad\hbox{and}\quad
% x( t - m \tau ) = - x( t )\;\forall t
%   \end{eqnarray}
if and only if $\nu_{\lambda,\tau}(\bar{x})>0$.
\item[\rm (III)] For each $\mu\in\Sigma(\bar{x})$,   one of the following assertions holds:
\begin{enumerate}
\item[\rm (III.1)]
The problem (\ref{e:LinearDelay2+}) with $\lambda=\mu$ has a sequence of solutions, $x^{j}\ne \bar{x}$ ($j=1,2,\cdots$)
such that  $x^{j}|_{[0,\tau]}\to \bar{x}|_{[0,\tau]}$ in $C^1([0,\tau];\R^{n})$.

\item[\rm (III.2)]  For every $\lambda\in\Lambda\setminus\{\mu\}$ sufficiently close to $\mu$, there is a  solution $\bar{x}^\lambda\ne \bar{x}$ of the problem
(\ref{e:LinearDelay2+}) with parameter value $\lambda$
such that  $\bar{x}^\lambda|_{[0,\tau]}-\bar{x}|_{[0,\tau]}\to 0$ in $C^1([0,\tau];\R^{n})$  as $\lambda\to \mu$.

\item[\rm (III.3)] For a given neighborhood $\mathcal{W}$ of $\bar{x}|_{[0,\tau]}$ in $C^1([0,\tau];\R^{n})$,
there is a one-sided  neighborhood $\Lambda^0$ of $\mu$ within $\mathbb{R}$ such that
for any $\lambda\in\Lambda^0\setminus\{\mu\}$, the problem (\ref{e:LinearDelay2+})  with parameter value $\lambda$
has at least two distinct solutions $\bar{x}^\lambda\ne \bar{x}$ and $\hat{x}^\lambda\ne \bar{x}$ satisfying
$\bar{x}^\lambda|_{[0,\tau]}\in \mathcal{W}$ and $\hat{x}^\lambda|_{[0,\tau]}\in \mathcal{W}$.
Moreover, if $\nu_{\mu,\tau}(\bar{x})>1$ and the problem (\ref{e:LinearDelay2+}) with parameter value $\lambda$
has only finitely many  solutions whose restrictions to $[0,\tau]$ belong to $\mathcal{W}$, then the above
$\bar{x}^\lambda$ and $\hat{x}^\lambda$  can also be required to satisfy:
\begin{eqnarray*}
&&\int^{\tau}_0\left[\frac{1}{2}(\dot{\bar{v}}^\lambda(t), A_{m,n}^{-1}\bar{v}^\lambda(t))_{\mathbb{R}^{mn}}+
 H_0(t, \bar{v}^\lambda(t))+ \lambda H_1(t, \bar{v}^\lambda(t))\right]dt\nonumber\\
&&\ne \int^{\tau}_0\left[\frac{1}{2}(\dot{\hat{v}}^\lambda(t), A_{m,n}^{-1}\hat{v}^\lambda(t))_{\mathbb{R}^{mn}}+
H_0(t,\hat{v}^\lambda(t))+ \lambda H_1(t,
\hat{v}^\lambda(t))\right]dt
\end{eqnarray*}
for  $H_0$ and $H_1$ in (\ref{e:H_0H_1}), where $\bar{v}^\lambda(t):=(\bar{x}^\lambda_1(t)^T,\cdots, \bar{x}^\lambda_m(t)^T)^T\in ({\R}^{n})^m$
with $\bar{x}^\lambda_ { i } ( t ) = \bar{x}^\lambda( t-(i-1)\tau)$ for $i=1,\cdots,m$,
and $\hat{v}^\lambda(t):=(\hat{x}^\lambda_1(t)^T,\cdots, \hat{x}^\lambda_m(t)^T)^T\in ({\R}^{n})^m$
with $\hat{x}^\lambda_ { i } ( t ) = \hat{x}^\lambda( t-(i-1)\tau)$ for $i=1,\cdots,m$.
\end{enumerate}
  \end{enumerate}
\end{theorem}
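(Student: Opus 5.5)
We reduce Theorem~\ref{th:bif-per2Delay++} to the Hamiltonian problem (\ref{e:M-invariantDelay5++}) and apply the affine-in-$\lambda$ results of \cite{Lu11}, the analogues of Theorems~1.8--1.10 there. First we note that the sign convention $x(t-m\tau)=-x(t)$ in (\ref{e:LinearDelay2+}) is equivalent to $x(t+m\tau)=-x(t)$, so (\ref{e:LinearDelay2+}) is problem (\ref{e:Delay1}) with $V=V_0+\lambda V_1$. Put
\begin{equation}\label{e:H_0H_1}
H_j(t,x_1,\dots,x_m)=\sum_{i=1}^m V_j(t,x_i),\quad j=0,1,
\end{equation}
for $m\in2\mathbb{N}$, and the analogous $\tilde H_j$ from (\ref{e:Delay9}) for $m$ odd. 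Then $H=H_0+\lambda H_1$, and after the linear change $z\mapsto\Upsilon(m,n)^{-1}z$ we get $\check H=\check H_0+\lambda\check H_1$. Both $\check H_j$ are $\tau$-periodic up to $M_{m,n}$, and they are even when $V_0,V_1$ are. By Proposition~\ref{prop:Liu12} and Proposition~\ref{prop:Equiv1}, solutions of (\ref{e:LinearDelay2+}) correspond bijectively to solutions $u$ of (\ref{e:M-invariantDelay5++}). This correspondence is a homeomorphism on the relevant $C^1([0,\tau])$ neighbourhoods, because $x|_{[0,\tau]}$ determines $v|_{[0,\tau]}$ through the antiperiodicity and the delay shifts. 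Under the same correspondence, $\nu_\tau({\bf 0}^\lambda)$ equals $\nu_{\tau,M_{m,n}}(\Gamma^\lambda_{n,m,{\bf 0}})$, as explained after (\ref{e:Delay28++}).

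The key hypothesis to verify is the definiteness condition of \cite{Lu11}: $\nabla_3^2\check H_1(t,u^\lambda(t))$ must be positive (or negative) definite. For the zero branch, (\ref{e:Delay24}) gives $\nabla^2_3H_1(t,{\bf 0})=\mathrm{diag}(\nabla^2_2V_1(t,{\bf 0}),\dots)$, which is definite by (iii). For $m$ odd, (\ref{e:Delay25}) adds the term $E^\top\nabla^2_2V_1(t,{\bf 0})E$, which is semidefinite of the same sign, so the sum stays definite. Congruence by $\Upsilon$ preserves definiteness.

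With this in hand, we would argue as follows. A definite perturbation makes $\lambda\mapsto i_{\tau,M}(\Gamma^\lambda)$ monotone. It jumps exactly at the points where $\nu\ne0$, those points are isolated, and the jump equals $\nu$. This is the standard monotonicity or spectral-flow argument for $\lambda$-affine definite perturbations, which \cite{Lu11} packages in the form we need. It gives (A), and it supplies the two-sided index hypothesis of Theorems~\ref{th:bif-per1Delay1}(II) and~\ref{th:bif-per2Delay+}, using the relation (\ref{e:Delay27+Dong}) between the two index normalizations. Part (B) then follows from Theorem~\ref{th:bif-per1Delay1}(I) for necessity and (II) for sufficiency. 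Part (C) follows from Theorem~\ref{th:bif-per2Delay+}, since the zero branch and evenness are available.

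For the second half we take the constant branch $x^\lambda\equiv\bar x$. It solves (\ref{e:LinearDelay2+}) for every $\lambda$ because $\sum_i\nabla_2V_1(t,\bar x(t-i\tau))=0$. The needed definiteness is that of $\nabla^2_3H_1(t,\bar v(t))=\mathrm{diag}(\nabla_2^2V_1(t,\bar x(t-(i-1)\tau)))$. This is where I expect the main obstacle. The hypothesis controls only the \emph{sum} $\sum_{i=1}^{m-1}\nabla^2_2V_1(t,\bar x(t-i\tau))$, not each block separately. So we must either pass to the Hessian form tested along the symmetric space $\{v(t+\tau)=T^{-1}_{m,n}v(t)\}$, where the blocks are time shifts of one function and integrating over a period turns the sum into the relevant quadratic form, or show that \cite{Lu11} requires only definiteness of the integrated form $\int_0^\tau(\nabla^2_3\check H_1 u,u)\,dt$ on the kernel and its neighbourhood. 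We would try the integrated version first. Granted this, (I)--(III) follow exactly as in the proof of Theorem~\ref{th:bif-per2Delay}, using the first part of Theorem~1.8 of \cite{Lu11}. The functional inequality in (III.3) is the transported form of (\ref{e:Delay26++}) with $H=H_0+\lambda H_1$.
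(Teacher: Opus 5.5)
Your treatment of (A)--(C) follows the paper. The paper also sets $H_j=\sum_i V_j(t,x_i)$ and checks definiteness of $\nabla^2_2\check H_1(t,\mathbf{0})$ via (\ref{e:Delay24}), (\ref{e:Delay25}) and congruence by $\Upsilon$. It then applies the final part of Corollary~1.9 of \cite{Lu11} to $(\check H_0+\lambda\check H_1,M_{m,n})$, which yields (A), (B), (C) at once; your route through monotonicity and Theorems~\ref{th:bif-per1Delay1}, \ref{th:bif-per2Delay+} is equivalent. One side inaccuracy: $x|_{[0,\tau]}$ does not determine $v|_{[0,\tau]}$, since that needs $x$ on $[-(m-1)\tau,\tau]$. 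What the transfer actually controls is convergence over a full period.

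The genuine gap is in the ``Moreover'' part. You take the definiteness of $\nabla^2_2H_1(t,\bar v(t))=\mathrm{diag}\bigl(\nabla^2_2V_1(t,\bar x(t-(i-1)\tau))\bigr)_{i=1}^m$ as ``granted'', and your integrated repair cannot supply it. Every $v$ with $v(t+\tau)=T_{m,n}^{-1}v(t)$ has the form $v(t)=(x(t)^\top,\dots,x(t-(m-1)\tau)^\top)^\top$ with $x(t+m\tau)=-x(t)$. Put $K(s):=\nabla^2_2V_1(s,\bar x(s))$. By $\tau$-periodicity of $V_1$, $\nabla^2_2V_1(t,\bar x(t-(i-1)\tau))=K(t-(i-1)\tau)$, hence
\[
\int_0^\tau\bigl(\nabla^2_2H_1(t,\bar v(t))v(t),v(t)\bigr)\,dt=\int_{-(m-1)\tau}^{\tau}\bigl(K(s)x(s),x(s)\bigr)\,ds .
\]
This form governs the index jump at a crossing, and it involves only $K$, not $\sum_{j=1}^{m-1}K(s-j\tau)$. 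For $m\ge 4$ the sum condition does not force $K$ to be definite. For $n=1$, $m=4$, the $4\tau$-periodic function $K(s)=1+2\cos(\pi s/(2\tau))$ satisfies $\sum_{j=1}^{3}K(s-j\tau)=3-2\cos(\pi s/(2\tau))\ge 1$, yet $K(2\tau)=-1$. So this form is indefinite in general, and nothing in the hypotheses controls its sign on the kernel either.

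The paper's own proof does not contain the argument you are missing. In Step~3 it simply asserts that $\nabla^2_2H_1(t,\bar v(t))$ is definite ``by the assumptions''. That holds for $m=2$: the sum is the single term $K(t-\tau)$, and periodicity then makes both diagonal blocks definite, so for $m=2$ your plan closes by applying Corollary~1.9 of \cite{Lu11} directly. For $m\ge 4$, however, the paper tacitly uses definiteness of $K(t)=\nabla^2_2V_1(t,\bar x(t))$ for all $t$. Your suspicion is therefore well founded, and the repair is a change of hypothesis rather than a cleverer estimate. Require $\nabla^2_2V_1(t,\bar x(t))$ to be positive definite for all $t$ or negative definite for all $t$. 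This implies the stated sum condition and makes every diagonal block definite, after which Corollary~1.9 of \cite{Lu11} applies to $(\check H_0+\lambda\check H_1,M_{m,n})$ along $\bar u=\Upsilon(m,n)\bar v$ exactly as in the paper.

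The other hypothesis of that corollary is fine as stated, but you should record it explicitly. Since $\bar x$ solves (\ref{e:LinearDelay2+}) for every $\lambda$, $\bar v$ solves (\ref{e:Delay6}) for every $\lambda$, so $A_{m,n}\nabla_2H_1(t,\bar v(t))=0$. Invertibility of $A_{m,n}$ for even $m$ then gives $\nabla_2V_1(t,\bar x(t-(i-1)\tau))=0$ for each $i$. This step uses $m\in 2\mathbb{N}$.
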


\begin{proof}[\bf Proof]
{\it Step 1}(proving the first part for $m\in 2\mathbb{N}$). Let us define  $H_0, {H}_1: \mathbb{R}\times({\R}^{n})^m\to\R$  by
\begin{equation}\label{e:H_0H_1}
H_0 \left(t , x _ { 1 } , \ldots , x _ { m } \right) = \sum^m_{i=1}V_0 \left(t , x _ {i } \right)
\quad\hbox{and}\quad {H}_1 \left(t , x _ { 1 } , \ldots , x _ { m } \right) = \sum^m_{i=1}{V}_1\left(t , x _ {i } \right).
\end{equation}
By (\ref{e:Delay24}), we have either $\nabla^2_2{H}_1(t, {\bf 0})>0$ for all $t$ or $\nabla^2_2{H}_1(t, {\bf 0})<0$ for all $t$.
Let
\begin{eqnarray*}
\check{H}_i(t, z)=H_i(t, {\Upsilon}(m, n)^{-1}z)\quad\forall
(t, z)\in \R\times({\R}^{n})^m,\;i=0,1.
\end{eqnarray*}
Then $\nabla^2_2 {H}_i(t, {\Upsilon}(m, n)^{-1}z)=({\Upsilon}(m, n))^T\nabla^2_2 \check{H}_i(t, z){\Upsilon}(m, n)$
for $i=0,1$. Therefore %We have
$$\nabla^2_2{H}_1(t, {\bf 0})>0\;\forall t\Longleftrightarrow \nabla^2_2\check{H}_1(t, {\bf 0})>0\;\forall t,\quad
\nabla^2_2{H}_1(t, {\bf 0})<0\;\forall t\Longleftrightarrow \nabla^2_2\check{H}_1(t, {\bf 0})<0\;\forall t.
$$
Define  $\check{H}: \mathbb{R}\times\mathbb{R}\times({\R}^{n})^m\to\R$  by $\check{H}(\lambda, t, x)=\check{H}_0(t,x)+\lambda \check{H}_1(t,x)$.
Then the zero map ${\bf 0}:\mathbb{R}\to ({\mathbb{R}}^{n})^m$ satisfies the following problem
\begin{equation}\label{e:LinearDelay2++}
\dot{u}(t)={J}_{mn/2} \nabla_3 \check{H}(\lambda, t, u(t))\quad\hbox{and}\quad
 u(t+\tau)={M}_{m,n}u(t)\;\;\forall t\in \mathbb{R}
\end{equation}
with $M_{m,n}={\Upsilon}(m, n)T_{m,n}^{-1}{\Upsilon}(m, n)^{-1}$, and the dimension of the solution space of the linear problem
\begin{equation}\label{e:LinearDelay2+++}
\dot{u}(t)={J}_{mn/2} \nabla_3^2 \check{H}(\lambda, t, {\bf 0})u(t)\quad\hbox{and}\quad
 u(t+\tau)={M}_{m,n}u(t)\;\;\forall t\in \mathbb{R}
\end{equation}
equals to that of
\begin{equation}\label{e:LinearDelay2++++}
\dot{v}(t)=A_{m,n}\nabla^2_2 H_0(t, {\bf 0})v(t)+ \lambda A_{m,n}\nabla^2_2 H_1(t, {\bf 0})v(t)\quad\hbox{and}\quad v(t+\tau)=T_{m,n}^{-1}v(t)\;\;\forall t\in \mathbb{R},
\end{equation}
and thus equals $\nu_{\tau}({\bf 0}^\lambda)$ (the dimension  of the solution space of the problem (\ref{e:LinearDelay2})
by the claim below (\ref{e:Delay28++})).

Applying the final part of Corollary~1.9 in \cite{Lu11} to $(H,M)=(\check{H}, M_{m,n})$ we obtain the conclusions (A) and
  \begin{enumerate}
\item[\rm (B*)] $(\mu, {\bf 0})$ with $\mu\in\R$ is a bifurcation point  for the problem (\ref{e:LinearDelay2++})
if and only if $\nu_{\tau}({\bf 0}^\mu)>0$;
\item[\rm (C*)] for each $\mu\in\Sigma$,   one of the following assertions holds:
  \begin{enumerate}
\item[\rm (C*.1)] the problem (\ref{e:LinearDelay2++})
 with $\lambda=\mu$ has a sequence of solutions, $u^{\mu,j}\ne {\bf 0}$ ($j=1,2,\cdots$)
such that  $u^{\mu,j}|_{[0,\tau]}$ converges to  zero  in $C^1([0,\tau];(\R^{n})^m)$.
\item[\rm (C*.2)] There exist left and right  neighborhoods $\Lambda^-$ and $\Lambda^+$ of $\mu$ in $\mathbb{R}$
and nonnegative integers $n^+$ and $n^-$ satisfying $n^++n^-\ge \nu_{\tau}({\bf 0}^\mu)$,
such that for $\lambda\in\Lambda^-\setminus\{\mu\}$ (resp. $\lambda\in\Lambda^+\setminus\{\mu\}$),
the problem (\ref{e:LinearDelay2++}) with parameter value $\lambda$  has at least $n^-$ (resp. $n^+$) distinct pairs of nontrivial solutions,
$\{u^{\lambda,i}, -u^{\lambda,i}\}$ ($i=1,\cdots,n^-$ resp. $n^+$),
such that their restrictions to $[0,\tau]$  converge to zero in $C^1([0,\tau];(\R^{n})^m)$  as $\lambda\to\mu$.
\end{enumerate}
  \end{enumerate}
By Proposition~\ref{prop:Equiv1}, conclusions (B*), (C*.1), and (C*.2) are respectively equivalent to the following three claims:
\begin{description}
\item[\rm (B**)] $(\mu, {\bf 0})$ with $\mu\in\R$ is a bifurcation point  for the problem
\begin{equation}\label{e:LinearDelay2+++++}
\dot{v}(t)=A_{m,n}\nabla_2 H_0(t, v(t))+ \lambda A_{m,n}\nabla_2 H_1(t, v(t))\quad\hbox{and}\quad v(t+\tau)=T_{m,n}^{-1}v(t)\;\;\forall t\in \mathbb{R}
\end{equation}
if and only if $\nu_{\tau}({\bf 0}^\mu)>0$;
\item[\rm (C**.1)] the problem (\ref{e:LinearDelay2+++++})
 with $\lambda=\mu$ has a sequence of solutions, $v^{\mu,j}\ne {\bf 0}$, $j=1,2,\cdots$,
such that  $v^{\mu,j}|_{[0,\tau]}$ converges to the zero  in $C^1([0,\tau];(\R^{n})^m)$;
\item[\rm (C**.2)] there exist left and right  neighborhoods $\Lambda^-$ and $\Lambda^+$ of $\mu$ in $\mathbb{R}$
and integers $n^+, n^-\ge 0$, such that $n^++n^-\ge \nu_{\tau}({\bf 0}^\mu)$,
and for $\lambda\in\Lambda^-\setminus\{\mu\}$ (resp. $\lambda\in\Lambda^+\setminus\{\mu\}$),
the problem (\ref{e:LinearDelay2+++++}) with parameter value $\lambda$  has at least $n^-$ (resp. $n^+$) distinct pairs of nontrivial solutions,
$\{v^{\lambda,i}, -v^{\lambda,i}\}$ ($i = 1, \dots, n^-$ resp. $n^+$),
whose restrictions to $[0,\tau]$  converge to zero in $C^1([0,\tau];(\R^{n})^m)$  as $\lambda\to\mu$.
\end{description}
By Proposition~\ref{prop:Liu12}(i), the three claims are respectively equivalent to (B), (C.1), and (C.2).
This completes the proof for this case.

%
%The desired conclusions follow from \cite[Corollary~1.12]{Lu11} immediately.

{\it Step 2}(proving the first part for $m\in 2\mathbb{N}+1$). Define $H_0, {H}_1: \mathbb{R}\times({\R}^{n})^{m-1}\to\R$  by
\begin{eqnarray*}
&&{H}_0\left(t , x _ { 1 } , \ldots , x _ { m-1 } \right) = V_0 \left(t , x _ { 1 } \right) + \cdots + V_0\left(t , x _ { m -1} \right)+V_0 \left(t , -x_1+x_2-\cdots+ x _ { m-1} \right),\\
&&{H}_1\left(t , x _ { 1 } , \ldots , x _ { m-1 } \right) = {V}_1 \left(t , x _ { 1 } \right) + \cdots + {V}_1 \left(t , x _ { m -1} \right)+
{V}_1 \left(t , -x_1+x_2-\cdots+ x _ { m-1} \right).
\end{eqnarray*}
It follows from (\ref{e:Delay25}) that
\begin{eqnarray*}
% {\small
\nabla^2_2{H}_1\left(t , {\bf 0}\right) &=&
\left( \begin{array} { c c c c }
\nabla^2_2{V}_1 \left(t, 0 \right) & 0 & \cdots & 0 \\ 0 &
\nabla^2_2{V}_1\left(t, 0\right) & \cdots & 0 \\ \vdots & \vdots & \vdots & \vdots \\ 0 & 0 & \cdots &
\nabla^2_2{V}_1 \left(t, 0\right) \end{array} \right)\\
%\end{eqnarray*}
%\begin{equation*}
&&+\left(-I_{n}\;I_{n}\;-I_{n}\;\cdots\;I_{n}\right)^T \cdot \nabla^2_2{V}_1
\left(t, 0\right) \cdot \left(-I_{n}\;I_{n}\;-I_{n}\;\cdots\;I_{n}\right)
\end{eqnarray*}
If $\nabla^2_2{V}_1(t, {\bf 0})>0\;\forall t$ (resp. $\nabla^2_2{V}_1(t, {\bf 0})<0\;\forall t$),
then the first term is positive (resp. negative) definite, and the second term is
semi-positive (resp. semi-negative) definite,
and thus $\nabla^2_2{H}_1(t, {\bf 0})>0\;\forall t$ (resp.  $\nabla^2_2{H}_1(t, {\bf 0})<0\;\forall t$).
Next, let us define $M_{m,n}={\Upsilon}(m-1, n)\tilde{B}_{m-1,n}^{-1}{\Upsilon}(m-1, n)^{-1}$ and
\begin{eqnarray*}
\check{H}_i(\lambda,t, z)=\tilde{H}_i(\lambda,t, {\Upsilon}(m-1, n)^{-1}z)\quad\forall
(\lambda,t, z)\in \Lambda\times\R\times({\R}^{n})^{m-1},\quad i=0,1.
\end{eqnarray*}
The remainder of the proof for this case can be completed using arguments analogous to those in Step 1,
with several appropriate modifications.

{\it Step 3}(proving the ``Moreover" part). %(Proof of ``Moreover'')
For ${H}_i$ and $\check{H}_i$ in Step 1, since
$$
\nabla_2H_i\left(t , x_ { 1 }, \ldots , x_ { m }\right) =\left(\nabla_2V_i(t, x_1)^T,\cdots, \nabla_2V_i(t, x_m)^T\right)^T
$$
and
$$
{\small
\nabla^2_2H_i\left(t , x_ { 1 }, \ldots , x_ { m }\right) =
\left( \begin{array} { c c c c }
\nabla^2_2V_i\left(t, x_1 \right) & 0 & \cdots & 0 \\ 0 &
\nabla^2_2V_i\left(t, x_{ 2 } \right) & \cdots & 0 \\ \vdots & \vdots & \vdots & \vdots \\ 0 & 0 & \cdots &
\nabla^2_2V_i\left(t, x_ { m }\right) \end{array} \right)}
$$
for $i=0,1$, by the assumptions and  Proposition~\ref{prop:Liu12}(i),
$\mathbb{R}\ni t\mapsto \bar{v}(t):=(\bar{x}_1(t)^T,\cdots, \bar{x}_m(t)^T)^T\in ({\R}^{n})^m$,
where $\bar{x}_ { i } ( t ) = \bar{x}( t-(i-1)\tau)$ for $i=1,\cdots,m$, satisfies %the following problem
\begin{eqnarray}\label{e:LinearDelay2+A}
&&  \dot{\bar{v}}(t)=A_{m,n}\nabla_2 H_0(t, \bar{v}(t))\quad\hbox{and}\quad \bar{v}(t+\tau)=T_{m,n}^{-1}\bar{v}(t)\;\;\forall t\in \R,\\
 &&\nabla_2 H_1 (t , \bar{v}(t))=0\;\forall t \label{e:LinearDelay2+B}
   \end{eqnarray}
  and $\nabla^2_2 H_1 (t , \bar{v}(t))$  is either positive definite for all $t$
  or negative definite for all $t$.
As in Step 1, let
$\check{H}_i(t, z)=H_i(t, {\Upsilon}(m, n)^{-1}z)$ for all
$(t, z)\in \R\times({\R}^{n})^m,\;i=0,1$.
Then $\bar{u}(t):={\Upsilon}(m, n)\bar{v}(t)$  satisfies
\begin{eqnarray}\label{e:LinearDelay2+AA}
&&  \dot{\bar{u}}(t)={J}_{mn/2}\nabla_2 \check{H}_0(t, \bar{u}(t))\quad\hbox{and}\quad \bar{u}(t+\tau)=
{M}_{m,n}\bar{u}(t)\;\;\forall t\in \R,\\
 &&\nabla_2 \check{H}_1 (t , \bar{u}(t))=0\;\forall t \label{e:LinearDelay2+BB}
   \end{eqnarray}
  and $\nabla^2_2 \check{H}_1 (t , \bar{u}(t))$  is either positive definite for all $t$
  or negative definite for all $t$.
Define  $H, \check{H}: \mathbb{R}\times\mathbb{R}\times({\R}^{n})^m\to\R$  by ${H}(\lambda, t, x)={H}_0(t,x)+\lambda {H}_1(t,x)$ and $\check{H}(\lambda, t, x)=\check{H}_0(t,x)+\lambda \check{H}_1(t,x)$. Clearly,
(\ref{e:LinearDelay2+A})-(\ref{e:LinearDelay2+B}) and (\ref{e:LinearDelay2+AA})-(\ref{e:LinearDelay2+BB})
imply
\begin{eqnarray}\label{e:LinearDelay2+AAA}
&&  \dot{\bar{v}}(t)=A_{m,n}\nabla_3 H(\lambda, t, \bar{v}(t))\quad\hbox{and}\quad \bar{v}(t+\tau)=T_{m,n}^{-1}\bar{v}(t)\;\;\forall t\in \R,\\
&&  \dot{\bar{u}}(t)={J}_{mn/2}\nabla_3 \check{H}(\lambda, t, \bar{u}(t))\quad\hbox{and}\quad \bar{u}(t+\tau)=
{M}_{m,n}\bar{u}(t)\;\;\forall t\in \R,
  \label{e:LinearDelay2+BBB}
   \end{eqnarray}
respectively. Since the dimension of the solution space of the problem (\ref{e:LinearDelay2+C})
is $\nu_{\lambda,\tau}(\bar{x})$,  the same holds for the solution spaces of the following systems:
\begin{eqnarray}\label{e:LinearDelay2+AAAA}
\dot{{v}}(t)=A_{m,n}\nabla^2_3 H(\lambda, t, \bar{v}(t))v(t)\quad\hbox{and}\quad {v}(t+\tau)=T_{m,n}^{-1}{v}(t)\;\;\forall t\in \R
   \end{eqnarray}
and
\begin{eqnarray}\label{e:LinearDelay2+BBBB}
\dot{{u}}(t)={J}_{mn/2}\nabla^2_3 \check{H}(\lambda, t, \bar{u}(t))u(t)\quad\hbox{and}\quad {u}(t+\tau)=
{M}_{m,n}{u}(t)\;\;\forall t\in \R.
   \end{eqnarray}
Applying  \cite[Corollary~1.9]{Lu11} to $(H, M)=(\check{H}, M_{m,n})$ we obtain the conclusions (I) and
\begin{enumerate}
\item[\rm (II*)] $(\mu, \bar{u})$ with $\mu\in\R$ is a bifurcation point  for
\begin{eqnarray}\label{e:LinearDelay2+AAA*}
 \dot{{u}}(t)={J}_{mn/2}\nabla_3 \check{H}(\lambda, t, {u}(t))\quad\hbox{and}\quad {u}(t+\tau)=
{M}_{m,n}{u}(t)\;\;\forall t\in \R
   \end{eqnarray}
with $\check{H}(\lambda, t, x)=\check{H}_0(t,x)+\lambda \check{H}_1(t,x)$ as above
if and only if $\nu_{\lambda,\tau}(\bar{x})>0$.
\item[\rm (III*)] For each $\mu\in\Sigma(\bar{x})$,   one of the following assertions holds:
\begin{enumerate}
\item[\rm (III.1*)]
The problem (\ref{e:LinearDelay2+AAA*}) with $\lambda=\mu$ has a sequence of solutions, $u^{j}\ne \bar{u}$ ($j=1,2,\cdots$)
such that  $u^{j}|_{[0,\tau]}\to \bar{u}|_{[0,\tau]}$ in $C^1([0,\tau];\R^{n})$.

\item[\rm (III.2*)]  For every $\lambda\in\Lambda\setminus\{\mu\}$ sufficiently close to $\mu$, there is a  solution $\bar{u}^\lambda\ne \bar{u}$ of the problem (\ref{e:LinearDelay2+AAA*}) with parameter value $\lambda$
such that  $\bar{u}^\lambda|_{[0,\tau]}-\bar{u}|_{[0,\tau]}\to 0$ in $C^1([0,\tau];\R^{n})$  as $\lambda\to \mu$.

\item[\rm (III.3*)] For a given neighborhood $\mathcal{W}^\ast$ of $\bar{u}|_{[0,\tau]}$ in $C^1([0,\tau];(\R^{n})^m)$,
there is a one-sided  neighborhood $\Lambda^0$ of $\mu$ such that
for any $\lambda\in\Lambda^0\setminus\{\mu\}$, the problem (\ref{e:LinearDelay2+AAA*})  with parameter value $\lambda$
has at least two distinct solutions $\bar{u}^\lambda\ne \bar{u}$ and $\hat{u}^\lambda\ne \bar{u}$ satisfying
$\bar{u}^\lambda|_{[0,\tau]}\in \mathcal{W}^\ast$ and $\hat{u}^\lambda|_{[0,\tau]}\in \mathcal{W}^\ast$.
Moreover, if $\nu_{\mu,\tau}(\bar{x})>1$ and the problem (\ref{e:LinearDelay2+AAA*}) with parameter value $\lambda$
has only finitely many  solutions whose restrictions to $[0,\tau]$ belong to $\mathcal{W}^\ast$, then the above
$\bar{u}^\lambda$ and $\hat{u}^\lambda$  can also be required to satisfy:
\begin{eqnarray*}
&&\int^{\tau}_0\left[\frac{1}{2}( J_{mn/2}\dot{\bar{u}}^\lambda(t),\bar{u}^\lambda(t))_{\mathbb{R}^{mn}}+
 \check{H}_0(t, \bar{u}^\lambda(t))+ \lambda \check{H}_1(t, \bar{u}^\lambda(t))\right]dt\nonumber\\
&&\ne \int^{\tau}_0\left[\frac{1}{2}(J_{mn/2}\dot{\hat{u}}^\lambda(t), \hat{u}^\lambda(t))_{\mathbb{R}^{mn}}+
\check{H}_0(t,\hat{u}^\lambda(t))+ \lambda \check{H}_1(t,
\hat{u}^\lambda(t))\right]dt
\end{eqnarray*}
for  $H_0$ and $H_1$ in (\ref{e:H_0H_1}).
\end{enumerate}
  \end{enumerate}
As in Step 1, it is not hard to translate these into the expected claims in Theorem~\ref{th:bif-per2Delay++}.
\end{proof}

%[\textsf{Alternative bifurcations of Fadell-Rabinowitz type and of Rabinowitz type}]

\section{ Bifurcations of the delay system (\ref{e:Delay2})}\label{sec:delay2}

Let $x^\lambda_ { i } ( t ) = x^\lambda( t-(i-1)\tau)$, $i=1,\cdots,m$. Then for each $\lambda\in\Lambda$,
\begin{equation}\label{e:Delay29@}
v^\lambda(t)=(x^\lambda_1(t)^\top,\cdots, x^\lambda_m(t)^\top)^\top\in ({\mathbb{R}}^{2n})^m
\end{equation}
 satisfies (\ref{e:Delay16}).
Under Assumption~\ref{ass:BasiAss1Delay3},  there holds
\begin{equation}\label{e:Delay29}
{%\small
\nabla^2_3\hat{H}\left(\lambda, t , v^\lambda(t)\right) =
\left( \begin{array} { c c c c }
\nabla^2_3G\left(\lambda,t, x^\lambda_1(t) \right) & 0 & \cdots & 0 \\ 0 &
\nabla^2_3G\left(\lambda,t, x^\lambda_{ 2 }(t) \right) & \cdots & 0 \\ \vdots & \vdots & \vdots & \vdots \\ 0 & 0 & \cdots &
\nabla^2_3G\left(\lambda,t, x^\lambda_ { m }(t) \right) \end{array} \right)}
\end{equation}
for $\hat{H}$ in Proposition~\ref{prop:Liu12}(iii).
Let $\hat{\gamma}_{n, m,x}^\lambda$ be the fundamental matrix solution of the linear system
\begin{equation}\label{e:Delay30}
\dot{Z}(t)=J_{n,m}\nabla^2_3\hat{H}\left(\lambda, t , v^\lambda(t)\right)Z(t)
\end{equation}
with $\hat{\gamma}^\lambda_{n,m,x}(0)=I_{2nm}$.

By the arguments above (\ref{e:M-invariantDelay2}),
(with $J_N = J_{mn}$ and $\mathcal{J} = J_{n,m}$),
there exists a congruence transformation between $J_{n,m}$ and $J_{mn}$, given by an invertible real
matrix $\Gamma(m, 2n)=(A^{-1})^\top$ of order $2mn$ satisfying
$\Gamma(m, 2n)^\top J_{n,m} \Gamma(m, 2n) = J_{mn}$.
%(As done in Section~\ref{app:Matrix-I}
%a precise congruence between  the matrix $J_{ n , m }$ and $J_{mn}$ can be constructed in a programmable way.
%Hereafter, \textsf{we will choose it without special mention}.)
When $m=2$,  we can explicitly find a matrix
 \begin{equation}\label{e:LinearDelay2G+}
\Gamma(2,2n)=\left( \begin{array} { c c c } -J_n & 0 \\ 0 &I _ { 2n }\end{array} \right),
\end{equation}
 which satisfies
$$
 \Gamma(2, 2n)^\top
 J_{n,2}\Gamma(2, 2n)=J_{2n},\quad\text{with}\;
 J_{n,2}:=\left( \begin{array} { c c c } 0& J_n \\ J_n &0\end{array} \right).
 $$
Moreover, the matrix $M_{2,n}$ defined by
\begin{equation}\label{e:LinearDelay2G++}
M_{2,n}:=\Gamma(2, 2n)^\top P_{n,2}^{-1}(\Gamma(2, 2n)^\top)^{-1}%=
%\left( \begin{array} { c c c } J_n & 0 \\ 0 &I _ { 2n }\end{array} \right)
%\left( \begin{array} { c c c } 0 & I _ { 2n} \\ I _ { 2n } & 0 \end{array} \right)
%\left( \begin{array} { c c c } -J_n & 0 \\ 0 &I _ { 2n }\end{array} \right)\\
=\left( \begin{array} { c c c } 0 & J_n \\ -J_n & 0 \end{array} \right)%\in{\rm Sp}(4n)^0
\end{equation}
is an orthogonal symplectic matrix with $(M_{2,n})^2=I_{4n}$.
%Applying Theorem 1.5 in \cite{Lu11} to $(H, v_\lambda, M)=(\check{H}, u^\lambda, M_{2,n})$
%in the problem (\ref{e:M-invariantDelay5+++})
%with $m=2$ (where $u^\lambda=\Gamma(2,n)v^\lambda$ and $v^\lambda$ is given by (\ref{e:Delay29@})),
%  as argued in the proof of Theorem~\ref{th:bif-per1Delay1}
% we obtain the following result.

By Proposition~\ref{prop:Equiv},
$v:\mathbb{R}\to ({\mathbb{R}}^{2n})^m$ solves  the problem (\ref{e:Delay16})
 if and only if
$u(t):=\Gamma(m, 2n)^\top v(t)$ satisfies the following problem
\begin{equation}\label{e:M-invariantDelay5+++}
\begin{cases}
\dot{u}(t)={J}_{mn} \nabla_3 \check{H}(\lambda, t, u(t))\\
u(t+\tau)={M}_{m,n}u(t)\;\;\forall t\in\mathbb{R},
\end{cases}
\end{equation}
where
\begin{align}\label{e:Delay30.1}
M_{m,n}&=\Gamma(m, 2n)^\top P_{n,m}^{-1}(\Gamma(m, 2n)^\top)^{-1},\\
\check{H}(\lambda,t, z)&=\hat{H}(\lambda,t, (\Gamma(m, 2n)^\top)^{-1}z)\quad\forall
(\lambda,t, z)\in \Lambda\times\mathbb{R}\times({\mathbb{R}}^{2n})^m,\label{e:Delay30.2}
\end{align}
and in this situation there holds
$$
\int^{\tau}_0\left[\frac{1}{2}(J_{mn}\dot{u}(t), u(t))_{\mathbb{R}^{2mn}}+ \check{H}(\lambda, t, u(t))\right]dt
= \int^{\tau}_0\left[\frac{1}{2}(\dot{v}(t), J_{n,m}^{-1}v(t))_{\mathbb{R}^{2mn}}+
\hat{H}(\lambda, t, v(t))\right]dt.
$$

Note that each $u^\lambda(t):=\Gamma(m, 2n)^\top v^\lambda(t)$ satisfies the problem (\ref{e:M-invariantDelay5+++}) by Proposition~\ref{prop:Equiv}.
Denote by $\Xi_{n, m,x}^\lambda(t)$ the fundamental matrix solution of  the linear system
\begin{equation*}%\label{e:M-invariantDelay7++}
\dot{y}(t)={J}_{mn} \nabla^2_3\check{H}(\lambda, t, u^\lambda(t)) y(t)
\end{equation*}
with $\Xi_{n, m,x}^\lambda(0)=I_{2mn}$. Then
$\Xi_{n, m,x}^\lambda(t)=\Gamma(m,n)^\top\hat\gamma_{n, m,x}^\lambda(t)(\Gamma(m,n)^\top)^{-1}$  by Proposition~\ref{prop:Equiv}.

%According to  \cite[Definition~2.4]{Liu12} and \cite[Definition~2.2]{ZhouLZZ22},
% the Maslov-type index $(i_{\tau}^M, \nu_{\tau}^M)$ in (\ref{e:LiuTang-index}) %yields well-defined indexes
% allows us to define the following indices:
% %As before, using  the Maslov-type index $(i_{\tau, M}, \nu_{\tau, M})$ given in (\ref{e:dongIndex}),  we define

 By (\ref{e:NM-index}), we have the $(J_{n,m}, P_{2n,m})$-index of
 $\hat{\gamma}_{n, m,x}^\lambda$
 defined by
\begin{equation}\label{e:Delay31}
\begin{cases}
i_{P_{2n,m}^{-1}}^{J_{n,m}}(\hat{\gamma}_{n, m,x}^\lambda):=i_{\tau}^{M_{m,n}}(\Xi_{n, m,x}^\lambda)=i_{\tau}^{M_{m,n}}(
\Gamma(m,n)^\top\hat\gamma_{n, m,x}^\lambda(t)(\Gamma(m,n)^\top)^{-1}),\\
\nu_{P_{2n,m}^{-1}}^{J_{n,m}}(\hat{\gamma}_{n, m,x}^\lambda):=\nu_{\tau}^{M_{m,n}}(\Xi_{n, m,x}^\lambda)=
\nu_{\tau}^{M_{m,n}}(\Gamma(m,n)^\top\hat\gamma_{n, m,x}^\lambda(t)(\Gamma(m,n)^\top)^{-1})
\end{cases}
\end{equation}
with $M_{m,n}$ as defined in (\ref{e:Delay30.1}).
For convenience, in what follows we will always write
\begin{equation}\label{e:Delay31*}
i_{P_{2n,m}^{-1}}^{J_{n,m}}(x^\lambda):=i_{P_{2n,m}^{-1}}^{J_{n,m}}(\hat{\gamma}_{n, m,x}^\lambda)\quad\text{and}\quad
\nu_{P_{2n,m}^{-1}}^{J_{n,m}}(x^\lambda):=
\nu_{P_{2n,m}^{-1}}^{J_{n,m}}(\hat{\gamma}_{n, m,x}^\lambda).
\end{equation}
Clearly,
Remark~\ref{rm:Liu12}(iii) implies that $\nu_{P_{2n,m}^{-1}}^{J_{n,m}}(x^\lambda)$ equals the dimension of the solution space
 of the following linearization for the problem (\ref{e:Delay2}) along ${x}^\lambda$,
 \begin{equation}\label{e:LinearDelay2G}
  \begin{cases}
 \dot{x}( t ) = \sum^{m-1}_{i=1}J_n\nabla^2_3 G (\lambda, t , x^\lambda( t - i\tau ) )x ( t - i\tau ) ,\\
    x( t + m \tau ) =  x( t )\;\forall t.
   \end{cases}
  \end{equation}

We now apply Theorem 1.5 in \cite{Lu11} to the problem (\ref{e:M-invariantDelay5+++}). In this application, we set
$(H, v_\lambda, M)=(\check{H}, u^\lambda, M_{m,n})$, with
$u^\lambda=\Gamma(m,2n)v^\lambda$ and $v^\lambda$ as in  (\ref{e:Delay29@})).
Following the same line of argument as in the proof of Theorem~\ref{th:bif-per1Delay1},
 we arrive at the following result.

\begin{theorem}\label{th:bif-per1Delay2}
Let Assumption~\ref{ass:BasiAss1Delay3} be satisfied, and
let $i_{\tau}(x^\lambda)$ and $\nu_{\tau}(x^\lambda)$
be defined by (\ref{e:Delay31}).  Then the following is true.
\begin{enumerate}
\item[\rm (I)]{\rm (\textsf{Necessary condition}):}
 If $(\mu, x^\mu)$ is a bifurcation point along sequences of the problem (\ref{e:Delay2})
 with respect to the branch $\{(\lambda, x^\lambda)\,|\,\lambda\in\Lambda\}$,
 then $\nu_{\tau}(x^\lambda)\ne 0$.

\item[\rm (II)]{\rm (\textsf{Sufficient condition}):}
Let $\Lambda$ be first countable.
Suppose that for some $\mu\in\Lambda$,
there exist two sequences $(\lambda_k^-)$ and
$(\lambda_k^+)$ in  $\Lambda$ such that $\lambda_k^\pm\to\mu$,  and such that
for each $k\in\mathbb{N}$,
$$
[i_{P_{2n,m}^{-1}}^{J_{n,m}}(x^{\lambda_k^-}), i_{P_{2n,m}^{-1}}^{J_{n,m}}(x^{\lambda_k^-})+\nu_{P_{2n,m}^{-1}}^{J_{n,m}}(x^{\lambda_k^-})]
\cap[i_{P_{2n,m}^{-1}}^{J_{n,m}}(x^{\lambda_k^+}), i_{P_{2n,m}^{-1}}^{J_{n,m}}(x^{\lambda_k^+})+
\nu_{P_{2n,m}^{-1}}^{J_{n,m}}(x^{\lambda_k^+})]=\emptyset,
$$
 and moreover, either $\nu_{P_{2n,m}^{-1}}^{J_{n,m}}(x^{\lambda_k^+})=0$ or $\nu_{P_{2n,m}^{-1}}^{J_{n,m}}(x^{\lambda_k^-})=0$.
Let $\hat{\Lambda}:=\{\mu,\lambda^+_k, \lambda^-_k\,|\,k\in\mathbb{N}\}$.
  Then  $(\mu, x^\mu)$ is a bifurcation point  of the problem (\ref{e:Delay2})
  with respect to the branch $\{(\lambda, x^\lambda)\,|\,\lambda\in\hat\Lambda\}$
  (and hence also with respect to $\{(\lambda, x^\lambda)\,|\,\lambda\in\Lambda\}$).

\item[\rm (III)]{\rm (\textsf{Existence for bifurcations}):}
Let $\Lambda$ be a path-connected space. Suppose that there exist two points $\lambda^+, \lambda^- \in \Lambda$
 such that the intervals $[i_{P_{2n,m}^{-1}}^{J_{n,m}}(x^{\lambda^-}), i_{P_{2n,m}^{-1}}^{J_{n,m}}(x^{\lambda^-}) +
\nu_{P_{2n,m}^{-1}}^{J_{n,m}}(x^{\lambda^-})]$ and $[i_{P_{2n,m}^{-1}}^{J_{n,m}}(x^{\lambda^+}), i_{P_{2n,m}^{-1}}^{J_{n,m}}(x^{\lambda^+}) +
\nu_{P_{2n,m}^{-1}}^{J_{n,m}}(x^{\lambda^+})]$  are disjoint, i.e.,
 $$
[i_{P_{2n,m}^{-1}}^{J_{n,m}}(x^{\lambda^-}), i_{P_{2n,m}^{-1}}^{J_{n,m}}(x^{\lambda^-}) +
\nu_{P_{2n,m}^{-1}}^{J_{n,m}}(x^{\lambda^-})] \cap [i_{P_{2n,m}^{-1}}^{J_{n,m}}(x^{\lambda^+}), i_{P_{2n,m}^{-1}}^{J_{n,m}}(x^{\lambda^+}) +
\nu_{P_{2n,m}^{-1}}^{J_{n,m}}(x^{\lambda^+})] = \emptyset,
$$
and at least one of $\nu_{P_{2n,m}^{-1}}^{J_{n,m}}(x^{\lambda^+})$ or $\nu_{P_{2n,m}^{-1}}^{J_{n,m}}(x^{\lambda^-})$ is zero.
Then, for any continuous path $\alpha : [0,1] \to \Lambda$ connecting $\lambda^+$ to $\lambda^-$, there exists a sequence
$(t_k) \subset [0,1]$ converging to some $\bar{t} \in [0,1]$, and corresponding solutions $x^k \neq x^{\alpha(t_k)}$
 (of the problem (\ref{e:Delay2}) with $\lambda = \alpha(t_k)$), such that the sequence $(x^k)$ converges to $x^{\alpha(\bar{t})}$
  in the $C^1$ norm on any compact interval $I \subset \mathbb{R}$ as $k \to \infty$.
Moreover, if $\nu_{\tau}(x^{\lambda^+}) = 0$, then $\alpha(\bar{t}) \neq \lambda^+$; and if
$\nu_{\tau}(x^{\lambda^-}) = 0$, then $\alpha(\bar{t}) \neq \lambda^-$.
  \end{enumerate}
 \end{theorem}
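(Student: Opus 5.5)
The plan is to transport everything to the standard Hamiltonian problem (\ref{e:M-invariantDelay5+++}), apply the abstract result \cite[Theorem~1.5]{Lu11} there, and translate back, exactly as for Theorem~\ref{th:bif-per1Delay1}. First, for each $\lambda$, the function $v^\lambda$ in (\ref{e:Delay29@}) solves (\ref{e:Delay16}) by Proposition~\ref{prop:Liu12}(iii). Hence, by Proposition~\ref{prop:Equiv} with $A=(\Gamma(m,2n)^\top)^{-1}$, the function $u^\lambda=\Gamma(m,2n)^\top v^\lambda$ solves (\ref{e:M-invariantDelay5+++}). The Hamiltonian $\check H$ in (\ref{e:Delay30.2}) satisfies $\check H(\lambda,t+\tau,M_{m,n}z)=\check H(\lambda,t,z)$; this follows from (\ref{e:Delay18}) together with $M_{m,n}$ being standard symplectic with $M_{m,n}^m=I$. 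The map $(\lambda,t)\mapsto u^\lambda(t)$ is continuous because $(\lambda,t)\mapsto x^\lambda(t)$ is. By Proposition~\ref{prop:Equiv}, $\Xi^\lambda_{n,m,x}=\Gamma(m,2n)^\top\hat\gamma^\lambda_{n,m,x}(\Gamma(m,2n)^\top)^{-1}$. So (\ref{e:Delay31}) gives $(i^{J_{n,m}}_{P^{-1}_{2n,m}}(x^\lambda),\nu^{J_{n,m}}_{P^{-1}_{2n,m}}(x^\lambda))=(i^{M_{m,n}}_\tau(\Xi^\lambda_{n,m,x}),\nu^{M_{m,n}}_\tau(\Xi^\lambda_{n,m,x}))$.

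Next, I would convert to Dong's index, which is the one used in \cite{Lu11}. Since $M_{m,n}\ne I$ (as $P_{n,m}\ne I$), Proposition~\ref{prop:twoDef} gives $i^{M_{m,n}}_\tau(\Xi^\lambda)=i_{\tau,M_{m,n}}(\Xi^\lambda)+N_{m,n}$, where the integer $N_{m,n}$ depends only on $M_{m,n}$. It also gives equality of the nullities. Adding a constant to every index preserves the disjointness of the intervals $[i,i+\nu]$, so the hypotheses of (II) and (III) become exactly the hypotheses of Proposition~\ref{prop:bif-per1Delay1}(II),(III) with $\Gamma$ replaced by $\Xi$. That proposition is itself \cite[Theorem~1.5]{Lu11} applied with $(H,u_\lambda,M)=(\check H,u^\lambda,M_{m,n})$. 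We therefore obtain the three conclusions for (\ref{e:M-invariantDelay5+++}).

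Finally, I would translate the solutions back. The necessary condition (I) is read contrapositively. Suppose $x^k\to x^\mu$ with $x^k\ne x^{\lambda_k}$ solving (\ref{e:Delay2}). Then $v^k$ solves (\ref{e:Delay16}), and $u^k=\Gamma(m,2n)^\top v^k$ solves (\ref{e:M-invariantDelay5+++}). We have $u^k\ne u^{\lambda_k}$, since $x=x_1$ is the first block of $v$.

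The step I expect to be the main obstacle is convergence: $u^k|_{[0,\tau]}\to u^\mu|_{[0,\tau]}$. It needs $x^k\to x^\mu$ on $[-(m-1)\tau,\tau]$, whereas only $[0,\tau]$ is given. I would handle this using $\tau$-periodicity in $t$ and the boundary condition $v(t+\tau)=P_{n,m}^{-1}v(t)$. These reduce everything to the initial value $u(0)$, and standard continuous dependence of ODE solutions for (\ref{e:M-invariantDelay5+++}) then gives $C^1_{\rm loc}$ convergence. Concretely, convergence on $[0,\tau]$ gives convergence of $u^k(0)$ via the first block together with the shifts. Continuity of the flow in $(\lambda,u(0))$ then gives $C^1$ convergence on any compact interval, and hence convergence of all blocks.

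Conversely, solutions $u^k\ne u^{\alpha(t_k)}$ converging in $C^1_{\rm loc}$ give, via Proposition~\ref{prop:Equiv} and the converse part of Proposition~\ref{prop:Liu12}(iii), solutions $x^k=x^k_1\ne x^{\alpha(t_k)}$ of (\ref{e:Delay2}). Here $x^k\ne x^{\alpha(t_k)}$ because $v$ is determined by its first block through $x_i(t)=x_1(t-(i-1)\tau)$. The $C^1_{\rm loc}$ convergence is inherited. Then (I) follows from Proposition~\ref{prop:bif-per1Delay1}(I), and (II) and (III) follow from the corresponding parts.
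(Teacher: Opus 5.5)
Your overall route is the paper's: pass to (\ref{e:M-invariantDelay5+++}) via Proposition~\ref{prop:Equiv}, invoke \cite[Theorem~1.5]{Lu11} with $(\check H,u^\lambda,M_{m,n})$, and use Proposition~\ref{prop:twoDef}. That proposition shows $i^{M_{m,n}}_\tau$ and $i_{\tau,M_{m,n}}$ differ by a constant depending only on $M_{m,n}$, so the disjointness hypotheses are unchanged. You then pull back through Proposition~\ref{prop:Liu12}(iii). The $u\to x$ transfers needed for (II) and (III) are fine, since $x=x_1$ is the first block of $(\Gamma(m,2n)^\top)^{-1}u$, a continuous linear operation.

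The problem is the step you yourself flag: the $x\to u$ transfer needed for (I). Your proposed fix does not work for $m\ge 3$.

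\begin{itemize}
\item The boundary condition $v(t+\tau)=P_{n,m}^{-1}v(t)$ only encodes $x_{i+1}(t+\tau)=x_i(t)$ together with $m\tau$-periodicity $x(t+\tau)=x(t-(m-1)\tau)$.
\item Hence $v^k(0)=(x^k(0),x^k((m-1)\tau),\dots,x^k(2\tau),x^k(\tau))$. For $m\ge 3$ the entries $x^k(2\tau),\dots,x^k((m-1)\tau)$ lie outside $[0,\tau]$ and are not controlled by $x^k|_{[0,\tau]}\to x^\mu|_{[0,\tau]}$.
\item The $\tau$-periodicity of $G$ in $t$ says nothing about solutions.
\item On $[0,\tau]$ the delay equation only gives information about the sum $\sum_{j=1}^{m-1}\nabla_3G(\lambda_k,t,x^k(t-j\tau))$, not about the individual delayed values. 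This holds even under $C^1$ convergence.
\end{itemize}

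So you do not obtain $u^k(0)\to u^\mu(0)$, and the continuous-dependence argument never starts. For $m=2$ your argument is correct, because $x^k(-\tau)=x^k(\tau)$.

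For general $m$, the clean fix is to measure the convergence in Definition~\ref{def:Delay} over a full period $[0,m\tau]$. Equivalently, require convergence over $[-(m-1)\tau,\tau]$, i.e.\ $v^k|_{[0,\tau]}\to v^\mu|_{[0,\tau]}$. Then the transfer is immediate and needs no ODE argument. The paper's own ``translate back'' leaves this step implicit. Under the literal $[0,\tau]$ definition, part (I) for $m\ge 3$ is justified neither by the paper's sketch nor by your argument.
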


%Applying to the first part of Theorem~1.8 in \cite{Lu11} to  $(H, v_\lambda, M)=(\check{H}, {\bf 0}, M_{2,n})$ in the problem (\ref{e:M-invariantDelay5+++})
%may yield the following theorem.

Applying the first part of Theorem~1.8 in \cite{Lu11} to the problem (\ref{e:M-invariantDelay5+++}),
with the substitutions  $(H, v_\lambda, M)=(\check{H}, \mathbf{0}, M_{m,n})$,
 we may obtain the following theorem.

\begin{theorem}[\textsf{Alternative bifurcations of Rabinowitz type}]\label{th:bif-per2Delay2}
Let Assumption~\ref{ass:BasiAss1Delay3} hold, where $\Lambda$ is a real interval,
and let $i_{P_{2n,m}^{-1}}^{J_{n,m}}({\bf 0}^\lambda)$ and $\nu_{P_{2n,m}^{-1}}^{J_{n,m}}({\bf 0}^\lambda)$
be defined by (\ref{e:Delay31}).
Suppose that  $\mu$ is an interior point of $\Lambda$ such that
$\nu_{P_{2n,m}^{-1}}^{J_{n,m}}({\bf 0}^\mu) \ne 0$ and $\nu_{P_{2n,m}^{-1}}^{J_{n,m}}({\bf 0}^\lambda) = 0$ for all $\lambda \in \Lambda \setminus \{\mu\}$
sufficiently close to $\mu$,
and  $i_{P_{2n,m}^{-1}}^{J_{n,m}}({\bf 0}^\lambda)$ takes the values $i_{P_{2n,m}^{-1}}^{J_{n,m}}({\bf 0}^\mu)$
and $i_{P_{2n,m}^{-1}}^{J_{n,m}}({\bf 0}^\mu) + i_{P_{2n,m}^{-1}}^{J_{n,m}}({\bf 0}^\mu)$
as $\lambda$ varies in the two deleted half-neighborhoods of $\mu$.
  Then  one of the following assertions holds:
\begin{enumerate}
\item[\rm (i)]
The problem (\ref{e:Delay2}) with $\lambda=\mu$ has a sequence of solutions, $x^{\mu,j}\ne x^\mu$ ($j=1,2,\cdots$)
such that  $x^{\mu,j}|_{[0,\tau]}\to x^\mu|_{[0,\tau]}$ in $C^1([0,\tau];\R^{2n})$.

\item[\rm (ii)]  For every $\lambda\in\Lambda\setminus\{\mu\}$ sufficiently close to $\mu$,
there is a  solution $\bar{x}^\lambda\ne x^\lambda$ of
the problem (\ref{e:Delay2}) with parameter value $\lambda$ such that  $\bar{x}^\lambda|_{[0,\tau]}-x^\lambda|_{[0,\tau]}\to 0$ in $C^1([0,\tau];\R^{2n})$  as $\lambda\to \mu$.

\item[\rm (iii)]
For a given neighborhood $\mathcal{W}$ of $x^\mu|_{[0,\tau]}$ in $C^1([0,\tau];\R^{2n})$,
there is a one-sided  neighborhood $\Lambda^0$ of $\mu$ such that
for any $\lambda\in\Lambda^0\setminus\{\mu\}$, the problem (\ref{e:Delay2}) with parameter value $\lambda$
has at least two distinct solutions $\bar{x}^\lambda\ne x^\lambda$ and $\hat{x}^\lambda\ne x^\lambda$ satisfying
$\bar{x}^\lambda|_{[0,\tau]}\in \mathcal{W}$ and $\hat{x}^\lambda|_{[0,\tau]}\in \mathcal{W}$.
Moreover, if $\nu_{P_{2n,m}^{-1}}^{J_{n,m}}(x^\mu)>1$ and the problem (\ref{e:Delay2}) with parameter value $\lambda$
has only finitely many  solutions whose restrictions to $[0,\tau]$ belong to $\mathcal{W}$, then the above
$\bar{x}^\lambda$ and $\hat{x}^\lambda$  can also be required to satisfy:
%\begin{eqnarray*}
%&&\int^{\tau}_0\left[\frac{1}{2}(\dot{\bar{v}}^\lambda(t), J_{n,m}^{-1}\bar{v}^\lambda(t))_{\mathbb{R}^{2mn}}+ \hat{H}(\lambda, t,
%\bar{v}^\lambda(t))\right]dt\\
%&&\ne \int^{\tau}_0\left[\frac{1}{2}(\dot{\hat{v}}^\lambda(t), J_{n,m}^{-1}\hat{v}^\lambda(t))_{\mathbb{R}^{2mn}}+ \hat{H}(\lambda, t,
%\hat{v}^\lambda(t))\right]dt
%\end{eqnarray*}
\begin{align}\label{e:LinearDelay2G+++}
&\int^{\tau}_0\left[\frac{1}{2}(\dot{\bar{v}}^\lambda(t), J_{n,m}^{-1}\bar{v}^\lambda(t))_{\mathbb{R}^{2mn}}+ \hat{H}(\lambda, t,
\bar{v}^\lambda(t))\right]dt\nonumber\\
&\ne \int^{\tau}_0\left[\frac{1}{2}(\dot{\hat{v}}^\lambda(t), J_{n,m}^{-1}\hat{v}^\lambda(t))_{\mathbb{R}^{2mn}}+ \hat{H}(\lambda, t,
\hat{v}^\lambda(t))\right]dt
\end{align}
for $\hat{H}$ in (\ref{e:Delay14}), where $\bar{v}^\lambda(t):=(\bar{x}^\lambda_1(t)^\top,\cdots, \bar{x}^\lambda_m(t)^\top)^\top\in ({\R}^{2n})^m$
with $\bar{x}^\lambda_ { i } ( t ) = \bar{x}^\lambda( t-(i-1)\tau)$ ($i=1,\cdots,m$),
and $\hat{v}^\lambda(t):=(\hat{x}^\lambda_1(t)^\top,\cdots, \hat{x}^\lambda_m(t)^\top)^\top\in ({\R}^{2n})^m$
with $\hat{x}^\lambda_ { i } ( t ) = \hat{x}^\lambda( t-(i-1)\tau)$ ($i=1,\cdots,m$).
\end{enumerate}
 \end{theorem}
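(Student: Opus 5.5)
The plan is to run the same reduction used for Theorem~\ref{th:bif-per2Delay}, now on the $G$-system. The statement's hypotheses involve ${\bf 0}^\lambda$ and $\nu({\bf 0}^\mu)$, and its second index value is written as $i({\bf 0}^\mu)+i({\bf 0}^\mu)$. I will read these as typographical for $x^\lambda$, $x^\mu$ and $i(x^\mu)+\nu(x^\mu)$, which matches the conclusions (i)--(iii) and the Rabinowitz-type pattern of Theorem~\ref{th:bif-per2Delay}. I also take $u^\lambda=\Gamma(m,2n)^\top v^\lambda$ as the branch, with $v^\lambda$ from (\ref{e:Delay29@}).

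\textbf{Step 1 (index translation).} By (\ref{e:Delay31})--(\ref{e:Delay31*}), $i_{P_{2n,m}^{-1}}^{J_{n,m}}(x^\lambda)=i_\tau^{M_{m,n}}(\Xi^\lambda_{n,m,x})$ and $\nu_{P_{2n,m}^{-1}}^{J_{n,m}}(x^\lambda)=\nu_\tau^{M_{m,n}}(\Xi^\lambda_{n,m,x})$. Proposition~\ref{prop:twoDef} gives
\[
i_\tau^{M_{m,n}}=i_{\tau,M_{m,n}}+N_{m,n},
\]
where $N_{m,n}$ depends only on $M_{m,n}$, while the two nullities coincide, exactly as in (\ref{e:Delay27+Dong}). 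Since the shift $N_{m,n}$ is independent of $\lambda$, the hypotheses transfer directly to $(i_{\tau,M_{m,n}},\nu_{\tau,M_{m,n}})(\Xi^\lambda_{n,m,x})$: the nullity is nonzero at $\mu$ and zero on a punctured neighbourhood, and the index takes the values $i$ and $i+\nu$ on the two half-neighbourhoods. I will also check the case $M_{m,n}=I$, where the two indices agree by (\ref{e:dongLiuIndex+}), so no shift is needed there.

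\textbf{Step 2 (apply \cite{Lu11}).} Check that $\check H$ in (\ref{e:Delay30.2}) satisfies the hypotheses of Theorem~1.8 of \cite{Lu11}. Regularity follows from Assumption~\ref{ass:BasiAss1Delay1}. The required invariance $\check H(\lambda,t+\tau,M_{m,n}z)=\check H(\lambda,t,z)$ follows from (\ref{e:Delay18}) after the conjugation by $\Gamma(m,2n)^\top$. Continuity of $(\lambda,t)\mapsto u^\lambda(t)$ follows from Assumption~\ref{ass:BasiAss1Delay3}. The first part of Theorem~1.8 of \cite{Lu11} with $(H,u_\lambda,M)=(\check H,u^\lambda,M_{m,n})$ then yields the three alternatives for (\ref{e:M-invariantDelay5+++}). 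Alternative (iii) there includes distinct action values $\int_0^\tau[\frac12(J_{mn}\dot u,u)+\check H]\,dt$.

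\textbf{Step 3 (back to (\ref{e:Delay2})).} By Proposition~\ref{prop:Equiv}, $u\mapsto v=(\Gamma(m,2n)^\top)^{-1}u$ is a bijection between solutions of (\ref{e:M-invariantDelay5+++}) and (\ref{e:Delay16}), and it preserves the action functional, giving the identity displayed after (\ref{e:Delay30.2}). This turns the inequality of actions into (\ref{e:LinearDelay2G+++}). By Proposition~\ref{prop:Liu12}(iii), solutions of (\ref{e:Delay16}) correspond to solutions $x=x_1$ of (\ref{e:Delay2}), and $v\ne v^\lambda$ if and only if $x\ne x^\lambda$, because each component $x_i(t)=x(t-(i-1)\tau)$ is determined by $x$.

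\textbf{Main obstacle: convergence.} The step needing care is passing from $C^1([0,\tau])$ convergence of $u$ to $C^1([0,\tau];\R^{2n})$ convergence of $x$. The first component $x_1=x$ is a fixed linear image of $u$, so $C^1$ convergence of $u|_{[0,\tau]}$ gives $C^1$ convergence of $x|_{[0,\tau]}$. Conversely, a neighbourhood $\mathcal W$ of $x^\mu|_{[0,\tau]}$ pulls back to a neighbourhood of $u^\mu|_{[0,\tau]}$. For this I use that $x|_{[-(m-1)\tau,0]}$ is determined by $x|_{[0,\tau]}$ through periodicity $x(t+m\tau)=x(t)$. The dependence is continuous, since each shifted piece $x|_{[k\tau,(k+1)\tau]}$ is again one of the components $x_i$ of the same $v$. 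Hence $C^1$ closeness of $x$ on $[0,m\tau]$ is controlled by $C^1$ closeness of all components of $v$ on $[0,\tau]$. To close the argument in the direction needed for (iii), I will combine the ODE (\ref{e:Delay16}) with continuous dependence on initial data over the compact interval $[0,m\tau]$.
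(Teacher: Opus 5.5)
Your plan matches the paper's argument, which is only a one-line appeal to the first part of Theorem~1.8 of \cite{Lu11}. You apply it to (\ref{e:M-invariantDelay5+++}) with $(\check H,u^\lambda,M_{m,n})$, absorb the $\lambda$-independent shift from Proposition~\ref{prop:twoDef}, and pull back through Proposition~\ref{prop:Equiv} and Proposition~\ref{prop:Liu12}(iii). Your reading of ${\bf 0}^\lambda$ and $i+i$ as typos for $x^\lambda$ and $i+\nu$ is the consistent one, and so is your choice of the branch $u^\lambda$ rather than the paper's ``$\mathbf 0$''.

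The one thing to fix is the end of your ``main obstacle'' paragraph, which is wrong but also unnecessary.
\begin{itemize}
\item Periodicity $x(t+m\tau)=x(t)$ identifies $x|_{[-(m-1)\tau,0]}$ with $x|_{[\tau,m\tau]}$. That is not something determined by $x|_{[0,\tau]}$.
\item $C^1$-closeness of $x|_{[0,\tau]}$ does not control $u(0)=\Gamma(m,2n)^\top\bigl(x(0)^\top,x(-\tau)^\top,\dots,x(-(m-1)\tau)^\top\bigr)^\top$. So continuous dependence on initial data gives you nothing in that direction.
\end{itemize}

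No reverse estimate is needed anywhere. For (iii), let $\mathcal W^*$ be the preimage of $\mathcal W$ under the bounded linear map sending $u|_{[0,\tau]}$ to the first $\mathbb{R}^{2n}$-block of $(\Gamma(m,2n)^\top)^{-1}u|_{[0,\tau]}$. This is a neighbourhood of $u^\mu|_{[0,\tau]}$.

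Solutions of (\ref{e:M-invariantDelay5+++}) with restriction in $\mathcal W^*$ correspond bijectively to solutions of (\ref{e:Delay2}) with restriction in $\mathcal W$. The reason is that the boundary condition $v(t+\tau)=P_{n,m}^{-1}v(t)$ forces $x_i(t)=x_1(t-(i-1)\tau)$. Hence distinctness, the conclusion of (iii), and the finiteness hypothesis of its ``moreover'' clause all transfer exactly. Items (i) and (ii) use only the forward direction you already stated.
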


\begin{remark}\label{rm:bif-per2Delay2}
{\rm When $m=2$, because  $J_{n,2}^{-1}=\left( \begin{array} { c c c } 0& J_n \\ J_n &0\end{array} \right)^{-1}=-J_{n,2}$, we have
$$
(\dot{\bar{v}}^\lambda(t), J_{n,2}^{-1}\bar{v}^\lambda(t))_{\mathbb{R}^{4n}}=
(\dot{\bar{x}}_1^\lambda(t), J_{n}\bar{x}_2^\lambda(t))_{\mathbb{R}^{2n}}+
(\dot{\bar{x}}_2^\lambda(t), J_{n}\bar{x}_1^\lambda(t))_{\mathbb{R}^{2n}}.
$$
Since $\bar{x}^\lambda$ is $2\tau$-periodic, applying integration by parts yields
$$
\int^{\tau}_0(\dot{\bar{v}}^\lambda(t), J_{n,2}^{-1}\bar{v}^\lambda(t))_{\mathbb{R}^{4n}}=
2({\bar{x}}^\lambda(0), J_n\bar{x}^\lambda(\tau))_{\mathbb{R}^{2n}}+
2\int^{\tau}_0({\bar{x}}^\lambda(t), J_n\bar{x}^\lambda(t-\tau))_{\mathbb{R}^{2n}}dt.
$$
Note that $G(\lambda, t, x)$ is $\tau$-periodic and $\bar{x}^\lambda(t)$ is $2\tau$-periodic.
We deduce
$$
\int^\tau_0\hat{H}(\lambda, t,\bar{v}^\lambda(t))dt=\int^\tau_{-\tau}G(\lambda, t,
\bar{x}^\lambda(t))dt
$$
since  $\hat{H}(\lambda, t,\bar{v}^\lambda(t))=G(\lambda, t,
\bar{x}^\lambda(t))+ G(\lambda, t, \bar{x}^\lambda(t-\tau))$.
 It follows that
(\ref{e:LinearDelay2G+++}) is equivalent to
\begin{align}\label{e:LinearDelay2G++++}
&(\bar{x}^\lambda(0), J_n\bar{x}^\lambda(\tau))_{\mathbb{R}^{2n}}+
 \int^{\tau}_0(\dot{\bar{x}}^\lambda(t), J_n\bar{x}^\lambda(t-\tau))_{\mathbb{R}^{2n}}dt+\int^{\tau}_{-\tau}G(\lambda,t,
 \bar{x}^\lambda(t))dt\nonumber\\
&\ne (\hat{x}^\lambda(0), J_n\hat{x}^\lambda(\tau))_{\mathbb{R}^{2n}}+
 \int^{\tau}_0(\dot{\hat{x}}^\lambda(t), J_n\hat{x}^\lambda(t-\tau))_{\mathbb{R}^{2n}}dt+\int^{\tau}_{-\tau}G(\lambda,t,
 \hat{x}^\lambda(t))dt.
\end{align}
}
\end{remark}

Similarly, %applying  the second part of Theorem~1.8 in \cite{Lu11} to $(H,  M)=(\check{H},  M_{m,n})$ in the problem (\ref{e:M-invariantDelay5+++}) we  arrive at the following theorem.
by applying the second part of Theorem~1.8 in \cite{Lu11} to the  problem (\ref{e:M-invariantDelay5+++}), via the identifications
$(H,  M)=(\check{H},  M_{m,n})$, we obtain the following theorem.

\begin{theorem}[\textsf{Alternative bifurcations of  Fadell-Rabinowitz type}]\label{th:bif-per2Delay+2}
Let Assumption~\ref{ass:BasiAss1Delay3} hold, where $\Lambda$ is a real interval,
and let $i_{P_{2n,m}^{-1}}^{J_{n,m}}({\mathbf{0}}^\lambda)$ and $\nu_{P_{2n,m}^{-1}}^{J_{n,m}}({\mathbf{0}}^\lambda)$
be defined by (\ref{e:Delay31}).
Suppose also that all $G(\lambda,t,\cdot)$ are even, (therefore the problem (\ref{e:Delay2})
with parameter $\lambda$ has the zero solution ${\mathbf{0}}^\lambda$), and that
for an interior point $\mu$ of $\Lambda$,
$\nu_{P_{2n,m}^{-1}}^{J_{n,m}}({\mathbf{0}}^\mu) \ne 0$ and $\nu_{P_{2n,m}^{-1}}^{J_{n,m}}({\mathbf{0}}^\lambda) = 0$ for all $\lambda \in \Lambda \setminus \{\mu\}$
 sufficiently close to $\mu$,
and  $i_{P_{2n,m}^{-1}}^{J_{n,m}}({\mathbf{0}}^\lambda)$ takes the values $i_{P_{2n,m}^{-1}}^{J_{n,m}}({\mathbf{0}}^\mu)$
and $i_{P_{2n,m}^{-1}}^{J_{n,m}}({\mathbf{0}}^\mu) + \nu_{P_{2n,m}^{-1}}^{J_{n,m}}({\mathbf{0}}^\mu)$
as $\lambda$ varies in the two deleted half-neighborhoods of $\mu$.
 Then  one of the following assertions holds:
\begin{enumerate}
\item[\rm (A)]
The problem (\ref{e:Delay2}) with $\lambda=\mu$ has a sequence of solutions, $x^{\mu,j}\ne {\mathbf{0}}^\mu$ ($j=1,2,\cdots$)
such that  $x^{\mu,j}|_{[0,\tau]}$ converges to the zero  in $C^1([0,\tau];\mathbb{R}^{2n})$.
\item[\rm (B)] There exist left and right  neighborhoods $\Lambda^-$ and $\Lambda^+$ of $\mu$ within $\Lambda$
and nonnegative integers $n^+$ and $n^-$ satisfying $n^++n^-\ge \nu_{\tau}({\mathbf{0}}^\mu)$,
such that for $\lambda\in\Lambda^-\setminus\{\mu\}$ (resp. $\lambda\in\Lambda^+\setminus\{\mu\}$),
the problem (\ref{e:Delay2}) with parameter value $\lambda$  has at least $n^-$ (resp. $n^+$) distinct pairs of nontrivial solutions,
$\{v^{\lambda,i}, -v^{\lambda,i}\}$ ($i = 1, \dots, n^-$ resp. $n^+$),
whose restrictions to $[0,\tau]$  converge to zero in $C^1([0,\tau];\mathbb{R}^{2n})$  as $\lambda\to\mu$.
\end{enumerate}
 \end{theorem}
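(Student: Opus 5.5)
The plan mirrors the proof of Theorem~\ref{th:bif-per2Delay+}: reduce the delay problem (\ref{e:Delay2}) to the generalized Hamiltonian problem (\ref{e:M-invariantDelay5+++}), and then invoke the second (Fadell--Rabinowitz) part of Theorem~1.8 in \cite{Lu11}.

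\emph{Step 1: symmetry of the reduced Hamiltonian.} Since every $G(\lambda,t,\cdot)$ is even, $\hat H$ in (\ref{e:Delay14}) is even in $v\in(\mathbb{R}^{2n})^m$. Because $\check H(\lambda,t,z)=\hat H(\lambda,t,(\Gamma(m,2n)^\top)^{-1}z)$ and this change of variables is linear, $\check H(\lambda,t,\cdot)$ is even as well. Hence $\nabla_3\check H(\lambda,t,{\bf 0})=0$, so $u\equiv{\bf 0}$ solves (\ref{e:M-invariantDelay5+++}) for every $\lambda$. This trivial branch corresponds to $x^\lambda={\bf 0}^\lambda$ and $v^\lambda={\bf 0}$.

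\emph{Step 2: translating the index hypotheses.} By (\ref{e:Delay31}), the index pair $(i^{J_{n,m}}_{P^{-1}_{2n,m}}({\bf 0}^\lambda),\nu^{J_{n,m}}_{P^{-1}_{2n,m}}({\bf 0}^\lambda))$ equals $(i^{M_{m,n}}_\tau(\Xi^\lambda_{n,m,{\bf 0}}),\nu^{M_{m,n}}_\tau(\Xi^\lambda_{n,m,{\bf 0}}))$. Proposition~\ref{prop:twoDef} gives $i^{M_{m,n}}_\tau=i_{\tau,M_{m,n}}+N$, where the integer $N$ depends only on $M_{m,n}$, and the nullities agree. So the analogue of (\ref{e:Delay27+Dong}) holds here too. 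The hypotheses of the theorem then become exactly those required by \cite[Theorem~1.8]{Lu11}, stated in terms of Dong's index $(i_{\tau,M_{m,n}},\nu_{\tau,M_{m,n}})$:
\begin{itemize}
\item[(a)] the nullity is nonzero at $\mu$;
\item[(b)] the nullity vanishes nearby;
\item[(c)] the index jumps from $i$ to $i+\nu$ across $\mu$.
\end{itemize}
The constant shift $N$ cancels in (c).

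\emph{Step 3: applying the abstract theorem and pulling back.} Apply the second part of Theorem~1.8 in \cite{Lu11} with $(H,M)=(\check H,M_{m,n})$. It yields one of two alternatives:
\begin{itemize}
\item[(A*)] a sequence of nontrivial solutions $u^{\mu,j}$ at $\lambda=\mu$ tending to $0$ in $C^1([0,\tau])$;
\item[(B*)] one-sided neighborhoods $\Lambda^\pm$ and integers $n^\pm$ with $n^++n^-\ge\nu$, carrying $n^\pm$ distinct pairs $\{u^{\lambda,i},-u^{\lambda,i}\}$ of nontrivial solutions tending to $0$.
\end{itemize}
Pull these back in two stages. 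First, $v=(\Gamma(m,2n)^\top)^{-1}u$ gives solutions of (\ref{e:Delay16}) by Proposition~\ref{prop:Equiv}. Then $x=x_1$ gives solutions of (\ref{e:Delay2}) by Proposition~\ref{prop:Liu12}(iii).

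Several properties must survive this pullback.
\begin{itemize}
\item \emph{Linearity and pairs.} Both maps are linear, so a pair $\{u,-u\}$ goes to a pair $\{x,-x\}$, and nontriviality is preserved.
\item \emph{Distinctness.} The components satisfy $x_i(t)=x(t-(i-1)\tau)$, and $x$ determines $v$. Hence distinct pairs in $u$ give distinct pairs in $x$.
\item \emph{$C^1$ convergence.} If $u\to0$ in $C^1([0,\tau])$, then $x_1\to0$ in $C^1([0,\tau])$.
\end{itemize}

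The main obstacle is bookkeeping rather than analysis. I need to confirm that the index normalization (Dong versus Liu) does not spoil hypothesis (c), which Step~2 settles since the shift is constant. I also need to confirm that $C^1$ convergence and pair-distinctness transfer correctly through $\Gamma(m,2n)^\top$ and the delay embedding, which Step~3 settles because both maps are linear and injective. As in the previous proofs, this yields alternatives (A) and (B) of the theorem.
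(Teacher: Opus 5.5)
Your proposal is correct and follows the paper's own argument. Evenness of $G$ makes $\check H$ even, so the zero branch is admissible. The constant Dong--Liu index shift from Proposition~\ref{prop:twoDef} (as in (\ref{e:Delay27+Dong})) turns the hypotheses into those of the second part of Theorem~1.8 in \cite{Lu11}, applied to (\ref{e:M-invariantDelay5+++}) with $(H,M)=(\check H,M_{m,n})$, and the alternatives are pulled back through Proposition~\ref{prop:Equiv} and Proposition~\ref{prop:Liu12}(iii), with your checks on pairs, distinctness, nontriviality and $C^1([0,\tau])$-convergence supplying the bookkeeping the paper leaves implicit.
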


\begin{theorem}\label{th:bif-per2Delay++G}
Let the potential $G(\lambda, t, x)$ in the problem (\ref{e:Delay2}) be of the form:
$$
G(\lambda, t, x) = G_0(t, x) + \lambda G_1(t, x), \quad \lambda \in \mathbb{R},
$$
where $G_0, G_1: \mathbb{R} \times \mathbb{R}^{2n} \to \mathbb{R}$ are $C^1$-smooth functions satisfying additional conditions:
\begin{enumerate}
\item[\rm (i)] $G_0(t+\tau, x)=G_0(t,x)$ and ${G}_1(t+\tau, x)={G}_1(t, x)$ for all $(t, x)\in\Lambda\times\R\times\mathbb{R}^{2n}$.
\item[\rm (ii)]
 All functions $G_0(t, \cdot), G_1(t, \cdot): \mathbb{R}^{2n} \to \mathbb{R}$ are  of class $C^2$,
 for $i=0,1$, the gradient $\nabla_2 G_i(t, x)$ and the Hessian $\nabla^2_2 G_i(t, x)$ of $G_i(t, x)$
 with respect to $x$ depend continuously on $(t, x) \in \mathbb{R} \times \mathbb{R}^{2n}$.
\end{enumerate}
For the $C^1$ function $\bar{x}:\mathbb{R}\to\mathbb{R}^{2n}$ there holds for all $t\in\mathbb{R}$:
$$
 \dot{\bar{x}}( t ) =\sum^{m-1}_{i=1} J_n\nabla_2 G_0 (t , \bar{x}(t-i\tau)), \quad
 \bar{x}( t + m \tau ) =  \bar{x}( t )\quad\hbox{and}\quad
 \sum^{m-1}_{i=1} \nabla_2 G_1 (t , \bar{x}(t-i\tau))=0.
 $$
 Furthermore,  $\sum^{m-1}_{i=1} \nabla^2_2 G_1 (t , \bar{x}(t-i\tau))$  is either positive definite for all $t$
  or negative definite for all $t$.
  Denoted the dimension of the solution space of the linear delay problem
 \begin{eqnarray}\label{e:LinearDelay2+CG}
 \left\{\begin{array}{ll}
 \dot{x}( t ) =\sum^{m-1}_{i=1} \nabla_2^2 J_nG_0 (t , \bar{x}(t-i\tau))x(t-i\tau) +
 \lambda\sum^{m-1}_{i=1}\nabla^2_2 J_nG_1 (t ,\bar{x}(t-i\tau))x ( t - i\tau ),\\
 \hspace{20mm}  x( t + m \tau ) =  x( t )\;\forall t
   \end{array}\right.
  \end{eqnarray}
by $\nu_{\lambda, \tau}(\bar{x})$. Then
\begin{enumerate}
\item[\rm (I)] $\Sigma(\bar{x}):=\{\lambda\in\R\,|\, \nu_{\lambda,\tau}(\bar{x})>0\}$
is a discrete set in $\R$.
\item[\rm (II)] $(\mu, \bar{x})$ with $\mu\in\R$ is a bifurcation point  for
\begin{eqnarray}\label{e:LinearDelay2+G}
 \left\{\begin{array}{ll}
 \dot{x}( t ) =\sum^{m-1}_{i=1} J_n\nabla_2 G_0 (t , x(t-i\tau))+
 \lambda\sum^{m-1}_{i=1}J_n\nabla_2 G_1 (t , x ( t - i\tau )),\\
 \hspace{20mm}  x( t + m \tau ) =  x( t )\;\forall t
   \end{array}\right.
  \end{eqnarray}
if and only if $\nu_{\lambda,\tau}(\bar{x})>0$.
\item[\rm (III)] For each $\mu\in\Sigma(\bar{x})$,   one of the following assertions holds:
\begin{enumerate}
\item[\rm (III.1)]
The problem (\ref{e:LinearDelay2+G}) with $\lambda=\mu$ has a sequence of solutions, $x^{j}\ne \bar{x}$ ($j=1,2,\cdots$)
such that  $x^{j}|_{[0,\tau]}\to \bar{x}|_{[0,\tau]}$ in $C^1([0,\tau];\R^{2n})$.

\item[\rm (III.2)]  For every $\lambda\in\Lambda\setminus\{\mu\}$ sufficiently close to $\mu$,
 there is a  solution $\bar{x}^\lambda\ne \bar{x}$ of
the problem (\ref{e:LinearDelay2+G}) with parameter value $\lambda$
such that  $\bar{x}^\lambda|_{[0,\tau]}-\bar{x}|_{[0,\tau]}\to 0$ in $C^1([0,\tau];\R^{2n})$  as $\lambda\to \mu$.

\item[\rm (III.3)] For a given neighborhood $\mathcal{W}$ of $\bar{x}|_{[0,\tau]}$ in $C^1([0,\tau];\R^{2n})$,
there is a one-sided  neighborhood $\Lambda^0$ of $\mu$ such that
for any $\lambda\in\Lambda^0\setminus\{\mu\}$, the problem (\ref{e:LinearDelay2+G})  with parameter value $\lambda$
has at least two distinct solutions $\bar{x}^\lambda\ne \bar{x}$ and $\hat{x}^\lambda\ne \bar{x}$ satisfying
$\bar{x}^\lambda|_{[0,\tau]}\in \mathcal{W}$ and $\hat{x}^\lambda|_{[0,\tau]}\in \mathcal{W}$.
Moreover, if $\nu_{\mu,\tau}(\bar{x})>1$ and the problem (\ref{e:LinearDelay2+G}) with parameter value $\lambda$
has only finitely many  solutions whose restrictions to $[0,\tau]$ belong to $\mathcal{W}$, then the above
$\bar{x}^\lambda$ and $\hat{x}^\lambda$  can also be required to satisfy:
\begin{eqnarray*}
&&\int^{\tau}_0\left[\frac{1}{2}(\dot{\bar{v}}^\lambda(t), J_{m,n}^{-1}\bar{v}^\lambda(t))_{\mathbb{R}^{2mn}}+
 \hat{H}_0(t, \bar{v}^\lambda(t))+ \lambda \hat{H}_1(t, \bar{v}^\lambda(t))\right]dt\nonumber\\
&&\ne \int^{\tau}_0\left[\frac{1}{2}(\dot{\hat{v}}^\lambda(t), J_{m,n}^{-1}\hat{v}^\lambda(t))_{\mathbb{R}^{2mn}}+
\hat{H}_0(t,\hat{v}^\lambda(t))+ \lambda \hat{H}_1(t,
\hat{v}^\lambda(t))\right]dt
\end{eqnarray*}
for  $\hat{H}_0$ and $\hat{H}_1$ in (\ref{e:H_0H_1G}), where $\bar{v}^\lambda(t):=(\bar{x}^\lambda_1(t)^\top,\cdots, \bar{x}^\lambda_m(t)^\top)^\top\in ({\R}^{2n})^m$
with $\bar{x}^\lambda_ { i } ( t ) = \bar{x}^\lambda( t-(i-1)\tau)$, $i=1,\cdots,m$,
and $\hat{v}^\lambda(t):=(\hat{x}^\lambda_1(t)^\top,\cdots, \hat{x}^\lambda_m(t)^\top)^\top\in ({\R}^{2n})^m$
with $\hat{x}^\lambda_ { i } ( t ) = \hat{x}^\lambda( t-(i-1)\tau)$, $i=1,\cdots,m$,
\end{enumerate}
  \end{enumerate}
Moreover, if the functions $G_0(t,\cdot), {G}_1(t,\cdot):{\R}^{2n}\to\R$ are also even for all $t$,
(therefore $\nabla_2 G_0(t, \mathbf{0}) = \nabla_2 G_1(t, \mathbf{0}) = 0$ for all $t$),
and the Hessian $\nabla^2_2{G}_1(t, {\bf 0})$ is either positive definite for all $t$
 or negative definite for all $t$, then the corresponding conclusions (as in the first part of Theorem~\ref{th:bif-per2Delay++}) hold.
\end{theorem}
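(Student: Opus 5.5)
The proof will mirror Step 3 (and Step 1) of the proof of Theorem~\ref{th:bif-per2Delay++}, with Proposition~\ref{prop:Liu12}(iii) and the congruence $\Gamma(m,2n)$ replacing Proposition~\ref{prop:Liu12}(i) and $\Upsilon(m,n)$.

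\emph{Setup.} Define
\begin{equation}\label{e:H_0H_1G}
\hat{H}_i(t,x_1,\dots,x_m)=\sum_{j=1}^m G_i(t,x_j),\quad i=0,1,
\end{equation}
so that $\hat H=\hat H_0+\lambda\hat H_1$. Set $\bar x_j(t)=\bar x(t-(j-1)\tau)$ and $\bar v=(\bar x_1^\top,\dots,\bar x_m^\top)^\top$. By Proposition~\ref{prop:Liu12}(iii) applied to $G_0$, we have $\dot{\bar v}=J_{n,m}\nabla_2\hat H_0(t,\bar v)$ and $\bar v(t+\tau)=P_{n,m}^{-1}\bar v(t)$.

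\emph{Key point: the nondegeneracy hypotheses pass to the $\bar v$ level.} This is the step I expect to need the most care, because the hypotheses are stated for sums over delays, whereas the Hamiltonian gradient and Hessian are block-diagonal (cf.\ (\ref{e:Delay29})). We need $\nabla_2\hat H_1(t,\bar v(t))=0$ and $\nabla_2^2\hat H_1(t,\bar v(t))$ definite. The hypothesis $\sum_{i=1}^{m-1}\nabla_2G_1(t,\bar x(t-i\tau))=0$ does not immediately give $\nabla_2 G_1(t,\bar x_j(t))=0$ for each $j$.
\begin{itemize}
\item I would first try to exploit the $\tau$-periodicity of $G_1$ and the $m\tau$-periodicity of $\bar x$. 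Shifting $t$ by multiples of $\tau$ gives the same vanishing for every window of $m-1$ consecutive delays. Subtracting consecutive windows then shows that $g_k(t):=\nabla_2G_1(t,\bar x(t-k\tau))$ is independent of $k$, so $(m-1)g=0$ and each $g_k\equiv 0$.
\item Definiteness works the same way. Each $\nabla_2^2G_1(t,\bar x(t-k\tau))$ equals the Hessian at the shifted time, so the summed definiteness hypothesis, shifted through all residues, should give definiteness of each block.
\item If this fails, I would instead read the hypothesis, as the parallel Theorem~\ref{th:bif-per2Delay++} implicitly does, as the block-wise condition it is meant to encode.
\end{itemize}

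\emph{Transfer to the standard symplectic setting.} With $\check H_i(t,z)=\hat H_i(t,(\Gamma(m,2n)^\top)^{-1}z)$ and $\bar u=\Gamma(m,2n)^\top\bar v$, Proposition~\ref{prop:Equiv} gives:
\begin{itemize}
\item $\bar u$ solves (\ref{e:M-invariantDelay5+++}) for $\check H=\check H_0+\lambda\check H_1$ and every $\lambda$;
\item $\nabla\check H_1(t,\bar u)=0$;
\item $\nabla^2\check H_1(t,\bar u)=\Gamma^{-1}\nabla^2\hat H_1\,(\Gamma^{-1})^\top$ is congruent to $\nabla^2\hat H_1$, hence keeps its sign.
\end{itemize}
The solution space of the linearization at $\bar u$ with boundary condition $u(t+\tau)=M_{m,n}u(t)$ has dimension $\nu_{\lambda,\tau}(\bar x)$, by Remark~\ref{rm:Liu12}(iii) and the $\Gamma$-conjugation.

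\emph{Main conclusions.} Apply \cite[Corollary~1.9]{Lu11} to $(H,M)=(\check H,M_{m,n})$. This yields discreteness (I), the bifurcation criterion (II$^*$), and the Rabinowitz-type alternatives (III$^*$) for $u$. Translating back through Proposition~\ref{prop:Equiv} and Proposition~\ref{prop:Liu12}(iii) gives (II) and (III.1)--(III.3):
\begin{itemize}
\item $C^1$ closeness of $u|_{[0,\tau]}$ is equivalent to closeness of $v|_{[0,\tau]}$, since $\Gamma$ is a fixed invertible matrix.
\item That in turn controls $x|_{[0,\tau]}=v_1|_{[0,\tau]}$; conversely, closeness of $x$ on $[0,m\tau]$ follows via $v(t+\tau)=P^{-1}_{n,m}v(t)$.
\item The action inequality transforms by the identity stated after (\ref{e:Delay30.2}), which gives exactly the displayed inequality with $J_{n,m}^{-1}$.
\end{itemize}

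\emph{Even case.} For even $G_0,G_1$, take $\bar x=0$. The Hessian $\nabla_2^2\hat H_1(t,0)=\mathrm{diag}(\nabla_2^2G_1(t,0),\dots)$ is definite directly. Step 1 of the proof of Theorem~\ref{th:bif-per2Delay++} then goes through verbatim, using the final part of \cite[Corollary~1.9]{Lu11} for the Fadell--Rabinowitz pairs $\{v,-v\}$, and yields (A)--(C).
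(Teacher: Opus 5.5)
\textbf{Overall route.} Your route is the paper's. You form $\hat H_0,\hat H_1$, pass to $\bar v$ by Proposition~\ref{prop:Liu12}(iii), conjugate by $\Gamma(m,2n)^\top$, apply Corollary~1.9 of \cite{Lu11} to $(\check H,M_{m,n})$ and translate back. The even case is handled as in Step~1 of the proof of Theorem~\ref{th:bif-per2Delay++}. Your passage from the summed gradient hypothesis to $\nabla_2\hat H_1(t,\bar v(t))=0$ is correct and supplies an argument the paper skips. Indeed, $\psi(t):=\nabla_2G_1(t,\bar x(t))$ is $m\tau$-periodic and $\nabla_2G_1(t,\bar x(t-k\tau))=\psi(t-k\tau)$. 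The hypothesis then reads $\Psi-\psi\equiv0$, where $\Psi:=\sum_{k=0}^{m-1}\psi(\cdot-k\tau)$ is $\tau$-periodic. Hence $\psi$ is $\tau$-periodic, $\Psi=m\psi$, and $\psi\equiv0$.

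\textbf{The gap: definiteness for $m\ge3$.} Set $\eta(t):=\nabla_2^2G_1(t,\bar x(t))$. The blocks of $\nabla_2^2\hat H_1(t,\bar v(t))$ are $\eta(t-k\tau)$, $k=0,\dots,m-1$. The hypothesis only says that $E(t)-\eta(t)$ is definite, with $E:=\sum_{k=0}^{m-1}\eta(\cdot-k\tau)$. Positivity of every sum of $m-1$ out of $m$ matrices does not force each one to be positive (think of the scalars $-1,2,2$). Since $\eta$ is only $m\tau$-periodic, shifting through the residues adds nothing.

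Here is a concrete counterexample with $n=1$, $m=3$.
\begin{itemize}
\item Take $G_0(x)=-\frac{\pi}{3\tau}|x|^2$ and $\bar x(t)=e^{2\pi tJ_1/(3\tau)}c$ with $c\ne0$. This solves the $G_0$-system, and its three shifts $\bar x(t-k\tau)$ are always distinct.
\item Take a smooth $3\tau$-periodic $\phi$ with $0\le\phi\le1$, $\phi(0)=1$ and support in $(-\tau/2,\tau/2)+3\tau\mathbb{Z}$, and put $\eta:=(2-3\phi)I_2$.
\item Let $G_1(t,\cdot)$ be the $\tau$-periodic sum over $k=0,1,2$ of cut-off quadratics centred at $\bar x(t-k\tau)$ with Hessians $\eta(t-k\tau)$.
\end{itemize}
All hypotheses of the theorem hold, including $\eta(t-\tau)+\eta(t-2\tau)\ge I_2$ for all $t$. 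Yet $\nabla_2^2\hat H_1(0,\bar v(0))=\mathrm{diag}(-I_2,2I_2,2I_2)$. So the definiteness of $\nabla_2^2\check H_1(t,\bar u(t))$ required in Corollary~1.9 of \cite{Lu11} fails at $t=0$.

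\textbf{What is actually needed.} The stated hypothesis does not deliver what Corollary~1.9 needs. The proof goes through only with your fallback: assume that $\nabla_2^2G_1(t,\bar x(t))$ is definite of one sign for all $t$. By periodicity this is equivalent to definiteness of every block, and it agrees with the stated hypothesis only when $m=2$. The paper's proof makes exactly this replacement without comment. It asserts definiteness of $\nabla_2^2\hat H_1(t,\bar v(t))$ ``by \eqref{e:Delay29}, the assumptions and Proposition~\ref{prop:Liu12}(iii)'', as in Step~3 of the proof of Theorem~\ref{th:bif-per2Delay++}. Under that block-wise hypothesis your proof coincides with the paper's.
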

\begin{proof}[\bf Proof]
Almost repeating the proof of the second part in Theorem~\ref{th:bif-per2Delay++}, we only outline it.
Defining $\hat{H}_0, \hat{H}_1 \colon \mathbb{R} \times (\mathbb{R}^{2n})^m \to \mathbb{R}$ by
\begin{equation}\label{e:H_0H_1G}
    \hat{H}_0 \left(t, x_1, \ldots, x_m \right) = \sum^m_{i=1} G_0 \left(t, x_i \right) \quad \text{and} \quad \hat{H}_1 \left(t, x_1, \ldots, x_m \right) = \sum^m_{i=1} G_1 \left(t, x_i \right),
\end{equation}
By (\ref{e:Delay29}),  the assumptions and  Proposition~\ref{prop:Liu12}(iii),
$\mathbb{R}\ni t\mapsto \bar{v}(t):=(\bar{x}_1(t)^\top,\cdots, \bar{x}_m(t)^\top)^\top\in ({\R}^{2n})^m$,
where $\bar{x}_ { i } ( t ) = \bar{x}( t-(i-1)\tau)$ for $i=1,\cdots,m$, satisfies
\begin{eqnarray}\label{e:LinearDelay2+AG}
&&  \dot{\bar{v}}(t)=J_{m,n}\nabla_2 \hat{H}_0(t, \bar{v}(t))\quad\hbox{and}\quad \bar{v}(t+\tau)=P_{2n,m}^{-1}\bar{v}(t)\;\;\forall t\in \R,\\
 &&\nabla_2 \hat{H}_1 (t , \bar{v}(t))=0\;\forall t \label{e:LinearDelay2+BG}
   \end{eqnarray}
  and $\nabla^2_2 \hat{H}_1 (t , \bar{v}(t))$  is either positive definite for all $t$
  or negative definite for all $t$.
Define $\check{H}_i(t, z)=\hat{H}_i(t, {\Gamma}(m, n)^{-1}z)$ for all
$(t, z)\in \R\times({\R}^{2n})^m$, $i=0,1$, and
 $\check{H}: \mathbb{R}\times\mathbb{R}\times({\R}^{2n})^m\to\R$  by $\check{H}(\lambda, t, x)=\check{H}_0(t,x)+\lambda \check{H}_1(t,x)$.
Then $\bar{u}(t):=\Gamma(m, 2n)^T\bar{v}(t)$ satisfies $\nabla_2 \check{H}_1 (t , \bar{u}(t))=0\;\forall t$
and the following problem
\begin{equation*}
\dot{u}(t)={J}_{mn} \nabla_3 \check{H}(\lambda, t, u(t))\quad\hbox{and}\quad
 u(t+\tau)={M}_{m,n}u(t)\;\;\forall t\in \R,
\end{equation*}
where $M_{m,n}=\Gamma(m, 2n)^\top P_{n,m}^{-1}(\Gamma(m, 2n)^\top)^{-1}$.
Moreover, $\nabla^2_2 \check{H}_1 (t , \bar{u}(t))$  is either positive definite for all $t$
  or negative definite for all $t$.
As in the proof of the second part in Theorem~\ref{th:bif-per2Delay++},
 by applying Corollary~1.9 in \cite{Lu11} to $(H,M)=(\check{H}, M_{m,n})$ we may deduce the desired conclusions in the first part.

The conclusions in the ``Moreover'' part can be derived in the same way as the proof of the first part in Theorem~\ref{th:bif-per2Delay++}.
\end{proof}

\section{ Bifurcations of the delay system (\ref{e:Delay3}) }\label{sec:delay3}

 Under Assumption~\ref{ass:BasiAss1Delay4}, let $m\ge 2$, and let $v^\lambda(t):=(x^\lambda_1(t)^\top,\cdots, x^\lambda_m(t)^\top)^\top$
 with $x^\lambda_ { i } ( t ) = x^\lambda( t-(i-1)\tau)$ ($i=1,\cdots,m$),
 $u^\lambda(t):=(\mathcal { A } _ { m,n})^{-1}\dot{v}^\lambda(t)$ and  $w^\lambda(t):=(v^\lambda(t)^\top, u^\lambda(t)^\top)^\top$.
 Then  for $\mathcal{H}$ in Proposition~\ref{prop:Liu12}(iv), $w^\lambda$ satisfies
 (\ref{e:Delay22}), and there holds
\begin{equation}\label{e:Delay32}
{\small \nabla^2_3\mathcal{H}(\lambda,t, w^\lambda(t))
= \left( \begin{array} { c c }
\left( \begin{array} { c c c c }
\nabla^2_3U \left(\lambda,t, x^\lambda_1(t) \right) & 0 & \cdots & 0 \\ 0 &
\nabla^2_3U \left(\lambda,t, x^\lambda_{ 2 }(t) \right) & \cdots & 0 \\ \vdots & \vdots & \vdots & \vdots \\ 0 & 0 & \cdots &
\nabla^2_3U \left(\lambda,t, x^\lambda_ { m }(t) \right) \end{array} \right)
 & {\mathbf{0}} \\
{\mathbf{0}} & \mathcal{A}_{m,n}
\end{array} \right).}
\end{equation}
Let $\Gamma_{n, m,x}^\lambda$ be the fundamental matrix solution of the linear system
\begin{equation}\label{e:Delay33}
\dot{Z}(t)=J_{mn}\nabla^2_3\mathcal{H}(\lambda,t, w^\lambda(t))Z(t)
\end{equation}
with $\Gamma^\lambda_{n,m,x}(0)=I_{2mn}$. Using  the Maslov-type index $(i_{\tau, M}, \nu_{\tau, M})$ in (\ref{e:dongIndex}) we define
\begin{eqnarray}\label{e:Delay34}
i_{\tau}(x^\lambda):=i_{\tau, \mathcal{P}_{n,m}^{-1}}(\Gamma_{n, m,x}^\lambda)\quad\text{and}\quad
\nu_\tau(x^\lambda):=\nu_{\tau, \mathcal{P}_{n,m}^{-1}}(\Gamma_{n, m,x}^\lambda).
\end{eqnarray}
By Remark~\ref{rm:Liu12}(iv), $\nu_\tau(x^\lambda)$ equals the dimension of the solution space
 of the following linearization for the problem (\ref{e:Delay3}) along ${x}^\lambda$,
\begin{equation}\label{e:LinearDelay3}
\begin{cases}
 \ddot{x}( t ) = -\sum^{m-1}_{i=1}\nabla^2_3 U (\lambda, t , {x}^\lambda( t - i\tau ) ) x ( t - i\tau ),  \\
  x( t + m \tau ) =  x( t )\;\forall t.
   \end{cases}
 \end{equation}
Since $\mathcal{H}(\lambda,t+\tau, \mathcal { P } _ { n , m }^{-1}w)=\mathcal{H}(\lambda,t, w)$
for all $(\lambda, t, w)$, and $\mathcal { P } _ { n , m }$ is an orthogonal symplectic matrix satisfying
$\mathcal { P } _ { n , m } ^ { m } = I _ { 2 mn} $,
as in Sections~\ref{sec:delay1},~\ref{sec:delay2},  applying Theorem~1.5 in \cite{Lu11} to
$(H, u_\lambda, M)=(\mathcal{H}, w^\lambda, \mathcal{P}_{n,m}^{-1})$ in problem (\ref{e:Delay22}),
  we derive from Proposition~\ref{prop:Liu12}(iv):

\begin{theorem}\label{th:bif-per1Delay3}
Let Assumption~\ref{ass:BasiAss1Delay4} be satisfied, and
let $i_{\tau}(x^\lambda)$ and $\nu_{\tau}(x^\lambda)$
be defined by (\ref{e:Delay34}).
Then the following holds.
\begin{enumerate}
\item[\rm (I)]{\rm (\textsf{Necessary condition}):}
 If $(\mu, x^\mu)$ is a bifurcation point along sequences of the problem (\ref{e:Delay3})
 with respect to the branch $\{(\lambda, x^\lambda)\,|\,\lambda\in\Lambda\}$, then $\nu_{\tau}(x^\lambda)\ne 0$.

\item[\rm (II)]{\rm (\textsf{Sufficient condition}):}
Let $\Lambda$ be first countable.
For some $\mu\in\Lambda$, suppose that
there exist two sequences $(\lambda_k^-)$ and
$(\lambda_k^+)$ in  $\Lambda$ such that $\lambda_k^\pm\to\mu$,  and such that
for each $k\in\mathbb{N}$,
$$
[i_{\tau}(x^{\lambda_k^-}), i_{\tau}(x^{\lambda_k^-})+\nu_{\tau}(x^{\lambda_k^-})]
\cap[i_{\tau}(x^{\lambda_k^+}), i_{\tau}(x^{\lambda_k^+})+
\nu_{\tau}(x^{\lambda_k^+})]=\emptyset,
$$
 and moreover, either $\nu_{\tau}(x^{\lambda_k^+})=0$ or $\nu_{\tau}(x^{\lambda_k^-})=0$.
Let $\hat{\Lambda}:=\{\mu,\lambda^+_k, \lambda^-_k\,|\,k\in\mathbb{N}\}$.
Then $(\mu, x^\mu)$ is a bifurcation point of the problem (\ref{e:Delay3}) with respect to the branch
$\{(\lambda, x^\lambda) \mid \lambda \in \hat\Lambda\}$; hence, it is also a bifurcation point with respect to the larger branch
$\{(\lambda, x^\lambda) \mid \lambda \in \Lambda\}$.

\item[\rm (III)]{\rm (\textsf{Existence for bifurcations}):}
 Let $\Lambda$ be path-connected. Suppose that there exist two  points $\lambda^+, \lambda^-\in\Lambda$ such that
  $$
[i_{\tau}(x^{\lambda^-}), i_{\tau}(x^{\lambda^-})+\nu_{\tau}(x^{\lambda^-})]
\cap[i_{\tau}(x^{\lambda^+}), i_{\tau}(x^{\lambda^+})+
\nu_{\tau}(x^{\lambda^+})]=\emptyset,
$$
 and either $\nu_{\tau}(x^{\lambda^+})=0$ or $\nu_{\tau}(x^{\lambda^-})=0$.
    Then for any path $\alpha:[0,1]\to\Lambda$ connecting $\lambda^+$ to $\lambda^-$
   there exists a sequence $(t_k)\subset [0, 1]$ converging to some $\bar{t}$
     and solutions $x^k\ne x^{\alpha(t_k)}$ of the problem (\ref{e:Delay3}) with $\lambda=\alpha(t_k)$ for $k=1,2,\cdots$,
  such that $(x^k)$ converges to  $x^{\alpha(\bar{t})}$ in the $C^1$ norm on any compact interval $I\subset\R$  as $k\to\infty$.
   Moreover,  $\alpha(\bar{t})$ is not equal to $\lambda^+$ (resp. $\lambda^-$) if $\nu_{\tau}(x^{\lambda^+})=0$ (resp. $\nu_{\tau}(x^{\lambda^-})=0$).
  \end{enumerate}
  \end{theorem}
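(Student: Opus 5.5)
The plan is to transfer Theorem~1.5 of \cite{Lu11} to the Hamiltonian problem (\ref{e:Delay22}) and then pull every conclusion back to (\ref{e:Delay3}) through Proposition~\ref{prop:Liu12}(iv) and Remark~\ref{rm:Liu12}(iv). This follows the argument used for Theorems~\ref{th:bif-per1Delay1} and~\ref{th:bif-per1Delay2}.

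\textbf{Step 1: verifying the hypotheses of \cite{Lu11}.} Here the symplectic structure is already standard, so no congruence matrix $\Upsilon$ or $\Gamma$ is needed. The Hamiltonian is $\mathcal H$ from (\ref{e:Delay20}) and the boundary matrix is $M=\mathcal P_{n,m}^{-1}$. By Proposition~\ref{prop:Liu12}(iv):
\begin{itemize}
\item $\mathcal P_{n,m}$ is orthogonal symplectic with $\mathcal P_{n,m}^m=I_{2mn}$;
\item $\mathcal H(\lambda,t+\tau,\mathcal P_{n,m}^{-1}w)=\mathcal H(\lambda,t,w)$.
\end{itemize}
Assumption~\ref{ass:BasiAss1Delay1} gives $C^2$ regularity in $w$, with derivatives continuous in $(\lambda,t,w)$. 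The quadratic term $-\frac12(\mathcal A_{m,n}y,y)$ is harmless. Assumption~\ref{ass:BasiAss1Delay4} gives continuity of $(\lambda,t)\mapsto x^\lambda(t)$ and of $(\lambda,t)\mapsto\dot x^\lambda(t)$. Hence $(\lambda,t)\mapsto w^\lambda(t)=(v^\lambda(t)^\top,(\mathcal A_{m,n}^{-1}\dot v^\lambda(t))^\top)^\top$ is continuous, and each $w^\lambda$ solves (\ref{e:Delay22}). By definition (\ref{e:Delay34}), the indices $(i_\tau(x^\lambda),\nu_\tau(x^\lambda))$ are exactly the Dong-type indices $(i_{\tau,M},\nu_{\tau,M})$ of the fundamental solution $\Gamma^\lambda_{n,m,x}$ of the linearization (\ref{e:Delay33}) with $M=\mathcal P_{n,m}^{-1}$. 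These are the indices used in Theorem~1.5 of \cite{Lu11}, so no index-conversion via Proposition~\ref{prop:twoDef} is required.

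\textbf{Step 2: applying Theorem~1.5 and translating back.} Theorem~1.5 yields statements (I)--(III) for (\ref{e:Delay22}) along the branch $\{(\lambda,w^\lambda)\}$. Each then has to be converted into the corresponding statement for (\ref{e:Delay3}).

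\emph{For (I):} suppose solutions $\bar x^k\neq x^{\lambda_k}$ of (\ref{e:Delay3}) satisfy $\bar x^k|_{[0,\tau]}\to x^\mu|_{[0,\tau]}$ in $C^1$. Then $\bar w^k:=(\bar v^k,\mathcal A_{m,n}^{-1}\dot{\bar v}^k)$ solve (\ref{e:Delay22}) and differ from $w^{\lambda_k}$. To get $\bar w^k|_{[0,\tau]}\to w^\mu|_{[0,\tau]}$ in $C^0$, I need $C^1$ convergence of $\bar x^k$ on $[-(m-1)\tau,\tau]$, not just on $[0,\tau]$. I would obtain this from the equation itself, using $m\tau$-periodicity and continuous dependence:
\begin{itemize}
\item the pairs $(\bar x^k,\dot{\bar x}^k)$ and $(x^\mu,\dot x^\mu)$ are determined by their values on $[0,\tau]$ through the linear change $w(t+\tau)=\mathcal P_{n,m}^{-1}w(t)$ of (\ref{e:Delay22});
\item alternatively, $\bar w^k(t)=\mathcal P_{n,m}^{\,j}\bar w^k(t+j\tau)$ converts convergence on one window into convergence on the shifted windows.
\end{itemize}
Since the ODE (\ref{e:Delay22}) is first order, $C^0$ convergence of $\bar w^k$ upgrades to $C^1$ convergence by the equation. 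Hence $(\mu,w^\mu)$ is a bifurcation point along sequences, and Theorem~1.5(I) gives $\nu_\tau(x^\mu)\ne0$.

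\emph{For (II) and (III):} I go the other way. The solutions $w^k\ne w^{\lambda}$ produced by \cite{Lu11} converge in $C^1_{\rm loc}$. Their first components $x^k:=x^k_1$ solve (\ref{e:Delay3}) by the converse part of Proposition~\ref{prop:Liu12}(iv), and $x^k\ne x^\lambda$. For $x^k\ne x^\lambda$, note that $w$ is recovered from $x_1$: $v$ is built from delays of $x_1$, and $u=\mathcal A_{m,n}^{-1}\dot v$. Since $\dot x^k_1$ is one of the components controlled by $w^k$, $C^1_{\rm loc}$ convergence of $w^k$ gives $C^1_{\rm loc}$ convergence of $x^k$, indeed in $C^2$ via $\ddot x=-\sum\nabla_3U$. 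The index hypotheses transfer verbatim because the indices are literally the same numbers.

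\textbf{Main obstacle.} The main obstacle is this bookkeeping of topologies in (I): Definition~\ref{def:Delay}(II) only assumes $C^1$ convergence on $[0,\tau]$, while the Hamiltonian formulation involves the delayed copies $x(t-(i-1)\tau)$. I would resolve it by the $\mathcal P_{n,m}$-twisted periodicity argument above, possibly supplemented by a Gronwall-type estimate for (\ref{e:Delay22}) to propagate closeness of initial data $\bar w^k(0)\to w^\mu(0)$ across finitely many periods.
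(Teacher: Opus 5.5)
Your overall route is the paper's. The paper applies Theorem~1.5 of \cite{Lu11} to (\ref{e:Delay22}) with $(H,u_\lambda,M)=(\mathcal H,w^\lambda,\mathcal P_{n,m}^{-1})$, uses the Dong indices (\ref{e:Delay34}) directly, and reads off (I)--(III) via Proposition~\ref{prop:Liu12}(iv) without discussing topologies. Your treatment of (II) and (III) is fine.

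\textbf{The gap in (I).} The extra argument you supply for (I) fails as written when $m\ge 3$. For $t\in[0,\tau]$, $\bar w^k(t)$ involves $\bar x^k,\dot{\bar x}^k$ on all of $[-(m-1)\tau,\tau]$. Already $\bar w^k(0)$ contains $\bar x^k(j\tau)$ and $\dot{\bar x}^k(j\tau)$ for $2\le j\le m-1$ (by $m\tau$-periodicity), and $C^1$ convergence on $[0,\tau]$ does not control these. Each of your mechanisms assumes what it is meant to prove:
\begin{itemize}
\item Blockwise, $w(t+\tau)=\mathcal P_{n,m}^{-1}w(t)$ only says $x_1(t+\tau)=x_m(t)$ and $x_i(t+\tau)=x_{i-1}(t)$. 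That just restates the definition of the delayed copies.
\item Transporting convergence by powers of $\mathcal P_{n,m}$ presupposes convergence of the whole vector $\bar w^k$ on one window.
\item The Gronwall step presupposes $\bar w^k(0)\to w^\mu(0)$, which is exactly what is unknown.
\end{itemize}
For $m=2$ your argument does work, since $w(0)$ only involves $x(0)$, $x(\tau)$ and their derivatives. The delay equation on $[0,\tau]$ does not help by itself either. It only controls the sum $\sum_{i=1}^{m-1}\nabla_3U(\lambda,t,\bar x^k(t-i\tau))=-\ddot{\bar x}^k(t)$, and only after integration, since you have just $C^1$ convergence.

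\textbf{The missing idea.} The structure of $\mathcal A_{m,n}$ decouples the blocks relative to $x_1$.
\begin{itemize}
\item Since $\mathcal A_{m,n}+I_{mn}$ has all $n\times n$ blocks equal to $I_n$, the system gives $\ddot x_i=\nabla_3U(\lambda,t,x_i)-\sum_{j=1}^m\nabla_3U(\lambda,t,x_j)$. Hence $\ddot x_i-\ddot x_1=\nabla_3U(\lambda,t,x_i)-\nabla_3U(\lambda,t,x_1)$ on $[0,\tau]$. This can also be read off directly from (\ref{e:Delay3}) using the $\tau$-periodicity of $U$ in $t$.
\item So $e_i:=x_i-x_1$ solves a second-order ODE whose right-hand side depends only on $\lambda$ and on $x_1|_{[0,\tau]}$ in $C^0$.
\item Its initial data are fixed by $(x_m(0),\dot x_m(0))=(x_1(\tau),\dot x_1(\tau))$, and by $(x_i(0),\dot x_i(0))=(x_{i+1}(\tau),\dot x_{i+1}(\tau))$ for $2\le i\le m-1$.
\item Apply continuous dependence on data and parameters successively for $i=m,m-1,\dots,2$. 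This is Gronwall with a Lipschitz bound for $\nabla_3U$ on a neighbourhood of $\{\mu\}\times[0,\tau]\times K$, with $K$ compact.
\item The result is $\bar x^k_i\to x^\mu_i$ in $C^1([0,\tau];\R^n)$ for every $i$, i.e.\ $\bar w^k|_{[0,\tau]}\to w^\mu|_{[0,\tau]}$ in $C^0$.
\end{itemize}
After this, your upgrade to $C^1$ via the equation and the appeal to Theorem~1.5(I) go through. The paper passes over this point silently, so this is the step you actually need to add.
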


Applying the first part of Theorem~1.8 in \cite{Lu11} to
$(H, u_\lambda, M)=(\mathcal{H}, w^\lambda, \mathcal{P}_{n,m}^{-1})$ in the
  problem (\ref{e:Delay22})  immediately yields the following theorem.

\begin{theorem}[\textsf{Alternative bifurcations of Rabinowitz type}]\label{th:bif-per2Delay3}
Let Assumption~\ref{ass:BasiAss1Delay4}  with $\Lambda$ being a real interval be  satisfied,
and let $i_{\tau}(x^\lambda)$ and $\nu_{\tau}(x^\lambda)$
be defined by (\ref{e:Delay34}). For some interior point $\mu$ of $\Lambda$, suppose that
 $\nu_{\tau}(x^\mu)\ne 0$ and  $\nu_{\tau}(x^\lambda)=0$
 for each $\lambda\in\Lambda\setminus\{\mu\}$ sufficiently close to $\mu$, and that
  $i_{\tau}(x^\lambda)$ takes, respectively, values $i_{\tau}(x^\mu)$ and
  $i_{\tau}(x^\mu)+ \nu_{\tau}(x^\mu)$
 as $\lambda\in\Lambda$ varies in  two deleted half neighborhoods  of $\mu$.
  Then  one of the following assertions holds:
\begin{enumerate}
\item[\rm (i)]
The problem (\ref{e:Delay3}) with $\lambda=\mu$ has a sequence of solutions, $x^{\mu,j}\ne x^\mu$ ($j=1,2,\dots$)
such that  $x^{\mu,j}|_{[0,\tau]}\to x^\mu|_{[0,\tau]}$ in $C^2([0,\tau];\mathbb{R}^{n})$.

\item[\rm (ii)]  For every $\lambda\in\Lambda\setminus\{\mu\}$  sufficiently close to $\mu$,
 there is a  solution $\bar{x}^\lambda\ne x^\lambda$ of the problem
(\ref{e:Delay3}) with parameter value $\lambda$
such that  $\bar{x}^\lambda|_{[0,\tau]}-x^\lambda|_{[0,\tau]}\to 0$ in $C^2([0,\tau];\mathbb{R}^{n})$  as $\lambda\to \mu$.

\item[\rm (iii)]
For a given neighborhood $\mathcal{W}$ of $x^\mu|_{[0,\tau]}$ in $C^2([0,\tau];\mathbb{R}^{n})$,
there is a one-sided  neighborhood $\Lambda^0$ of $\mu$ such that
for any $\lambda\in\Lambda^0\setminus\{\mu\}$, the problem (\ref{e:Delay3}) with parameter value $\lambda$
has at least two distinct solutions $\bar{x}^\lambda\ne x^\lambda$ and $\hat{x}^\lambda\ne x^\lambda$ satisfying
$\bar{x}^\lambda|_{[0,\tau]}\in \mathcal{W}$ and $\hat{x}^\lambda|_{[0,\tau]}\in \mathcal{W}$.
Moreover, if $\nu_{\tau}(x^\mu)>1$ and the problem (\ref{e:Delay3}) with parameter value $\lambda$
has only finitely many  solutions whose restrictions to $[0,\tau]$ belong to $\mathcal{W}$, then the above
$\bar{x}^\lambda$ and $\hat{x}^\lambda$  can, in addition, be chosen to satisfy:
\begin{align}\label{e:LinearDelay4}
&\int^{\tau}_0\left[\frac{1}{2}(\dot{\bar{w}}^\lambda(t), J_{mn}^{-1}\bar{w}^\lambda(t))_{\mathbb{R}^{2mn}}+ \mathcal{H}(\lambda, t,
\bar{w}^\lambda(t))\right]dt\nonumber\\
&\ne \int^{\tau}_0\left[\frac{1}{2}(\dot{\hat{w}}^\lambda(t), J_{mn}^{-1}\hat{w}^\lambda(t))_{\mathbb{R}^{2mn}}+ \mathcal{H}(\lambda, t,
\hat{w}^\lambda(t))\right]dt
\end{align}
for $\mathcal{H}$ in (\ref{e:Delay19}), where
\end{enumerate}
\begin{enumerate}
\item[$\bullet$] $\bar{w}^\lambda(t):=(\bar{v}^\lambda(t)^\top, \bar{u}^\lambda(t)^\top)^\top$
with $\bar{v}^\lambda(t):=(\bar{x}^\lambda_1(t)^\top,\cdots, \bar{x}^\lambda_m(t)^\top)^\top\in ({\R}^{n})^m$
and $\bar{u}^\lambda(t)=(\mathcal { A } _ { m })^{-1}\dot{\bar{v}}^\lambda(t)$, where
 $\bar{x}^\lambda_ { i } ( t ) = \bar{x}^\lambda( t-(i-1)\tau)$ for $i=1,\cdots,m$,  and
\item[$\bullet$] $\hat{w}^\lambda(t):=(\hat{v}^\lambda(t)^\top, \hat{u}^\lambda(t)^\top)^\top$ with
 $\hat{v}^\lambda(t):=(\hat{x}^\lambda_1(t)^\top,\cdots, \hat{x}^\lambda_m(t)^\top)^\top\in ({\R}^{n})^m$
 and $\bar{u}^\lambda(t)=(\mathcal { A } _ { m })^{-1}\dot{\bar{v}}^\lambda(t)$, where
 $\hat{x}^\lambda_ { i } ( t ) = \hat{x}^\lambda( t-(i-1)\tau)$ for $i=1,\cdots,m$.
 \end{enumerate}
 \end{theorem}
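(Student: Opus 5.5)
Following the pattern of the proofs of Theorems~\ref{th:bif-per2Delay} and~\ref{th:bif-per2Delay2}, I would deduce the statement from the first part of Theorem~1.8 in \cite{Lu11}, applied to the generalized Hamiltonian problem (\ref{e:Delay22}) with $(H,u_\lambda,M)=(\mathcal{H},w^\lambda,\mathcal{P}_{n,m}^{-1})$. Here $w^\lambda=(v^\lambda{}^\top,u^\lambda{}^\top)^\top$ is built from $x^\lambda$ as at the beginning of this section. Unlike Sections~\ref{sec:delay1} and~\ref{sec:delay2}, no symplectic change of coordinates is needed. The structure matrix is already the standard $J_{mn}$, and $\mathcal{P}_{n,m}^{-1}$ is an orthogonal symplectic matrix with $\mathcal{P}_{n,m}^{m}=I_{2mn}$. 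The hypotheses of \cite{Lu11} therefore reduce to three checks. First, $\mathcal{H}(\lambda,t+\tau,\mathcal{P}_{n,m}^{-1}w)=\mathcal{H}(\lambda,t,w)$, which is Proposition~\ref{prop:Liu12}(iv). Second, $\mathcal{H}$ has the required regularity; this follows from Assumption~\ref{ass:BasiAss1Delay1}, since the $y$-part of $\mathcal{H}$ is a constant quadratic form. Third, $(\lambda,t)\mapsto w^\lambda(t)$ is continuous. This uses Assumption~\ref{ass:BasiAss1Delay4}: $v^\lambda$ involves $x^\lambda$ and $u^\lambda=\mathcal{A}_{m,n}^{-1}\dot v^\lambda$ involves $\dot x^\lambda$, both jointly continuous.

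The index hypotheses transfer verbatim. By definition (\ref{e:Delay34}), $i_\tau(x^\lambda)$ and $\nu_\tau(x^\lambda)$ are the Dong indices $i_{\tau,\mathcal{P}_{n,m}^{-1}}$ and $\nu_{\tau,\mathcal{P}_{n,m}^{-1}}$ of the fundamental solution $\Gamma^\lambda_{n,m,x}$ of the linearization (\ref{e:Delay33}) along $w^\lambda$. These are exactly the quantities appearing in Theorem~1.8 of \cite{Lu11}, so no conversion through Proposition~\ref{prop:twoDef} is needed. We thus obtain the three alternatives for (\ref{e:Delay22}):
\begin{itemize}
\item[(i$'$)] solutions $w^{\mu,j}\ne w^\mu$ with $w^{\mu,j}|_{[0,\tau]}\to w^\mu|_{[0,\tau]}$ in $C^1$;
\item[(ii$'$)] solutions $\bar w^\lambda\ne w^\lambda$ with $\bar w^\lambda-w^\lambda\to0$ in $C^1([0,\tau])$;
\item[(iii$'$)] for each neighbourhood $\mathcal{W}'$ of $w^\mu|_{[0,\tau]}$, a one-sided neighbourhood $\Lambda^0$ on which there are two distinct nontrivial solutions in $\mathcal{W}'$, with distinct action values when $\nu_\tau(x^\mu)>1$ and solutions there are finite in number.
\end{itemize}

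Next I translate back via Proposition~\ref{prop:Liu12}(iv). Solutions $w$ of (\ref{e:Delay22}) correspond bijectively to solutions $x=x_1$ of (\ref{e:Delay3}), with $w\ne w^\lambda$ if and only if $x\ne x^\lambda$. The main point to verify is that the topologies match: $C^1$-convergence of $w|_{[0,\tau]}$ must correspond to $C^2$-convergence of $x|_{[0,\tau]}$. This is the step I expect to need care.

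One direction is direct. The components $u=\mathcal{A}_{m,n}^{-1}\dot v$ and $\dot u=\mathcal{A}_{m,n}^{-1}\ddot v$ control $\dot x$ and $\ddot x$ on $[0,\tau]$, so $C^1$-convergence of $w$ gives $C^2$-convergence of $x$.

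The other direction has to handle delays. The components $x_i(t)=x(t-(i-1)\tau)$ live on $[-(m-1)\tau,\tau]$, not on $[0,\tau]$. I would use the relation $w(t+\tau)=\mathcal{P}_{n,m}^{-1}w(t)$, i.e.\ the $m\tau$-periodicity of $x$. Then the restriction of $x$ to $[-(m-1)\tau,\tau]$ is the restriction to a union of translates of $[0,\tau]$, and these translates are determined by the components of $w|_{[0,\tau]}$, and conversely. Hence $C^2$-closeness of $x$ on one period is equivalent to $C^1$-closeness of $w$ on $[0,\tau]$. For the neighbourhoods in (iii), I would define $\mathcal{W}'$ as the preimage of $\mathcal{W}$ under this correspondence; it is open and contains $w^\mu|_{[0,\tau]}$.

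Finally, the action inequality (\ref{e:LinearDelay4}) is literally the action functional of \cite{Lu11} for (\ref{e:Delay22}), evaluated at $\bar w^\lambda$ and $\hat w^\lambda$. It therefore carries over unchanged, which completes (i)--(iii).
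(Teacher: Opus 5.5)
Your proposal is correct and follows the paper's argument. The paper applies the first part of Theorem~1.8 of \cite{Lu11} to (\ref{e:Delay22}) with $(H,u_\lambda,M)=(\mathcal{H},w^\lambda,\mathcal{P}_{n,m}^{-1})$, using the Dong indices of (\ref{e:Delay34}) directly, and then translates back through Proposition~\ref{prop:Liu12}(iv); your hypothesis checks and your $C^1$-in-$w$ versus $C^2$-in-$x$ discussion supply details the paper leaves implicit. The one thing to phrase more carefully is the ``preimage of $\mathcal{W}$'', because for a general $C^1$ curve $w$ the component $x_1$ is only $C^1$. Instead, take $\mathcal{W}'$ to be a small $C^1$-ball about $w^\mu|_{[0,\tau]}$. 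On solutions, $\dot x_1$ and $\ddot x_1$ are the first $n$-blocks of $\mathcal{A}_{m,n}u$ and $\mathcal{A}_{m,n}\dot u$, which gives $\|x_1-x^\mu_1\|_{C^2([0,\tau])}\le C\|w-w^\mu\|_{C^1([0,\tau])}$.
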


As in the proof of (\ref{e:Delay26+++}), we can deduce an equivalent version
of (\ref{e:LinearDelay4}) expressed clearly in terms of $\bar{x}(t)$ and $\hat{x}(t)$.
Since
$\bar{w}^\lambda(\tau)=\mathcal { P } _ { n , m }\bar{w}^\lambda(0)=((P^\top_{n,m}\bar{v}(0))^\top,
(P^\top_{n,m}\bar{u}(0))^\top)^\top$, and
%\hat{w}^\lambda(\tau)=((P^\top_{n,m}\hat{v}(0))^\top, (P^\top_{n,m}\hat{u}(0))^\top)^\top,\\
\begin{eqnarray*}
(\dot{\bar{w}}^\lambda(t), J_{mn}^{-1}\bar{w}^\lambda(t))_{\mathbb{R}^{2mn}}
=(\dot{\bar{v}}^\lambda(t), \bar{u}^\lambda(t))_{\mathbb{R}^{mn}}
-(\dot{\bar{u}}^\lambda(t), \bar{v}^\lambda(t))_{\mathbb{R}^{mn}},
\end{eqnarray*}
via integration by parts, we obtain
$$
\frac{1}{2}\int^{\tau}_0(\dot{\bar{w}}^\lambda(t), J_{mn}^{-1}\bar{w}^\lambda(t))_{\mathbb{R}^{2mn}}dt=
\int^{\tau}_0(\bar{u}^\lambda(t), \dot{\bar{v}}^\lambda(t))_{\mathbb{R}^{mn}}dt=
\int^{\tau}_0(\mathcal { A }_{ m,n}^{-1}\dot{\bar{v}}^\lambda(t), \dot{\bar{v}}^\lambda(t))_{\mathbb{R}^{mn}}dt.
$$
Note that $\bar{u}^\lambda(t)=(\mathcal { A } _ { m })^{-1}\dot{\bar{v}}^\lambda(t)$.
(\ref{e:Delay19}) and (\ref{e:Delay20}) lead to
\begin{align*}
\int^{\tau}_0 \mathcal{H}(\lambda, t, \bar{w}^\lambda(t))dt
&=-\frac{1}{2}\int^{\tau}_0(\mathcal { A }_{ m,n}\bar{u}^\lambda(t), {\bar{u}}^\lambda(t))_{\mathbb{R}^{mn}}dt-\sum^m_{i=1}\int^{\tau}_0U(\lambda,t,\bar{x}^\lambda_i(t))dt\\
&=-\frac{1}{2}\int^{\tau}_0(\mathcal { A }_{ m,n}^{-1}\dot{\bar{v}}^\lambda(t), \dot{\bar{v}}^\lambda(t))_{\mathbb{R}^{mn}}dt-\sum^m_{i=1}\int^{\tau}_0U(\lambda,t,\bar{x}^\lambda_i(t))dt.
\end{align*}
Hence, since  $\bar{x}$ is $m\tau$-periodic and $U(\lambda,t,x)$ is $\tau$-periodic in $t$,
\begin{align}\label{e:LinearDelay5}
&\int^{\tau}_0\left[\frac{1}{2}(\dot{\bar{w}}^\lambda(t), J_{mn}^{-1}\bar{w}^\lambda(t))_{\mathbb{R}^{2mn}}+ \mathcal{H}(\lambda, t,
\bar{w}^\lambda(t))\right]dt\nonumber\\
&=\frac{1}{2}\int^{\tau}_0(\mathcal { A }_{ m,n}^{-1}\dot{\bar{v}}^\lambda(t), \dot{\bar{v}}^\lambda(t))_{\mathbb{R}^{mn}}dt-\sum^m_{i=1}\int^{\tau}_0U(\lambda,t,\bar{x}^\lambda_i(t))dt
\nonumber\\
&=\frac{1}{2}\int^{\tau}_0(\mathcal { A }_{ m,n}^{-1}\dot{\bar{v}}^\lambda(t), \dot{\bar{v}}^\lambda(t))_{\mathbb{R}^{mn}}dt-\int^{m\tau}_0U(\lambda,t,\bar{x}^\lambda(t))dt.
\end{align}
Moreover,   $\dot{\bar{v}}^\lambda(t)=(\dot{\bar{x}}^\lambda_1(t)^\top,\cdots, \dot{\bar{x}}^\lambda_m(t)^\top)^\top$.
From (\ref{e:Delay21}) we have
$$
\mathcal { A }_{ m,n}^{-1}\dot{\bar{v}}^\lambda(t)=\left(\dot{\bar{x}}^\lambda_1(t)^\top-\sum^m_{j=1}
\dot{\bar{x}}^\lambda_j(t)^\top,\cdots,
\dot{\bar{x}}^\lambda_m(t)^\top-\sum^m_{j=1}
\dot{\bar{x}}^\lambda_j(t)^\top\right)^\top
$$
and therefore
\begin{align*}
\frac{1}{2}\int^{\tau}_0(\mathcal { A }_{ m,n}^{-1}\dot{\bar{v}}^\lambda(t), \dot{\bar{v}}^\lambda(t))_{\mathbb{R}^{mn}}dt
&=\sum^m_{j=1}\int^{\tau}_0(\dot{\bar{x}}^\lambda_j(t), \dot{\bar{x}}^\lambda_j(t))_{\mathbb{R}^{n}}dt-
\frac{1}{m-1}\sum^m_{i,j=1}\int^{\tau}_0(\dot{\bar{x}}^\lambda_i(t), \dot{\bar{x}}^\lambda_j(t))_{\mathbb{R}^{n}}dt\\
&=\int^{m\tau}_0(\dot{\bar{x}}^\lambda(t), \dot{\bar{x}}^\lambda(t))_{\mathbb{R}^{n}}dt-
\frac{1}{m-1}\sum^m_{i,j=1}\int^{\tau}_0(\dot{\bar{x}}^\lambda_i(t), \dot{\bar{x}}^\lambda_j(t))_{\mathbb{R}^{n}}dt
\end{align*}
because $\bar{x}$ is $m\tau$-periodic. Combining  with
(\ref{e:LinearDelay5}) yields
\begin{align}\label{e:LinearDelay6}
&\int^{\tau}_0\left[\frac{1}{2}(\dot{\bar{w}}^\lambda(t), J_{mn}^{-1}\bar{w}^\lambda(t))_{\mathbb{R}^{2mn}}+ \mathcal{H}(\lambda, t, \bar{w}^\lambda(t))\right]dt=\int^{m\tau}_0(\dot{\bar{x}}^\lambda(t), \dot{\bar{x}}^\lambda(t))_{\mathbb{R}^{n}}dt\nonumber\\
&-\frac{1}{m-1}\sum^m_{i,j=1}\int^{\tau}_0(\dot{\bar{x}}^\lambda_i(t), \dot{\bar{x}}^\lambda_j(t))_{\mathbb{R}^{n}}dt
-\int^{m\tau}_0U(\lambda,t,\bar{x}^\lambda(t))dt.
\end{align}
Then (\ref{e:LinearDelay4}) becomes
\begin{align}\label{e:LinearDelay7}
&\int^{m\tau}_0(\dot{\bar{x}}^\lambda(t), \dot{\bar{x}}^\lambda(t))_{\mathbb{R}^{n}}dt
-\frac{1}{m-1}\sum^m_{i,j=1}\int^{\tau}_0(\dot{\bar{x}}^\lambda_i(t), \dot{\bar{x}}^\lambda_j(t))_{\mathbb{R}^{n}}dt
-\int^{m\tau}_0U(\lambda,t,\bar{x}^\lambda(t))dt\nonumber\\
&\ne\int^{m\tau}_0(\dot{\hat{x}}^\lambda(t), \dot{\hat{x}}^\lambda(t))_{\mathbb{R}^{n}}dt
-\frac{1}{m-1}\sum^m_{i,j=1}\int^{\tau}_0(\dot{\hat{x}}^\lambda_i(t), \dot{\hat{x}}^\lambda_j(t))_{\mathbb{R}^{n}}dt
-\int^{m\tau}_0U(\lambda,t,\hat{x}^\lambda(t))dt.
\end{align}

By (\ref{e:Delay20}), if all $U(\lambda,t,\cdot)$ are even, then so are all $\mathcal{H}(\lambda,t, \cdot)$.
Therefore, applying the second part of Theorem~1.8 in \cite{Lu11} to
  problem (\ref{e:Delay22}) with $(H, u_\lambda, M)=(\mathcal{H}, w^\lambda, \mathcal{P}_{n,m}^{-1})$,
  we immediately obtain the following theorem.

\begin{theorem}[\textsf{Alternative bifurcations of  Fadell-Rabinowitz type}]\label{th:bif-per2Delay+3}
Let  Assumption~\ref{ass:BasiAss1Delay4}  with $\Lambda$ being a real interval be  satisfied.
Suppose that all $U(\lambda,t,\cdot)$ are even and therefore that
the problem (\ref{e:Delay3}) with parameter value $\lambda$ has the zero solution ${\mathbf{0}}^\lambda$.
For some interior point $\mu$ of $\Lambda$, suppose that
 $\nu_{\tau}({\mathbf{0}}^\mu)\ne 0$ and  $\nu_{\tau}({\mathbf{0}}^\lambda)=0$
 for each $\lambda\in\Lambda\setminus\{\mu\}$ sufficiently close to $\mu$, and that
  $i_{\tau}({\mathbf{0}}^\lambda)$ takes, respectively, values $i_{\tau}({\mathbf{0}}^\mu)$ and
  $i_{\tau}({\mathbf{0}}^\mu)+ \nu_{\tau}({\mathbf{0}}^\mu)$
 as $\lambda\in\Lambda$ varies in
 two deleted half neighborhoods  of $\mu$.
 Then  one of the following assertions holds:
\begin{enumerate}
\item[\rm (I)] The problem (\ref{e:Delay3}) with $\lambda=\mu$ has a sequence of solutions, $x^{\mu,j}\ne {\mathbf{0}}^\mu$ ($j=1,2,\cdots$)
such that  $x^{\mu,j}|_{[0,\tau]}$ converges to the zero  in $C^2([0,\tau];\mathbb{R}^{n})$.
\item[\rm (II)] There exist left and right  neighborhoods $\Lambda^-$ and $\Lambda^+$ of $\mu$ within $\Lambda$
and nonnegative integers $n^+$ and $n^-$ satisfying $n^++n^-\ge \nu_{\tau}({\mathbf{0}}^\mu)$,
such that for $\lambda\in\Lambda^-\setminus\{\mu\}$ (resp. $\lambda\in\Lambda^+\setminus\{\mu\}$),
the problem (\ref{e:Delay3}) with parameter value $\lambda$  has at least $n^-$ (resp. $n^+$) distinct pairs of nontrivial solutions,
$\{v^{\lambda,i}, -v^{\lambda,i}\}$ ($i = 1, \dots, n^-$ resp. $n^+$),
whose restrictions to $[0,\tau]$  converge to zero in $C^2([0,\tau];\mathbb{R}^{n})$  as $\lambda\to\mu$.
\end{enumerate}
 \end{theorem}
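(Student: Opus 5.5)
The plan is to reduce Theorem~\ref{th:bif-per2Delay+3} to the second part (the Fadell--Rabinowitz alternative) of Theorem~1.8 in \cite{Lu11}, applied to the Hamiltonian boundary problem (\ref{e:Delay22}) with $(H,u_\lambda,M)=(\mathcal{H},\mathbf{0},\mathcal{P}_{n,m}^{-1})$. Then I would transport the conclusions back to (\ref{e:Delay3}) via Proposition~\ref{prop:Liu12}(iv).

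First I check the hypotheses of \cite[Theorem~1.8]{Lu11}. The matrix $\mathcal{P}_{n,m}$ is a permutation matrix, hence orthogonal, and it is symplectic with $\mathcal{P}_{n,m}^m=I_{2mn}$. We also have $\mathcal{H}(\lambda,t+\tau,\mathcal{P}_{n,m}^{-1}w)=\mathcal{H}(\lambda,t,w)$. Regularity of $\mathcal{H}$ follows from Assumption~\ref{ass:BasiAss1Delay1}, since the quadratic term in $y$ is smooth. If every $U(\lambda,t,\cdot)$ is even, then by (\ref{e:Delay19})--(\ref{e:Delay20}) every $\mathcal{H}(\lambda,t,\cdot)$ is even. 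Hence $w\equiv 0$ solves (\ref{e:Delay22}) for every $\lambda$, and it corresponds to $x=\mathbf{0}^\lambda$ through $w=(v^\top,(\mathcal{A}_{m,n}^{-1}\dot v)^\top)^\top$. The index hypotheses are literally those of \cite[Theorem~1.8]{Lu11}, because $(i_\tau(\mathbf{0}^\lambda),\nu_\tau(\mathbf{0}^\lambda))$ was defined in (\ref{e:Delay34}) directly as Dong's index $(i_{\tau,\mathcal{P}_{n,m}^{-1}},\nu_{\tau,\mathcal{P}_{n,m}^{-1}})$ of the fundamental solution $\Gamma^\lambda_{n,m,\mathbf 0}$. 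No conversion through Proposition~\ref{prop:twoDef} is needed.

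Applying the theorem yields one of two alternatives. In the first, there is a sequence $w^{\mu,j}\neq 0$ of solutions at $\lambda=\mu$ with $w^{\mu,j}|_{[0,\tau]}\to0$ in $C^1$. In the second, there are one-sided neighborhoods $\Lambda^\pm$ and integers $n^\pm$ with $n^++n^-\ge\nu_\tau(\mathbf{0}^\mu)$ such that, for $\lambda\in\Lambda^\pm\setminus\{\mu\}$, there are $n^\pm$ distinct pairs $\{w^{\lambda,i},-w^{\lambda,i}\}$ of nontrivial solutions tending to $0$ in $C^1([0,\tau])$. By the converse direction of Proposition~\ref{prop:Liu12}(iv), $x:=x_1$ (the first $n$ components of $v$) solves (\ref{e:Delay3}). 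The map $w\mapsto x_1$ is odd. It is also injective on solutions, because $v$ is recovered from $x$ by $x_i(t)=x(t-(i-1)\tau)$ and $u=\mathcal{A}_{m,n}^{-1}\dot v$. So nonzero $w$ gives nonzero $x$, and distinct pairs give distinct pairs.

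The main obstacle is the topology upgrade: $C^1$ convergence of $w$ on $[0,\tau]$ must become $C^2$ convergence of $x$ on $[0,\tau]$. I would argue as follows. Since $w(t+\tau)=\mathcal{P}^{-1}_{n,m}w(t)$, $C^1$ smallness on $[0,\tau]$ propagates to $[-m\tau,m\tau]$. Hence $v$ and $\dot v=\mathcal{A}_{m,n}u$ converge uniformly there, which gives $C^1$ convergence of $x$ on $[0,\tau]$, and indeed on $[-(m-1)\tau,\tau]$. Next I would use the equation $\ddot x(t)=-\sum_{i=1}^{m-1}\nabla_3U(\lambda,t,x(t-i\tau))$. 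Continuity of $\nabla_3U$ in $(\lambda,t,x)$, together with $\nabla_3U(\lambda,t,0)=0$ from evenness, then yields uniform convergence of $\ddot x$ to $0$ on $[0,\tau]$. Alternatively, $\dot u=-\nabla_3\mathbf{H}$ converging uniformly gives $\ddot v\to0$ directly. This gives assertions (I) and (II).
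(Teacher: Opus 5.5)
Your proposal is correct and follows the paper's argument. The paper deduces evenness of $\mathcal{H}$ from (\ref{e:Delay20}), applies the second (Fadell--Rabinowitz) part of Theorem~1.8 in \cite{Lu11} to (\ref{e:Delay22}) with $M=\mathcal{P}_{n,m}^{-1}$ and the Dong index of (\ref{e:Delay34}), and transfers back via Proposition~\ref{prop:Liu12}(iv), all in one line. Your checks of the injectivity and oddness of $w\mapsto x_1$ and of the $C^1\to C^2$ upgrade (most directly, $\ddot v=\mathcal{A}_{m,n}\dot u\to 0$ uniformly on $[0,\tau]$) correctly supply what the paper leaves implicit.
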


\section{Bifurcations of  solutions for problems
	 (\ref{e:crm1}) and  (\ref{e:crm1Ad})}\label{sec:delay4}

%\section{Bifurcations of anti-periodic solutions in delay/advanced systems: $\dot{x}(t) = -\nabla_y H(\lambda, t, x(t), x(t\pm\tau))$}\label{sec:Remark}
%
%
%\section{Concluding remarks}\label{sec:Remark}

% Suppose
%  \begin{equation}\label{e:crm00}
%H(\lambda,t+\tau, x, y)=H(\lambda,t, x, y)=H(\lambda,t, y, -x),\quad\forall (\lambda, t, x,y).
%\end{equation}
Clearly, the first equality in Assumption~\ref{ass:Crm1}(ii) shows that
 $H(\lambda,t, x, y)$ is $\tau$-periodic in $t$,
  and the second equality in Assumption~\ref{ass:Crm1}(ii) implies that for each fixed pair
  $(\lambda,t)$, the function $H(\lambda, t, \cdot)$ is  invariant under the transformation
   $(x, y) \mapsto (y, -x)$. The latter transformation
 corresponds to the action of $-J_n$ on $\mathbb{R}^{2n} \equiv \mathbb{R}^n \times \mathbb{R}^n$ defined by
\[
(x_1, \cdots, x_n, y_1, \cdots, y_n)^\top \mapsto -J_n(x_1, \cdots, x_n, y_1, \cdots, y_n)^\top = (y_1, \cdots, y_n, -x_1, \cdots, -x_n)^\top.
\]
 (Since $-J_n=J_n^{-1}$, $H(\lambda,t, J_n^{-1}\cdot z)=H(\lambda,t, (-J_n)\cdot z)=H(\lambda,t,  z)$
 if and only if $H(\lambda,t, J_n\cdot z)=H(\lambda,t,  z)$, that is,
 each $H(\lambda,t, \cdot)$ is $J_n$-invariant.
 Combining this with the first equality in Assumption~\ref{ass:Crm1}(ii),
 we arrive at $H(\lambda, t+\tau, J_nz)=H(\lambda, t+\tau, z)=H(\lambda, t, z)$.)
 %%%%%%%%%%%%%%%%%%%%%%%%%%%%%%%%%%%%%%%%%%%%%%%%%%%%%%%%%%%%%%%%%%%%%%%%%%%%%%%%%%%%%%
% $H(\lambda, t, x, y)=H(\lambda, t, y, -x)$. Combining this with the second equality in Assumption~\ref{ass:Crm1}(ii),
% we arrive at $H(\lambda, t, x, y)=H(\lambda, t, -x, -y)$, i.e.,
%  \textsf{the function $H(\lambda, t, \cdot)$ is even}.)
 %We have assumed that all vectors in the Euclidean spaces are column vectors.)

 Denote by $\nabla_zH(\lambda,t,\cdot)$  the euclidian gradient of
 $H(\lambda,t,\cdot)$ with respect to the $\mathbb{R}^n\times\mathbb{R}^n\equiv\mathbb{R}^{2n}$-variable, and by $\nabla^2_zH(\lambda,t,\cdot)=D_z(\nabla_zH(\lambda,t,\cdot))\in\mathcal{L}_s(\mathbb{R}^{2n})$.
 Let $\nabla_3H(\lambda,t, x, y)$ and $\nabla_4H(\lambda,t, x, y)$ denote
  the euclidian gradient of  $H(\lambda,t, x, y)$ with respect to the $x$-variable and $y$-variable, respectively.
 The second equality in Assumption~\ref{ass:Crm1}(ii) yields
  \begin{equation}\label{e:crm0}
  \nabla_3H(\lambda,t, x, y)=-\nabla_4H(\lambda,t, y, -x),\quad\forall (\lambda, t, x,y).
\end{equation}

\begin{remark}\label{rm:crm1}
{\rm
 There exists a close relation between systems of the form (\ref{e:crm1}) and (\ref{e:crm1Ad}).
If $x(t)$ satisfies (\ref{e:crm1}), then from $x(t+2\tau) = -x(t)$ we have $x(t+\tau) = x(t-\tau+2\tau) = -x(t-\tau)$.
Using (\ref{e:crm0}), we deduce
\begin{align}\label{e:crm1Ad+}
\dot{x}(t)&=-\nabla_4H(\lambda, t,x(t),x(t-\tau))\nonumber\\
&=-\nabla_4H(\lambda, t,x(t), -x(t+\tau))\nonumber\\
&=\nabla_3H(\lambda, t, x(t+\tau), x(t)).
\end{align}

Now define $G:\Lambda\times \mathbb{R}\times{\mathbb{R}}^{n}\times{\mathbb{R}}^{n}\to\mathbb{R}$ by $G(\lambda,t, u, v)=- H(\lambda,t, v, u)$.
Assumption~\ref{ass:Crm1}(ii) implies that $G$ satisfies
\begin{eqnarray}\label{e:crm1Ad++}
G(\lambda, t+\tau, u, v)=G(\lambda,t, u, v)= G(\lambda,t, -v, u),\quad\forall (\lambda, t, u, v).
\end{eqnarray}
Note that $\nabla_3H(\lambda,t, u, v)=- \nabla_4G(\lambda,t, v, u)$. Combining this identity with (\ref{e:crm1}) and (\ref{e:crm1Ad+}), we obtain
\begin{equation}\label{e:crm1Ad+++}
    \dot{x}(t) = -\nabla_4 G(\lambda, t, x(t), x(t + \tau)) \quad \text{and} \quad x(t + 2\tau) = -x(t);
\end{equation}
that is, $x$ satisfies system (\ref{e:crm1Ad}) with the Hamiltonian $H$ replaced by $G$.

Conversely, if $x(t)$ satisfies (\ref{e:crm1Ad+++}), then using (\ref{e:crm1Ad++}), one can verify that $x(t)$ indeed satisfies (\ref{e:crm1}) by reversing the above steps.
}
\end{remark}

Remark~\ref{rm:crm1} shows that the results of systems (\ref{e:crm1}) and (\ref{e:crm1Ad}) can be converted into each other. Thus, it suffices to study either one carefully.

The following relation %between solutions of (\ref{e:crm1}) and (\ref{e:crm2})
was stated in \cite[Section~4]{WaLiu14}. % Qi Wang,  Chungen Liu,

\begin{claim}\label{cl:crm1}
Under Assumption~\ref{ass:Crm1}(i), if  $z(t)=(x(t)^\top, y(t)^\top)^{\top}$ satisfies
\begin{equation}\label{e:crm2}
\dot{z}(t)=J_n\nabla_z H({\lambda,t}, z(t))\quad\text{and}\quad z(t+\tau)=J_nz(t),\;\forall t\in \mathbb{R},
\end{equation}
then  $x(t)$  satisfies (\ref{e:crm1}).
 Conversely, under Assumption~\ref{ass:Crm1}, if $x(t)$ satisfies (\ref{e:crm1}),
 then $z(t)=(x(t)^\top, y(t)^\top)^{\top}$, where $y(t):=x(t-\tau)$,
  satisfies  (\ref{e:crm2}).
%\begin{equation}\label{e:crm2}
%\dot{z}(t)=J_n\nabla_z H({\lambda,t}, z(t))\;\forall t\in \mathbb{R}\quad\text{and}\quad z(t+\tau)=J_nz(t)
%\end{equation}
 \end{claim}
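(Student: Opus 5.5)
The plan is to unpack the Hamiltonian system \eqref{e:crm2} componentwise and match each piece with \eqref{e:crm1}. Write $z=(x^\top,y^\top)^\top$. Since $J_n=\begin{pmatrix}0&-I_n\\ I_n&0\end{pmatrix}$ and $\nabla_zH=(\nabla_3H^\top,\nabla_4H^\top)^\top$, the equation $\dot z=J_n\nabla_zH(\lambda,t,z)$ is equivalent to the pair
\[
\dot x(t)=-\nabla_4H(\lambda,t,x(t),y(t)),\qquad \dot y(t)=\nabla_3H(\lambda,t,x(t),y(t)).
\]
Likewise, the boundary condition $z(t+\tau)=J_nz(t)$ is equivalent to
\[
x(t+\tau)=-y(t),\qquad y(t+\tau)=x(t)\quad\forall t.
\]

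\emph{Forward direction (only Assumption~\ref{ass:Crm1}(i) is used).} From $y(t+\tau)=x(t)$ we get $y(t)=x(t-\tau)$ for all $t$. Substituting this into $x(t+\tau)=-y(t)$ gives $x(t+\tau)=-x(t-\tau)$, that is, $x(t+2\tau)=-x(t)$. Substituting $y(t)=x(t-\tau)$ into the first component equation gives $\dot x(t)=-\nabla_4H(\lambda,t,x(t),x(t-\tau))$. Hence $x$ solves \eqref{e:crm1}. The $\dot y$ equation is simply not needed here, which is why no symmetry of $H$ enters this direction.

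\emph{Converse (uses Assumption~\ref{ass:Crm1}(ii)).} Let $x$ be a $C^1$ solution of \eqref{e:crm1} and set $y(t):=x(t-\tau)$.
\begin{itemize}
\item The first component equation holds by the very definition of $y$.
\item For the boundary condition, $z(t+\tau)=(x(t+\tau)^\top,x(t)^\top)^\top$. Antiperiodicity gives $x(t+\tau)=-x(t-\tau)=-y(t)$, so $z(t+\tau)=(-y(t)^\top,x(t)^\top)^\top=J_nz(t)$.
\end{itemize}
The only step needing care, and the main obstacle, is the $\dot y$ equation, since it must be produced from the hidden symmetry. First compute
\[
\dot y(t)=\dot x(t-\tau)=-\nabla_4H(\lambda,t-\tau,x(t-\tau),x(t-2\tau)).
\]
Then apply three facts in turn:
\begin{itemize}
\item $\tau$-periodicity of $H$ in $t$ replaces $t-\tau$ by $t$;
\item $x(t-2\tau)=-x(t)$ rewrites the last argument;
\item identity \eqref{e:crm0}, $\nabla_3H(\lambda,t,a,b)=-\nabla_4H(\lambda,t,b,-a)$, is applied with $a=x(t)$ and $b=y(t)$.
\end{itemize}
This yields
\[
\dot y(t)=-\nabla_4H(\lambda,t,y(t),-x(t))=\nabla_3H(\lambda,t,x(t),y(t)).
\]
Both component equations and the boundary condition therefore hold, so $z$ solves \eqref{e:crm2}.
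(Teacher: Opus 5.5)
Your proof is correct and follows essentially the same route as the paper's. The forward direction unpacks $z(t+\tau)=J_nz(t)$ into $x(t+\tau)=-y(t)$ and $y(t+\tau)=x(t)$, and the converse obtains the $\dot y$ equation from $\tau$-periodicity, $x(t-2\tau)=-x(t)$, and the identity \eqref{e:crm0}, exactly as in the paper's computation.
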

\begin{proof}[\bf Proof]
Let $z(t)=(x(t)^\top, y(t)^\top)^{\top}$ be a solution of (\ref{e:crm2}). Then the second equality in (\ref{e:crm2}) implies
 $x(t)=-y(t-\tau)$, $y(t)=x(t-\tau)$ and thus
$x(t+2\tau)=-y(t+\tau)=-x(t)$. Combining  with the first equality in (\ref{e:crm2}) yields
$$
\dot{x}(t)=-\nabla_4H(\lambda, t,x(t), y(t))=-\nabla_4H(\lambda, t,x(t), x(t-\tau)).
$$
Conversely, if $x(t)$ satisfies (\ref{e:crm1}), define
 $z(t)=(x(t)^\top, y(t)^\top)^{\top}$ with $y(t)=x(t-\tau)$. Then
$$
z(t+\tau)=(x(t+\tau)^\top, x(t)^\top)^{\top}=(-x(t-\tau)^\top, x(t)^\top)^{\top}=(-y(t)^\top, x(t)^\top)^{\top}=J_nz(t)
$$
and
\begin{align*}
\left( \begin{array} { c c }
\dot{x}(t) \\
\dot{y}(t)
\end{array} \right)&=
\left( \begin{array} { c c }
-\nabla_4H(\lambda, t,x(t),x(t-\tau)) \\
\dot{x}(t-\tau)
\end{array} \right)=\left( \begin{array} { c c }
-\nabla_4H(\lambda, t,x(t),x(t-\tau)) \\
-\nabla_4H(\lambda, t-\tau,x(t-\tau),x(t-2\tau))
\end{array} \right)\\
&=\left( \begin{array} { c c }
-\nabla_4H(\lambda, t,x(t), y(t)) \\
-\nabla_4H(\lambda, t, y(t), -x(t))
\end{array} \right)=\left( \begin{array} { c c }
-\nabla_4H(\lambda, t,x(t), y(t)) \\
\nabla_3H(\lambda, t, x(t), y(t))
\end{array} \right)
\end{align*}
due to Assumption~\ref{ass:Crm1}(ii) and (\ref{e:crm0}). %Conversely,
%if $z(t)=(x(t)^T, y(t)^T)^{T}$ is a solution of (\ref{e:crm1}), then $x(t)=-y(t-\tau)$, $y(t)=x(t-\tau)$, and
%\begin{eqnarray*}
%\frac{d}{dt}(y(t)-x(t-\tau))&=&\dot{y}(t)-\dot{x}(t-\tau)\\
%&=&\nabla_3H(\lambda, t, x(t), y(t))+\nabla_4H(\lambda, t-\tau, x(t-\tau), y(t-\tau))\\
%&=&\nabla_3H(\lambda, t, x(t), y(t))+\nabla_4H(\lambda, t, y(t), -x(t))\\
%&=&\nabla_3H(\lambda, t, x(t), y(t))-\nabla_3H(\lambda, t, x(t), y(t))\equiv 0
%\end{eqnarray*}
%due to (\ref{e:crm0}). Combining this with $x(0)=y(\tau)$, we obtain $y(t)=x(t-\tau)$ and thus
%$$
%\dot{x}(t)=-\nabla_4H(\lambda, t, x(t), y(t))=-\nabla_4H(\lambda, t, x(t), x(t-\tau))\quad\text{and}\quad
%x(t+2\tau)=-y(t+\tau)=-x(t).
%$$
\end{proof}

By a similar argument, we have:

\begin{claim}\label{cl:crm2}
Under Assumption~\ref{ass:Crm1}(i), if $ z(t) = (x(t)^\top, y(t)^\top)^\top $ satisfies
\begin{equation}\label{e:crm2Ad}
    \dot{z}(t) = J_n \nabla_z H(\lambda, t, z(t))\quad \text{and} \quad z(t + \tau) = -J_n z(t),\;
     \quad \forall t \in \mathbb{R},
\end{equation}
then $ x(t) $ satisfies (\eqref{e:crm1Ad}).
Conversely, under Assumption~\ref{ass:Crm1}, if $ x(t) $ satisfies \eqref{e:crm1Ad},
then $ z(t) = (x(t)^\top, y(t)^\top)^\top $ with $ y(t) := x(t+ \tau) $, satisfies \eqref{e:crm2Ad}.
\end{claim}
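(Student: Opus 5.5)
The plan is to mirror the proof of Claim~\ref{cl:crm1}, with $J_n$ replaced by $-J_n$ in the boundary condition and the delay $t-\tau$ replaced by the advance $t+\tau$.

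\emph{Forward direction.} Assume only Assumption~\ref{ass:Crm1}(i), and let $z=(x^\top,y^\top)^\top$ satisfy \eqref{e:crm2Ad}. Since $-J_n(x,y)^\top=(y^\top,-x^\top)^\top$, the boundary condition $z(t+\tau)=-J_nz(t)$ reads
\[
x(t+\tau)=y(t),\qquad y(t+\tau)=-x(t).
\]
Hence $y(t)=x(t+\tau)$, and $x(t+2\tau)=y(t+\tau)=-x(t)$. The first block row of $\dot z=J_n\nabla_zH$ is $\dot x=-\nabla_4H(\lambda,t,x,y)$. Substituting $y(t)=x(t+\tau)$ gives $\dot x(t)=-\nabla_4H(\lambda,t,x(t),x(t+\tau))$, which together with the antiperiodicity is \eqref{e:crm1Ad}. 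This direction uses no symmetry of $H$.

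\emph{Converse.} Assume the full Assumption~\ref{ass:Crm1}, let $x$ satisfy \eqref{e:crm1Ad}, and set $y(t)=x(t+\tau)$.
\begin{itemize}
\item[\rm (a)] The boundary condition follows from $x(t+2\tau)=-x(t)$:
\[
z(t+\tau)=(x(t+\tau)^\top,x(t+2\tau)^\top)^\top=(y(t)^\top,-x(t)^\top)^\top=-J_nz(t).
\]
\item[\rm (b)] The first component of the differential equation, $\dot x=-\nabla_4H(\lambda,t,x,y)$, is \eqref{e:crm1Ad} itself.
\item[\rm (c)] For the second component, differentiate and use $\tau$-periodicity of $H$ together with $x(t+2\tau)=-x(t)$:
\[
\dot y(t)=\dot x(t+\tau)=-\nabla_4H(\lambda,t+\tau,x(t+\tau),x(t+2\tau))=-\nabla_4H(\lambda,t,y(t),-x(t)).
\]
\item[\rm (d)] By \eqref{e:crm0}, $-\nabla_4H(\lambda,t,y,-x)=\nabla_3H(\lambda,t,x,y)$. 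This is exactly the second block row of $J_n\nabla_zH$, so $z$ satisfies \eqref{e:crm2Ad}.
\end{itemize}

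There is no real obstacle. The only step needing care is (c)--(d): the arguments must end up in the order $(y,-x)$ so that identity \eqref{e:crm0} applies. This works because the advance makes the second slot $x(t+2\tau)=-x(t)$.

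As a cross-check, one could instead obtain the claim from Claim~\ref{cl:crm1} via Remark~\ref{rm:crm1} and the substitution $G(\lambda,t,u,v)=-H(\lambda,t,v,u)$. The direct computation above is shorter, so that is the route I would take.
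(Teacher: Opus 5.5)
Your proof is correct and follows the paper's intended approach. The paper gives no separate proof, saying only that Claim~\ref{cl:crm2} holds ``by a similar argument'' to Claim~\ref{cl:crm1}; your computation is that argument with $J_n$ replaced by $-J_n$ and $t-\tau$ by $t+\tau$, and the second component is handled through $\tau$-periodicity and \eqref{e:crm0} exactly as in the proof of Claim~\ref{cl:crm1}.
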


% and $\Lambda\times [0,\tau]\ni (\lambda,t)\mapsto u_\lambda(t)\in\mathbb{R}^{2n}$ is also continuous.
Because of relations in Claim~\ref{cl:crm1} (resp. Claim~\ref{cl:crm2}),
 results in \cite{Lu11} can be applied to system (\ref{e:crm2}) (resp. (\ref{e:crm2Ad})) and thus yield bifurcation conclusions for (\ref{e:crm1}) (resp. (\ref{e:crm1Ad})).
%Owing to this relation, numerous results from \cite{Lu11} can be applied to system (\ref{e:crm2}), thereby yielding bifurcation conclusions for (\ref{e:crm1}).
%This relation enables the application of results from \cite{Lu11} to system (\ref{e:crm2}), which in turn provides bifurcation conclusions for (\ref{e:crm1}). µ«ÄúÔ­À´µÄ¾ä×ÓÒѾ­ºÜºÃ, ²»ÐèÒªÐÞ¸Ä.
%We present only  a few of them.
%Only a select few are presented here.
A select few are provided below.

\begin{assumption}\label{ass:BasiAss1Delay2Rm}
{\rm Under Assumption~\ref{ass:Crm1}, for each $\lambda\in\Lambda$, let $x^\lambda:\mathbb{R}\to\mathbb{R}^{n}$
be a solution of (\ref{e:crm1}).
Moreover, assume that the mapping $(\lambda,t)\mapsto x^\lambda(t)$ from $\Lambda\times\mathbb{R}$ to $\mathbb{R}^{n}$ is continuous.}
\end{assumption}

% ({\it Note}: the map $\mathbb{R}\ni t\mapsto 0\in\mathbb{R}^n$,
%  denoted by ${\mathbf{0}}^\lambda$ without occurrence of confusions,
%   satisfies (\ref{e:Delay1}) because $V(\lambda,x)$ is also even in variable $x\in\mathbb{R}^n$.)}

For each $\lambda\in\Lambda$, let $z^\lambda(t)=(x^\lambda(t)^\top, x^\lambda(t-\tau)^\top)^{\top}$,
and let $\gamma_\lambda:[0,\tau]\to {\rm Sp}(2n,\mathbb{R})$ be the fundamental matrix solution of
\begin{equation}\label{e:crm3}
\dot{Z}(t)=J_n\nabla^2_zH(\lambda,t, z^\lambda(t))Z(t).
\end{equation}
Let $(i_{\tau, J_n}(\gamma_\lambda), \nu_{\tau, J_n}(\gamma_\lambda))$ denote the Maslov-type index of $\gamma_\lambda$ defined by (\ref{e:dongIndex}). Then $\nu_{\tau, J_n}(\gamma_\lambda)$ equals the dimension of the solution space of the linearized system of (\ref{e:crm1}) along $x^\lambda$:
 \begin{equation}\label{e:crm4}
 \begin{cases}
\dot{x}(t)=-\nabla_4\nabla_3H(\lambda, t,x^\lambda(t), x^\lambda(t-\tau))x(t)- \nabla^2_4H(\lambda, t, x^\lambda(t), x^\lambda(t-\tau))x(t-\tau),\\
 x(t + 2\tau) = -x(t), \quad \forall t.
   \end{cases}
  \end{equation}

%To find $4\tau$-periodic solution $x(t)$ with $x(t+2\tau)=-x(t)$, let
% then the solutions of (DDS.2) is corresponding to the solutions of the following Hamiltonian systems
%$$
%\begin{cases}
%\dot{z}(t)=J_{N}\nabla_{z} V(t,z), \
%z(t)=J_{N}^{-1}z(t+\tau),
%\end{cases}
%$$

As in Section~\ref{sec:delay1}, we can apply  Theorem~1.5 in \cite{Lu11} to problem (\ref{e:crm2})
 to obtain:

\begin{theorem}\label{th:crm1}
Under Assumption~\ref{ass:Crm1} and Assumption~\ref{ass:BasiAss1Delay2Rm}, the following holds.
 \begin{enumerate}
\item[\rm (I)]{\rm (\textsf{Necessary condition}):}
 If  $(\mu, x^\mu)$ is a bifurcation point along sequences of  (\ref{e:crm1})
 with respect to the branch $\{(\lambda, x^\lambda)\,|\,\lambda\in\Lambda\}$,
 then $\nu_{\tau, J_n}(\gamma_\mu)\ne 0$.

\item[\rm (II)]{\rm (\textsf{Sufficient condition}):}
Assume that $\Lambda$ is first countable. Let $\mu\in\Lambda$, and suppose there exist two sequences $(\lambda_k^-)$ and $(\lambda_k^+)$ in $\Lambda$ such that $\lambda_k^\pm\to\mu$ and, for each $k\in\mathbb{N}$,
\[
[i_{\tau, J_n}(\gamma_{\lambda_k^-}), i_{\tau,J_n}(\gamma_{\lambda_k^-})+\nu_{\tau,J_n}(\gamma_{\lambda_k^-})]
\cap
[i_{\tau,J_n}(\gamma_{\lambda_k^+}), i_{\tau,J_n}(\gamma_{\lambda_k^+})+\nu_{\tau,J_n}(\gamma_{\lambda_k^+})]=\emptyset,
\]
with either $\nu_{\tau,J_n}(\gamma_{\lambda_k^+})=0$ or $\nu_{\tau,J_n}(\gamma_{\lambda_k^-})=0$.
Set $\hat{\Lambda}:=\{\mu,\lambda^+_k, \lambda^-_k \mid k\in\mathbb{N}\}$. Then $(\mu, x^\mu)$ is a bifurcation point of (\ref{e:crm1}) with respect to the sub-branch $\{(\lambda, x^\lambda) \mid \lambda \in \hat\Lambda\}$; consequently, it is also a bifurcation point with respect to the full branch $\{(\lambda, x^\lambda) \mid \lambda \in \Lambda\}$.

\item[\rm (III)]{\rm (\textsf{Existence for bifurcations}):}
Assume that $\Lambda$ is path-connected, and that there exist two points $\lambda^+, \lambda^-\in\Lambda$ such that
\[
[i_{\tau,J_n}(\gamma_{\lambda^-}), i_{\tau,J_n}(\gamma_{\lambda^-})+\nu_{\tau,J_n}(\gamma_{\lambda^-})]
\cap
[i_{\tau,J_n}(\gamma_{\lambda^+}), i_{\tau,J_n}(\gamma_{\lambda^+})+\nu_{\tau,J_n}(\gamma_{\lambda^+})]=\emptyset,
\]
with either $\nu_{\tau,J_n}(\gamma_{\lambda^+})=0$ or $\nu_{\tau,J_n}(\gamma_{\lambda^-})=0$.
Then for any continuous path $\alpha:[0,1]\to\Lambda$ connecting $\lambda^-$ to $\lambda^+$, there exist a sequence $(t_k)\subset [0, 1]$ converging to some $\bar{t}\in[0,1]$ and solutions $x^k$ of (\ref{e:crm1}) with $\lambda=\alpha(t_k)$ (where $x^k\neq x^{\alpha(t_k)}$ for all $k$) such that $x^k\to x^{\alpha(\bar{t})}$ in $C^1_{\rm loc}(\mathbb{R},\mathbb{R}^n)$ as $k\to\infty$.
Moreover, if $\nu_{\tau,J_n}(\gamma_{\lambda^+})=0$ then $\alpha(\bar{t})\neq\lambda^+$, and if $\nu_{\tau,J_n}(\gamma_{\lambda^-})=0$ then $\alpha(\bar{t})\neq\lambda^-$.
  \end{enumerate}
\end{theorem}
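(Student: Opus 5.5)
The plan is to reduce Theorem~\ref{th:crm1} to Theorem~1.5 of \cite{Lu11}, applied to the generalized Hamiltonian problem (\ref{e:crm2}). This follows the same pattern as the proof of Theorem~\ref{th:bif-per1Delay1}, with Claim~\ref{cl:crm1} playing the role that Proposition~\ref{prop:Liu12} played there.

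\textbf{Checking the hypotheses of \cite{Lu11}.} We take $(H,u_\lambda,M)=(H,z^\lambda,J_n)$, where $z^\lambda(t)=(x^\lambda(t)^\top,x^\lambda(t-\tau)^\top)^\top$. Here $M=J_n$ is orthogonal and symplectic, and $J_n^4=I_{2n}$. The symmetry required by \cite{Lu11} is $H(\lambda,t+\tau,J_nz)=H(\lambda,t,z)$, which was already derived from Assumption~\ref{ass:Crm1}(ii) in the discussion before (\ref{e:crm0}). The regularity required is Assumption~\ref{ass:Crm1}(i). By Claim~\ref{cl:crm1}, each $z^\lambda$ solves (\ref{e:crm2}). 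Continuity of $(\lambda,t)\mapsto z^\lambda(t)$ follows from Assumption~\ref{ass:BasiAss1Delay2Rm}. The fundamental solution associated with $z^\lambda$ in \cite{Lu11} is exactly $\gamma_\lambda$ from (\ref{e:crm3}), and its index is the Dong index $(i_{\tau,J_n},\nu_{\tau,J_n})$, so no conversion via Proposition~\ref{prop:twoDef} is needed.

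\textbf{Identifying the nullity.} I would verify that $\nu_{\tau,J_n}(\gamma_\lambda)=\dim{\rm Ker}(\gamma_\lambda(\tau)-J_n)$ equals the dimension of the solution space of (\ref{e:crm4}). To do this, apply Claim~\ref{cl:crm1} to the quadratic Hamiltonian $\frac12(\nabla^2_zH(\lambda,t,z^\lambda(t))w,w)$. This Hamiltonian is $\tau$-periodic in $t$ and satisfies the $J_n$-invariance, because differentiating the symmetry of $H$ twice gives $J_n^\top\nabla_z^2H(\lambda,t+\tau,J_nz)J_n=\nabla_z^2H(\lambda,t,z)$, and $z^\lambda(t+\tau)=J_nz^\lambda(t)$. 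This gives a bijective linear correspondence between the solutions of (\ref{e:crm4}) and the solutions of the linearized problem $\dot w=J_n\nabla^2_zH(\lambda,t,z^\lambda)w$, $w(t+\tau)=J_nw(t)$, with $w(t)\mapsto$ its first component.

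\textbf{Transferring the conclusions.} The last step carries conclusions (I)--(III) of \cite[Theorem~1.5]{Lu11} for (\ref{e:crm2}) back to (\ref{e:crm1}) through Claim~\ref{cl:crm1}, using the correspondence $x\mapsto z=(x(\cdot)^\top,x(\cdot-\tau)^\top)^\top$ and its inverse $z\mapsto$ first component.

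For (I), suppose $\bar x^k\to x^\mu$ on $[0,\tau]$. The relation $z(t+\tau)=J_nz(t)$ together with the ODE shows that convergence on $[0,\tau]$ propagates to $[-\tau,0]$, because $\bar x^k(t-\tau)$ is determined by $\bar x^k(t)$ via antiperiodicity over $[\tau,2\tau]$. Hence $\bar z^k\to z^\mu$ on $[0,\tau]$. Injectivity of the correspondence gives $\bar z^k\ne z^{\lambda_k}$, so $(\mu,z^\mu)$ is a bifurcation point along sequences for (\ref{e:crm2}), and \cite{Lu11} forces $\nu_{\tau,J_n}(\gamma_\mu)\neq0$.

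For (II) and (III), projecting onto the first component preserves $C^1_{\rm loc}$ convergence and distinctness, so the conclusions transfer directly.

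The main obstacle is the norm bookkeeping. The definitions use $C^0$ or $C^1$ convergence on $[0,\tau]$ for $x$, but convergence of $z$ on $[0,\tau]$ requires control of $x$ on $[-\tau,\tau]$. I would handle this using $x(t-\tau)=-x(t+\tau)$ and the fact that solutions of (\ref{e:crm2}) depend continuously on $z(0)$ and $\lambda$, uniformly on compact intervals.
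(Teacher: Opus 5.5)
Your plan is the paper's own route. The paper obtains Theorem~\ref{th:crm1} by applying Theorem~1.5 of \cite{Lu11} to (\ref{e:crm2}) with $M=J_n$ and transferring back through Claim~\ref{cl:crm1}, and gives no more detail than that. Your hypothesis check and your transfer of (II)--(III) are fine. Two steps need repair.

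\textbf{The nullity step.} One justification there is false, although the conclusion still holds. The quadratic Hamiltonian $Q(t,w)=\frac12(\nabla^2_zH(\lambda,t,z^\lambda(t))w,w)$ is in general neither $\tau$-periodic nor $J_n$-invariant. Put $B(t)=\nabla^2_zH(\lambda,t,z^\lambda(t))$. The identity you differentiate, together with $z^\lambda(t+\tau)=J_nz^\lambda(t)$, gives only $B(t+\tau)=J_nB(t)J_n^\top$. This is the combined relation $Q(t+\tau,J_nw)=Q(t,w)$. Hence $Q(t+\tau,w)=Q(t,J_n^{-1}w)$, which differs from $Q(t,w)$ whenever $B(t)$ fails to commute with $J_n$. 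So Claim~\ref{cl:crm1}, stated under the full Assumption~\ref{ass:Crm1}, cannot be quoted for $Q$.

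The bijection you want still holds. The converse direction of that claim's proof only uses $\nabla_3Q(t,x,y)=-\nabla_4Q(t-\tau,y,-x)$. This follows from the combined relation, since it is $Q(t,x,y)=Q(t-\tau,J_n^{-1}(x,y)^\top)$ differentiated in $x$. Alternatively, compute directly with $\nabla^2_zH(\lambda,t,J_n^{-1}z)=J_n^\top\nabla^2_zH(\lambda,t,z)J_n$. Either way you get the correspondence between solutions of (\ref{e:crm4}) and solutions of $\dot w=J_nB(t)w$, $w(t+\tau)=J_nw(t)$.

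\textbf{The bookkeeping in (I).} The sentence propagating convergence ``via antiperiodicity over $[\tau,2\tau]$'' is circular, because you have no control of $\bar x^k$ on $[\tau,2\tau]$. What actually makes your final remark work is the identity $\bar z^k(0)=(\bar x^k(0)^\top,-\bar x^k(\tau)^\top)^\top$, using $\bar x^k(-\tau)=-\bar x^k(\tau)$. It involves only values on $[0,\tau]$, so $\bar z^k(0)\to z^\mu(0)$. Continuous dependence of solutions of $\dot z=J_n\nabla_zH(\lambda,t,z)$ on the initial value and on $\lambda_k\to\mu$ then gives $\bar z^k\to z^\mu$ in $C^1_{\rm loc}$.
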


 Similarly, applying the first part of Theorem~1.8 in \cite{Lu11} to system (\ref{e:crm1}) yields:

\begin{theorem}[\textsf{Alternative bifurcations of Rabinowitz type}]\label{th:crm2}
Under Assumption~\ref{ass:Crm1} and Assumption~\ref{ass:BasiAss1Delay2Rm},
let $\mu$ be an interior point of $\Lambda$ such that
 $\nu_{\tau, J_n}(\gamma_\mu)\ne 0$, $\nu_{\tau, J_n}(\gamma_\lambda)=0$
  for all $\lambda$ in some punctured neighborhood of $\mu$ in $\Lambda$,
and the index $i_{\tau, J_n}(\gamma_\lambda)$ jumps by $\nu_{\tau, J_n}(\gamma_\mu)$ across $\mu$.
 Then at least one of the following assertions holds:
 \begin{enumerate}
\item[\rm (i)] The system (\ref{e:crm1}) with $\lambda=\mu$ has a sequence of
solutions $x^{\mu,j}\ne x^\mu$ ($j=1,2,\cdots$)
such that  $x^{\mu,j}\to x^\mu$ in $C^1_{\rm loc}(\mathbb{R},\mathbb{R}^n)$ as $j\to\infty$.

\item[\rm (ii)] For every $\lambda\in\Lambda\setminus\{\mu\}$ sufficiently close to $\mu$, there exists a solution $\bar{x}^\lambda\ne x^\lambda$ of (\ref{e:crm1}) with parameter $\lambda$
such that $\bar{x}^\lambda\to x^\mu$ in $C^1_{\rm loc}(\mathbb{R},\mathbb{R}^n)$ as $\lambda\to \mu$.

\item[\rm (iii)]
For a given neighborhood $\mathcal{W}$ of $x^\mu|_{[0,\tau]}$ in $C^1([0,\tau];\mathbb{R}^{n})$,
there exists a one-sided neighborhood $\Lambda^0$ of $\mu$ (either $\Lambda^0\subset(\mu-\delta,\mu]$ or $\Lambda^0\subset[\mu,\mu+\delta)$) such that
for any $\lambda\in\Lambda^0\setminus\{\mu\}$, system (\ref{e:crm1}) with parameter $\lambda$
has at least two distinct solutions $\bar{x}^\lambda\neq x^\lambda$ and $\hat{x}^\lambda\neq x^\lambda$ satisfying
$\bar{x}^\lambda|_{[0,\tau]}\in \mathcal{W}$ and $\hat{x}^\lambda|_{[0,\tau]}\in \mathcal{W}$.
Moreover, if $\nu_{\tau, J_n}(\gamma_\mu)>1$ and (\ref{e:crm1}) with parameter $\lambda$
has only finitely many solutions whose restrictions to $[0,\tau]$ belong to $\mathcal{W}$, then $\bar{x}^\lambda$ and $\hat{x}^\lambda$ can be chosen such that
\begin{equation*}
\int^{\tau}_0\left[\frac{1}{2}(J_n\dot{\bar{v}}^\lambda(t), \bar{v}^\lambda(t))_{\mathbb{R}^{2n}}+ H(\lambda, t, \bar{v}^\lambda(t))\right]dt
\neq \int^{\tau}_0\left[\frac{1}{2}(J_n\dot{\hat{v}}^\lambda(t), \hat{v}^\lambda(t))_{\mathbb{R}^{2n}}+ H(\lambda, t, \hat{v}^\lambda(t))\right]dt,
\end{equation*}
where $\bar{v}^\lambda(t):=(\bar{x}^\lambda(t)^\top, \bar{x}^\lambda(t-\tau)^\top)^\top$ and $\hat{v}^\lambda(t):=(\hat{x}^\lambda(t)^\top, \hat{x}^\lambda(t-\tau)^\top)^\top$.
\end{enumerate}
Furthermore, if $H(\lambda, t, z)$ is even in $z$ and $x^\lambda\equiv 0$ for every $\lambda\in\Lambda$,
then at least one of  (i) and (iv) occurs, where (i) is as stated
previously, and
\begin{enumerate}
\item[\rm (iv)] There exist left and right neighborhoods $\Lambda^-$ and $\Lambda^+$ of $\mu$ in $\Lambda$
and integers $n^+, n^-\ge 0$ with $n^++n^-\ge \nu_{\tau,J_n}(\gamma_\mu)$,
such that for $\lambda\in\Lambda^-\setminus\{\mu\}$ (resp. $\lambda\in\Lambda^+\setminus\{\mu\}$),
system (\ref{e:crm1}) with parameter $\lambda$ has at least $n^-$ (resp. $n^+$) distinct pairs of nontrivial solutions $\{v_\lambda^i, -v_\lambda^i\}$, $i=1,\cdots,n^-$ (resp. $i=1,\cdots,n^+$),
which converge to zero in $C^1_{\rm loc}(\mathbb{R},\mathbb{R}^{n})$ as $\lambda\to\mu$.
\end{enumerate}
\end{theorem}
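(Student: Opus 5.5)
The plan is to transfer Theorem~1.8 of \cite{Lu11} (first and second parts) from the generalized Hamiltonian problem (\ref{e:crm2}) back to (\ref{e:crm1}), using Claim~\ref{cl:crm1}. First I set $u_\lambda:=z^\lambda=(x^\lambda(\cdot)^\top,x^\lambda(\cdot-\tau)^\top)^\top$. By Claim~\ref{cl:crm1} each $u_\lambda$ solves (\ref{e:crm2}), and $(\lambda,t)\mapsto u_\lambda(t)$ is continuous by Assumption~\ref{ass:BasiAss1Delay2Rm}. The boundary matrix is $M=J_n$, which is symplectic and orthogonal with $J_n^4=I_{2n}$. By Assumption~\ref{ass:Crm1}(ii) we have $H(\lambda,t+\tau,J_nz)=H(\lambda,t,z)$, which is exactly the invariance required in \cite{Lu11}. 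So the abstract hypotheses for $(H,u_\lambda,M)=(H,z^\lambda,J_n)$ are met. The linearization of (\ref{e:crm2}) along $z^\lambda$ has fundamental solution $\gamma_\lambda$. Its solutions with $y(t+\tau)=J_ny(t)$ form a space of dimension $\nu_{\tau,J_n}(\gamma_\lambda)$, and they correspond bijectively to solutions of (\ref{e:crm4}); the proof of this correspondence is the linear analogue of Claim~\ref{cl:crm1}. Hence the nullity and index hypotheses in the statement are literally those of \cite[Theorem~1.8]{Lu11}.

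Applying the first part of that theorem gives three alternatives for (\ref{e:crm2}): a sequence $z^{\mu,j}\ne z^\mu$ at $\lambda=\mu$; a branch $\bar z^\lambda\ne z^\lambda$ converging as $\lambda\to\mu$; or two distinct solutions in a prescribed neighborhood for $\lambda$ on one side, with distinct action values when $\nu>1$ and there are only finitely many solutions. I translate each alternative through the projection $z=(x,y)\mapsto x$. By Claim~\ref{cl:crm1} the projection maps solutions of (\ref{e:crm2}) to solutions of (\ref{e:crm1}), and its inverse is $x\mapsto (x,x(\cdot-\tau))$. The projection is therefore a bijection between the two solution sets, and distinct $z$ give distinct $x$. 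The action values equal the stated integrals with $\bar v^\lambda=(\bar x^\lambda,\bar x^\lambda(\cdot-\tau))$.

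For the ``Furthermore'' part, when $H$ is even in $z$ and $x^\lambda\equiv0$, I apply the second part of Theorem~1.8 of \cite{Lu11}. It yields distinct pairs $\{z,-z\}$, which project to distinct pairs $\{x,-x\}$ because the projection is linear and injective on solutions.

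The main obstacle is matching topologies. The abstract results give convergence in $C^1([0,\tau];\mathbb{R}^{2n})$, or in $C^1_{\rm loc}$. The statement asks for $C^1_{\rm loc}(\mathbb{R},\mathbb{R}^n)$, and in (iii) a neighborhood of $x^\mu|_{[0,\tau]}$ in $C^1([0,\tau])$. Going from $z$ to $x$ is immediate, since $x$ is a component of $z$. For the converse I will use the twisted periodicity $z(t+\tau)=J_nz(t)$. It propagates $C^1$-closeness on $[0,\tau]$ to every interval $[k\tau,(k+1)\tau]$, because $J_n$ is an isometry. The equation propagates closeness of derivatives, since $\nabla_zH$ is continuous and locally uniformly so on compacts. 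To pull a $C^1([0,\tau])$ neighborhood $\mathcal W$ of $x^\mu$ back to one for $z^\mu$, note that the $y$-component on $[0,\tau]$ equals $x$ on $[-\tau,0]$, which by antiperiodicity is $-x$ on $[\tau,2\tau]$. So I take the preimage neighborhood $\{z: z_1|_{[0,\tau]}\in\mathcal W\}$, which is open in $C^1([0,\tau];\mathbb{R}^{2n})$, and then check that every solution in it projects into $\mathcal W$.
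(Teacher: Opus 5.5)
Your proposal is correct and follows the paper's route. The paper just applies the first part of Theorem~1.8 of \cite{Lu11} (and the second part for the even case) to problem (\ref{e:crm2}) with $M=J_n$, and transfers the conclusions to (\ref{e:crm1}) through the bijection of Claim~\ref{cl:crm1}. Your extra checks are details the paper leaves implicit: the linear analogue of Claim~\ref{cl:crm1} for the nullity, the preimage neighborhood $\{z: z_1|_{[0,\tau]}\in\mathcal W\}$ (which makes the finiteness hypothesis in (iii) transfer exactly), and the use of $z(t+\tau)=J_nz(t)$ to pass from $C^1([0,\tau])$ to $C^1_{\rm loc}$ convergence.
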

As before, it can be readily computed that
$$
\frac{1}{2}\int^{\tau}_0(J_n\dot{\bar{v}}^\lambda(t), \bar{v}^\lambda(t))_{\mathbb{R}^{2n}}=
-({\bar{x}}^\lambda(\tau), \bar{x}^\lambda(0))_{\mathbb{R}^{n}}+
\int^{\tau}_0(\dot{\bar{x}}^\lambda(t), \bar{x}^\lambda(t-\tau))_{\mathbb{R}^{n}}dt.
$$

Corollaries~1.9 and 1.10 in \cite{Lu11} can also yield corresponding results.

Now we proceed to the study of bifurcations in system (\ref{e:crm1Ad}).

\begin{assumption}\label{ass:BasiAss1Delay2Rm+}
{\rm Under Assumption~\ref{ass:Crm1}, for each $\lambda\in\Lambda$, let $x^\lambda:\R\to\mathbb{R}^{n}$
be a solution of (\ref{e:crm1Ad}). Moreover, assume that the mapping $(\lambda,t)\mapsto x^\lambda(t)$ from $\Lambda\times\mathbb{R}$ to $\mathbb{R}^{n}$ is continuous.
 }
\end{assumption}

For each $\lambda\in\Lambda$, define $w^\lambda(t)=(x^\lambda(t)^\top, x^\lambda(t+\tau)^\top)^{\top}$.
Let $\Upsilon_\lambda:[0,\tau]\to {\rm Sp}(2n,\mathbb{R})$ be the fundamental matrix solution of
\begin{equation}\label{e:crm5}
\dot{Z}(t)=J_n\nabla^2_zH(\lambda,t, w^\lambda(t))Z(t).
\end{equation}
Denote by $(i_{\tau, -J_n}(\Upsilon_\lambda), \nu_{\tau, -J_n}(\Upsilon_\lambda))$ the Maslov-type index of $\Upsilon_\lambda$ as defined in (\ref{e:dongIndex}). Then $\nu_{\tau, -J_n}(\Upsilon_\lambda)$ equals the dimension of the solution space of the linearized system of (\ref{e:crm1Ad}) along the solution $x^\lambda$:
\begin{equation}\label{e:crm6}
\begin{cases}
\dot{x}(t) = -\nabla_4\nabla_3 H(\lambda, t, x^\lambda(t), x^\lambda(t+\tau))x(t)-\nabla^2_4 H(\lambda, t, x^\lambda(t), x^\lambda(t+\tau))x(t+\tau),\\[4pt]
x(t + 2\tau) = -x(t), \quad \forall t\in\mathbb{R}.
\end{cases}
\end{equation}

\begin{theorem}\label{th:crm3}
Theorem~\ref{th:crm1} and Theorem~\ref{th:crm2} hold true
after replacing (\ref{e:crm1}) with (\ref{e:crm1Ad}) and making the following substitutions:
\begin{enumerate}
\item[$\bullet$] $J_n$ with $-J_n$,
\item[$\bullet$] $\gamma_\lambda$ and $\gamma_\mu$ with $\Upsilon_\lambda$ and $\Upsilon_\mu$, respectively,
\item[$\bullet$] $\gamma_{\lambda^\pm}$ and $\gamma_{\lambda^\pm_k}$  with $\Upsilon_{\lambda^\pm}$ and $\Upsilon_{\lambda^\pm_k}$, respectively,
\item[$\bullet$] $x^\lambda(t-\tau)$ with $x^\lambda(t+\tau)$.
\end{enumerate}
\end{theorem}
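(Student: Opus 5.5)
The plan is to repeat the argument behind Theorem~\ref{th:crm1} and Theorem~\ref{th:crm2}, with Claim~\ref{cl:crm2} in place of Claim~\ref{cl:crm1}. Under Assumption~\ref{ass:BasiAss1Delay2Rm+}, set $w^\lambda(t)=(x^\lambda(t)^\top,x^\lambda(t+\tau)^\top)^\top$. By Claim~\ref{cl:crm2}, $w^\lambda$ solves the Hamiltonian boundary problem \eqref{e:crm2Ad}, i.e. $\dot z=J_n\nabla_zH(\lambda,t,z)$ with $z(t+\tau)=Mz(t)$ and $M:=-J_n$. The matrix $M$ is orthogonal symplectic and satisfies $M^4=I_{2n}$. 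Assumption~\ref{ass:Crm1}(ii) gives $H(\lambda,t+\tau,Mz)=H(\lambda,t,z)$, because $H(\lambda,t,\cdot)$ is $J_n$-invariant, hence also $J_n^{-1}=-J_n$-invariant, and $H$ is $\tau$-periodic in $t$. Also $(\lambda,t)\mapsto w^\lambda(t)$ is continuous. So the hypotheses of Theorem~1.5, Theorem~1.8 and Corollaries~1.9--1.10 of \cite{Lu11} hold for the triple $(H,w^\lambda,-J_n)$ in the standard symplectic setting, and no congruence matrix $\Upsilon$ is needed.

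\textbf{Step 1 (identify the index data).} The fundamental solution of the linearization of \eqref{e:crm2Ad} along $w^\lambda$ is exactly $\Upsilon_\lambda$ from \eqref{e:crm5}, so the index used in \cite{Lu11} is $(i_{\tau,-J_n}(\Upsilon_\lambda),\nu_{\tau,-J_n}(\Upsilon_\lambda))$. I will then check the claim stated before the theorem: $\nu_{\tau,-J_n}(\Upsilon_\lambda)=\dim\mathrm{Ker}(\Upsilon_\lambda(\tau)+J_n)$ equals the dimension of the solution space of \eqref{e:crm6}. This follows from the linear version of Claim~\ref{cl:crm2}, applied to the quadratic Hamiltonian $\frac12(\nabla^2_zH(\lambda,t,w^\lambda(t))z,z)$. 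That Hamiltonian keeps the symmetry $Q(t+\tau,-J_nz)=Q(t,z)$, by differentiating the invariance of $H$ twice and using $w^\lambda(t+\tau)=-J_nw^\lambda(t)$. The map $x\mapsto(x,x(\cdot+\tau))$ is therefore a linear isomorphism between the two solution spaces.

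\textbf{Step 2 (transfer the conclusions).} Each conclusion of \cite{Lu11} concerns solutions $z$ of \eqref{e:crm2Ad} converging to $w^\mu$ in $C^1_{\rm loc}$ or in $C^1([0,\tau])$, and I will translate it to $x$ through the correspondence $z=(x,x(\cdot+\tau))$. The direction $x\mapsto z$ is continuous. The first component of $z$ is $x$, and distinct $z$ give distinct $x$ because the second component is determined by the first. For convergence of the restrictions to $[0,\tau]$, $z|_{[0,\tau]}$ involves $x$ on $[0,2\tau]$. The boundary condition $z(t+\tau)=-J_nz(t)$ propagates $C^1$ closeness on $[0,\tau]$ to every compact interval, so convergence of $x|_{[0,\tau]}$ and of $z|_{[0,\tau]}$ are equivalent. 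Evenness of $H$ in $z$ is the same in both formulations, and $x\equiv0$ corresponds to $z\equiv0$, so the Fadell--Rabinowitz part (iv) transfers as well. The functional in (iii) is the same expression with $\bar v^\lambda=(\bar x^\lambda,\bar x^\lambda(\cdot+\tau))$, which is exactly the substitution $x^\lambda(t-\tau)\to x^\lambda(t+\tau)$.

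\textbf{Main obstacle.} The substance lies in Step 1 together with the boundary-condition bookkeeping in Step 2. The converse direction of Claim~\ref{cl:crm2} (recovering $y=x(\cdot+\tau)$ from a solution of \eqref{e:crm2Ad}) uses the full Assumption~\ref{ass:Crm1}, so I will check that the symmetry survives linearization and that the sign $-J_n$ rather than $J_n$ is used consistently in $M$, in the index $i_{\tau,-J_n}$, and in the kernel. Alternatively, Remark~\ref{rm:crm1} converts \eqref{e:crm1Ad} into \eqref{e:crm1} for $G(\lambda,t,u,v)=-H(\lambda,t,v,u)$, but matching its indices with $\Upsilon_\lambda$ seems harder, so the direct route above is the one I will follow.
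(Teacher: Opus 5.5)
Your route is the one the paper intends. You pass from \eqref{e:crm1Ad} to the boundary problem \eqref{e:crm2Ad} with $M=-J_n$ via Claim~\ref{cl:crm2}, check that $H(\lambda,t+\tau,-J_nz)=H(\lambda,t,z)$, and read off Theorems~1.5 and~1.8 of \cite{Lu11} for the triple $(H,w^\lambda,-J_n)$.

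Several of your checks are sound:
\begin{itemize}
\item The identification of $\nu_{\tau,-J_n}(\Upsilon_\lambda)$ with the solution space of \eqref{e:crm6} is correct. Claim~\ref{cl:crm2} as stated needs periodicity and $-J_n$-invariance separately, which $Q$ lacks, but its proof only uses the combined invariance $Q(t+\tau,-J_nz)=Q(t,z)$ that you derived.
\item The transfer of (II), (III) and (i)--(iv) through the projection $z\mapsto x$ works. In (iii) take $\mathcal W^*=\{z:\ z_1|_{[0,\tau]}\in\mathcal W\}$; this also gives the finiteness hypothesis.
\item You are right to keep $J_n$, not $-J_n$, in the action functional. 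Only the index subscripts change.
\end{itemize}

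One inference in Step~2 is not justified as written. The boundary condition $z(t+\tau)=-J_nz(t)$ propagates closeness of $z|_{[0,\tau]}$ to every compact interval. However, $x|_{[0,\tau]}$ is only the first component of $z|_{[0,\tau]}$, so the boundary condition alone says nothing about $y|_{[0,\tau]}=x|_{[\tau,2\tau]}$. The claimed equivalence is therefore unsupported in the direction $x|_{[0,\tau]}\Rightarrow z|_{[0,\tau]}$. That is exactly the direction you need for the necessary condition (I), when bifurcation along sequences is measured by $\bar x^k|_{[0,\tau]}$ as in Definition~\ref{def:Delay}.

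The statement is true, but for a different reason:
\begin{itemize}
\item On $[0,\tau]$ the function $y^k:=\bar x^k(\cdot+\tau)$ solves $\dot y^k=\nabla_3H(\lambda_k,t,\bar x^k(t),y^k)$ with $y^k(0)=\bar x^k(\tau)$.
\item The limit $x^\mu(\cdot+\tau)$ solves the corresponding problem at $\lambda=\mu$.
\item You have uniform convergence $\bar x^k\to x^\mu$ on $[0,\tau]$, $\lambda_k\to\mu$, continuity of $\nabla_3H$, and a local Lipschitz bound in $y$ from the continuous $z$-Hessian.
\item Gronwall's inequality then gives $y^k\to x^\mu(\cdot+\tau)$ in $C^1([0,\tau])$, hence $\bar z^k|_{[0,\tau]}\to w^\mu|_{[0,\tau]}$.
\end{itemize}

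With this replacement your argument is complete.
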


\section{ Bifurcations of the distributed  delay differential system  (\ref{e:Bifu-distributedDelay1}) }\label{sec:delay5}

%The following two lemmas follow from Lemma~3.1 and Lemma~3.2 in \cite{ZhongWL25}, respectively (with similar arguments applying to their proofs).
%This is contained in   Since our assumptions are weaker,
%we include its proof here for clarity.

The proof of Lemma~3.1 in \cite{ZhongWL25} actually shows:

\begin{lemma}\label{lem:distri1}
Let $\Lambda$ be a  topological space $\Lambda$. Let  $F: \Lambda \times \mathbb{R} \times \mathbb{R}^n \to \mathbb{R}$
be such that  for each fixed $(\lambda,t) \in \Lambda \times \mathbb{R}$, the map
$F(\lambda,t,\cdot): \mathbb{R}^n \to \mathbb{R}$ is  $C^1$.  Furthermore,
 let $G: \Lambda \times  \mathbb{R}^{n} \to \mathbb{R}$ be such that
for each fixed $\lambda\in \Lambda$, the map
$G(\lambda,\cdot): \mathbb{R}^n \to \mathbb{R}$ is even and $C^1$.
%%%%%%%%%%%%%%%%%%%%%%%%%%%%%%%%%%%%%%%%%%%%%%%%%%%%%%%%%%%%%
%Let $\Lambda$ be a topological space. Consider $F: \Lambda \times \mathbb{R} \times \mathbb{R}^n \to \mathbb{R}$
%such that $F(\lambda,t,\cdot)\in C^1(\mathbb{R}^n)$  for all $(\lambda,t) \in \Lambda \times \mathbb{R}$.
% Also consider $G: \Lambda \times  \mathbb{R}^{n} \to \mathbb{R}$ be such that
% for each $\lambda\in \Lambda$, $G(\lambda,\cdot)$  is an even $C^1$ function on $\mathbb{R}^n$.
%Under Assumption~\ref{ass:Distri1Delay1}, but with "$C^2$" replaced by "$C^1$",
Let $H(\lambda,t,z) = 2G(\lambda,x) + F(\lambda, t,y)$
for $z=(x^\top,y^\top)^\top\in\mathbb{R}^{2n}$.
[Write $\nabla_3F(\lambda,t,\cdot)$ and $\nabla_3H(\lambda,t,\cdot)$ for the Euclidean gradients of $F(\lambda,t,\cdot)$
(in $\mathbb{R}^{n}$) and $H(\lambda,t,\cdot)$ (in $\mathbb{R}^{2n}$), respectively,
and $\nabla_2G(\lambda,\cdot)$ for that of $G(\lambda,\cdot)$ (in $\mathbb{R}^{n}$.)]
  If a differentiable curve
$x:\mathbb{R}\to \mathbb{R}^n$ satisfies
\begin{equation}\label{e:distributedDelay1}
\dot{x}(t) = -\nabla_3 F\left(\lambda, t, \int_0^\tau
\nabla_2 G(\lambda, x(t-s)) \mathrm{d}s\right)\quad\hbox{and}\quad
x(t+\tau) = - x(t)\;\forall t,
\end{equation}
setting $y(t):=\int_0^\tau
\nabla_2 G(\lambda, x(t-s)) \mathrm{d}s$, then  $z(t)=(x(t)^\top, y(t)^\top)^\top$
 satisfies
\begin{equation}\label{e:distributedDelay2}
\dot{z}(t) = J_n\nabla_3 H(\lambda, t, z(t))\quad\hbox{and}\quad
z(t+\tau) = - z(t)\;\forall t.
\end{equation}
Conversely, if $z(t)=(x(t)^\top, y(t)^\top)^\top$ satisfies (\ref{e:distributedDelay2}), then
 $y(t)=\int_0^\tau\nabla_2 G(\lambda, x(t-s)) \mathrm{d}s$ and $x(t)$ satisfies
(\ref{e:distributedDelay1}).
\end{lemma}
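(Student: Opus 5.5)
The proof is a direct computation based on the block form of $J_n\nabla_3 H$. Since $H(\lambda,t,z)=2G(\lambda,x)+F(\lambda,t,y)$, we have $\nabla_3H(\lambda,t,z)=\bigl(2\nabla_2G(\lambda,x)^\top,\ \nabla_3F(\lambda,t,y)^\top\bigr)^\top$. Hence
$$
J_n\nabla_3H(\lambda,t,z)=\bigl(-\nabla_3F(\lambda,t,y)^\top,\ 2\nabla_2G(\lambda,x)^\top\bigr)^\top .
$$
So (\ref{e:distributedDelay2}) consists of three conditions: $\dot x=-\nabla_3F(\lambda,t,y)$, $\dot y=2\nabla_2G(\lambda,x)$, and $z(t+\tau)=-z(t)$. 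The only structural input is that $G(\lambda,\cdot)$ is even. This makes $\nabla_2G(\lambda,\cdot)$ odd, i.e.\ $\nabla_2G(\lambda,-x)=-\nabla_2G(\lambda,x)$. It is also continuous, since $G(\lambda,\cdot)$ is $C^1$.

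\emph{Forward direction.} Let $x$ solve (\ref{e:distributedDelay1}). Substituting $r=t-s$ gives $y(t)=\int_{t-\tau}^t\nabla_2G(\lambda,x(r))\,dr$. The integrand $r\mapsto\nabla_2G(\lambda,x(r))$ is continuous, so the fundamental theorem of calculus applies, and together with $x(t-\tau)=-x(t)$ and oddness it yields
$$
\dot y(t)=\nabla_2G(\lambda,x(t))-\nabla_2G(\lambda,x(t-\tau))=2\nabla_2G(\lambda,x(t)).
$$
The first component equation $\dot x=-\nabla_3F(\lambda,t,y)$ is exactly (\ref{e:distributedDelay1}) by the definition of $y$. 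Antiperiodicity of $y$ follows from oddness again: $y(t+\tau)=\int_0^\tau\nabla_2G(\lambda,-x(t-s))\,ds=-y(t)$. Combined with $x(t+\tau)=-x(t)$, this gives $z(t+\tau)=-z(t)$.

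\emph{Converse.} Let $z=(x^\top,y^\top)^\top$ satisfy (\ref{e:distributedDelay2}), and set $w(t):=\int_0^\tau\nabla_2G(\lambda,x(t-s))\,ds$. Since $x(t+\tau)=-x(t)$, the computation above applies verbatim to $w$. It gives $\dot w(t)=2\nabla_2G(\lambda,x(t))=\dot y(t)$, so $y-w\equiv c$ is a constant vector. Both $y$ and $w$ are $\tau$-antiperiodic: $y$ by hypothesis, $w$ by the oddness argument. Therefore $c=(y-w)(t+\tau)=-(y-w)(t)=-c$, so $c=0$ and $y=w$. The first component of (\ref{e:distributedDelay2}) then becomes $\dot x(t)=-\nabla_3F\bigl(\lambda,t,\int_0^\tau\nabla_2G(\lambda,x(t-s))\,ds\bigr)$. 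Together with $x(t+\tau)=-x(t)$, this is (\ref{e:distributedDelay1}).

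The only step needing care is this identification $y=w$ in the converse, where the integration constant has to be removed. The antiperiodicity argument above is what handles it. Everything else is routine, and only $C^1$ regularity of $F(\lambda,t,\cdot)$ and $G(\lambda,\cdot)$ is used.
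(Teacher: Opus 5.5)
Your proposal is correct and follows essentially the same route as the paper: rewrite $y$ via the substitution $r=t-s$, then use the fundamental theorem of calculus and the oddness of $\nabla_2G(\lambda,\cdot)$ to obtain $\dot y=2\nabla_2G(\lambda,x)$ and $y(t+\tau)=-y(t)$. In the converse you compare $y$ with the integral expression and remove the constant difference $c$ using antiperiodicity, which gives $c=-c$ and hence $c=0$.
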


%For the sake of completeness, we

\begin{proof}[\bf Proof]
Substituting $u=t-s$ in  the definition of $y(t)$ yields, for all \(t\in\mathbb{R}\),
\begin{equation}\label{e:distributedDelay3}
y(t)=\int_0^\tau
\nabla_2 G(\lambda, x(t-s)) \mathrm{d}s=\int_0^t
\nabla_2 G(\lambda, x(u)) \mathrm{d}u- \int_0^{t-\tau}
\nabla_2 G(\lambda, x(u)) \mathrm{d}u.
\end{equation}
Assume  $x(t)$ satisfies (\ref{e:distributedDelay1}). Then we have
\begin{equation}\label{e:distributedDelay4}
y(t+\tau)=\int_0^\tau
\nabla_2 G(\lambda, x(t+\tau-s)) \mathrm{d}s=\int_0^\tau
\nabla_2 G(\lambda, -x(t-s)) \mathrm{d}s
\end{equation}
by the first representation in (\ref{e:distributedDelay3}) and the anti-periodicity $x(t+\tau)=-x(t)$, and
\begin{equation}\label{e:distributedDelay5}
\dot{y}(t)=\nabla_2 G(\lambda, x(t))-\nabla_2 G(\lambda, x(t-\tau))=
\nabla_2 G(\lambda, x(t))-\nabla_2 G(\lambda, -x(t))
\end{equation}
by the second representation in (\ref{e:distributedDelay3}) and  $x(t-\tau)=-x(t)$.
By the oddness of $\nabla_2 G(\lambda, \cdot)$,
(\ref{e:distributedDelay4}) and (\ref{e:distributedDelay5}) lead to
 $y(t+\tau)=-y(t)$ and $\dot{y}(t)=2\nabla_2 G(\lambda, x(t))$, respectively.

Conversely, suppose $z(t)=(x(t)^\top, y(t)^\top)^\top$ satisfies \eqref{e:distributedDelay2}.
Then $\dot{x}(t) = -\nabla_3 F(\lambda, t, y(t))$ and $\dot{y}(t)=2\nabla_2 G(\lambda, x(t))$.
Now set $y^*(t):=\int_{0}^{\tau}\nabla_2 G(\lambda, x(t-s))\,\mathrm{d}s$.
Since $\nabla_2 G(\lambda, \cdot)$ is odd, as above
using \eqref{e:distributedDelay3}, \eqref{e:distributedDelay3} and \eqref{e:distributedDelay5}
we deduce that $y^*$ satisfies $y^*(t+\tau)=-y^*(t)$ and $\dot{y}^*(t)=2\nabla_2 G(\lambda, x(t))$.
Hence $\frac{\mathrm{d}}{\mathrm{d}t}(y-y^*)\equiv 0$, so $y(t)-y^*(t)\equiv \mathbf{c}$ for some constant vector $\mathbf{c}$.
Evaluating at $t+\tau$ yields $\mathbf{c}=y(t+\tau)-y^*(t+\tau)=-y(t)+y^*(t)\equiv -\mathbf{c}$, whence $\mathbf{c}=0$.
Thus $y(t)\equiv y^*(t)=\int_{0}^{\tau}\nabla_2 G(\lambda, x(t-s))\,\mathrm{d}s$, and $x(t)$ satisfies \eqref{e:distributedDelay1}.
\end{proof}

 \begin{assumption}\label{ass:Distri1Delay2}
{\rm Under Assumption~\ref{ass:Distri1Delay1}, suppose also that $F(\lambda,t+\tau,y)=F(\lambda,t,-x)$ for all $(\lambda,t,x)\in
\Lambda \times \mathbb{R} \times \mathbb{R}^n$.
For each $\lambda\in\Lambda$ let $x^\lambda:\R\to\mathbb{R}^{n}$
be a  solution of (\ref{e:Bifu-distributedDelay1}), such that the map $\Lambda\times \R\ni (\lambda,t)\mapsto x^\lambda(t)\in\mathbb{R}^{n}$
 is also continuous.}
\end{assumption}

Under Assumption~\ref{ass:Distri1Delay2}, define
$y_\lambda(t):=\int_0^\tau \nabla_2 G(\lambda, x_\lambda(t-s)) \,\mathrm{d}s$
and $z_\lambda(t)=(x_\lambda(t)^\top, y_\lambda(t)^\top)^\top$.
By Lemma~\ref{lem:distri1}, each $z_\lambda$ satisfies \eqref{e:distributedDelay2}.
The linearized system of \eqref{e:distributedDelay2} at $z_\lambda$ is given by
\begin{equation}\label{e:LinedistributedDelay3}
\left\{
\begin{aligned}
\dot{z}(t) &= J_n\nabla^2_{3}H(\lambda, t, z_\lambda(t))\, z(t), \\
z(t+\tau) &= -z(t), \quad \forall t,
\end{aligned}
\right.
\end{equation}
where $\nabla^2_{3}H$ denotes the Hessian matrix of $H$ with respect to its third variable $z$.
Let ${\gamma}_\lambda$ be the fundamental matrix solution of the linear system in the first line of (\ref{e:LinedistributedDelay3}),
and let $(i_{\tau, -I_{2n}}(\gamma_\lambda), \nu_{\tau, -I_{2n}}(\gamma_\lambda))$ denote the Maslov-type index of $\gamma_\lambda$ defined by (\ref{e:dongIndex}).
%Then $\nu_{\tau, J_n}(\gamma_\lambda)$ equals the dimension of the solution space of the linearized system of (\ref{e:crm1}) along $x^\lambda$:
%\begin{equation}\label{e:Delay30}
%\dot{Z}(t)=J_{n}\nabla^2_3{H}\left(\lambda, t , z_\lambda(t)\right)Z(t)
%\end{equation}
%with ${\gamma}^\lambda(0)=I_{2n}$. Then
%$$
%\nu_{\tau,-I_{2n}}(\gamma^\lambda)=\dim{\rm Ker}(\gamma^\lambda(\tau)+I_{2n})
%$$
%is equal to
The conditions that $G(\lambda, -x)=G(\lambda, x)$ and
$F(\lambda,t+\tau, x)=F(\lambda,t,-x)$ ensure that the Hamiltonian
 $H(\lambda,t,z) = 2G(\lambda,x) + F(\lambda, t,y)$ satisfies
 $$
 H(\lambda, t+\tau, -I_{2n}z)=2G(\lambda, -x)+ F(\lambda, t+\tau, -y)=2G(\lambda, x)+ F(\lambda, t, y)=H(\lambda, t, z).
 $$
 Hence, $H$ satisfies  Assumption~1.2 in \cite{Lu11} with $M=-I_{2n}$.
  Consequently, by applying the method of Section~\ref{sec:delay1} and combining Theorem~1.5 of \cite{Lu11} with Lemma~\ref{lem:distri1},
   we obtain the following result:
% following the approach of Section~\ref{sec:delay1} and combining Theorem~1.5 in \cite{Lu11} with Lemma~\ref{lem:distri1}, we derive:

\begin{theorem}\label{th:Distri1}
Under Assumption~\ref{ass:Distri1Delay2}, the following holds.
 \begin{enumerate}
\item[\rm (I)]{\rm (\textsf{Necessary condition}):}
 If  $(\mu, x^\mu)$ is a bifurcation point along sequences of  (\ref{e:Bifu-distributedDelay1})
 with respect to the branch $\{(\lambda, x^\lambda)\,|\,\lambda\in\Lambda\}$,
 then $\nu_{\tau, -I_{2n}}(\gamma_\mu)\ne 0$.

\item[\rm (II)]{\rm (\textsf{Sufficient condition}):}
Assume that $\Lambda$ is first countable. Let $\mu\in\Lambda$, and suppose there exist two sequences $(\lambda_k^-)$ and $(\lambda_k^+)$ in $\Lambda$ such that $\lambda_k^\pm\to\mu$ and, for each $k\in\mathbb{N}$,
\[
[i_{\tau, -I_{2n}}(\gamma_{\lambda_k^-}), i_{\tau,-I_{2n}}(\gamma_{\lambda_k^-})+\nu_{\tau,-I_{2n}}(\gamma_{\lambda_k^-})]
\cap
[i_{\tau,-I_{2n}}(\gamma_{\lambda_k^+}), i_{\tau,-I_{2n}}(\gamma_{\lambda_k^+})+\nu_{\tau,-I_{2n}}(\gamma_{\lambda_k^+})]=\emptyset,
\]
with either $\nu_{\tau,-I_{2n}}(\gamma_{\lambda_k^+})=0$ or $\nu_{\tau,-I_{2n}}(\gamma_{\lambda_k^-})=0$.
Set $\hat{\Lambda}:=\{\mu,\lambda^+_k, \lambda^-_k \mid k\in\mathbb{N}\}$. Then $(\mu, x^\mu)$ is a bifurcation point of
(\ref{e:Bifu-distributedDelay1}) with respect to the sub-branch $\{(\lambda, x^\lambda) \mid \lambda \in \hat\Lambda\}$; consequently, it is also a bifurcation point with respect to the full branch $\{(\lambda, x^\lambda) \mid \lambda \in \Lambda\}$.

\item[\rm (III)]{\rm (\textsf{Existence for bifurcations}):}
Assume that $\Lambda$ is path-connected, and that there exist two points $\lambda^+, \lambda^-\in\Lambda$ such that
\[
[i_{\tau,-I_{2n}}(\gamma_{\lambda^-}), i_{\tau,-I_{2n}}(\gamma_{\lambda^-})+\nu_{\tau,-I_{2n}}(\gamma_{\lambda^-})]
\cap
[i_{\tau,-I_{2n}}(\gamma_{\lambda^+}), i_{\tau,-I_{2n}}(\gamma_{\lambda^+})+\nu_{\tau,-I_{2n}}(\gamma_{\lambda^+})]=\emptyset,
\]
with either $\nu_{\tau,-I_{2n}}(\gamma_{\lambda^+})=0$ or $\nu_{\tau,-I_{2n}}(\gamma_{\lambda^-})=0$.
Then for any continuous path $\alpha:[0,1]\to\Lambda$ connecting $\lambda^-$ to $\lambda^+$, there exist a sequence $(t_k)\subset [0, 1]$ converging to some $\bar{t}\in[0,1]$ and solutions $x^k$ of (\ref{e:Bifu-distributedDelay1}) with $\lambda=\alpha(t_k)$ (where $x^k\neq x^{\alpha(t_k)}$ for all $k$) such that $x^k\to x^{\alpha(\bar{t})}$ in $C^1_{\rm loc}(\mathbb{R},\mathbb{R}^n)$ as $k\to\infty$.
Moreover, if $\nu_{\tau,-I_{2n}}(\gamma_{\lambda^+})=0$ then $\alpha(\bar{t})\neq\lambda^+$, and if $\nu_{\tau,-I_{2n}}(\gamma_{\lambda^-})=0$ then $\alpha(\bar{t})\neq\lambda^-$.
  \end{enumerate}
\end{theorem}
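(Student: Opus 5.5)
The plan is to transfer Theorem~1.5 of \cite{Lu11} to the distributed-delay problem through the Hamiltonian reformulation of Lemma~\ref{lem:distri1}, in the same way Theorem~\ref{th:bif-per1Delay1} was obtained from Proposition~\ref{prop:bif-per1Delay1}.

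\emph{Setting up the Hamiltonian branch.} Put $H(\lambda,t,z)=2G(\lambda,x)+F(\lambda,t,y)$. Assumption~\ref{ass:Distri1Delay1} gives that $\nabla_3H$ and $\nabla^2_3H$ are continuous in $(\lambda,t,z)$. The computation preceding the theorem shows $H(\lambda,t+\tau,-z)=H(\lambda,t,z)$. Since $M=-I_{2n}$ is orthogonal symplectic with $M^2=I_{2n}$, Assumption~1.2 of \cite{Lu11} holds. By Lemma~\ref{lem:distri1}, $z_\lambda=(x^{\lambda\top},y_\lambda^\top)^\top$ solves \eqref{e:distributedDelay2}. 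Continuity of $(\lambda,t)\mapsto z_\lambda(t)$ follows from that of $x^\lambda$ together with continuity of $\nabla_2G$, via dominated convergence in the integral defining $y_\lambda$. Thus $\{(\lambda,z_\lambda)\}$ is an admissible branch for Theorem~1.5 of \cite{Lu11}. The index there is Dong's $(i_{\tau,M},\nu_{\tau,M})$ with $M=-I_{2n}$, which is exactly the index $(i_{\tau,-I_{2n}}(\gamma_\lambda),\nu_{\tau,-I_{2n}}(\gamma_\lambda))$ appearing in the statement. So (I)--(III) of that theorem hold verbatim for \eqref{e:distributedDelay2} relative to $\{(\lambda,z_\lambda)\}$.

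\emph{Transferring conclusions back to $x$.} The correspondence $x\mapsto z=(x,y)$ with $y(t)=\int_0^\tau\nabla_2G(\lambda,x(t-s))\,ds$ is a bijection between solutions of \eqref{e:Bifu-distributedDelay1} and solutions of \eqref{e:distributedDelay2}; its inverse is projection to the first component. In particular $x\ne x^\lambda$ if and only if $z\ne z_\lambda$.

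For (I), take the sequence $(\lambda_k,\bar x^k)$ given by the definition of a bifurcation point along sequences and consider the associated $\bar z^k$. I must show $\bar z^k|_{[0,\tau]}\to z_\mu|_{[0,\tau]}$ in $C^0$ or $C^1$.
\begin{itemize}
\item Anti-periodicity gives $\bar x^k\to x^\mu$ uniformly on all of $\mathbb{R}$.
\item Hence $\bar y^k(t)=\int_0^\tau\nabla_2G(\lambda_k,\bar x^k(t-s))\,ds$ converges uniformly, because $\nabla_2G$ is uniformly continuous on compact subsets of $\Lambda\times\mathbb{R}^n$. Here $\{\lambda_k\}\cup\{\mu\}$ is compact and the $\bar x^k$ are uniformly bounded.
\item The derivatives converge uniformly as well, by $\dot{\bar y}^k=2\nabla_2G(\lambda_k,\bar x^k)$ and $\dot{\bar x}^k=-\nabla_3F(\lambda_k,t,\bar y^k)$, using continuity of the gradients. $F$ is $2\tau$-periodic in $t$, so $t$ ranges over a compact set.
\end{itemize}
So $(\mu,z_\mu)$ is a bifurcation point along sequences for \eqref{e:distributedDelay2}, and (I) of \cite[Theorem~1.5]{Lu11} gives $\nu_{\tau,-I_{2n}}(\gamma_\mu)\ne0$.

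For (II) and (III), the conclusions of \cite{Lu11} provide solutions $z^k\ne z_{\alpha(t_k)}$ (respectively along $\hat\Lambda$) converging in $C^1_{\rm loc}$ or on $[0,\tau]$. Their first components $x^k$ then satisfy $x^k\ne x^{\alpha(t_k)}$, since otherwise $y$ would also agree by Lemma~\ref{lem:distri1}. The $x^k$ converge in $C^1_{\rm loc}$ because projection is continuous. The statements that $\alpha(\bar t)\neq\lambda^\pm$ transfer unchanged.

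\emph{Main obstacle.} The only delicate point is the backward direction in (I): lifting $C^0$ or $C^1$ convergence of $x$ on $[0,\tau]$ to $C^1$ convergence of $z$. This relies on the uniform-continuity argument just described, together with anti-periodicity, which controls the delayed values $x(t-s)$ for $t-s<0$. A minor gap remains: Assumption~\ref{ass:Distri1Delay2} is evidently meant to say $F(\lambda,t+\tau,y)=F(\lambda,t,-y)$. I will read it that way, as the computation preceding the theorem does.
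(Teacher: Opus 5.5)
Your proposal is correct and follows the paper's route. The paper verifies $H(\lambda,t+\tau,-z)=H(\lambda,t,z)$ for $H=2G+F$, notes that Assumption~1.2 of \cite{Lu11} then holds with $M=-I_{2n}$, and transfers Theorem~1.5 of \cite{Lu11} back through Lemma~\ref{lem:distri1}; you additionally spell out the lifting of convergence from $x$ to $z=(x,y)$, which the paper leaves implicit. One cosmetic point: $\Lambda$ is only a topological space, so continuity of $(\lambda,t)\mapsto y_\lambda(t)$ and uniform convergence of $\bar y^k$ should be justified by joint continuity of the integrand plus compactness of $[0,\tau]$ (a tube-lemma or subsequence argument), rather than by dominated convergence or ``uniform continuity on compact subsets of $\Lambda\times\mathbb{R}^n$''.
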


 Similarly, applying the first part of Theorem~1.8 in \cite{Lu11} to system (\ref{e:crm1}) yields:

\begin{theorem}[\textsf{Alternative bifurcations of Rabinowitz type}]\label{th:Distri2}
Under Assumption~\ref{ass:Distri1Delay2},
let $\mu$ be an interior point of $\Lambda$ such that
 $\nu_{\tau, -I_{2n}}(\gamma_\mu)\ne 0$, $\nu_{\tau, -I_{2n}}(\gamma_\lambda)=0$
  for all $\lambda$ in some punctured neighborhood of $\mu$ in $\Lambda$,
and the index $i_{\tau, -I_{2n}}(\gamma_\lambda)$ jumps by $\nu_{\tau, -I_{2n}}(\gamma_\mu)$ across $\mu$.
 Then at least one of the following assertions holds:
 \begin{enumerate}
\item[\rm (i)] The system (\ref{e:Bifu-distributedDelay1}) with $\lambda=\mu$ has a sequence of
solutions $x^{\mu,j}\ne x^\mu$ ($j=1,2,\cdots$)
such that  $x^{\mu,j}\to x^\mu$ in $C^1_{\rm loc}(\mathbb{R},\mathbb{R}^n)$ as $j\to\infty$.

\item[\rm (ii)] For every $\lambda\in\Lambda\setminus\{\mu\}$ sufficiently close to $\mu$, there exists a solution $\bar{x}^\lambda\ne x^\lambda$ of
(\ref{e:Bifu-distributedDelay1}) with parameter $\lambda$
such that $\bar{x}^\lambda\to x^\mu$ in $C^1_{\rm loc}(\mathbb{R},\mathbb{R}^n)$ as $\lambda\to \mu$.

\item[\rm (iii)]
For a given neighborhood $\mathcal{W}$ of $x^\mu|_{[0,\tau]}$ in $C^1([0,\tau];\mathbb{R}^{n})$,
there exists a one-sided neighborhood $\Lambda^0$ of $\mu$ (either $\Lambda^0\subset(\mu-\delta,\mu]$ or $\Lambda^0\subset[\mu,\mu+\delta)$) such that
for any $\lambda\in\Lambda^0\setminus\{\mu\}$, system (\ref{e:Bifu-distributedDelay1}) with parameter $\lambda$
has at least two distinct solutions $\bar{x}^\lambda\neq x^\lambda$ and $\hat{x}^\lambda\neq x^\lambda$ satisfying
$\bar{x}^\lambda|_{[0,\tau]}\in \mathcal{W}$ and $\hat{x}^\lambda|_{[0,\tau]}\in \mathcal{W}$.
Moreover, if $\nu_{\tau, -I_{2n}}(\gamma_\mu)>1$ and (\ref{e:Bifu-distributedDelay1}) with parameter $\lambda$
has only finitely many solutions whose restrictions to $[0,\tau]$ belong to $\mathcal{W}$, then $\bar{x}^\lambda$ and $\hat{x}^\lambda$ can be chosen such that
\begin{eqnarray*}
&&\int^{\tau}_0\left[\frac{1}{2}(J_n\dot{\bar{v}}^\lambda(t), \bar{v}^\lambda(t))_{\mathbb{R}^{2n}}+ 2G(\lambda, \bar{x}^\lambda(t))+ F(\lambda, t, \bar{y}^\lambda(t))\right]dt\\
&&\neq \int^{\tau}_0\left[\frac{1}{2}(J_n\dot{\hat{v}}^\lambda(t), \hat{v}^\lambda(t))_{\mathbb{R}^{2n}}+
G(\lambda, \hat{x}^\lambda(t))+ F(\lambda, t, \hat{y}^\lambda(t))\right]dt,
\end{eqnarray*}
where $\bar{v}^\lambda(t):=(\bar{x}^\lambda(t)^\top, \bar{y}^\lambda(t)^\top)^\top$, $\hat{v}^\lambda(t):=(\hat{x}^\lambda(t)^\top,
\hat{y}^\lambda(t)^\top)^\top$ and
$$
\bar{y}^\lambda(t):=\int_0^\tau\nabla_2 G(\lambda, \bar{x}^\lambda(t-s)) \mathrm{d}s,\quad
\hat{y}^\lambda(t):=\int_0^\tau\nabla_2 G(\lambda, \hat{x}^\lambda(t-s)) \mathrm{d}s.
$$
\end{enumerate}
Furthermore, if, for every $\lambda\in\Lambda$, $F(\lambda,t, y)$ is even in $y$ and $x^\lambda\equiv 0$,
then at least one of  (i) and (iv) occurs, where (i) is as stated
previously, and
\begin{enumerate}
\item[\rm (iv)] There exist left and right neighborhoods $\Lambda^-$ and $\Lambda^+$ of $\mu$ in $\Lambda$
and integers $n^+, n^-\ge 0$ with $n^++n^-\ge \nu_{\tau,-I_{2n}}(\gamma_\mu)$,
such that for $\lambda\in\Lambda^-\setminus\{\mu\}$ (resp. $\lambda\in\Lambda^+\setminus\{\mu\}$),
system (\ref{e:Bifu-distributedDelay1}) with parameter $\lambda$ has at least $n^-$ (resp. $n^+$) distinct pairs of nontrivial solutions $\{v_\lambda^i, -v_\lambda^i\}$, $i=1,\cdots,n^-$ (resp. $i=1,\cdots,n^+$),
which converge to zero in $C^1_{\rm loc}(\mathbb{R},\mathbb{R}^{n})$ as $\lambda\to\mu$.
\end{enumerate}
\end{theorem}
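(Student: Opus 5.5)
The plan is to derive Theorem~\ref{th:Distri2} from the first and second parts of Theorem~1.8 in \cite{Lu11}. We apply them to the Hamiltonian boundary problem (\ref{e:distributedDelay2}) with $H(\lambda,t,z)=2G(\lambda,x)+F(\lambda,t,y)$, $M=-I_{2n}$ and the branch $u_\lambda:=z_\lambda=(x_\lambda^\top,y_\lambda^\top)^\top$. This is the same pattern used for Theorems~\ref{th:bif-per2Delay}, \ref{th:crm2}.

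\textbf{Hypotheses of \cite[Theorem~1.8]{Lu11}.} We have already checked that $H(\lambda,t+\tau,-z)=H(\lambda,t,z)$, so Assumption~1.2 of \cite{Lu11} holds with $M=-I_{2n}$. The regularity of $H$ in $z$ (continuous gradient and Hessian in $(\lambda,t,z)$) follows from Assumption~\ref{ass:Distri1Delay1}. Continuity of $(\lambda,t)\mapsto z_\lambda(t)$ follows from continuity of $x^\lambda$ and the integral formula for $y_\lambda$, by dominated convergence on the compact interval $[0,\tau]$. The index hypotheses of Theorem~\ref{th:Distri2} are literally those of \cite[Theorem~1.8]{Lu11} for the path $\gamma_\lambda$ with $M=-I_{2n}$: nonzero nullity at $\mu$, zero nullity nearby, and a jump of $i_{\tau,-I_{2n}}$ by $\nu_{\tau,-I_{2n}}(\gamma_\mu)$ across $\mu$. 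Note that $M=-I_{2n}$ is orthogonal symplectic, so no conjugation by a matrix like $\Upsilon$ is needed.

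\textbf{Transfer of the alternatives.} Theorem~1.8 yields alternatives (i)--(iii) for solutions $z$ of (\ref{e:distributedDelay2}), with $C^1$ convergence on $[0,\tau]$, hence $C^1_{\rm loc}$ convergence by the $-I_{2n}$ (anti)periodicity. Lemma~\ref{lem:distri1} gives a bijection between solutions $x$ of (\ref{e:Bifu-distributedDelay1}) and solutions $z=(x,y)$ of (\ref{e:distributedDelay2}), with $y$ determined by $x$. Hence $z\neq z_\lambda$ if and only if $x\neq x^\lambda$, and distinct $z$'s give distinct $x$'s. Convergence of $z$ in $C^1_{\rm loc}$ implies convergence of $x$ in $C^1_{\rm loc}$. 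Conversely, for a neighbourhood $\mathcal W$ of $x^\mu|_{[0,\tau]}$ in (iii), we must produce a neighbourhood $\mathcal W^*$ of $z_\mu|_{[0,\tau]}$ whose preimage lands in $\mathcal W$. Taking $\mathcal W^*=\{z: x|_{[0,\tau]}\in\mathcal W\}$ works, since this set is open in $C^1([0,\tau];\R^{2n})$. The action-value inequality in (iii) is exactly the functional $\int_0^\tau[\frac12(J_n\dot z,z)+H(\lambda,t,z)]dt$ from \cite{Lu11}, written out with $H=2G+F$. The displayed right-hand side should read $2G$; this is a typo to be corrected.

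\textbf{The ``Furthermore'' part.} Suppose $F(\lambda,t,\cdot)$ is even. Since $G(\lambda,\cdot)$ is even, $H(\lambda,t,\cdot)$ is even. If $x^\lambda\equiv0$, then $y_\lambda\equiv0$ because $\nabla_2G(\lambda,0)=0$, so $z_\lambda\equiv 0$. The second part of \cite[Theorem~1.8]{Lu11} then gives (i) or the pairs $\{z,-z\}$. Under Lemma~\ref{lem:distri1}, the solution $-z$ corresponds to $-x$, because $y$ is odd in $x$ (as $\nabla_2 G$ is odd). So distinct pairs of $z$ give distinct pairs $\{v,-v\}$ of $x$, which is (iv).

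\textbf{Main obstacle.} The delicate step is the neighbourhood and convergence bookkeeping in (ii)--(iii). In particular, we need that the map $x\mapsto y=\int_0^\tau\nabla_2G(\lambda,x(\cdot-s))ds$ is continuous jointly in $\lambda$ and $x$ in the $C^0$ topology. This is needed so that convergence statements phrased via $x$ and via $z$ are equivalent. Uniform continuity of $\nabla_2G$ on compacta handles it.
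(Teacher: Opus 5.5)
Your proposal is correct and follows the paper's own route: the paper obtains Theorem~\ref{th:Distri2} in one line. It applies the first and second parts of Theorem~1.8 of \cite{Lu11} to \eqref{e:distributedDelay2} with $H=2G+F$ and $M=-I_{2n}$ (its text mistakenly says ``system \eqref{e:crm1}''), and translates back through Lemma~\ref{lem:distri1}. Your additions (the pulled-back neighbourhood $\mathcal W^*$, the oddness of $x\mapsto y$ for the pairs in (iv), and the $2G$ typo) are exactly the bookkeeping the paper omits, and the continuity of $x\mapsto y$ you flag as the main obstacle is not actually needed, since only the projection $z\mapsto x$ is ever used.
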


Corollary~1.12 in \cite{Lu11} can also yield corresponding results.

\begin{corollary}\label{cor:bif-per2}
 For a real $\tau>0$, consider continuous functions
$F_0, \hat{F}:  \mathbb{R} \times \mathbb{R}^n \to \mathbb{R}$ and $C^2$ even functions
$G_0, \hat{G}:  \mathbb{R}^{n} \to \mathbb{R}$ satisfying:
\begin{itemize}
\item[\rm (i)] $F_0(t+\tau,x)=F_0(t,-x)$ and $\hat{F}(t+\tau,x)=\hat{F}(t,-x)$ for all $(t,x)\in
\mathbb{R} \times \mathbb{R}^n$.

\item[\rm (ii)] For each $t\in\mathbb{R}$, $F_0(t,\cdot)$ and $\hat{F}(t,\cdot)$ are
  $C^2$ on ${\R}^{n}$.
\item[\rm (iii)] Both the Euclidean gradients, $\nabla_2F_0(t, x)$ and $\nabla_2\hat{F}(t, x)$,
 and the Hessian matrices, $\nabla^2_2F_0(t, x)$ and $\nabla^2_2\hat{F}(t, x)$,
  depend continuously on $(t, x)\in \mathbb{R}\times\mathbb{R}^{n}$.
 \end{itemize}
Let $\bar{x}:\R\to\R^{n}$ satisfy
\begin{enumerate}
\item[\rm (c)]  $\bar{x}(t+\tau)=-\bar{x}(t)$ and
$\dot{\bar{x}}(t) = -\nabla_2 F_0\left(t, \displaystyle\int_0^\tau
\nabla G_0(\bar{x}(t-s)) \,\mathrm{d}s\right)$.

\item[\rm (d)]
$\nabla\hat{G}(\bar{x}(t))=0$ and $\nabla_2\hat{F}(t,\bar{y}(t))=0$
for all $t\in \R$, where $\bar{y}(t):=\int_0^\tau
\nabla G_0(\bar{x}(t-s)) \,\mathrm{d}s$;
moreover either $\nabla^2\hat{G}(\bar{x}(t))>0$ and $\nabla^2_2\hat{F}(t, \bar{y}(t))>0$ for all $t\in \R$,
 or $\nabla^2\hat{G}(\bar{x}(t))<0$ and $\nabla^2_2\hat{F}(t, \bar{y}(t))<0$ for all $t\in \R$.
\end{enumerate}
Take $F(\lambda,t, x)=F_0(t,x)+\lambda\hat{F}(t,x)$ and $G(\lambda, x)=G_0(x)+\lambda\hat{G}(x)$ in (\ref{e:Bifu-distributedDelay1}).
Then
\begin{enumerate}
\item[\rm (A)] $\Sigma:=\{\lambda\in\R\,|\, \nu_{\tau,M}(\gamma_\lambda)>0\}$
is a discrete set in $\R$.
\item[\rm (B)] $(\mu, \bar{v})$ with $\mu\in\R$ is a bifurcation point  for (\ref{e:Bifu-distributedDelay1})
if and only if $\nu_{\tau,-I_{2n}}(\gamma_\mu)>0$.
\item[\rm (C)] For each $\mu\in\Sigma$, %and a small enough $\rho>0$, (\ref{e:case1}) and (\ref{e:case2})
%hold, and therefore
the conclusions in the first part of Theorem~\ref{th:Distri2}
holds, and the second one of Theorem~\ref{th:Distri2} is also true provided that
$\bar{x}=0$ and all $F_0(t,\cdot), \hat{F}(t,\cdot)$ are even.
\end{enumerate}
\end{corollary}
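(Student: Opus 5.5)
The plan is to mirror Step 3 of the proof of Theorem~\ref{th:bif-per2Delay++}. First, reformulate via Lemma~\ref{lem:distri1}. Put
\[
H_0(t,z)=2G_0(x)+F_0(t,y),\qquad \hat H(t,z)=2\hat G(x)+\hat F(t,y),\qquad H=H_0+\lambda\hat H,
\]
with $z=(x^\top,y^\top)^\top$, so that $H(\lambda,t,z)=2G(\lambda,x)+F(\lambda,t,y)$. By (i) and the evenness of $G_0,\hat G$ we get $H(\lambda,t+\tau,-z)=H(\lambda,t,z)$. Hence Assumption~1.2 of \cite{Lu11} holds with $M=-I_{2n}$, which is orthogonal symplectic, so no conjugation by a matrix $\Upsilon$ is needed.

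Let $\bar z=(\bar x^\top,\bar y^\top)^\top$. Hypothesis (c) and Lemma~\ref{lem:distri1} (with $\lambda$ frozen, $G=G_0$, $F=F_0$) show that $\bar z$ solves $\dot z=J_n\nabla_zH_0(t,z)$ with $z(t+\tau)=-z(t)$. By (d),
\[
\nabla_z\hat H(t,\bar z(t))=\bigl(2\nabla\hat G(\bar x(t))^\top,\nabla_2\hat F(t,\bar y(t))^\top\bigr)^\top=0 .
\]
So $\bar z$ solves the problem for every $\lambda$. However, Lemma~\ref{lem:distri1} for parameter $\lambda$ uses $\nabla_2G(\lambda,\cdot)=\nabla G_0+\lambda\nabla\hat G$ inside the integral defining $y$. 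The identity $\bar y(t)=\int_0^\tau\nabla G_0(\bar x(t-s))\,ds$ coincides with $\int_0^\tau\nabla_2G(\lambda,\bar x(t-s))\,ds$ precisely because $\nabla\hat G(\bar x)\equiv0$. This point must be checked explicitly, and it is what makes $\bar x$ a common solution of (\ref{e:Bifu-distributedDelay1}) for all $\lambda$.

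Next comes the definiteness. The Hessian is block diagonal,
\[
\nabla^2_z\hat H(t,\bar z(t))=\operatorname{diag}\bigl(2\nabla^2\hat G(\bar x(t)),\,\nabla^2_2\hat F(t,\bar y(t))\bigr),
\]
and by (d) it is positive definite for all $t$ or negative definite for all $t$. Now apply \cite[Corollary~1.9]{Lu11} (or Corollary~1.12, as the paper indicates) to $(H_0,\hat H,M,\bar z)=(H_0,\hat H,-I_{2n},\bar z)$. This gives three things:
\begin{enumerate}
\item[(a)] $\Sigma$ is discrete.
\item[(b)] $(\mu,\bar z)$ is a bifurcation point if and only if $\nu_{\tau,-I_{2n}}(\gamma_\mu)>0$.
\item[(c)] At each $\mu\in\Sigma$ the Rabinowitz-type alternatives hold, and when $\bar z=0$ with even Hamiltonian the Fadell--Rabinowitz alternatives hold.
\end{enumerate}
In the last case, evenness of $H(\lambda,t,\cdot)$ requires $F_0(t,\cdot),\hat F(t,\cdot)$ to be even, which is exactly the extra hypothesis in (C).

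Finally, translate back. Lemma~\ref{lem:distri1} gives a bijection $x\leftrightarrow z$ for each $\lambda$. Under it, $\nu_{\tau,-I_{2n}}(\gamma_\lambda)$ equals the dimension of the solution space of (\ref{e:LinedistributedDelay3}). This uses that $\gamma_\lambda$ is built along $\bar z$, and the index in (A) is the one with $M=-I_{2n}$.

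The main obstacle is the convergence: $C^1$ convergence of $z$-solutions must yield $C^1_{\rm loc}$ convergence of the $x$-components, and conversely. One direction is immediate since $x$ is a component of $z$. For the other, $y$ is obtained from $x$ by an integral operator that is continuous in $C^0$, uniformly in $\lambda$ near $\mu$ by continuity of $\nabla G_0$ and $\nabla\hat G$. Antiperiodicity reduces $C^1_{\rm loc}$ convergence on $\mathbb{R}$ to convergence on $[0,\tau]$. Distinctness ($x\ne\bar x$ if and only if $z\ne\bar z$) and the pairing $\{x,-x\}\leftrightarrow\{z,-z\}$ follow from the same bijection and the oddness of $\nabla G$.
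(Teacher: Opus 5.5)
Your proposal is correct and follows essentially the same route as the paper. Like the paper, you set $H_0(t,z)=2G_0(x)+F_0(t,y)$ and $\hat H(t,z)=2\hat G(x)+\hat F(t,y)$, use (c), (d) and Lemma~\ref{lem:distri1} to show that $\bar z=(\bar x^\top,\bar y^\top)^\top$ solves the $H_0$-problem with $M=-I_{2n}$, that $\nabla_z\hat H(t,\bar z(t))\equiv 0$ and that $\nabla_z^2\hat H(t,\bar z(t))$ is definite, and then invoke the corollary of \cite{Lu11} (the paper cites Corollary~1.12) together with Lemma~\ref{lem:distri1}. Your extra checks fill in details the paper leaves implicit: that $\bar y$ coincides with the $\lambda$-dependent integral $\int_0^\tau\nabla_2 G(\lambda,\bar x(t-s))\,\mathrm{d}s$ because $\nabla\hat G(\bar x)\equiv 0$, and that $C^1$ convergence, distinctness and the $\pm$ pairing transfer between $x$ and $z$.
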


Indeed, let $H_0(t,z) = 2G_0(x) + F_0(t,y)$ and $\hat{H}(t,z) = 2\hat{G}(x) + \hat{F}(t,y)$
for $z=(x^\top,y^\top)^\top\in\mathbb{R}^{2n}$. Then  condition (c) shows that
$\bar{v}(t):=(\bar{x}(t)^\top, \bar{y}(t)^\top)^\top$
satisfies $\bar{v}(t+\tau)=-\bar{v}(t)$ and $\dot{\bar{v}}(t) = J_n\nabla_2 H_0(t, \bar{v}(t))$.
Moreover, from condition (d) we deduce that
 $\nabla_2\hat{H}(t,\bar{v}(t))=0$ for all $t\in \R$, and that
 either $\nabla^2_2\hat{H}(t,\bar{v}(t))>0$  for all $t\in \mathbb{R}$,
 or  $\nabla^2_2\hat{H}(t, \bar{v}(t))<0$ for all $t\in \mathbb{R}$.
Thus the conclusion of Corollary~\ref{cor:bif-per2} follow by
combining Corollary~1.12 of \cite{Lu11} with Lemma~\ref{lem:distri1} immediately.

Similarly, by studying bifurcations of symmetric brake orbits of Hamiltonian systems (\cite{ZhongWL25}),
 we obtain bifurcation results for the following problem:

\begin{equation}\label{e:distributedDelay6}
\left\{\begin{array}{ll}
\dot{x}(t) = -\nabla_3 F\!\left(\lambda, t, \int_0^\tau
\nabla_2 G(\lambda, x(t-s)) \,\mathrm{d}s\right),\\[4pt]
x(t+\tau) = -x(t)=x(-t)\quad\forall t.
\end{array}\right.
\end{equation}
%\begin{equation}\label{e:distributedDelay6}
%\left\{\begin{array}{ll}
%\dot{x}(t) = -\nabla_3 F\left(\lambda, t, \int_0^\tau
%\nabla_2 G(\lambda, x(t-s)) \mathrm{d}s\right),\\
%x(t+\tau) = -x(t)=x(-t)\quad\forall t.
%\end{array}\right.
%\end{equation}

\noindent{\bf  Funding}\quad
This work was supported by National Natural Science Foundation of China (Grant No. 12371108).\vspace{2mm}

\noindent{\bf  Author Contribution}\quad
The author confirms sole responsibility for the following: study conception and design, data collection, analysis and interpretation of results, and manuscript preparation.

\noindent{\bf  Conflict of Interest}\quad
The author declares that there is no conflict of interest regarding the publication of this paper.
\vspace{2mm}

%{\bf Funding}\\
%This work was supported by National Natural Science Foundation of China [12371108].

\renewcommand{\refname}{REFERENCES}

\medskip

\begin{tabular}{l}
 School of Mathematical Sciences, Beijing Normal University\\
 Laboratory of Mathematics and Complex Systems, Ministry of Education\\
 Beijing 100875, The People's Republic of China\\
 E-mail address: gclu@bnu.edu.cn\\
\end{tabular}

\end{document}